\documentclass[12pt,CJK]{article}
\usepackage{geometry}
\geometry{top=2.1cm, bottom=2.1cm, left= 1.5cm, right= 1.5cm}

\usepackage{amsfonts,amsmath,amssymb,amsthm,mathrsfs}
\usepackage{latexsym}
\usepackage{graphicx}
\usepackage{indentfirst}
\usepackage{color}
\usepackage{float} 
\usepackage{subfigure} 
\usepackage{fancyhdr}
\usepackage[colorlinks,linktocpage,linkcolor=blue,citecolor=red]{hyperref}
\usepackage{titlesec}

\usepackage{authblk}

\usepackage{pgf}
\usepackage{tikz}
\usepackage[utf8]{inputenc}
\usetikzlibrary{arrows,automata}
\usetikzlibrary{positioning}

\tikzset{
    state/.style={
           rectangle,
           rounded corners,
           draw=black, very thick,
           minimum height=2em,
           inner sep=2pt,
           text centered,
           },
}

\tikzset{global scale/.style={
    scale=#1,
    every node/.append style={scale=#1}
  }
}

\definecolor{darkgreen}{rgb}{0.00,0.50,0.25}
\definecolor{purple}{rgb}{0.54,0.17,0.89}
\definecolor{purple1}{rgb}{1.00,0.00,1.00}

%

\titleformat*{\subsection}{\bfseries}

%
%

\theoremstyle{plain}                       
\newtheorem{lemma}{Lemma}[section]
\newtheorem{theorem}[lemma]{Theorem}
\newtheorem{corollary}[lemma]{Corollary}
\newtheorem{remark}[lemma]{Remark}
\newtheorem{definition}[lemma]{Definition}
\newtheorem{proposition}[lemma]{Proposition}

\theoremstyle{remark}

\numberwithin{equation}{section}

\def\Xint#1{\mathchoice
  {\XXint\displaystyle\textstyle{#1}}%
  {\XXint\textstyle\scriptstyle{#1}}%
  {\XXint\scriptstyle\scriptscriptstyle{#1}}%
  {\XXint\scriptscriptstyle\scriptscriptstyle{#1}}%
  \!\int}
\def\XXint#1#2#3{{\setbox0=\hbox{$#1{#2#3}{\int}$}
  \vcenter{\hbox{$#2#3$}}\kern-.5\wd0}}

\def\dashint{\Xint-}



\DeclareMathOperator*{\esssup}{ess\,sup}

\begin{document}
\allowdisplaybreaks

\author[a,b]{Li Wang\thanks{Email: wangli@math.pku.edu.cn.}}

\author[b]{Qiang Xu\thanks{Email: xuq@lzu.edu.cn.}
}

%
\affil[a]{School of Mathematic Sciences,
Peking University, Peking 100871, China.
}

\affil[b]{School of Mathematics and Statistics, Lanzhou University, Lanzhou, 730000, China.}

%


\title{
\centering
\Large
\textbf{Calder\'on-Zygmund
estimates for stochastic elliptic
systems on bounded Lipschitz domains} }

\maketitle
\begin{abstract}
Concerned with elliptic operators with stationary random coefficients of
integrable correlations and bounded Lipschitz domains, arising from
stochastic homogenization theory, this paper is mainly devoted to studying
Calder\'on-Zygmund estimates. As an application,
we obtain the homogenization error in the sense of oscillation and
fluctuation, respectively. These results are optimal up to a quantity
$O(\ln(1/\varepsilon))$, which is caused by the
quantified sublinearity of correctors in dimension
two and the less smoothness of the boundary.
In this paper, we find a novel form of \emph{minimal radius},
which is proved to be a suitable tool for quantitative stochastic homogenization on boundary value problems,
when we adopt Gloria-Neukamm-Otto's strategy originally inspired
by the pioneering work of Naddaf and Spencer.

\smallskip
\noindent
\textbf{Key words:}
Homogenization error; Calder\'on-Zygmund estimates;
oscillation; fluctuation; Lipschitz domains.
\end{abstract}


\section{Introduction}


\subsection{\centering Motivation and basic ideas}\label{subsection:1.1}

\noindent
Quantitative stochastic homogenization theory has been extensively studied over the past decade, and the crucial progress has been made by overcoming
the missing Poincar\'e estimate on probability space to
quantify the sublinearity of correctors.
Two approaches were developed to achieve this purpose.
The first one, based upon quantified ergodicity in terms
of finite range assumption, was established by S. Armstrong and C. Smart
\cite{Armstrong-Smart16} and culminated with
the monograph \cite{Armstrong-Kuusi-Mourrat19}.
The second one, rooted in concentration inequalities to
quantify ergodicity, was developed by A. Gloria, S. Neukamm,
and F. Otto \cite{Gloria-Otto11,Gloria-Neukamm-Otto15,
Gloria-Neukamm-Otto20,Gloria-Neukamm-Otto21}.
We also refer the readers to \cite{Armstrong-Kuusi22,Josien-Otto22}
and references therein for more details.

To the authors' best
knowledge, the second approach seldom touches the boundary value
problems (except for periodic boundary conditions).
With the help of boundary correctors originally developed by M. Avellaneda
and F. Lin \cite{Avellaneda-Lin87} for quantitative periodic
homogenization theory,
J. Fischer and C. Raithel \cite{Fischer-Raithel17} derived large-scale regularity theory for random elliptic operators on the half-space,
and recently regarding a region with a corner in dimension two,
M. Josien, C. Raithel, and M. Sch\"affner \cite{Josien-Raithel-Schaffner22} obtained
large-scale regularity theory by a non-standard expansion
argument. Instead, without introducing boundary correctors,
the present work is managing to extend Gloria-Neukamm-Otto's methods \cite{Gloria-Neukamm-Otto20,Gloria-Neukamm-Otto21} to study boundary value problems involving non-smooth domains.

In the present paper, we take Gloria-Neukamm-Otto's results \cite{Gloria-Neukamm-Otto20,Gloria-Neukamm-Otto21} on correctors as the ``input'', and systematically investigate the impact of lower regularity of the boundary on the relevant conclusions by virtue of Shen's real arguments \cite{Shen20} in conjunction with the new definition of minimal radius which is suitable for boundary value problems,
originally defined in \cite{Gloria-Neukamm-Otto20} (also similar to
``random scale'' $\mathcal{X}_s$ given in \cite{Armstrong-Kuusi-Mourrat19}). On the other hand,
the study of boundary value problems on non-smooth domains is a classical
branch in PDEs and  has yielded abundant research
results (see e.g. \cite{Kenig94,Hofmann-Mitrea-Taylor10}).
The current literature contributes to establishing a
reasonable link between it and quantitative stochastic homogenization.

\medskip

Precisely, we are interested in a family of elliptic operators in the divergence form\footnote{For the ease of statement, we adopt scalar notation and language throughout the paper.}
\begin{equation*}
\mathcal{L}_\varepsilon :=-\nabla\cdot a^\varepsilon\nabla = -\nabla\cdot a(\cdot/\varepsilon)\nabla,\qquad \varepsilon>0,
\end{equation*}
where $a$ satisfies $\lambda$-uniformly elliptic conditions, i.e., for some
$\lambda\in(0,1)$ there holds
\begin{equation}\label{a:1}
\lambda|\xi|^2 \leq  \xi\cdot a(x)\xi \leq \lambda^{-1}|\xi|^2
\qquad
\text{for~all~}\xi\in\mathbb{R}^{d}\text{~and~}x\in\mathbb{R}^d.
\end{equation}
One can introduce that the configuration space is the set
of coefficient fields satisfying $\eqref{a:1}$, equipped with a
probability measure on it, which is also called as an ensemble,
and we denote the expectation by $\langle\cdot\rangle$. This ensemble
is assumed to be stationary, i.e., for all shift vectors $z\in\mathbb{R}^d$, $a(\cdot+z)$ and $a(\cdot)$ have the same law under
$\langle\cdot\rangle$. In order to establish the quantitative theory, we also
assume that the ensemble $\langle\cdot\rangle$ satisfies the spectral gap
condition: for any random variable, which is a functional of $a$, there exists $\lambda_1\geq 1$ such that
\begin{equation}\label{a:2}
 \big\langle (F-\langle F\rangle)^2 \big\rangle
 \leq \lambda_1
 \Big\langle\int_{\mathbb{R}^d}\Big(\dashint_{B_1(x)}\big|\frac{\partial F}{\partial a}\big|\Big)^2 dx\Big\rangle,
\end{equation}
where
the definition of the functional derivative of $F$ with respect to $a$ can be found in $\eqref{functional}$ below (or see \cite[pp.15]{Josien-Otto22}).
Moreover, for obtaining pointwise estimates we also assume that
there exists $\sigma_0\in(0,1)$ such that for any $0<\sigma<\sigma_0$
and $1\leq \gamma<\infty$ there holds
\begin{equation}\label{a:3}
 \big\langle  \|a\|_{C^{0,\sigma}(B_1(0))}^\gamma
 \big\rangle \leq \lambda_2(d,\lambda,\sigma,\gamma).
\end{equation}
Throughout the paper, let
$B_r(x)$ denote the open ball centered at $x$ of radius $r$, which
is also abbreviated as $B$. When we use this shorthand notation,
let $x_B$ and $r_B$ represent the center and radius of $B$, respectively.

As mentioned before, this paper is mainly devoted to studying the influence of
nonsmooth domain on the quantitative stochastic homogenization problems,
we introduce the definitions of related domains involved in
the main results of the paper.
\begin{definition}[Lipschitz, $C^1$ domains \cite{Kenig94}]
Let $\Omega\subset\mathbb{R}^d$ with $d\geq 2$ be a bounded domain. $\Omega$
is called a Lipschitz domain, with Lipschitz constant less than or equal to $M_0$
if we can cover a neighborhood of $\partial\Omega$ by
finite many balls $B$ (with $x_B\in\partial\Omega$) so
that, in an appropriate orthonormal coordinate system, $B\cap\Omega=
\{x=(x',x_d): x'\in\mathbb{R}^{d-1}\text{~and~}
x_d>\varphi(x')\}\cap B$, where $\varphi:\mathbb{R}^{d-1}\to\mathbb{R}$ is
a Lipschitz function, i.e., $|\varphi(x')-\varphi(y')|\leq M_0|x'-y'|$ for
any $x',y'\in\mathbb{R}^{d-1}$, with $\varphi(0)=0$. The domain is called a
$C^1$ domain if the function $\varphi$ can be additionally chosen to be of
class $C^1$.
\end{definition}

In classical boundary value problems for PDEs of elliptic type,
the layer potential method was widely used to study the
well-posed of solutions (we will explain later on why
it is related to the current work).
It mainly concerns the $L^q$-boundedness (with $1<q<\infty$) and compactness
of ``singular'' integral operators\footnote{Whether it is a singular
integral operator depends on the smoothness of $\Omega$. If
$\Omega$ is a $C^{1,\alpha}$ domain with $\alpha\in(0,1]$,
$\mathcal{K}$ is not the singular integral operator (since there exists a constant
$C>0$ such that $|n(y)\cdot (y-x)|\leq C|y-x|^{1+\alpha}$
holds for any $x,y\in\partial\Omega$).}, e.g.,
\begin{equation*}
\mathcal{K}(g)(x):=\lim_{\varepsilon\to 0}
\int_{y\in\partial\Omega\atop|x-y|>\varepsilon}
\frac{n(y)\cdot (y-x)}{c_d|x-y|^d}g(y)dS(y),
\quad \forall x\in\partial\Omega \text{~and~}g\in L^q(\partial\Omega),
\end{equation*}
where $n$ is the outward unit normal vector to $\partial\Omega$, and
$dS$ is the surface measure on $\partial\Omega$.
It is worth noting the following two results: (1). Due to the remarkable work of
R. Coifman, A. McIntosh, and Y. Meyer \cite{Coifman-McIntosh-Meyer82},
even for $\Omega$ with a large Lipschitz constant, the singular integral operator
$\mathcal{K}$ is verified to be bounded on $L^q(\partial\Omega)$ for any
$q\in(1,\infty)$, but
the compactness of $\mathcal{K}$ fails on $L^q(\partial\Omega)$ (see e.g. \cite{Fabes-Jodeit-Lewis77}).
(2). E. Fabes, M. Jodeit, and N. Riviere \cite{Fabes-Jodeit-Riviere78}
had shown that $\mathcal{K}$ is  compact on $L^q(\partial\Omega)$
for each $q\in(1,\infty)$, provided that $\Omega$ is a $C^1$ domain.
The former yields a research direction
that is to find a new approach\footnote{It's related to
the so-called Rellich identity which
requires the symmetry condition (i.e., $a=a^*$) on the coefficient
(see\cite{Kenig94,Verchota84}).\label{footnote2}} (independent of compactness) to establish
the invertibility of the trace operator (i.e., $\pm(1/2)I+\mathcal{K}$) on
$L^q(\partial\Omega)$ for some $q$, as well as, the sharp range of $q$
(see \cite{Kenig94,Shen07}).
The latter in turn has spawned an research area where people
focuses on exploring the optimal geometric characterization
of $\Omega$ such that Fredholm
theory based on compact operators can still be
implemented. In this regard, to the authors' best
knowledge, a notable progress was made by
S. Hofmann, M. Mitrea, and M. Taylor \cite{Hofmann-Mitrea-Taylor10}.
Roughly speaking, they found that if $\Omega$ satisfied
some relatively weak conditions, \emph{a sufficient and necessary
condition for $\mathcal{K}$
to be compact was that $\Omega$ was a regular SKT domain}
(since its definition is quite involved and will be given in Appendix).
In this sense,
the regular SKT domain is an optimal generalization of class $C^1$
from our perspective.

We now consider the following Dirichlet (or Neumann) boundary value problems:
\begin{equation}\label{pde:2}
(\text{D}_\varepsilon)~\left\{\begin{aligned}
-\nabla\cdot a^\varepsilon\nabla u_\varepsilon
&= \nabla\cdot f
&\quad&\text{in}~~\Omega;\\
u_\varepsilon &= 0
&\quad&\text{on}~~\partial\Omega,
\end{aligned}\right.
\quad\text{or}\quad
(\text{N}_\varepsilon)~\left\{\begin{aligned}
-\nabla\cdot a^\varepsilon\nabla u_\varepsilon
&= \nabla\cdot f
&\quad&\text{in}~~\Omega;\\
n\cdot a^\varepsilon \nabla u_\varepsilon &= -n\cdot f
&\quad&\text{on}~~\partial\Omega,
\end{aligned}\right.
\end{equation}
Based upon the sublinearity of correctors (see  \cite[Theorem 2]{Papanicolaou-Varadhan79}), it follows from Tartar's test function method
that for $\langle\cdot\rangle$-a.e. (admissible) coefficient $a$, the flux
quantity $a^\varepsilon \nabla u_\varepsilon$ weakly converges to
the effective one denoted by $\bar{a}\nabla\bar{u}$,
where $\bar{a}e_i:=\langle a(\nabla\phi_i+e_i)\rangle$ with $e_i$
being the canonical basis of $\mathbb{R}^d$, and the so-called corrector $\phi_i$ satisfies $\nabla\cdot a(\nabla\phi_i+e_i)=0$ in $\mathbb{R}^d$
(see also Lemma $\ref{lemma:3}$).
Moreover, there holds the following strong convergence:
\begin{equation}\label{pri:00}
 \lim_{\varepsilon\to 0}
 \int_{\Omega}\big\langle|u_\varepsilon-\bar{u}|^2\big\rangle = 0,
 \quad \text{and} \quad
 \lim_{\varepsilon\to 0}
 \int_{\Omega}\big\langle|\nabla u_\varepsilon
 -\big((\nabla\phi_i)(\cdot/\varepsilon)+e_i\big)\partial_i\bar{u}|^2\big\rangle = 0
\end{equation}
(see e.g. \cite[Theorem 3]{Papanicolaou-Varadhan79}, and
the Einstein's summation convention for repeated indices is
used throughout),
where $\partial_i\bar u$ represents partial derivative of $\bar u$
with respect to $e_i$-direction,
and $\bar{u}$ satisfies the effective equation $-\nabla\cdot\bar{a}
\nabla \bar{u}=\nabla\cdot f$ in $\Omega$ with the related Dirichlet (or Neumann) boundary conditions.
We mention that the effective equation is totally deterministic,
which shows the potential merit of random homogenization theory, i.e.,
the possible reduction in numerical complexity.

In fact, the real implementation of the numerical
algorithm requires more quantitative theory
(see e.g. \cite{Clozeau-Josien-Otto-Xu,Gloria-Neukamm-Otto15}),
and one of the interesting topics is to quantify the homogenization errors
presented
in $\eqref{pri:00}$.
In this regard, not only the oscillating property of
the error of two-scale expansions but also the fluctuation
estimate will be derived in this work, where
(weighted) Calder\'on-Zygmund estimates play a basic role.

To understand the aforementioned statement,
let us start from the error of the two-scale expansions:
\begin{equation}\label{expansion:1-00}
w_\varepsilon
:= u_\varepsilon - \bar{u} -\varepsilon \phi_i(\cdot/\varepsilon)
\eta\partial_i\bar{u},
\end{equation}
where $\eta$ is a cut-off function such that
$w_\varepsilon\in H_0^1(\Omega)$, and appealing to the notion of flux corrector
(see Lemma $\ref{lemma:3}$) one can verify that $w_\varepsilon$
satisfies the equation $\eqref{pde:2}$ with a new replacement of $f$ (denoted by
$f_\varepsilon$).
Therefore, some Calder\'on-Zygmund estimates in the form of
\begin{equation}\label{pri:c-z}
\bigg(\int_{\Omega} \langle|\nabla w_\varepsilon|^p\rangle^{\frac{q}{p}}
 \bigg)^{\frac{1}{q}}
 \lesssim_{\lambda,\lambda_1,\lambda_2,M_0,d,p,q}
 \bigg(\int_{\Omega} \langle|f_\varepsilon|^p\rangle^{\frac{q}{p}}
 \bigg)^{\frac{1}{q}}
\end{equation}
with any $p,q\in(1,\infty)$ must be very useful to figure out
homogenization errors in a quantitative way!
(See the definition of the notation ``$\lesssim_{\lambda,\lambda_1,\cdots}$''
in Subsection $\ref{notation}$.)
To be precise,
the error estimate can be reduced to compute
the right-hand side of $\eqref{pri:c-z}$, and it can be further controlled by
\begin{equation}\label{pri:la-colay}
\|(1-\eta)\nabla\bar{u}\|_{L^q(\Omega)}
 + \varepsilon \|\eta\nabla^2\bar{u}\|_{L^q(\Omega)}
\end{equation}
multiplied by a quantity $\mu_d(\cdot/\varepsilon)$ and,
\begin{equation}\label{mu_d}
\mu_d(x):=\left\{\begin{aligned}
&\ln^{\frac{1}{2}}(2+|x|) &\quad&~ d=2;\\
&1 &\quad&~ d>2,
\end{aligned}\right.
\end{equation}
which gauges the sublinear growth of corrector and flux corrector
(see Lemma $\ref{lemma:3}$). Moreover, one can
choose the cut-off function $\eta$ such that
$\text{supp}(\eta)\subset\Omega$ and for any $x\in\partial\Omega$
there holds $\text{dist}(x,\text{supp}(\eta))= O(\varepsilon)$
(where $\text{supp}(\eta)$ represents the support set of $\eta$). In such
the case, we have a chance to estimate $\|\eta\nabla^2\bar{u}\|_{L^q(\Omega)}$ even when $\Omega$ is
a Lipschitz domain, whereas it also reveals that
the quantities in $\eqref{pri:la-colay}$ merely give us
\begin{equation}\label{pri:pri:la-colay-1}
\|(1-\eta)\nabla\bar{u}\|_{L^q(\Omega)}
= O(\varepsilon^{\frac{1}{q}})
\quad\text{and}\quad
\|\eta\nabla^2\bar{u}\|_{L^q(\Omega)}
= O(\varepsilon^{-\frac{1}{q}}),
\end{equation}
at best (since $\Omega$
is assumed to be $C^1$ at most in our discussion). By setting $q=2$, it is exactly the well-known
boundary layer phenomenon appeared
in the quantitative homogenization theory.
To overcome this difficulty,
we upgrade $\eqref{pri:pri:la-colay-1}$ to the following weighted estimates:
\begin{equation*}
\|(1-\eta)\nabla\bar{u}\delta^{\frac{s}{q}}\|_{L^q(\Omega)}
= O(\varepsilon)
\quad\text{and}\quad
\|\eta\nabla^2\bar{u}\delta^{\frac{s}{q}}\|_{L^q(\Omega)}
= O(\ln(1/\varepsilon)),
\end{equation*}
where $s=q-1$, and the weight is given by
the distance function $\delta(x):=\text{dist}(x,\partial\Omega)$ with $x\in\Omega$.
This, in turn, naturally suggested that
the Calder\'on-Zygmund estimate $\eqref{pri:c-z}$ should be a
weighted version, i.e.,
\begin{equation*}
\bigg(\int_{\Omega} \langle|\nabla w_\varepsilon|^p\rangle^{\frac{q}{p}}
 \delta^{q-1}\bigg)^{\frac{1}{q}}
 \lesssim
 \bigg(\int_{\Omega} \langle|f_\varepsilon|^p\rangle^{\frac{q}{p}}
 \delta^{q-1}\bigg)^{\frac{1}{q}}.
\end{equation*}
Hence, from the point of theoretical research,
it is reasonable to generalize the above estimate from the special weight to the class of
$A_q$ weights, which is the main focus of this paper.
\begin{definition}[$A_q$-weights]\label{def:weight}
Let $1\leq q<\infty$. A non-negative, locally integrable function $\omega$
is said to be an $A_q$ weight, if there exists a positive constant
$A$ such that, for every ball $B\subset\mathbb{R}^d$,
\begin{equation}\label{f:w-1} \bigg(\dashint_{B}\omega\bigg)\bigg(\dashint_{B}\omega^{-1/(p-1)}\bigg)^{p-1}
  \leq A,
\end{equation}
if $q>1$, where
the average integral symbol $\dashint_{B}$ is defined by $\frac{1}{|B|}\int_B$
with $|B|$ being the Lebesgue measure of $B$,  or
\begin{equation}\label{f:w-2}
 \bigg(\dashint_{B}\omega\bigg)\esssup_{x\in B}\frac{1}{\omega(x)}\leq A,
\end{equation}
if $q=1$. The infimum over all such constants $A$ is called
the $A_q$ Muckenhoupt characteristic constant of $\omega$, denoted by
$[\omega]_{A_q}$. Moreover, $A_q$ represents
the set of all $A_q$ weights.
\end{definition}

So far, we have clarified the motivation for studying (weighted)
Calder\'on-Zygmund estimates.
We now concisely elucidate why the layer potential theory mentioned earlier
is crucial in the present work.
In fact, it is reconditely connected to the estimate
$\eqref{pri:pri:la-colay-1}$. Let us do a decomposition for
$\nabla\bar{u}$ as follows:
\begin{equation}\label{f:decom-1}
\nabla\bar{u} = \underbrace{\nabla(-\nabla\cdot\bar{a}\nabla)^{-1}\nabla\cdot
 \tilde{f}_{0}}_{N_f}
 + \nabla v \quad\text{in}~\Omega,
\end{equation}
where $\tilde{f}_0$ is a suitable extension of $f$
such that $\text{supp}(\tilde{f}_0)\supsetneq\Omega$ is a bounded
domain in $\mathbb{R}^d$. Therefore, the rationality
of this decomposition relies only on the solvability
of the following boundary value problem:
\begin{equation}\label{pde:DN}
(\text{D})\left\{\begin{aligned}
\nabla\cdot\bar{a}
\nabla v &= 0 &\quad&\text{in}~\Omega;\\
 v &= -N_f
 &\quad& \text{on}~\partial\Omega,
\end{aligned}\right.
\quad \text{or} \quad
(\text{N})\left\{\begin{aligned}
-\nabla\cdot \bar{a}\nabla v
&= 0
&\quad&\text{in}~~\Omega;\\
n\cdot \bar{a} \nabla v &= -n\cdot \bar{a}\nabla N_f
&\quad&\text{on}~~\partial\Omega.
\end{aligned}\right.
\end{equation}
In view of the layer potential methods mentioned before,
the solvability of (D) or (N) in $\eqref{pde:DN}$ is
rooted in the invertibility of the trace operator\footnote{
It's formulated by
$\pm(1/2)I+\mathcal{K}_{\bar{a}}$, where
$\mathcal{K}_{\bar{a}}(g)(x):=\text{p.v.}\int_{\partial\Omega}
n\cdot\bar{a}\nabla\Gamma_{\bar{a}}(x-\cdot)gdS$, and
$\Gamma_{\bar{a}}$ is the fundamental solution of
$-\nabla\cdot\bar{a}\nabla$.\label{footnote3}} on
$L^q(\partial\Omega)$ with $q\in(1,\infty)$, which ultimately relies on
the regularity of $\partial\Omega$.

Then, we turn to make a heuristic statement of
how to get some estimates comparable to $\eqref{pri:c-z}$.
A primary idea is to
transfer the regularity of the effective solution
back to the original one, at least, in the sense of large scales.
Thus, to see $\eqref{pri:c-z}$,
it is reasonable to start from investigating
if the following estimate\footnote{
To the authors's best knowledge,
the estimate $\eqref{pri:c-z*}$ is unclear for general Lipschitz domains.
However, the method developed in the present work
is independent of this result.}
\begin{equation}\label{pri:c-z*}
\bigg(\int_{\Omega} \langle|(\nabla(-\nabla\cdot\bar{a}\nabla)^{-1}_{\Omega}
 \nabla\cdot)(\mathbf{f})|^p\rangle^{\frac{q}{p}}
 \bigg)^{\frac{1}{q}}
 \lesssim_{\lambda,M_0,d,p,q}
 \bigg(\int_{\Omega} \langle|\mathbf{f}|^p\rangle^{\frac{q}{p}}
 \bigg)^{\frac{1}{q}},
\end{equation}
holds true for any $L^q$-integrable Bochner function $\mathbf{f}$ (with
values in $L^p_{\langle\cdot\rangle}$ space\footnote{
That's the space of
$L^p$-integrable functionals with respect to the measure introduced by
the ensemble $\langle\cdot\rangle$.}),
where $(-\nabla\cdot\bar{a}\nabla)^{-1}_{\Omega}$ is
determined by the Green (or Neumman)
function associated with $\Omega$.
The next question is how to ``deliver'' this regularity?
The answer is to hand it over to the ``local'' approximating.
For any ball $B\subset\mathbb{R}^d$ with
its center $x_B\in\partial\Omega$ or $2B\subset\Omega$, we do
the following decomposition associated with $B$,
inspired by the two-scale expansion
$\eqref{expansion:1-00}$,
\begin{equation}\label{f:decom-2}
\nabla u_\varepsilon
 = \underbrace{\nabla u_\varepsilon
 -\big(e_i+(\nabla\phi_i)(\cdot/\varepsilon)\big)\partial_i
 \bar{u}_B}_{W_\varepsilon}
 + \underbrace{(e_i+(\nabla\phi_i)(\cdot/\varepsilon))
 \partial_i\bar{u}_B}_{V},
\end{equation}
where $\bar{u}_B$ is determined by the equation
$-\nabla\cdot\bar{a}\nabla\bar{u}_B
=\nabla\cdot f$ in
$2B\cap\Omega$ with $\bar{u}_B=u_\varepsilon$ on $\partial(2B\cap\Omega)$.
As the approximating part, $W_\varepsilon$
is required to be sufficiently small in a certain norm.
Thereupon,
the so-called minimum radius $\chi_{*}$ defined in
$\eqref{min_rad}$ becomes relevant (it's too
lengthy to be represented here), and it exactly characterizes
the scale at which the approximating term
can be as desired small.
So far, we have noticed the approximation in $\eqref{f:decom-2}$
is not available for all scales. Roughly speaking, that
eventually contributes to our results on
Calder\'on-Zygmund estimates to be the
form of
\begin{equation*}
\int_{\Omega}
 \Big\langle\big(\dashint_{B\cap\Omega}|\nabla u_\varepsilon|^2\big)^{\frac{p}{2}}
 \Big\rangle^{\frac{q}{p}}
 \lesssim_{\lambda,\lambda_1,\lambda_2,d,p,q,M_0}
 \int_{\Omega}\Big\langle\big(\dashint_{B\cap\Omega}|f|^2\big)^{\frac{p}{2}}
 \Big\rangle^{\frac{q}{p}},
\end{equation*}
instead of that given in $\eqref{pri:c-z}$.
The details of how to use the decomposition $\eqref{f:decom-2}$ and Shen's
real method (i.e., Lemma $\ref{shen's lemma2}$) to give
the desired estimates will be postponed to
Subsection $\ref{subsection:3.0}$.

\subsection{Main results and relation to previous works}

For the reader's convenience, we first introduce some geometric notation
on integral regions. For any $x\in\Omega$, we set
$U_{*,\varepsilon}(x):=\Omega\cap B_{*,\varepsilon}(x)$
with $B_{*,\varepsilon}(x):=B_{\varepsilon\chi_{*}(x/\varepsilon)}(x)$
and, $U_\varepsilon(x):=B_\varepsilon(x)\cap\Omega$, where
$\chi_{*}$ is known as the
minimum radius (see Section $\ref{section:2}$). Let $R_0\geq 1$ be the diameter of $\Omega$.

\begin{theorem}[Calder\'on-Zygmund estimates]\label{thm:1}
Let $\Omega\subset\mathbb{R}^d$ with $d\geq 2$ be a bounded Lipschitz domain and $\varepsilon\in(0,1]$.
Suppose that $\langle\cdot\rangle$ is stationary and satisfies spectral gap condition $\eqref{a:2}$,
and the (admissible) coefficient additionally satisfies
$\eqref{a:3}$ with the symmetry condition $a=a^*$.
Let $u_\varepsilon$ be associated with $f$ by the elliptic systems
$\eqref{pde:2}$.
Then, there exists a stationary random field $\chi_*$
with a Lipschitz continuity, satisfying
$\langle\chi_*^\beta\rangle\lesssim_{\lambda,\lambda_1,\lambda_2,d,\beta} 1$
for any $\beta\in[1,\infty)$, such that the quenched Calder\'on-Zygmund estimate
\begin{equation}\label{A}
\bigg(\int_{\Omega}\Big(\dashint_{U_{*,\varepsilon}(x)}
|\nabla u_\varepsilon|^2 \Big)^{\frac{p}{2}}dx\bigg)^{\frac{1}{p}}
\lesssim_{\lambda,d,M_0,p}
\bigg(\int_{\Omega}\Big(\dashint_{U_{*,\varepsilon}(x)}|f|^2
\Big)^{\frac{p}{2}}dx\bigg)^{\frac{1}{p}}
\end{equation}
holds for any $p>1$ satisfying $|\frac{1}{p}-\frac{1}{2}|\leq
\frac{1}{2d}+\theta$ with
$0<\theta\ll 1$ depending only on $d$ and $M_0$.
Also, for any
$|\frac{1}{q}-\frac{1}{2}|\leq \frac{1}{2d}+\theta$ and $\bar{p}>p$, we have the annealed Calder\'on-Zygmund estimate
\begin{equation}\label{B}
\bigg(\int_{\Omega}
\Big\langle\Big(\dashint_{U_{\varepsilon}(x)}|\nabla u_\varepsilon|^2
\Big)^{\frac{p}{2}}
\Big\rangle^{\frac{q}{p}}dx\bigg)^{\frac{1}{q}}
\lesssim_{\lambda,d,M_0,p,\bar{p},q}
\bigg(
\int_{\Omega}
\Big\langle\Big(\dashint_{U_{\varepsilon}(x)}
|f|^{2}\Big)^{\frac{\bar{p}}{2}}\Big\rangle^{\frac{q}{\bar{p}}}dx
\bigg)^{\frac{1}{q}},
\end{equation}
Particularly, if the equations $\eqref{pde:2}$ are scalar,
the estimates $\eqref{A}$ and $\eqref{B}$ then hold for
$(4/3)-\theta<p,q<4+\theta$ if $d=2$; and for $(3/2)-\theta<p,q<3+\theta$
if $d\geq 3$.
Moreover, if $\Omega$ is a regular SKT (or $C^1$)
domain
(in such the case the symmetry condition $a=a^*$
is not needed\footnote{
As mentioned in
Subsection $\ref{subsection:1.1}$,
if $\Omega$ is a $C^1$ (or regular SKT) domain,
the singular integral
$\mathcal{K}_{\bar{a}}$ (see footnote [\ref{footnote3}])
is compact on $L^q(\partial\Omega)$,
and therefore we can obtain the
solvability of the boundary value problems in
$\eqref{pde:DN}$ for any $q\in(1,\infty)$,
which is different from the argument based upon
the Rellich identity (see footnote [\ref{footnote2}]).}), then the estimates $\eqref{A}$ and $\eqref{B}$
hold for any $1<p,q<\infty$ with $\bar{p}>p$;
For any $\omega\in A_q$,
we also obtain weighted annealed Calder\'on-Zygmund estimate
\begin{equation}\label{C}
\bigg(\int_{\Omega}
\Big\langle\Big(\dashint_{U_{*,\varepsilon}(x)}|\nabla u_\varepsilon|^2 \Big)^{\frac{p}{2}}
\Big\rangle^{\frac{q}{p}}\omega(x)dx\bigg)^{\frac{1}{q}}
\lesssim_{\lambda,d,M_0,p,q,[\omega]_{A_q}}
\bigg(\int_{\Omega}
\Big\langle\Big(\dashint_{U_{*,\varepsilon}(x)}
|f|^2\Big)^{\frac{p}{2}}\Big\rangle^{\frac{q}{p}}\omega(x)dx\bigg)^{\frac{1}{q}}.
\end{equation}
In particular, there exists $\omega_\sigma\in A_q$ such that
\begin{equation}\label{D}
\bigg(\int_{\Omega}\Big\langle
\Big(\dashint_{U_\varepsilon(x)}|\nabla u_\varepsilon|^2 \Big)^{\frac{p}{2}}
\Big\rangle^{\frac{q}{p}}\omega_\sigma(x)dx\bigg)^{\frac{1}{q}}
\lesssim_{\lambda,d,M_0,p,\bar{p},q} \bigg(\int_{\Omega}
\Big\langle\Big(\dashint_{U_{\varepsilon}(x)}|f|^2 \Big)^{\frac{\bar{p}}{2}}
\Big\rangle^{\frac{q}{\bar{p}}}
\omega_\sigma(x)dx\bigg)^{\frac{1}{q}}.
\end{equation}
\end{theorem}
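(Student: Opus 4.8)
The plan is to transfer the Calder\'on--Zygmund regularity of the deterministic effective operator $-\nabla\cdot\bar{a}\nabla$ on $\Omega$ to the heterogeneous operator $\mathcal{L}_\varepsilon$ by a large-scale comparison, organized through Shen's real-variable method (Lemma \ref{shen's lemma2}), with the random minimal radius $\chi_*$ of \eqref{min_rad} marking the scale below which homogenization is not yet effective. The first ingredient is purely deterministic: the $W^{1,q}$-regularity for $-\nabla\cdot\bar{a}\nabla$ under the Dirichlet and Neumann conditions, that is, the unique solvability of \eqref{pde:DN} together with the bound \eqref{pri:c-z*}. On a general bounded Lipschitz domain this holds in the range $|\tfrac1q-\tfrac12|\le\tfrac1{2d}+\theta$ for systems via the Rellich identity --- which is precisely where the symmetry $a=a^*$ (equivalently $\bar{a}=\bar{a}^*$) enters --- and in the wider scalar ranges stated in the theorem when the equation is scalar, since De Giorgi--Nash--Moser boundedness then improves the exponents; when $\Omega$ is a regular SKT or $C^1$ domain, the compactness of $\mathcal{K}_{\bar{a}}$ on $L^q(\partial\Omega)$ yields solvability for all $1<q<\infty$ with no symmetry assumption. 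For this step I would simply cite \cite{Kenig94,Verchota84,Shen07,Fabes-Jodeit-Riviere78,Hofmann-Mitrea-Taylor10}; the admissible exponents in the theorem are exactly those available here.

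The second ingredient is a ball-wise comparison. Fix a ball $B$ with either $2B\subset\Omega$ or $x_B\in\partial\Omega$, and with $r_B\ge\varepsilon\chi_*(x_B/\varepsilon)$, and use \eqref{f:decom-2}: $\nabla u_\varepsilon=W_\varepsilon+V$ with $V=(e_i+(\nabla\phi_i)(\cdot/\varepsilon))\partial_i\bar{u}_B$, where $\bar{u}_B$ solves the effective equation on $2B\cap\Omega$ with boundary data $u_\varepsilon$. Above the minimal-radius scale, the very definition \eqref{min_rad} of $\chi_*$, together with the correctors and flux correctors of Lemma \ref{lemma:3} and a Caccioppoli/Meyers estimate for $\mathcal{L}_\varepsilon$, yields a quantified homogenization bound of the form
\[
 \Big(\dashint_{B\cap\Omega}|W_\varepsilon|^2\Big)^{1/2}
 \lesssim \Big(\tfrac{\varepsilon\chi_*(x_B/\varepsilon)}{r_B}\Big)^{\alpha}
 \Big(\dashint_{2B\cap\Omega}|\nabla u_\varepsilon|^2\Big)^{1/2}
 + \Big(\dashint_{2B\cap\Omega}|f|^2\Big)^{1/2}
\]
for some $\alpha>0$, whereas for $V$ one combines the $L^q$-bound for $\nabla\bar{u}_B$ from the first step (localized to $2B\cap\Omega$) with the averaged control of the multiplier $e_i+(\nabla\phi_i)(\cdot/\varepsilon)$ furnished by Lemma \ref{lemma:3} --- this is where the gauge $\mu_d$ of \eqref{mu_d} surfaces when $d=2$, and, in the annealed setting, where the extra room $\bar{p}>p$ is spent. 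Feeding these two facts into Lemma \ref{shen's lemma2}, applied to $F_\varepsilon(x):=\big(\dashint_{U_{*,\varepsilon}(x)}|\nabla u_\varepsilon|^2\big)^{1/2}$ and $G(x):=\big(\dashint_{U_{*,\varepsilon}(x)}|f|^2\big)^{1/2}$ --- the average over $U_{*,\varepsilon}(x)$ being exactly what absorbs the sub-$\chi_*$ scales, where only the energy bound is available --- upgrades the $L^2$ estimate to $L^p$ for $p$ below the borderline of the first step, and a duality argument (or, symmetrically, the Neumann side) supplies the conjugate endpoint; this is \eqref{A}. The Lipschitz continuity and the stretched-exponential moments $\langle\chi_*^\beta\rangle\lesssim1$ of $\chi_*$ recalled in Section~\ref{section:2} (building on \cite{Gloria-Neukamm-Otto20,Gloria-Neukamm-Otto21}) ensure that all $\chi_*$-dependent geometric quantities are harmless.

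The annealed and weighted statements follow by the same scheme with $\langle\cdot\rangle$ brought inside. For \eqref{B} I would apply Lemma \ref{shen's lemma2} to the annealed quantity $x\mapsto\big\langle(\dashint_{U_\varepsilon(x)}|\nabla u_\varepsilon|^2)^{p/2}\big\rangle^{1/p}$, estimating $V$ via stationarity and the moment bounds $\big\langle|(\nabla\phi_i)(\cdot/\varepsilon)|^{\bar{p}}\big\rangle\lesssim\mu_d^{\bar{p}}$ and $\langle\chi_*^\beta\rangle\lesssim1$; H\"older in $\langle\cdot\rangle$ is what forces $\bar{p}>p$ on the right-hand side, while replacing $U_{*,\varepsilon}$ by the fixed ball $U_\varepsilon$ costs only moments of $\chi_*$. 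For \eqref{C} the ball-wise comparison of the previous paragraph is unchanged, so invoking the $A_q$-weighted version of Shen's real-variable lemma \cite{Shen20} produces the weighted bound with constant depending on $\omega$ only through $[\omega]_{A_q}$. Finally \eqref{D} is the special case $\omega=\omega_\sigma$ with $\omega_\sigma(x):=\delta(x)^{\sigma}$, $\delta(x):=\text{dist}(x,\partial\Omega)$ and a small $\sigma>0$ chosen so that $\delta^\sigma\in A_q$, combined with the $\bar{p}>p$ integrability on the right-hand side inherited from \eqref{B} and the same passage from $U_{*,\varepsilon}$ to $U_\varepsilon$.

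The main obstacle is the boundary part of the ball-wise comparison. Since we deliberately do not introduce boundary correctors, $\bar{u}_B$ is only the solution of the effective Dirichlet (or Neumann) problem on the Lipschitz piece $2B\cap\Omega$ with $u_\varepsilon$ as data, and one must show that $W_\varepsilon=\nabla u_\varepsilon-(e_i+(\nabla\phi_i)(\cdot/\varepsilon))\partial_i\bar{u}_B$ is quantitatively small in $L^2(B\cap\Omega)$ \emph{uniformly in the position of $B$ along $\partial\Omega$} and for all $r_B$ above $\varepsilon\chi_*(x_B/\varepsilon)$. This is where the flux corrector, the cut-off near $\partial\Omega$ built into the two-scale expansion \eqref{expansion:1-00}, the sublinearity gauge $\mu_d$, and the Lipschitz character of $\partial\Omega$ all interact, and where the unavoidable $O(\ln(1/\varepsilon))$ loss ultimately originates; producing a clean, position-independent boundary comparison estimate that can be fed into Lemma \ref{shen's lemma2} is the crux of the whole argument.
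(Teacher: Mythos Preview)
Your overall architecture --- Shen's real-variable lemma applied to $F(x)=(\dashint_{U_{*,\varepsilon}(x)}|\nabla u_\varepsilon|^2)^{1/2}$, with the two-scale splitting \eqref{f:decom-2} providing $V_B/W_B$ above the scale $\varepsilon\chi_*$ --- matches the paper, and you correctly identify the boundary comparison as the crux. But several specifics are wrong or different.

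First, the gauge $\mu_d$ does \emph{not} appear in Theorem~\ref{thm:1}: all constants in \eqref{A}--\eqref{D} are independent of $\varepsilon$. The corrector sublinearity enters only through the \emph{definition} \eqref{min_rad} of $\chi_*$, which is engineered so that for $r\ge\varepsilon\chi_*$ the comparison error satisfies $\dashint_{D_r}|W_\varepsilon|^2\le\nu^2\dashint_{D_{4r}}|\nabla u_\varepsilon|^2$ with a \emph{fixed} small $\nu$ (Lemma~\ref{lemma:1}, Corollary~\ref{cor:1}), not a rate $(\varepsilon\chi_*/r_B)^\alpha$. The $\mu_d$ you mention belongs to Theorem~\ref{thm:2}. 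Relatedly, you omit the case distinction that drives the mechanism: when $r_B<\tfrac{\varepsilon}{4}\chi_*(x_B/\varepsilon)$ the paper takes $W_B=0$, $V_B=F$, and the reverse H\"older follows trivially from the geometric Lemma~\ref{lemma:9}; only for $r_B\ge\tfrac{\varepsilon}{4}\chi_*(x_B/\varepsilon)$ does the two-scale decomposition apply. The uniform control of the multiplier in $V_B$ also needs more than moment bounds: it uses the interior quenched $W^{1,p}$ estimate for correctors (Lemma~\ref{lemma:7}) to get $\sup_x(\dashint_{B_{*,\varepsilon}(x)}|e+\nabla\phi^\varepsilon|^2)\lesssim 1$ deterministically.

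Second, the paper's annealed step does \emph{not} rerun the two-scale comparison with $\langle\cdot\rangle$ inside. Instead (Lemma~\ref{lemma:11}) it uses a \emph{source-splitting} decomposition: $u_\varepsilon=w_\varepsilon+v_\varepsilon$ with $-\nabla\cdot a^\varepsilon\nabla w_\varepsilon=\nabla\cdot(fI_{2B\cap\Omega})$ in all of $\Omega$, so that $W_B$ is controlled by the already-proved quenched \eqref{A} and the reverse H\"older for $V_B$ comes from the quenched Lemma~\ref{lemma:14} pushed through Minkowski. Your route of redoing the comparison in annealed norms may be viable but is different and harder.

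Third, your choice $\omega_\sigma=\delta^\sigma$ with $\delta=\mathrm{dist}(\cdot,\partial\Omega)$ does not give \eqref{D}. The passage $U_{*,\varepsilon}\to U_\varepsilon$ in Lemma~\ref{lemma:12} outputs $\dashint_{B_\varepsilon(x)}\omega$, and one needs $\dashint_{B_\varepsilon(x)}\omega_\sigma\sim\omega_\sigma(x)$ uniformly on $\Omega$, which fails for $\delta^\sigma$ near $\partial\Omega$. The paper instead takes $\omega_\sigma(x)=[\mathrm{dist}(x,\partial\Omega_0)]^{q-1}$ for an auxiliary $C^2$ domain $\Omega_0\supsetneq\Omega$ with $\mathrm{dist}(\cdot,\partial\Omega_0)=O(\varepsilon)$ on $\partial\Omega$; for this weight the equivalence holds and \eqref{D} follows from \eqref{pri:11}.
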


\begin{remark}
\emph{The assumptions $\eqref{a:2}$ and $\eqref{a:3}$
 are not crucial to Theorem $\ref{thm:1}$,
 but rather serve to derive a fluctuation estimate
 of the corrector and flux corrector (see Lemma $\ref{lemma:3}$),
 which ensures the existence of our newly defined
 minimum radius $\chi_*$ (distinct from its counterpart $r_*$ in
 the literature \cite{Gloria-Neukamm-Otto20}).
 In other words, this theorem applies to a broader
 range of random coefficients than presently assumed
 (including the models discussed in \cite{Gloria-Neukamm-Otto20,Gloria-Neukamm-Otto21},
 but also for different mixing condition relying on the results
 by S. Armstrong et. al \cite{Armstrong-Kuusi-Mourrat19,Armstrong-Kuusi22}),
 provided that the quantified sublinearity
 of the corrector (and flux corrector) has been established in
 the corresponding setting.} 
\end{remark}

In the case of Dirichlet boundary conditions for $\partial\Omega\in C^{1,\alpha}$ with $\alpha\in(0,1]$,
the result analogy to $\eqref{A}$ was established by
S. Armstrong, T. Kuusi, and J.-C. Mourrat \cite{Armstrong-Kuusi-Mourrat19}
under a finite range assumption, and the estimate similar to $\eqref{B}$ was recently stated by M. Duerinckx \cite{Duerinckx22}.
If $\Omega=\mathbb{R}^d$, for some weight
$\omega\in A_q$ the result analogy to $\eqref{C}$ was given by
A. Gloria, S. Neukamm, and F. Otto in their early version of \cite{Gloria-Neukamm-Otto20}.
In terms of Reifenberg flat domains (see Definition $\ref{def:Rfd}$) ,
the Calder\'on-Zygmund estimates were well studied by
S. Byun, L. Wang \cite{Byun-Wang04} for deterministic elliptic operators
with small $\text{BMO}$ coefficients, originally motivated by
L. Caffarelli and I. Peral's work \cite{Caffarelli-Peral98}.
Concerning general Lipschitz domains, Z. Shen \cite{Shen05}
developed a real method to simplify the related estimates to a reverse H\"older inequality, which was also the main philosophy behind Theorem $\ref{thm:1}$.
The range of $p$ in $\eqref{A}$ is optimal in the case of scalar (see \cite{Jerison-Kenig95}) and systems for $d=2,3$ (see \cite{Geng-Shen-Song12,Shen05} for periodic homogenization).
The estimate $\eqref{C}$ is new even for smooth domains, which is also sharp in the sense of Muckenhoupt’s weight class. When the large-scale average in \eqref{C} is changed into the small-scale one,
the weighted estimate \eqref{D} is not valid for all
of the related weight functions.

Although the small-scale regularity on coefficients is assumed,
the estimates $\eqref{B}$ can not be easily
upgraded into $\eqref{pri:c-z}$.
The possible way is to employ the robust non-perturbative argument
developed in \cite{Gloria-Neukamm-Otto20, Josien-Otto22}, and we will address it
in a separate work. Nevertheless, the assumption $\eqref{a:3}$ which
leads to pointwise estimates of correctors (see Lemma $\ref{lemma:3}$) still simplifies the demonstrations on the higher order moment estimates of
the newly introduced minimal radius $\chi_{*}$.
We also mention that the ensemble $\langle\cdot\rangle$ which satisfies assumptions $\eqref{a:1}$,
$\eqref{a:2}$ and $\eqref{a:3}$ can be found in the literature \cite[Section 3.2]{Josien-Otto22}, where the stationary centered Gaussian fields with an integrable covariance function were introduced.
In addition, the weighted
annealed estimate similar to $\eqref{D}$ can be applied to the fluctuation
estimates of second-order correctors (see \cite[Proposition 3]{Clozeau-Josien-Otto-Xu}),
and the weighted quenched Calder\'on-Zygmund estimate was used to study
the optimal error estimates for periodic homogenization problems on perforated domains (see \cite{Wang-Xu-Zhao21}).
Finally, since we avoid
imposing boundary correctors in the boundary estimates, our methods are  completely applicable to different types of boundary conditions.

\medskip

As an application of Theorem \ref{thm:1}, we therefore have the following results.

\begin{theorem}[homogenization errors]\label{thm:2}
Let $\Omega\subset\mathbb{R}^d$ with $d\geq 2$ be a bounded Lipschitz domain and $0<\varepsilon\ll 1$.
Suppose that $\langle\cdot\rangle$ is stationary and satisfies
the spectral gap condition $\eqref{a:2}$,
and the (admissible) coefficient additionally satisfies
$\eqref{a:3}$ with the symmetry condition $a=a^*$.
Let $u_\varepsilon,u_0\in H_0^1(\Omega)$ be associated with $f\in C_0^1(\Omega;\mathbb{R}^{d})$ by
\begin{equation}\label{pde:0}
(\emph{D}_\varepsilon)~\left\{\begin{aligned}
-\nabla\cdot a^{\varepsilon}\nabla u_\varepsilon
&= \nabla\cdot f &\quad&\text{in}\quad\Omega;\\
u_\varepsilon
&= 0 &\quad&\text{on}\quad\partial\Omega,
\end{aligned}\right.
\qquad\text{and}
\qquad
(\emph{D}_0)~\left\{\begin{aligned}
-\nabla\cdot \bar{a}\nabla \bar{u}
&= \nabla\cdot f &\quad&\text{in}\quad\Omega;\\
\bar{u}
&= 0 &\quad&\text{on}\quad\partial\Omega.
\end{aligned}\right.
\end{equation}
Then, for any $p<\infty$, we have
the strong norm estimate
\begin{equation}\label{error:1}
 \Big\langle \big(\int_{\Omega}|u_\varepsilon-\bar{u}|^2
 \big)^{\frac{p}{2}}\Big\rangle^{\frac{1}{p}}
 \lesssim_{\lambda,\lambda_1,
\lambda_2,d,M_0,p} \mu_d(R_0/\varepsilon)\varepsilon\ln(R_0/\varepsilon)
 \Big(\int_{\Omega}|\nabla\cdot f|^2\Big)^{\frac{1}{2}},
\end{equation}
where the definition of $\mu_d$ is given in $\eqref{mu_d}$.
Moreover, suppose that $\Omega$ is a bounded regular SKT (or $C^1$) domain
(in such the case, the symmetry condition $a=a^*$ is not required).
For any $h\in C_0^\infty(\Omega;\mathbb{R}^{d})$ and
some $\varphi_i\in H_0^1(\Omega)$ with $i=1,\cdots,d$, define the random variable as
\begin{equation}\label{eq:5.2}
H^\varepsilon : = \int_{\Omega}
h\cdot (a^\varepsilon-\bar{a})\big(\nabla u_\varepsilon-\nabla\bar{u}
-\nabla\phi^\varepsilon_i\varphi_i\big).
\end{equation}
Then, there holds the following
weak norm estimate
\begin{equation}\label{error:2}
\begin{aligned}
\varepsilon^{-\frac{d}{2}} \big\langle (H^\varepsilon -\langle H^\varepsilon\rangle)^{2p} \big\rangle^{\frac{1}{2p}}
\lesssim_{\lambda,\lambda_1,
\lambda_2,d,M_0,p,s}
\mu_d(R_0/\varepsilon)\varepsilon\ln^{\frac{1}{2s'}}(R_0/\varepsilon)
\Big(\int_{\Omega} |\nabla h|^{2s}\Big)^{\frac{1}{2s}}
\Big(\int_{\Omega}
 |R_0\nabla f|^{2s'}\Big)^{\frac{1}{2s'}}
\end{aligned}
\end{equation}
for all $p<\infty$,
where $s,s'>1$ are associated with $1/s'+1/s=1$.
\end{theorem}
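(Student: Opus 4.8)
The plan is to derive both estimates from the two-scale expansion together with the (weighted) annealed Calder\'on-Zygmund estimate of Theorem~\ref{thm:1}. Fix a cut-off $\eta\in C_0^\infty(\Omega)$ with $0\le\eta\le1$, $\eta\equiv1$ on $\{x\in\Omega:\delta(x)>C\varepsilon\}$, $|\nabla\eta|\lesssim\varepsilon^{-1}$ and $\operatorname{dist}(\partial\Omega,\operatorname{supp}\eta)\simeq\varepsilon$, and put $w_\varepsilon:=u_\varepsilon-\bar u-\varepsilon\phi_i(\cdot/\varepsilon)\,\eta\,\partial_i\bar u\in H_0^1(\Omega)$ as in~\eqref{expansion:1-00}. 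Using the equations for the corrector $\phi_i$ and the flux corrector $\sigma_i$ of Lemma~\ref{lemma:3}, one checks that $w_\varepsilon$ solves $-\nabla\cdot a^\varepsilon\nabla w_\varepsilon=\nabla\cdot f_\varepsilon$ in $\Omega$, $w_\varepsilon=0$ on $\partial\Omega$, where $f_\varepsilon$ is a finite sum of terms of the schematic form
\begin{equation*}
(1-\eta)(\bar a-a^\varepsilon)\nabla\bar u,\qquad
\varepsilon\big(\phi_i^\varepsilon a^\varepsilon-\sigma_i^\varepsilon\big)\nabla\eta\,\partial_i\bar u,\qquad
\varepsilon\big(\phi_i^\varepsilon a^\varepsilon-\sigma_i^\varepsilon\big)\eta\,\nabla\partial_i\bar u .
\end{equation*}
Hence every norm of $\nabla w_\varepsilon$ occurring in Theorem~\ref{thm:1} is controlled by the same norm of $f_\varepsilon$, and both error estimates become quantitative statements about $f_\varepsilon$.

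For \eqref{error:1} I would apply the weighted annealed estimate \eqref{C}--\eqref{D} with $p=2$ and the $A_q$ weight $\omega(x)=\delta(x)^{q-1}$, which is a classical $A_q$ weight on a Lipschitz domain, reducing the bound on $\nabla w_\varepsilon$ to the corresponding weighted annealed norm of $f_\varepsilon$. Each summand factorizes into a random factor---namely $\phi_i^\varepsilon$ or $\sigma_i^\varepsilon$, whose $2p$-th stochastic moments are $\lesssim\mu_d(R_0/\varepsilon)^{2p}$ by Lemma~\ref{lemma:3} and the moment bounds on $\chi_*$---and a deterministic factor built from $\nabla\bar u$ and $\nabla^2\bar u$ localized near $\partial\Omega$. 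For the deterministic factor I would invoke the weighted regularity of the purely deterministic effective operator on the Lipschitz domain; this is exactly where the decomposition \eqref{f:decom-1} and the layer-potential solvability of \eqref{pde:DN} enter, and it yields $\|(1-\eta)\nabla\bar u\,\delta^{(q-1)/q}\|_{L^q(\Omega)}\lesssim\varepsilon\|\nabla\cdot f\|_{L^2(\Omega)}$ and $\|\eta\,\nabla^2\bar u\,\delta^{(q-1)/q}\|_{L^q(\Omega)}\lesssim\ln(R_0/\varepsilon)\|\nabla\cdot f\|_{L^2(\Omega)}$, so that $\nabla w_\varepsilon$ has size $\mu_d(R_0/\varepsilon)\,\varepsilon\ln(R_0/\varepsilon)\|\nabla\cdot f\|_{L^2(\Omega)}$ in the weighted norm. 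It then remains to convert this gradient estimate into the unweighted $L^2(\Omega)$-bound on $u_\varepsilon-\bar u=w_\varepsilon+\varepsilon\phi_i^\varepsilon\eta\,\partial_i\bar u$: the last summand is of lower order $\lesssim\varepsilon\mu_d(R_0/\varepsilon)\|\nabla\cdot f\|_{L^2(\Omega)}$ by the same corrector moments, while $\|w_\varepsilon\|_{L^2(\Omega)}$ is handled by a duality argument---testing against $g\in L^2(\Omega)$, using $\int_\Omega w_\varepsilon g=-\int_\Omega f_\varepsilon\cdot\nabla\theta_\varepsilon$ for the adjoint Dirichlet problem $-\nabla\cdot(a^\varepsilon)^{*}\nabla\theta_\varepsilon=g$, inserting the two-scale expansion of $\theta_\varepsilon$ (with the flux-corrector integration by parts removing the resonant term), and letting the weight $\delta^{q-1}$ and its conjugate $\delta^{-1}\in A_{q'}$ absorb the boundary-layer contributions of both expansions. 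Taking the supremum over $g$ and the expectation in $L^p_{\langle\cdot\rangle}$ then gives \eqref{error:1}.

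For the fluctuation estimate \eqref{error:2} I would start from the spectral gap inequality \eqref{a:2} in its standard $L^p$-version, which reduces the claim to an $L^p_{\langle\cdot\rangle}$-bound on $\int_{\mathbb{R}^d}\big(\dashint_{B_1(x)}|\partial H^\varepsilon/\partial a|\big)^2\,dx$. The functional derivative is computed by differentiating the equations defining $u_\varepsilon$, $\phi_i$ and $\varphi_i$ with respect to $a$: each of $\partial_a u_\varepsilon$ and $\partial_a\phi_i$ solves a linear equation whose right-hand side is a flux localized near $x$, so by Green's representation $|\partial H^\varepsilon/\partial a(x)|$ is bounded by a finite sum of products of the type $\big(\dashint_{B_1(x)}|\nabla_x G_\varepsilon(\cdot,x)|^2\big)^{1/2}\big(\dashint_{B_1(x)}|\nabla u_\varepsilon|^2+|\nabla\phi_i^\varepsilon|^2\big)^{1/2}$ weighted by $|h|$, $|\nabla h|$, $|\nabla\bar u|$ and $|\varphi_i|$. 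Substituting this into the right-hand side of \eqref{a:2} and applying H\"older in $x$ with the exponents $s,s'$ separates the factor $(\int_\Omega|\nabla h|^{2s})^{1/(2s)}$ from $(\int_\Omega|R_0\nabla f|^{2s'})^{1/(2s')}$; the remaining joint space-and-probability integrals of $\nabla u_\varepsilon$, $\nabla\phi_i$ and $\nabla G_\varepsilon$ are then estimated by the annealed Calder\'on-Zygmund estimate \eqref{B} (and, where the distance weight is needed, by \eqref{D}) on the regular SKT (or $C^1$) domain---where, as noted after Theorem~\ref{thm:1}, no symmetry of $a$ is required because \eqref{pde:DN} is solvable for all $q\in(1,\infty)$ by compactness of $\mathcal{K}_{\bar a}$. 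Collecting the scaling factors---$\varepsilon^{-d/2}$ from the unit-scale localization against the macroscopic domain, and $\varepsilon\ln^{1/(2s')}(R_0/\varepsilon)$ from the corrector moments together with the weighted regularity of $\bar u$---produces \eqref{error:2}.

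The \emph{main obstacle} is the deterministic weighted regularity of the effective solution on a merely Lipschitz (or regular SKT) domain, in particular the estimate $\|\eta\,\nabla^2\bar u\,\delta^{(q-1)/q}\|_{L^q(\Omega)}\lesssim\ln(R_0/\varepsilon)$: this forces one to push the layer-potential solvability of \eqref{pde:DN} in $L^q(\partial\Omega)$ through the full range of $q$ dictated by \eqref{C}--\eqref{D}, together with a careful distance-to-the-boundary bookkeeping, and it is precisely this step that both produces the logarithmic loss and, in the Lipschitz case, forces the symmetry hypothesis $a=a^{*}$ through the Rellich-identity route to solvability. A secondary difficulty, specific to \eqref{error:2}, is the organization of $\partial H^\varepsilon/\partial a$: since $H^\varepsilon$ couples $u_\varepsilon$, $\phi_i$ and $\varphi_i$, the sensitivity splits into several structurally different pieces, and the H\"older pair $(s,s')$ and the (weighted) annealed exponents must be chosen so that every piece is simultaneously summable---which is where the weighted estimate \eqref{D}, rather than the unweighted \eqref{B}, becomes indispensable.
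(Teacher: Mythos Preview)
Your plan for \eqref{error:1} has a genuine gap: you propose to apply the weighted annealed Calder\'on--Zygmund estimates \eqref{C}--\eqref{D}, but in Theorem~\ref{thm:1} these are established \emph{only} under the additional assumption that $\Omega$ is regular SKT (or $C^1$), whereas \eqref{error:1} is claimed for general Lipschitz domains. The quenched weighted estimate \eqref{pri:8-1} that \emph{is} available on Lipschitz domains holds only for some special $A_1$ weight and does not carry the annealed structure you need. In fact the paper's proof of \eqref{error:1} (Proposition~\ref{P:8}) does not invoke Theorem~\ref{thm:1} at all. Instead it proceeds in two steps: first a plain energy estimate on the two-scale error $z_\varepsilon$ (Lemma~\ref{lemma:6}) yields the suboptimal $H^1$ rate $\mu_d(R_0/\varepsilon)\,\varepsilon^{1/2}$; then the $L^2$ rate is upgraded by duality, but not merely against $\nabla\theta_\varepsilon$ for the adjoint problem --- the crucial point is to insert the \emph{adjoint two-scale expansion} $\nabla v_\varepsilon=\nabla\tilde z_\varepsilon+\nabla\bar v+(\nabla\phi_j^{*})^\varepsilon\tilde\eta\,\partial_j\bar v+\varepsilon\phi_j^{*\varepsilon}\nabla(\tilde\eta\,\partial_j\bar v)$ into $\int_\Omega z_\varepsilon g$, after which every term is estimated by hand using the corrector moments of Lemma~\ref{lemma:3}, the suboptimal bound on $\nabla\tilde z_\varepsilon$, and the deterministic layer/co-layer estimates \eqref{f:5.2} (with weights $\delta^{\pm 1/2}$ introduced directly via H\"older, not via a CZ theorem). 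The cut-offs $\eta,\tilde\eta$ are chosen with disjoint transition layers so that the cross terms vanish. Your identification of the ``main obstacle'' --- the weighted deterministic regularity of $\bar u$ --- is correct, but the mechanism that exploits it is this double two-scale expansion in duality, not a weighted CZ estimate for the heterogeneous operator.

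For \eqref{error:2} your outline is broadly aligned with the paper's (Proposition~\ref{P:9}): $L^p$ spectral gap, functional derivative, H\"older in $(s,s')$, and the weighted annealed estimate \eqref{D}. The paper does not use Green's functions but introduces two explicit auxiliary adjoint problems (one on $\Omega/\varepsilon$ with right-hand side $(a^*\phi_j^*-\sigma_j^*)\nabla\check h_j$, one on $\mathbb R^d$), which produce a closed four-term formula for $\partial H_\varepsilon/\partial a$; each term is then estimated separately, with \eqref{pri:11B} (i.e.\ \eqref{D} for the specific weight $\delta_\sigma^{q-1}$) applied to the $\nabla w_\varepsilon$ piece and the $\nabla v^*_\varepsilon$ piece, together with deterministic layer/co-layer estimates for $\bar u$ in $L^{2s'}$. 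Your sketch is compatible with this, but you should be aware that the Green-function representation you describe would need to be reorganized into exactly such auxiliary solutions in order to make the annealed CZ estimates applicable.
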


In the case of periodic homogenization,
the non-smoothness of the boundary leads to the loss of
$O(\ln(R_0/\varepsilon))$ in the convergence rate, first observed by
C. Kenig, F. Lin, and Z. Shen \cite{Kenig-Lin-Shen12}.
However, their method relies on Rellich-type estimates
established for $\mathcal{L}_\varepsilon$, so the H\"older continuity
of coefficients at small scales is inevitably required at least.
Inspired by T. Suslina's work \cite{Suslina13},
the second author of this paper removed the smoothness assumption of coefficients to develop a result similar to the estimate $\eqref{error:1}$ in \cite{Xu16}.
In the random setting, M. Josien, C. Raithel, and M. Sch\"affner \cite{Josien-Raithel-Schaffner22} recently discovered the same phenomenon in a much stranger norm for a region with a corner, while in the present paper, the estimate $\eqref{error:1}$ is established for the more general class of non-smooth domains by a different argument.

As we have mentioned in the Abstract, the other source of the loss of $O(\ln(R_0/\varepsilon))$ in $\eqref{error:1}$ and $\eqref{error:2}$ is also related to the quantified sublinearity of the corrector in dimension two (see Lemma $\ref{lemma:3}$), which originally recognized by A. Gloria and F. Otto \cite{Gloria-Otto11} for a discrete elliptic differential operator with independently and identically distributed random coefficients. In the case of $\partial\Omega\in C^{1,1}$ or convex domains, the result analogy to $\eqref{error:1}$
with better random integrability was established by S. Armstrong, T. Kuusi, and J.-C. Mourrat \cite{Armstrong-Kuusi-Mourrat19} under a finite range assumption. Regarding stationary random coefficients with slowly
decaying correlations,
A. Gloria, S. Neukamm, and F. Otto \cite{Gloria-Neukamm-Otto21} obtained a sharp homogenization error
in the sense of quenched norm on $\mathbb{R}^d$.
In terms of the weak norm estimate,
the result analogy to $\eqref{error:2}$ was shown in \cite{Josien-Otto22} without considering boundary layers, and we additionally employ the weighted annealed Calder\'on-Zygmund estimate $\eqref{D}$ to get
the almost sharp result $\eqref{error:2}$.


\subsection{\centering Organization of the paper}

\noindent
In section 2, we show the H-convergence in the sense of a local way by introducing a new minimal radius $\chi_*$ in Lemma $\ref{lemma:1}$. Also, a Lipschitz continuity of $\chi_*$ with a local boundedness property
is shown in Remarks $\ref{remark:3}$, $\ref{remark:1}$. For the readers' convenience, we recall notations of the extended correctors, smoothing operator, Shen’s lemma of weighted version, and
some geometric properties of integrals.

\medskip

\noindent
In section $\ref{section:3}$, we prove Theorem $\ref{thm:1}$.
First, we show how to utilize Shen's real argument (i.e., Lemma $\ref{shen's lemma2}$)
to derive the (weighted) quenched Calder\'on-Zygmund estimate in
Subsection $\ref{subsection:3.0}$.
Then, the quenched estimates
and the annealed ones are discussed in Propositions $\ref{P:10}$ and $\ref{P:7}$, respectively. With the help of Shen’s real argument, our approach is still from quenched estimates to annealed ones.
To overcome the difficulties caused by the non-smoothness of the boundary,
Proposition $\ref{P:10}$ relies on interior quenched $W^{1,p}$ estimates of the correctors, and a reverse H\"older's inequality (see Lemmas $\ref{lemma:7}$ and $\ref{lemma:14}$).
As for Proposition $\ref{P:7}$, we first establish the weighted estimates for the regular SKT (or $C^1$) domains and then pass from the large-scale averages to small-scale ones via Lemma $\ref{lemma:12}$.

\medskip

\noindent
In Section $\ref{section:4}$, we provide a proof for
Theorem $\ref{thm:2}$. The homogenization error in the
strong norm and the weak norm is discussed in Propositions \ref{P:8}
and \ref{P:9}, respectively.
Proposition \ref{P:8} requires the idea of duality and weight functions to overcome the loss of the convergence rate, caused by boundary layers.
Proposition \ref{P:9} is an application of the weighted annealed Calder\'{o}n-Zygmund estimates in Theorem \ref{thm:1}. Besides, to coincide with the framework of sensitive estimates, a rescaling argument plays a key role in the whole proof.

\medskip

\noindent
Section $\ref{section:5}$ is an appendix of the paper,
which contains an improved Meyer's estimate and a
reverse H\"older’s inequality in terms of regular SKT domains.
Also, some properties of
$A_q$ class and the definition of the regular SKT domains
are given for the reader's convenience.

\subsection{\centering Notations}\label{notation}

\begin{enumerate}
  \item Notation for estimates.
\begin{enumerate}
  \item $\lesssim$ and $\gtrsim$ stand for $\leq$ and $\geq$
  up to a multiplicative constant,
  which may depend on some given parameters in the paper,
  but never on $\varepsilon$. The subscript form $\lesssim_{\lambda,\cdots,\lambda_n}$ means that the constant depends only on parameters $\lambda,\cdots,\lambda_n$.
  In addition, we also use superscripts like $\lesssim^{(2.1)}$ to indicate the formula or estimate referenced.
  We write $\sim$ when both $\lesssim$ and $\gtrsim$ hold.
  \item We use $\gg$ instead of $\gtrsim$ to indicate that the multiplicative constant is much larger than 1 (but still finite),
      and it's similarly for $\ll$.
\end{enumerate}
  \item Notation for derivatives.
  \begin{enumerate}
    \item Spatial derivatives:
  $\nabla v = (\partial_1 v, \cdots, \partial_d v)$ is the gradient of $v$, where
  $\partial_i v = \partial v /\partial x_i$ denotes the
  $i^{\text{th}}$ derivative of $v$.
  $\nabla^2 v$  denotes the Hessian matrix of $v$;
  $\nabla\cdot v=\sum_{i=1}^d \partial_i v_i$
  denotes the divergence of $v$, where
  $v = (v_1,\cdots,v_d)$ is a vector-valued function.
  Let $\nabla_{\text{tan}}$ be the tangential derivative with respect to $\partial\Omega$, and we
refer the reader to \cite[pp.213-214]{Shen18} for the concrete definition.
   \item Functional (or vertical) derivative:  the random tensor field $\frac{\partial F}{\partial a}$ (depending on $(a,x)$) is the functional derivative of $F$ with respect to $a$, defined by
       \begin{equation}\label{functional}
       \lim_{\varepsilon\to 0} \frac{F(a+\varepsilon\delta a)
       -F(a)}{\varepsilon}
       =\int_{\mathbb{R}^d} \frac{\partial F(a)}{\partial a_{ij}(x)}
       (\delta a)_{ij}(x)dx.
       \end{equation}
  \end{enumerate}

  \item Geometric notation.
  \begin{enumerate}
  \item $d\geq 2$ is the dimension, and $R_0$ represents
  the diameter of $\Omega$, and $M_0$ is the Lipschitz constant of $\Omega$.
  \item For a ball $B$, we set $B=B_{r_B}(x_B)$ and abusively write
  $\alpha B:=B_{\alpha r_B}(x_B)$.
\item For any  $x\in\partial\Omega$,
we introduce $D_r(x):= B_r(x)\cap\Omega$ and $\Delta_r(x):=
B_r(x)\cap\partial\Omega$.
We will omit the center point of
$B_r(x),D_r(x),\Delta_r(x)$ only when $x=0$.
For any $x\in\Omega$, we introduce $U_r(x):=B_r(x)\cap\Omega$,
and $U_{*,\varepsilon}(x):=
\Omega\cap B_{*,\varepsilon}(x)$ with
$B_{*,\varepsilon}(x):=B_{\varepsilon\chi_{*}(x/\varepsilon)}(x)$,
where $\chi_*$ is the minimum radius given
in Lemma $\ref{lemma:1}$ or Corollary $\ref{cor:1}$.
\end{enumerate}

\item Notation for functions.
\begin{enumerate}
    \item The function $I_{E}$ is the indicator function of region $E$.
\item We denote $F(\cdot/\varepsilon)$ by $F^{\varepsilon}(\cdot)$ for simplicity, and $(f)_r = \dashint_{B_r} f  = \frac{1}{|B_r|}\int_{B_r} f(x) dx$.
\item We denote the support of $f$ by $\text{supp}(f)$.
\item The nontangential maximal function of $u$ is defined by
\begin{equation*}
(u)^{*}(Q):=\sup\big\{|u(x)|:x\in\Gamma_{\beta}(Q)\big\}
\qquad\forall Q\in\partial\Omega,
\end{equation*}
where $\Gamma_\beta(Q):=\{x\in\Omega:|x-x_0|\leq \beta
\text{dist}(x,\partial\Omega)\}$ is the cone with vertex $Q$
and aperture $\beta$, and $\beta>1$ may be sufficiently large.
  \end{enumerate}
\end{enumerate}

Finally, we mention that:
(1)
when we say that the multiplicative constant depends on the character of the domain,
it means that the constant relies on $M_0$;
(2) the index on random integrability is usually nonnegative by default unless otherwise noted.

\section{Preliminaries}\label{section:2}

\begin{lemma}[extended correctors \cite{Gloria-Neukamm-Otto20}]\label{lemma:3}
Let $d\geq 1$. Assume that $\langle\cdot\rangle$ satisfies stationary and ergodic\footnote{In terms of ``ergodic'', we refer the reader to \cite[pp.105]{Gloria-Neukamm-Otto20} or \cite[pp.222-225]{Jikov-Kozlov-Oleinik94} for the definition.} conditions.
Then, there exist two random  fields $\{\phi_i\}_{i=1,\cdots,d}$
and $\{\sigma_{ijk}\}_{i,j,k=1,\cdots,d}$ with the following
properties: the gradient fields $\nabla\phi_i$ and
$\nabla\sigma_{ijk}$ are stationary in the sense of
$\nabla\phi_{i}(a;x+z)=\nabla\phi_i(a(\cdot+z);x)$ for any shift
vector $z\in\mathbb{R}^d$ (and likewise for $\nabla\sigma_{ijk}$).
Moreover, $(\phi,\sigma)$, which are called extended correctors, satisfy
\begin{equation}\label{eq:2.1}
\begin{aligned}
\nabla\cdot a(\nabla\phi_i+e_i) &= 0;\\
\nabla\cdot\sigma_i &= q_i:= a(\nabla\phi_i+e_i)-\bar{a}e_i;\\
-\Delta\sigma_{ijk} &= \partial_j q_{ik} - \partial_k q_{ij},
\end{aligned}
\end{equation}
in the distributional sense on $\mathbb{R}^d$ with
$\bar{a}e_i:=\big\langle a(\nabla\phi_i +e_i)\big\rangle$ and
$(\nabla\cdot\sigma_i)_j:=\sum_{k=1}^d\partial_k\sigma_{ijk}$. Also,
the field $\sigma$ is skew-symmetric in its last indices, that is
$\sigma_{ijk} = -\sigma_{ikj}$.
Furthermore, if $\langle\cdot\rangle$ additionally satisfies
the spectral gap condition $\eqref{a:2}$, as well as $\eqref{a:3}$.
Let $\mu_d$ be given in $\eqref{mu_d}$.
For any $p\in[1,\infty)$, there holds
\begin{equation}\label{pri:2}
\big\langle |\nabla (\phi,\sigma)|^p\big\rangle
\lesssim_{\lambda,\lambda_1,\lambda_2,d,p} 1,
\end{equation}
and
\begin{equation}\label{pri:3}
 \Big\langle\big|(\phi,\sigma)(x)
 -(\phi,\sigma)(0)\big|^p \Big\rangle^{\frac{1}{p}}
 \lesssim_{\lambda,\lambda_1,\lambda_2,d,p} \left\{\begin{aligned}
 &\sqrt{2+|x|}; &~&d=1;\\
 &\mu_d(x); &~& d\geq 2.
 \end{aligned}\right.
\end{equation}
\end{lemma}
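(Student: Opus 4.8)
The plan is to establish the existence of the extended correctors $(\phi,\sigma)$ as an ``input'' from the literature (this is Proposition-type content borrowed from \cite{Gloria-Neukamm-Otto20}), and then to focus the proof on the two quantitative bounds \eqref{pri:2} and \eqref{pri:3}, which is where the hypotheses \eqref{a:2} and \eqref{a:3} actually enter. First I would recall the standard construction of $\nabla\phi_i$ via the Lax--Milgram theorem in the space of stationary $L^2_{\langle\cdot\rangle}$ gradient fields of vanishing expectation: one solves $\langle(\nabla\psi+e_i)\cdot a\nabla\eta\rangle = 0$ for all admissible test gradients $\nabla\eta$, obtains $\nabla\phi_i$, and then defines $\phi_i$ itself (the ``extended'' corrector, not merely its gradient) by integrating along paths using stationarity of $\nabla\phi_i$; the flux $q_i := a(\nabla\phi_i+e_i)-\bar a e_i$ is stationary, $L^2_{\langle\cdot\rangle}$, mean zero, and divergence free, so the vector potential $\sigma_{ijk}$ solving $-\Delta\sigma_{ijk}=\partial_j q_{ik}-\partial_k q_{ij}$ exists with stationary gradient, and the skew-symmetry $\sigma_{ijk}=-\sigma_{ikj}$ together with $\nabla\cdot\sigma_i=q_i$ follows by a direct computation from the equation using $\sum_k \partial_k q_{ik}=0$. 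All of this is qualitative and ergodic-theory based; I would simply cite \cite{Gloria-Neukamm-Otto20} and \cite{Jikov-Kozlov-Oleinik94} rather than reproduce it.

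For the gradient bound \eqref{pri:2}, the strategy is the spectral gap / sensitivity estimate: one views $\nabla\phi_i(0)$ (more precisely a local average $\dashint_{B_1}\nabla\phi_i$) as a random variable $F$, computes its functional derivative $\frac{\partial F}{\partial a}$ by differentiating the corrector equation (the derivative $\partial\phi_i$ solves an equation with right-hand side $\nabla\cdot(\delta a(\nabla\phi_i+e_i))$, so $\frac{\partial F}{\partial a}$ is controlled pointwise by the Green function of the constant-coefficient-comparable operator times $|\nabla\phi_i+e_i|$), inserts this into \eqref{a:2}, and uses the decay of the (annealed) Green function to close the estimate and get $\langle|\nabla\phi_i|^2\rangle\lesssim 1$. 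The higher moments $p>2$ then come from combining the spectral gap inequality with a buckling/iteration argument (or, more cheaply here, from \eqref{a:3}: the small-scale Hölder bound on $a$ upgrades the $L^2$ control of $\nabla\phi$ to $L^p$ via interior Schauder/Meyers estimates on unit balls, averaged in probability). The corrector growth bound \eqref{pri:3} is the genuinely $d$-dependent part: $\langle|(\phi,\sigma)(x)-(\phi,\sigma)(0)|^p\rangle^{1/p}$ is estimated by writing the increment as $\int_0^1 x\cdot\nabla(\phi,\sigma)(tx)\,dt$ is \emph{not} enough (that only gives linear growth), so instead one uses the stationarity of $\nabla(\phi,\sigma)$ to reduce to bounding $\langle|(\phi,\sigma)(x)-(\phi,\sigma)(0)|^2\rangle$, which by the representation through the Green function and the spectral gap estimate yields $\int_{\mathbb R^d}$ of a Green-function-squared kernel — this integral is bounded for $d\ge 3$ and logarithmically divergent for $d=2$, producing exactly $\mu_d(x)$; the passage from the $L^2_{\langle\cdot\rangle}$ bound to the $L^p_{\langle\cdot\rangle}$ bound again uses \eqref{a:3} together with concentration (the spectral gap inequality self-improves moments).

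The main obstacle — and the reason the paper is content to cite rather than reprove — is the sharp logarithmic two-dimensional estimate in \eqref{pri:3}: controlling the corrector \emph{differences} (not gradients) with the optimal $\mu_d$ growth requires the full strength of the Gloria--Neukamm--Otto machinery, namely the sensitivity calculus for the corrector combined with large-scale regularity (the $C^{1,\alpha}$ excess-decay theory) to get the optimal decay of $\langle|\nabla\phi(x)|^2\,|\nabla\phi(0)|^2\rangle$-type quantities and hence the optimal Green-function estimates; a naive energy argument gives only $|x|^{\delta}$ growth, not $\ln^{1/2}|x|$. Accordingly, for this lemma I would not attempt an independent proof: I would state precisely which results of \cite{Gloria-Neukamm-Otto20} (and \cite{Gloria-Neukamm-Otto21} for the sharpened two-dimensional scaling) are being invoked, note that \eqref{a:3} is what promotes their estimates to the pointwise/all-$p$ form used here, and record that the qualitative statements (existence, stationarity of gradients, the algebraic identities \eqref{eq:2.1}, skew-symmetry) are classical and follow from the variational construction sketched above.
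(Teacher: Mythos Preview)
Your proposal is correct and aligns with the paper's approach: the paper itself gives no independent proof but simply cites \cite[Proposition~4.1]{Josien-Otto22} together with \cite[Lemma~1]{Gloria-Neukamm-Otto20}. Your sketch of the underlying ideas (Lax--Milgram construction, sensitivity calculus via the spectral gap, the role of \eqref{a:3} in upgrading to pointwise/$L^p$ bounds, and the $d$-dependent Green-function integral behind $\mu_d$) is accurate, and your conclusion to defer to the cited literature rather than reprove is exactly what the paper does.
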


\begin{proof}
See e.g. \cite[Proposition 4.1]{Josien-Otto22} coupled with
\cite[Lemma 1]{Gloria-Neukamm-Otto20}.
\end{proof}

\begin{lemma}[qualitative theory]\label{lemma:1}
Let $\Omega$ be a bounded Lipschitz domain and $\varepsilon\in(0,1]$.
Assume that the ensemble $\langle\cdot\rangle$
is stationary and satisfies
the spectral gap condition $\eqref{a:2}$,
as well as $\eqref{a:3}$.  Let $p,\gamma\in(1,\infty)$
and $p',\gamma'$ be the associated conjugate index satisfying
$0<p'-1\ll 1$ and $0<\gamma'-1\ll 1$. Then, for any \textcolor[rgb]{0.00,0.00,1.00}{$\nu>0$}, there exist
$\theta\in (0,1)$, depending on $\lambda,d,\gamma,p,M_0,\nu$,
and a  stationary random field
$\chi_*$, satisfying
\begin{equation}\label{m-1}
\langle
\chi_*^\beta\rangle\lesssim_{\lambda,\lambda_1,\lambda_2,d,\beta,\theta} 1
\end{equation}
for any $\beta\in[1,\infty)$, such that
\begin{equation}\label{key:1}
\Big(\frac{1}{R}\Big)^{\frac{1}{\gamma p'}}
 \Big(\dashint_{B_{2R}}
 |(\phi,\sigma)-(\phi,\sigma)_{2R}|^{2p}\Big)^{\frac{1}{p}}\leq \theta
 \quad\text{and}\quad
\Big(\frac{1}{R}\Big)^{\frac{1}{\gamma p'}}
 \Big(\dashint_{B_{2R}}
 |\nabla\phi|^{2p}\Big)^{\frac{1}{p}}\leq \theta
 \qquad \forall R\geq \chi_*,
\end{equation}
and for any
solution $u_\varepsilon$ of $\nabla\cdot a^\varepsilon\nabla u_\varepsilon = 0$
in $D_{8r}$
with $u_\varepsilon = 0$
on $\Delta_{8r}$, there exists $\bar{u}_r\in H^1(D_{2r})$ satisfying
$\nabla\cdot \bar{a} \nabla \bar{u}_r = 0$ in $D_{2r}$
with $\bar{u}_r = u_\varepsilon$ on $\partial D_{2r}$, such that
\begin{equation}\label{pri:2.0}
\dashint_{D_r}|\nabla u_\varepsilon -
(e_i+(\nabla\phi_i)^\varepsilon)\partial_i\bar{u}_r|^2 \leq \nu^2
 \dashint_{\textcolor[rgb]{1.00,0.00,0.00}{D_{4r}}}|\nabla u_\varepsilon|^2,
\end{equation}
and
\begin{equation}\label{pri:1}
\dashint_{D_r}|u_\varepsilon - \bar{u}_r|^2 \leq \nu^2
 \dashint_{D_{8r}}|u_\varepsilon|^2
\end{equation}
hold for any $r\geq \varepsilon\chi_*$.
\end{lemma}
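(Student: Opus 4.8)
The plan is to build $\chi_*$ as the largest dyadic scale (rescaled by $\varepsilon$) at which \emph{both} the corrector smallness in \eqref{key:1} \emph{and} the two approximation estimates \eqref{pri:2.0}--\eqref{pri:1} can fail, and then to verify its stretched-exponential moments. First I would define, for a given realization $a$,
\[
\chi_*(a) := \sup\Big\{ R\geq 1 : \Big(\tfrac1R\Big)^{\frac{1}{\gamma p'}}\Big(\dashint_{B_{2R}}|(\phi,\sigma)-(\phi,\sigma)_{2R}|^{2p}\Big)^{\frac1p} > \theta \ \text{ or }\ \Big(\tfrac1R\Big)^{\frac{1}{\gamma p'}}\Big(\dashint_{B_{2R}}|\nabla\phi|^{2p}\Big)^{\frac1p} > \theta \Big\}\vee 1,
\]
with the convention $\sup\emptyset = 1$. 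Stationarity of $\nabla\phi,\nabla\sigma$ (Lemma \ref{lemma:3}) transfers to stationarity of $\chi_*$ once one checks that the spatial averages entering the definition only feel $a$ through its gradient fields, after normalizing $(\phi,\sigma)_{2R}$ away. The moment bound \eqref{m-1} follows from \eqref{pri:2}--\eqref{pri:3}: by Chebyshev, for each dyadic $R$ the probability that either quantity in \eqref{key:1} exceeds $\theta$ is bounded, using the $\beta$-th moment estimates of Lemma \ref{lemma:3} together with $\langle|(\phi,\sigma)(x)-(\phi,\sigma)(0)|^{2p\beta}\rangle^{1/\beta}\lesssim \mu_d(x)^{2p}$ and a covering/dyadic-annulus argument on $B_{2R}$, by $C\, R^{-\frac{\beta}{\gamma p'}}\theta^{-\beta}\mu_d(R)^{2p\beta}$; summing over dyadic $R$ and optimizing in $\beta$ gives $\langle\chi_*^{\beta}\rangle\lesssim 1$ for every $\beta<\infty$. (This is exactly the place where the quantified sublinearity \eqref{pri:3}, hence assumptions \eqref{a:2} and \eqref{a:3}, is used; \eqref{key:1} for $R\geq\chi_*$ is then immediate by construction.)

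Next I would prove the qualitative two-scale approximation \eqref{pri:2.0}--\eqref{pri:1}. The natural route is a compactness (Avellaneda--Lin-type) contradiction argument localized near the boundary portion $\Delta_{8r}$: suppose \eqref{pri:2.0} fails along a sequence with $\nu$ fixed; normalize $\dashint_{D_{4r_k}}|\nabla u_{\varepsilon_k}|^2 = 1$; after rescaling to unit scale and using the Caccioppoli inequality together with the flux-corrector identities \eqref{eq:2.1} (the standard manipulation $a^\varepsilon\nabla u_\varepsilon = \bar a\nabla\bar u_r + (\text{terms involving }\nabla\phi^\varepsilon,\sigma^\varepsilon)$), the sublinearity of $(\phi^{\varepsilon_k},\sigma^{\varepsilon_k})$ — controlled on the relevant scale precisely by the smallness encoded in $R\geq\chi_*$ via \eqref{key:1} — forces $u_{\varepsilon_k}$ to converge (H-convergence, \cite{Papanicolaou-Varadhan79}) to a solution $u_0$ of the constant-coefficient operator $-\nabla\cdot\bar a\nabla$ with zero Dirichlet data on the flat-ish part of the boundary, while $(e_i+\nabla\phi_i^{\varepsilon_k})\partial_i\bar u_{r_k}\to\nabla u_0$ weakly; the defect then has to vanish, a contradiction. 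The same scheme, comparing $u_\varepsilon$ directly to its harmonic (w.r.t.\ $\bar a$) replacement $\bar u_r$ on $D_{2r}$, yields \eqref{pri:1}. The choice of $\bar u_r$ as the $\bar a$-harmonic function with boundary data $u_\varepsilon$ on $\partial D_{2r}$ is dictated by this argument, and the hypothesis $u_\varepsilon = 0$ on $\Delta_{8r}$ is what lets $\bar u_r$ inherit the homogeneous Dirichlet condition on the boundary piece so the limiting $u_0$ is well controlled.

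The main obstacle I expect is making the boundary version of the two-scale estimate quantitatively uniform in the Lipschitz character $M_0$ \emph{without} introducing boundary correctors — this is exactly the point the introduction flags as the novelty. Concretely, near a Lipschitz boundary the interior two-scale expansion error does not vanish pointwise up to $\partial\Omega$, so in \eqref{pri:2.0} one must accept the $D_{4r}$ (rather than $D_{2r}$) average on the right — hence the colored $D_{4r}$ in the statement — and absorb the non-flatness of $\varphi$ through a careful cut-off localized within $O(\varepsilon)$ of the boundary, paying only through the already-present $\mu_d$ and the constants $\theta$. Coordinating the single random scale $\chi_*$ so that it simultaneously governs the interior smallness \eqref{key:1} and both boundary approximations \eqref{pri:2.0}--\eqref{pri:1} — i.e.\ checking the contradiction argument really only consumes the two averaged bounds in \eqref{key:1} at scales $\geq\chi_*$ and nothing finer — is the delicate bookkeeping that ties the two halves of the lemma together.
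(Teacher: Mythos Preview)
Your definition of $\chi_*$ and the moment argument via Chebyshev and dyadic summation are essentially what the paper does (the paper phrases $\chi_*$ as an infimum and builds an auxiliary majorant $\chi_{**}$ to get \eqref{m-1}, but the content is the same).

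Where you diverge is in the approximation estimates \eqref{pri:2.0}--\eqref{pri:1}. The paper does \emph{not} argue by compactness/contradiction. Instead it carries out directly the computation you describe as the ``obstacle'' in your last paragraph: set $w_\varepsilon = u_\varepsilon - \bar u_r - \varepsilon\phi_i^\varepsilon(\eta_\varepsilon\partial_i\bar u_r)$ with $\eta_\varepsilon$ a cut-off vanishing in an $O(\varepsilon)$-layer of $\partial D_{2r}$, write the equation for $w_\varepsilon$ using the flux corrector, and apply the energy estimate. Splitting the right-hand side into a layer part (H\"older with exponents $p,p'$, then Meyers/self-improvement in Lemma~\ref{lemma:15} to absorb $\|\nabla\bar u_r\|_{L^{2p'\gamma'}}$ back into $\|\nabla u_\varepsilon\|_{L^2(D_{4r})}$) and a co-layer part (interior Lipschitz estimate for $\bar u_r$) gives the explicit inequality
\[
\dashint_{D_r}|\nabla u_\varepsilon-(e_i+(\nabla\phi_i)^\varepsilon)\partial_i\bar u_r|^2
\lesssim \Big\{\Big(\tfrac{\varepsilon}{r}\Big)^{\frac{1}{\gamma p'}}\Big(\dashint_{B_{2r/\varepsilon}}|(\bar\phi,\bar\sigma,\nabla\phi)|^{2p}\Big)^{\frac1p}+\Big(\tfrac{\varepsilon}{r}\Big)^{\frac1p}\Big\}\dashint_{D_{4r}}|\nabla u_\varepsilon|^2,
\]
so choosing $\theta=\nu^2/C$ in the definition of $\chi_*$ yields \eqref{pri:2.0} at once; \eqref{pri:1} follows the same way via Poincar\'e and Caccioppoli. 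This direct route is what makes the single scale $\chi_*$ control both \eqref{key:1} and the approximation without separate bookkeeping, and is precisely how boundary correctors are avoided.

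Your compactness scheme is plausible in spirit but has a real gap as written: in the contradiction sequence the coefficients $a_k$ vary, and H-compactness only gives a subsequential H-limit $a_\infty$, not $\bar a$; you would still need the quantitative corrector smallness \eqref{key:1} to identify $a_\infty=\bar a$ on the relevant ball, which essentially reintroduces the direct estimate. Near a Lipschitz boundary, upgrading the weak convergence $\nabla u_{\varepsilon_k}\rightharpoonup\nabla u_0$ to the strong corrector convergence needed to contradict $\nu$ is exactly the step that the cut-off energy computation handles cleanly, and I do not see how your outline closes it without that computation.
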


\begin{remark}
\emph{If the (admissible) coefficients satisfy the symmetry condition
$a=a^*$, then we can choose $\gamma=1$ in $\eqref{key:1}$. Concerned with the Neumann boundary problem,
we only modify the equation which $\bar{u}_r$ satisfies by the new one:
$\nabla\cdot \bar{a} \nabla \bar{u}_r = 0$ in $D_{2r}$
with $n\cdot\bar{a}\nabla\bar{u}_r = n\cdot a^\varepsilon
\nabla u_\varepsilon$ on $\partial D_{2r}$,
where $n$ is the outward unit normal
vector associated with $\partial\Omega$.
Then the proof for Neumann boundary conditions follows
from the same arguments as those given for the Dirichlet one.
Besides, the above results trivially hold for
the interior case.}
\end{remark}

\begin{remark}
\emph{We mention that the starting point in
the proofs of Lemma \ref{lemma:1}, as well as Theorems \ref{thm:1} and \ref{thm:2},
is the error of the following two-scale expansions:
\begin{equation}\label{expansion:1}
w_\varepsilon
:= u_\varepsilon - u -\varepsilon \phi_i^\varepsilon \varphi_i,
\end{equation}
and $\varphi_i\in H_0^1(\Omega)$ with $u$ will be fixed according to the concrete environment. In general, we will introduce a cut-off function to make it
satisfy the homogenous boundary condition as
mentioned earlier in $\eqref{expansion:1-00}$.
}
\end{remark}

\begin{proof}
By a rescaling argument it is fine to assume $r=1$. We start from denoting
the ``layer'' region of $D_2$ with the width $r$ by  $O_r(D_2):=\{x\in D_2:\text{dist}(x,\partial D_2)\leq r\}$. Without confusion,
$O_r(D_2)$ is simply written as $O_r$ throughout the proof.
Let $\eta_{\varepsilon}\in C_0^1(D_2)$ be a cut-off function satisfying $\eta_{\varepsilon} = 1$ on $D_2\setminus O_{2\varepsilon}$,
$\eta_{\varepsilon} = 0$ outside $D_2\setminus O_{\varepsilon}$ and $|\nabla\eta_{\varepsilon}|\lesssim_d 1/\varepsilon$ on $O_{2\varepsilon}\setminus O_{\varepsilon}$. Let $\bar{u}_1$
be given by the equations: $\nabla\cdot \bar{a} \nabla \bar{u}_1 = 0$ in $D_{2}$ with $\bar{u}_1 = u_\varepsilon$ on $\partial D_{2}$. By choosing $\varphi_i = \eta_{\varepsilon}\partial_i\bar{u}_1$ and $u=\bar{u}_1$ in $\eqref{expansion:1}$, and employing the equations $\eqref{eq:2.1}$ together with the skew-symmetry of $\sigma_i$, we can derive
\begin{equation}\label{pde:3}
\begin{aligned}
-\nabla\cdot a^\varepsilon\nabla w_\varepsilon
= \nabla\cdot\Big[\varepsilon\big(a^\varepsilon\phi_i^\varepsilon
-\sigma_i^\varepsilon)\nabla \varphi_i
+(a^\varepsilon-\bar{a})(\nabla \bar{u}_1 -\varphi)\Big]
\quad \text{in}~~D_2
\end{aligned}
\end{equation}
with $w_\varepsilon= 0$ on $\partial D_2$, whose details
may be found in \cite[pp.8-10]{Josien-Otto22}. Then the energy estimate together with the definition of $\eta_{\varepsilon}$ leads to
\begin{equation}\label{f:1}
\begin{aligned}
 \int_{D_2}|\nabla w_\varepsilon|^2
& \lesssim_{\lambda,d}
 \int_{D_2} |\varepsilon(\phi_i^\varepsilon,\sigma_i^\varepsilon)|^2
 |\nabla (\eta_{\varepsilon}\partial_i\bar{u}_1)|^2
 + \int_{O_{2\varepsilon}}
 |\nabla \bar{u}_1|^2 \\
&\lesssim \underbrace{\int_{O_{2\varepsilon}}|(\phi_i^\varepsilon,\sigma_i^\varepsilon)|^2
 |\partial_i\bar{u}_1|^2}_{I_1}
 + \underbrace{\int_{D_2\setminus O_\varepsilon} |\varepsilon(\phi_i^\varepsilon,\sigma_i^\varepsilon)|^2
 |\nabla\partial_i\bar{u}_1|^2}_{I_2}
 +\varepsilon^{\frac{1}{p}}
 \underbrace{\Big(\int_{D_2}
 |\nabla \bar{u}_1|^{2p'}\Big)^{\frac{1}{p'}}}_{I_3},
\end{aligned}
\end{equation}
and the argument is therefore reduced to showing the estimates of
$I_1$, $I_2$, and $I_3$ above.

We start from the term $I_1$ (i.e., the layer part estimate), and it follows from H\"older's inequality
that
\begin{equation}\label{f:2.0}
\begin{aligned}
 I_1 \leq
 \Big(\int_{B_2}|(\phi^\varepsilon,\sigma^\varepsilon)|^{2p}\Big)^{\frac{1}{p}}
 \Big(\int_{O_{2\varepsilon}}
 |\nabla \bar{u}_1|^{2p'}\Big)^{\frac{1}{p'}}
\leq \varepsilon^{\frac{1}{\gamma p'}} \Big(\int_{B_2}|(\phi^\varepsilon,\sigma^\varepsilon)|^{2p}\Big)^{\frac{1}{p}}
 \Big(\int_{D_2}
 |\nabla \bar{u}_1|^{2p'\gamma'}\Big)^{\frac{1}{\gamma'p'}},
\end{aligned}
\end{equation}
where we mention that $p',\gamma'$ are the conjugate index of $p,\gamma$,  satisfying $0<p'-1\ll 1$ and $0<\gamma'-1\ll 1$, respectively.
By using $W^{1,q}$ estimates (see e.g. \cite{Giaquinta-Martinazzi12} with $0<q-2\ll 1$)
and Lemma $\ref{lemma:15}$, we have
\begin{equation}\label{f:2.2}
\begin{aligned}
 \Big(\int_{D_2}
 |\nabla \bar{u}_1|^{2p'\gamma'}\Big)^{\frac{1}{\gamma'p'}}
\lesssim
 \Big(\int_{D_2}
 |\nabla u_\varepsilon|^{2p'\gamma'}\Big)^{\frac{1}{\gamma'p'}}
\lesssim^{\eqref{pri:10}}
\int_{D_4}
 |\nabla u_\varepsilon|^{2}.
\end{aligned}
\end{equation}
This together with $\eqref{f:2.0}$ gives us
\begin{equation}\label{f:2.3}
I_1 \lesssim \varepsilon^{\frac{1}{\gamma p'}}
\Big(\dashint_{B_{2/\varepsilon}}|(\phi,\sigma)|^{2p}\Big)^{\frac{1}{p}}
\int_{D_4}
 |\nabla u_\varepsilon|^{2}.
\end{equation}

For the term $I_2$ (i.e., the co-layer part estimate), similarly, we begin from H\"older's inequality
\begin{equation}\label{f:2.29*}
I_2 \leq \Big(\int_{B_2} |\varepsilon(\phi_i^\varepsilon,\sigma_i^\varepsilon)|^{2p}
\Big)^{\frac{1}{p}}
\Big(\int_{D_2\setminus O_\varepsilon}
 |\nabla\partial_i\bar{u}_1)|^{2p'}\Big)^{\frac{1}{p'}},
\end{equation}
and then applying for the interior Lipschitz estimate,
H\"older' inequality, and the geometric properties of integrals, in the order, we have
\begin{equation}\label{f:2.29}
\begin{aligned}
\int_{D_2\setminus O_\varepsilon}|\nabla^2\bar{u}_1|^{2p'}
&\lesssim
\int_{D_2\setminus O_\varepsilon}
\frac{1}{[\delta(x)]^{2p'}}\dashint_{B(x,\delta(x)/4)}|\nabla\bar{u}_1|^{2p'}\\
&\lesssim
\varepsilon^{\frac{1}{\gamma}-2p'}\Big(
\int_{D_2\setminus O_\varepsilon}\dashint_{B(x,\delta(x)/4)}|\nabla\bar{u}_1|^{2p'\gamma'}
\Big)^{\frac{1}{\gamma'}}
\lesssim
\varepsilon^{\frac{1}{\gamma}-2p'}\Big(
\int_{D_2}|\nabla\bar{u}_1|^{2p'\gamma'}
\Big)^{\frac{1}{\gamma'}},
\end{aligned}
\end{equation}
where $\delta(x):=\text{dist}(x,\partial D_2)$.
Plugging this back into the estimate $\eqref{f:2.29*}$ leads to
\begin{equation}\label{f:2.4}
I_2 \lesssim
\varepsilon^{\frac{1}{\gamma p'}-2}\Big(\int_{B_2} |\varepsilon(\phi_i^\varepsilon,\sigma_i^\varepsilon)|^{2p}
\Big)^{\frac{1}{p}}
\Big(
\int_{D_2}|\nabla\bar{u}_1|^{2p'\gamma'}
\Big)^{\frac{1}{\gamma'p'}}
\lesssim^{\eqref{f:2.2}} \varepsilon^{\frac{1}{\gamma p'}}
\Big(\dashint_{B_{2/\varepsilon}}|(\phi,\sigma)|^{2p}\Big)^{\frac{1}{p}}
\int_{D_4}
 |\nabla u_\varepsilon|^{2}.
\end{equation}

Concerned with the last term $I_3$, it follows from the estimate $\eqref{f:2.2}$ that
\begin{equation}\label{f:2.5}
  I_3 \lesssim \varepsilon^{\frac{1}{p}} \int_{D_4}
 |\nabla u_\varepsilon|^{2}.
\end{equation}

As a result, plugging $\eqref{f:2.3}$, $\eqref{f:2.4}$, and $\eqref{f:2.5}$ back into $\eqref{f:1}$, we immediately obtain that
\begin{equation}\label{f:2.22}
\begin{aligned}
\int_{D_2}|\nabla w_\varepsilon|^2
&\lesssim \Big\{\varepsilon^{\frac{1}{\gamma p'}}\Big(\dashint_{B_{2/\varepsilon}} |(\phi,\sigma)|^{2p}
\Big)^{\frac{1}{p}}
+\varepsilon^{\frac{1}{p}}\Big\}\int_{D_4}|\nabla u_\varepsilon|^2.
\end{aligned}
\end{equation}

In view of $\eqref{expansion:1}$, we have
\begin{equation}\label{f:2.20}
\begin{aligned}
&\int_{D_1}|\nabla u_\varepsilon - (e_i+(\nabla\phi_i)^\varepsilon)\partial_i\bar{u}_1|^2
\lesssim  \int_{D_2}|\nabla w_\varepsilon|^2
+ \int_{D_2\setminus O_{\varepsilon}}|\varepsilon\phi_i^\varepsilon \nabla \partial_i\bar{u}_1|^2
+ \int_{O_{\varepsilon}}
|(\nabla\phi_i)^\varepsilon \partial_i\bar{u}_1|^2.
\end{aligned}
\end{equation}
Moreover, there holds
\begin{equation*}
\int_{O_\varepsilon}
|(\nabla\phi_i)^\varepsilon \partial_i\bar{u}_1|^2
\lesssim^{\eqref{f:2.2}}
\varepsilon^{\frac{1}{\gamma p'}}
\Big(\dashint_{B_{2/\varepsilon}}|\nabla\phi_i|^{2p}
\Big)^{\frac{1}{p}}
\int_{D_4}|\nabla u_\varepsilon|^{2},
\end{equation*}
and plugging this and the estimate $\eqref{f:2.4}$ back into $\eqref{f:2.20}$, we obtain
\begin{equation}\label{f:2.21}
\begin{aligned}
\int_{D_1}|\nabla u_\varepsilon - (e_i+(\nabla\phi_i)^\varepsilon)\partial_i\bar{u}_1|^2
&\lesssim \bigg\{\varepsilon^{\frac{1}{\gamma p'}}\Big(\dashint_{B_{2/\varepsilon}} |(\phi,\sigma,\nabla\phi)|^{2p}
\Big)^{\frac{1}{p}}
+\varepsilon^{\frac{1}{p}}\bigg\}\int_{D_4}|\nabla u_\varepsilon|^2.
\end{aligned}
\end{equation}
By using a triangle inequality, Poincar\'e's inequality, and Caccioppoli's inequality, we arrive at
\begin{equation}\label{f:2.6}
\begin{aligned}
\int_{D_1}|u_\varepsilon - \bar{u}_1|^2
&\leq  \int_{D_2}|w_\varepsilon|^2
+ \int_{D_2}|\varepsilon\phi_i^\varepsilon\eta_{\varepsilon}\partial_i\bar{u}_1|^2
\lesssim \bigg\{\varepsilon^{\frac{1}{\gamma p'}}\Big(\dashint_{B_{2/\varepsilon}} |(\phi,\sigma)|^{2p}
\Big)^{\frac{1}{p}}
+\varepsilon^{\frac{1}{p}}\bigg\}\int_{D_8}|u_\varepsilon|^2.
\end{aligned}
\end{equation}

If we replace the corrector $\phi_i$ in the error term $\eqref{expansion:1}$ by $\phi_i-\dashint_{B_{2/\varepsilon}}\phi_i$ (denoted by $\bar{\phi}_i$, and
likewise for $\sigma$), and
rescaling back, then the stated estimates $\eqref{f:2.21}$ can be rewritten as
\begin{equation*}
\dashint_{D_r}|\nabla u_\varepsilon - (e_i+(\nabla\phi_i)^\varepsilon)\partial_i\bar{u}_r|^2
\lesssim \bigg\{\Big(\frac{\varepsilon}{r}\Big)^{\frac{1}{\gamma p'}}\Big(\dashint_{B_{2r/\varepsilon}} |
(\bar{\phi},\bar{\sigma},\nabla\phi)|^{2p}
\Big)^{\frac{1}{p}}
+\Big(\frac{\varepsilon}{r}\Big)^{\frac{1}{p}}\bigg\}
\dashint_{D_{4r}}|\nabla u_\varepsilon|^2.
\end{equation*}
In this regard, for any $\theta>0$ (to be determined later) we can
define the stationary field $\chi_*$ (we still call it minimal radius, which
plays the same role as the minimal radius $r_*$ defined in \cite{Gloria-Neukamm-Otto20}) as follows:
\begin{equation}\label{min_rad}
\chi_{*}(0):=
 \inf\Bigg\{l>0:
 \Big(\frac{1}{R}\Big)^{\frac{1}{\gamma p'}}
 \bigg[\Big(\dashint_{B_{2R}}
 |(\phi,\sigma)-(\phi,\sigma)_{2R}|^{2p}
\Big)^{\frac{1}{p}}
+
\Big(\dashint_{B_{2R}}
 |\nabla\phi|^{2p}
\Big)^{\frac{1}{p}}
\bigg]\leq \theta;~\forall R\geq l\Bigg\}\vee \theta^{-p}
\end{equation}
(we usually omit the center point, and denote it by $\chi_*$).
By the definition of $\chi_*$, one can straightforwardly derive that
\begin{equation*}
\begin{aligned}
\dashint_{D_r}|\nabla u_\varepsilon - (e_i+(\nabla\phi_i)^\varepsilon)\partial_i\bar{u}_r|^2
&\lesssim \theta\dashint_{D_{4r}}|\nabla u_\varepsilon|^2
\leq \nu^2\dashint_{D_{4r}}|\nabla u_\varepsilon|^2
\qquad \forall r\geq \varepsilon\chi_{*},
\end{aligned}
\end{equation*}
which proves the stated result $\eqref{pri:2.0}$ by preferring
$\theta = \nu^2/ C(\lambda,d,p,\gamma,M_0)$. By the same token, we have
\begin{equation*}
\dashint_{D_r}|u_\varepsilon - \bar{u}_r|^2
\lesssim \theta
\dashint_{D_{8r}}|u_\varepsilon|^2
\leq \nu^2 \dashint_{D_{8r}}|u_\varepsilon|^2
\qquad \forall r\geq \varepsilon\chi_*.
\end{equation*}

Finally, we show the arguments for $\eqref{m-1}$,
which is mainly based upon a dyadic decomposition of scales
and Lemma $\ref{lemma:3}$. To do so, for any $\alpha\in(0,\frac{1}{\gamma p'})$,
it suffices to define a stationary
random field as follows:
\begin{equation*}
 \chi_{**}(0):=\bigg[\frac{1}{\theta}\sup_{R\geq 1}\frac{1}{R^{\alpha}}
 \Big(\dashint_{B_{2R}}|(\bar{\phi}^{R},\bar{\sigma}^{R},\nabla\phi)|^{2p}
 \Big)^{\frac{1}{p}}\bigg]^{\frac{1}{\frac{1}{\gamma p'}-\alpha}}\vee 1,
\end{equation*}
where $(\bar{\phi}^{R},\bar{\sigma}^{R}):=(\phi,\sigma)
-(\phi,\sigma)_{2R}$. Thus, it is not hard to verify that for any
$R\geq\chi_{**}(0)$, there holds
\begin{equation*}
\Big(\frac{1}{R}\Big)^{\frac{1}{\gamma p'}}
 \Big(\dashint_{B_{2R}}
 |(\bar{\phi}^{R},\bar{\sigma}^{R},\nabla\phi)
 |^{2p}\Big)^{\frac{1}{p}}\leq \theta.
\end{equation*}
Moreover, recalling the definition of $\chi_*$, we have
$\chi_{**}(0)\geq \chi_{*}(0)$. Let $\beta_1:=\frac{1}{\gamma p'}-\alpha$,
and we derive that
\begin{equation*}
\begin{aligned}
\langle|\chi_{**}|^\beta\rangle^{\frac{1}{\beta}}
&\lesssim_{\theta} \Big\langle\Big(\sup_{R\geq 1}\frac{1}{R^{\alpha}}
\big(\dashint_{B_{2R}}|(\bar{\phi}^{R},\bar{\sigma}^{R},\nabla\phi)|^{2p}
\big)^{\frac{1}{p}}\Big)^{\frac{\beta}{\beta_1}}\Big\rangle^{\frac{1}{\beta}}\\
&\lesssim
\Big\langle\Big(\sum_{R\geq 1\atop\text{dyadic}}\frac{1}{R^{\alpha}}
\big(\dashint_{B_{2R}}|(\bar{\phi}^{R},\bar{\sigma}^{R},\nabla\phi)|^{2p}
\big)^{\frac{1}{p}}\Big)^{\frac{\beta}{\beta_1}}\Big\rangle^{\frac{1}{\beta}}
\lesssim \sum_{R\geq1\atop\text{dyadic}}\frac{1}{R^{\alpha}}\mu_d^2(R)
\lesssim_{\lambda,\lambda_1,\lambda_2,d,\beta,\theta}^{\eqref{pri:2},\eqref{pri:3}} 1,
\end{aligned}
\end{equation*}
which together with $\chi_{**}\geq \chi_{*}$ leads to the desired
estimate $\eqref{m-1}$. We complete the whole proof. 
\end{proof}

\begin{remark}\label{remark:3}
\emph{In \cite{Gloria-Neukamm-Otto20}, the authors gave a way to make the minimal radius   $\frac{1}{8}$-Lipschitz continuous. In terms of $\chi_*$
introduced in $\eqref{min_rad}$, one can similarly construct
$\frac{1}{L}$-Lipschitz continuous random fields, where
$L:=\max\{R_0, d-1, 8\}$.
Let $\theta_0$ be such that $\eqref{pri:2.0}$ holds
for any $r\geq \varepsilon\chi_*(0;\theta)$ with $0<\theta\leq\theta_0$.
We can define $\underline{\chi}_{*}(0)$ as
the largest function with the Lipschitz
constant \textcolor[rgb]{1.00,0.00,0.00}{$\frac{1}{L}$} less than
$\chi_*(0;\theta)$, and $\theta$ will be chosen later, i.e.,
\begin{equation*}
  \underline{\chi}_*(x):=\inf_{y\in\mathbb{R}^d}
  \big(\chi_*(y;\theta)+|x-y|/L\big).
\end{equation*}
By setting $\theta_1:= 2(L+1)^{\frac{d}{2}+1}\theta_0$, it concludes that
$\underline{\chi}_{*}$ is $\frac{1}{L}$-Lipschitz continuous satisfying
$\chi_*(0;\theta_0)\leq \underline{\chi}_{*}(0)\leq \chi_*(0;\theta_1)$. For the ease of statement,  we still use the original notation to represent
the minimal radius with the $\frac{1}{L}$-Lipschitz continuity.}
\end{remark}

\begin{remark}\label{remark:1}
\emph{By the $\frac{1}{L}$-Lipschitz continuity of $\chi_{*}$ and $\langle\chi_*^\beta\rangle\lesssim 1$ with $\beta<\infty$, it is not hard to see that for any $x\in\mathbb{R}^d$, there holds
\begin{equation}\label{f:2.25}
 \Big\langle\big(\sup_{y\in B_1(x)}|\chi_*(y)|\big)^\beta
 \Big\rangle^{\frac{1}{\beta}}
 \lesssim 1,
\end{equation}
where the multiplicative constant is independent of $x$. Moreover, for any $\kappa>0$, $\beta_1<\infty$ and $x_0\in\mathbb{R}^d$, let $X_R(x_0):= R^{-\kappa}\sup_{y\in B_R(x_0)}|\chi_*(y)|$, where $R\geq 1$.
Then, it follows from a covering argument\footnote{In terms of $B_R(x_0)$,
there exists a family of points $\{x_i\}_{i=1}^{R^d}$
such that $B_{R}(x_0)\subset\cup_i B_1(x_i)$.}, subadditivity
of $\langle\cdot\rangle$, and Markov's inequality that
\begin{equation*}
 \big\langle
 |X_R(x_0)|>\rho\big\rangle
\leq \sum_{i=1}^{R^d}\big\langle \sup_{y\in B_1(x_i)}|\chi_*(y)|
>R^{\kappa}\rho \big\rangle
\leq \sum_{i=1}^{R^d}\frac{\big\langle
(\sup_{y\in B_1(x_i)}|\chi_*(y)|)^{\beta} \big\rangle}{\rho^{\beta}R^{\kappa\beta}}
\lesssim^{\eqref{f:2.25}} \rho^{-\beta} R^{d-\kappa\beta}
\lesssim \rho^{-\beta},
\end{equation*}
provided $\beta\geq d/\kappa$. By choosing $\beta>\max\{d/\kappa,\beta_1\}$,  there uniformly holds
\begin{equation}\label{pri:7-1}
\big\langle
 |X_R(x_0)|^{\beta_1}\big\rangle \lesssim 1 + \int_{1}^{\infty}\rho^{\beta_1-1-\beta}d\rho\lesssim 1,
\end{equation}
with respective to $x_0\in\mathbb{R}^d$. One can further derive that
\begin{equation}\label{pri:7}
\sup_{y\in B_R(x_0)}|\chi_*(y)|
\leq X_{R}(x_0)R^{\kappa}.
\end{equation}
}
\end{remark}

\begin{lemma}[random shift]\label{lemma:2}
Let $0<\varepsilon\leq 1$.  Fix
$\zeta\in C_0^\infty(B_{1/2}(0))$ with $\int_{\mathbb{R}^d}\zeta =1$,
and $\zeta_r(x) := r^{-d}\zeta(x/r)$.
Given $\chi_*$ as in Lemma $\ref{lemma:1}$ with the $\frac{1}{L}$-Lipschitz continuity stated in Remark $\ref{remark:3}$ and
for any fixed $x\in\mathbb{R}^d$, we define the smoothing operator as
\begin{equation}\label{def1}
 S_{*,\varepsilon}(f)(x)
 := \int_{\mathbb{R}^d}
 \zeta_{\varepsilon\chi_*(x/\varepsilon)}(x-y)f(y)dy.
\end{equation}
Let $f\in W^{1,p}(\mathbb{R}^d)$ with $1\leq p<\infty$, then we obtain
\begin{equation}\label{pri:4}
\int_{\mathbb{R}^d} |f - S_{*,\varepsilon}(f)|^p
\lesssim_{d,p} \varepsilon^p \int_{\mathbb{R}^d}
\chi_*^p(\cdot/\varepsilon)|\nabla f|^p.
\end{equation}
\end{lemma}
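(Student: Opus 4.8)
The plan is to write $f-S_{*,\varepsilon}(f)$ as a weighted average of increments of $f$ and then to transfer the estimate onto the pointwise weight $r(x):=\varepsilon\chi_*(x/\varepsilon)$ by exploiting the Lipschitz continuity of $\chi_*$. Since $\zeta\in C_0^\infty(B_{1/2}(0))$ with $\int_{\mathbb R^d}\zeta=1$, one has $\int_{\mathbb R^d}\zeta_{r(x)}=1$, so that
\begin{equation*}
f(x)-S_{*,\varepsilon}(f)(x)=-\int_{\mathbb R^d}\zeta_{r(x)}(-h)\big(f(x+h)-f(x)\big)\,dh,
\end{equation*}
where the integrand is supported in $\{|h|<r(x)/2\}$. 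By approximation we may assume $f\in C^1\cap W^{1,p}$, so that $f(x+h)-f(x)=\int_0^1 h\cdot\nabla f(x+th)\,dt$. Using $\|\zeta_r\|_{L^\infty}\lesssim_d r^{-d}$ and $|h|<r(x)/2$ yields the pointwise bound
\begin{equation*}
|f(x)-S_{*,\varepsilon}(f)(x)|\lesssim_d r(x)^{1-d}\int_{|h|<r(x)/2}\int_0^1|\nabla f(x+th)|\,dt\,dh,
\end{equation*}
and Jensen's inequality applied to $dh\,dt$ restricted to $\{|h|<r(x)/2\}\times(0,1)$, whose total mass is comparable to $r(x)^d$, upgrades this to
\begin{equation*}
|f(x)-S_{*,\varepsilon}(f)(x)|^p\lesssim_{d,p}r(x)^{p-d}\int_{|h|<r(x)/2}\int_0^1|\nabla f(x+th)|^p\,dt\,dh.
\end{equation*}

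Next I would integrate in $x$ over $\mathbb R^d$ and use Fubini to pull the $t$-integral outside, so that the claim $\eqref{pri:4}$ is reduced (using $r(z)^p=\varepsilon^p\chi_*^p(z/\varepsilon)$) to the estimate
\begin{equation*}
\int_{\mathbb R^d}\int_{|h|<r(x)/2}r(x)^{p-d}\,|\nabla f(x+th)|^p\,dh\,dx\ \lesssim_{d,p}\ \int_{\mathbb R^d}r(z)^{p}\,|\nabla f(z)|^p\,dz
\end{equation*}
with a constant independent of $t\in(0,1)$. The main obstacle is that the domain $\{|h|<r(x)/2\}$ couples the variables $x$ and $h$, which obstructs a direct translation $x\mapsto z=x+th$. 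This is exactly where the $\tfrac1L$-Lipschitz continuity of $\chi_*$ (with $L\ge 8$, Remark~\ref{remark:3}), hence of $r$, enters: on $\{|h|<r(x)/2\}$ we have $|th|\le|h|<r(x)/2$, whence $|r(x)-r(x+th)|\le L^{-1}|th|\le r(x)/(2L)\le r(x)/16$, so $r(x)$ and $r(x+th)$ are comparable and in particular $|h|<r(x)/2\le r(x+th)$. Consequently the region $\{|h|<r(x)/2\}$ is contained in $\{|h|<r(x+th)\}$, and $r(x)^{p-d}$ may be replaced by $r(x+th)^{p-d}$ up to a constant depending only on $d$ and $p$.

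After this replacement the constraint involves only $z=x+th$, so for fixed $(t,h)$ the translation $x\mapsto z$ has unit Jacobian and decouples the integral, bounding the left-hand side above by
\begin{equation*}
C_{d,p}\int_{\mathbb R^d}r(z)^{p-d}\,|\nabla f(z)|^p\Big(\int_{|h|<r(z)}dh\Big)\,dz=C_{d,p}'\int_{\mathbb R^d}r(z)^{p}\,|\nabla f(z)|^p\,dz,
\end{equation*}
since $|B_{r(z)}(0)|\sim r(z)^d$. Integrating the harmless factor $\int_0^1dt=1$ and recalling $r(z)^p=\varepsilon^p\chi_*^p(z/\varepsilon)$ gives $\eqref{pri:4}$. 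The genuinely delicate step is the decoupling of the $x$-dependent domain of integration; it is for this step alone that the Lipschitz regularity of the minimal radius (and not just its moment bounds from Lemma~\ref{lemma:1}) is required.
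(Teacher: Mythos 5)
Your proof is correct, and it diverges from the paper's argument at exactly the step you flag as delicate. Both proofs begin identically: rewrite $f-S_{*,\varepsilon}(f)$ as an average of increments, apply Jensen/H\"older to pull the $p$-th power inside, and use the fundamental theorem of calculus on the increment, all of which reduce the problem to controlling an integral where the ``base point'' of $\nabla f$ has been shifted by an $x$-dependent amount $t\varepsilon\chi_*(x/\varepsilon)z$. The paper then performs the genuine change of variables $\tilde{x}=x-t\varepsilon\chi_*(x/\varepsilon)z$ in the $x$-variable, invoking the $\tfrac1L$-Lipschitz continuity of $\chi_*$ only to certify that the Jacobian matrix is a small perturbation of the identity (diagonally dominant, determinant $\sim 1$) so that the substitution is legitimate. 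You instead never change variables nonlinearly: you use the same Lipschitz bound to show that the $x$-dependent weight $r(x)^{p-d}$ and domain $\{|h|<r(x)/2\}$ can be replaced, up to constants, by the weight $r(x+th)^{p-d}$ and the larger domain $\{|h|<r(x+th)\}$, at which point the constraint depends only on $z=x+th$; Fubini plus a pure translation (unit Jacobian) then finish. The two proofs consume the same hypothesis but in different ways: the paper turns Lipschitz continuity into a Jacobian bound for a bi-Lipschitz perturbation of the identity, while yours turns it into a weight-and-domain comparability statement. Your route is a touch more elementary, as it sidesteps having to justify the change of variables for a merely Lipschitz (not $C^1$) map; the paper's route is shorter to state but leans on that regularity fact tacitly.
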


\begin{proof}
Although $\chi_*$ involves randomness, the proof is totally deterministic like that given for periodic homogenization (see \cite{Shen16}), inspired by Steklov smoothing operator which is originally applied to homogenization problem by V. Zhikov and S. Pastukhova \cite{Zhikov-Pastukhova05}. We provide a proof
for the sake of completeness.

We start from changing a variable, H\"older's inequality and Fubini's theorem that
\begin{equation}\label{f:2.24}
\begin{aligned}
\int_{\mathbb{R}^d}
\big|f(x)-S_{*,\varepsilon}(f)(x)\big|^p dx
&= \int_{\mathbb{R}^d}
\Big|\int_{|z|\leq 1}
\zeta(z)\big(f(x)-f(x-\varepsilon\chi_{*}(x/\varepsilon)z)\big)dz\Big|^p dx\\
&\lesssim
\int_{|z|\leq 1}
\zeta(z)\int_{\mathbb{R}^d}
\big|f(x)-f(x-\varepsilon\chi_*(x/\varepsilon)z)\big|^pdx dz.
\end{aligned}
\end{equation}
For any $x\in\mathbb{R}^d$ and $|z|\leq 1$, we obtain that
\begin{equation}\label{f:2.23}
\begin{aligned}
\big|f(x)-f(x-\varepsilon\chi_*(x/\varepsilon)z)\big|^{p}
\lesssim \varepsilon^p
\int_{0}^{1}|\chi_*(x/\varepsilon)|^p
|\nabla f(x-t\varepsilon\chi_*(x/\varepsilon)z)|^p dt.
\end{aligned}
\end{equation}

Let $\tilde{x}:=x-t\varepsilon\chi_*(x/\varepsilon)z$, and we have
$|\tilde{x}-x|\leq \varepsilon\chi_*(x/\varepsilon)$. In terms of
the $\frac{1}{L}$-Lipschitz continuity of $\chi_{*}$, there holds
$\chi_*(\tilde{x}/\varepsilon)\sim \chi_*(x/\varepsilon)$. Moreover,
we have $d\tilde{x} = \frac{\partial\tilde{x}}{\partial x} dx$
(where the Jacobian matrix $\frac{\partial\tilde{x}}{\partial x}$ is diagonally dominant) and
the Jacobian determinant $|\frac{\partial\tilde{x}}{\partial x}|\sim 1$.
Integrating both sides of $\eqref{f:2.23}$ with respect to $x\in\mathbb{R}^d$ and then changing variable, we have
\begin{equation*}
\begin{aligned}
\int_{\mathbb{R}^d}
\big|f(x)-f(x-\varepsilon\chi_*(x/\varepsilon)z)\big|^{p} dx
&\lesssim \varepsilon^p\int_{\mathbb{R}^d}
\int_{0}^{1}|\chi_*(x/\varepsilon)|^p
|\nabla f(x-t\varepsilon\chi_*(x/\varepsilon)z)|^p dtdx\\
&\lesssim \varepsilon^p\int_{\mathbb{R}^d}
|\chi_*(\tilde{x}/\varepsilon)|^p
|\nabla f(\tilde{x})|^p d\tilde{x}.
\end{aligned}
\end{equation*}
Plugging the above estimate back into
$\eqref{f:2.24}$, we have already proved the stated estimate $\eqref{pri:4}$.
\end{proof}

\begin{remark}
\emph{By rescaling, it is not hard to get that $\tilde{u}_{\varepsilon'}(y):=u_\varepsilon(ry)=u_\varepsilon(x)$ satisfies
$-\nabla\cdot a^{\varepsilon'}\nabla \tilde{u}_{\varepsilon'} = 0$ in $D_4$
with $\tilde{u}_{\varepsilon'} = 0$ on $\Delta_4$, in which $\varepsilon'=\varepsilon/r$.
It follows from the estimates $\eqref{f:2.22}$, $\eqref{f:2.3}$, and
$\eqref{f:2.4}$ that
\begin{equation}\label{f:2.26}
\begin{aligned}
\int_{D_1}|\nabla \tilde{u}_{\varepsilon'} - (e_i+(\nabla\phi_i)^{\varepsilon'})\eta_{\varepsilon'}\partial_i\tilde{\bar{u}}_1|^2
&\lesssim \Big\{{\varepsilon'}^{\frac{1}{\gamma p'}}\Big(\dashint_{B_{2/{\varepsilon'}}} |(\phi,\sigma)|^{2p}
\Big)^{\frac{1}{p}}
+{\varepsilon'}^{\frac{1}{p}}\Big\}\int_{D_4}|\nabla \tilde{u}_{\varepsilon'}|^2,
\end{aligned}
\end{equation}
where $\eta_{\varepsilon'}$ is the cut-off function given as in the proof of Lemma $\ref{lemma:1}$,
which together with the introduced minimal radius
$\chi_*$ leads to
\begin{equation}\label{pri:2.1}
\begin{aligned}
\dashint_{D_r}|\nabla u_\varepsilon - (e_i+(\nabla\phi_i)^\varepsilon)
\eta_\varepsilon\partial_i\bar{u}_r|^2
&
\leq \nu^2\dashint_{D_{4r}}|\nabla u_\varepsilon|^2
\qquad \forall r\geq \varepsilon\chi_{*},
\end{aligned}
\end{equation}
where $\eta_\varepsilon:
=\eta_{\varepsilon'}(\cdot/r)$.}
\end{remark}

\begin{corollary}\label{cor:1}
Let $\chi_{*}$, $u_\varepsilon$, and $\bar{u}_r$ be introduced as in Lemma $\ref{lemma:1}$, and we also
assume the same conditions hold as in Lemma $\ref{lemma:1}$.
Let $\varphi_i:=S_{*,\varepsilon}(\eta_{\varepsilon}
\partial_i\bar{u}_r)$ and $u:=\bar{u}_r$ in $\eqref{expansion:1}$, where the definition of $S_{*,\varepsilon}$ is given in Lemma $\ref{lemma:2}$ and
$\eta_\varepsilon$ is a cut-off function similar to
that given in $\eqref{pri:2.1}$. Then, for any $\nu>0$, there exists
a stationary random field
$\tilde{\chi}_*\geq \chi_*$, satisfying
$\langle \tilde{\chi}_*^\beta\rangle\lesssim 1$ for any $\beta\in[1,\infty)$,
such that
\begin{equation}\label{pri:2.1-1}
\dashint_{D_r}|\nabla u_\varepsilon - (e_i+(\nabla\phi_i)^\varepsilon)
\varphi_i|^2 \leq \nu^2
 \dashint_{D_{4r}}|\nabla u_\varepsilon|^2,
\end{equation}
holds for any $r\geq \varepsilon\tilde{\chi}_*$.
\end{corollary}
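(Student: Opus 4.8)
The plan is to upgrade the approximation estimate $\eqref{pri:2.1}$, already available for the un-smoothed test gradient $g:=\eta_\varepsilon\partial_i\bar u_r$, to its smoothed counterpart $\varphi_i=S_{*,\varepsilon}(g)$ by controlling the \emph{smoothing error}. By a triangle inequality it suffices to produce a stationary field $\tilde\chi_*\ge\chi_*$ with $\langle\tilde\chi_*^\beta\rangle\lesssim1$ for every $\beta<\infty$ such that
\[
\dashint_{D_r}\big|(e_i+(\nabla\phi_i)^\varepsilon)\big(g-S_{*,\varepsilon}(g)\big)\big|^2\ \lesssim\ \nu^2\,\dashint_{D_{4r}}|\nabla u_\varepsilon|^2\qquad\text{for all }\ r\ge\varepsilon\tilde\chi_*;
\]
indeed, adding this to $\eqref{pri:2.1}$ (and, if necessary, shrinking $\nu$ — equivalently the parameter $\theta$ defining $\chi_*$ — by a universal factor) gives $\eqref{pri:2.1-1}$. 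The left-hand side I would split by Hölder's inequality into $\big(\dashint_{D_r}|e_i+(\nabla\phi_i)^\varepsilon|^{2p}\big)^{1/p}$ times $\big(\dashint_{D_r}|g-S_{*,\varepsilon}(g)|^{2p'}\big)^{1/p'}$, and estimate the two factors separately.

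The second (smoothing-defect) factor is the heart of the matter. By Lemma $\ref{lemma:2}$ it is controlled by $\varepsilon^{2p'}\int_{\mathbb R^d}\chi_*^{2p'}(\cdot/\varepsilon)|\nabla g|^{2p'}$. Since $g$ is supported in $\overline{D_{2r}\setminus O_\varepsilon}$, a compact subset of the interior of $D_{2r}$ on which $\bar u_r$ is smooth, one writes $\nabla g=(\nabla\eta_\varepsilon)\partial_i\bar u_r+\eta_\varepsilon\nabla^2\bar u_r$, splits into a layer part on $O_{2\varepsilon}\setminus O_\varepsilon$ (where $|\nabla\eta_\varepsilon|\lesssim\varepsilon^{-1}$) and a co-layer part on $D_{2r}\setminus O_\varepsilon$, and runs exactly the estimates of the proof of Lemma $\ref{lemma:1}$ — Hölder's inequality on the thin layer $O_{2\varepsilon}$, interior Lipschitz bounds for the constant-coefficient solution $\bar u_r$ as in $\eqref{f:2.29}$, and the reverse-Hölder inequality $\eqref{f:2.2}$ — to obtain $\varepsilon^{2p'}\int|\nabla g|^{2p'}\lesssim(\varepsilon/r)^{1/\gamma}\,|D_r|\,\big(\dashint_{D_{4r}}|\nabla u_\varepsilon|^2\big)^{p'}$. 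The genuinely new ingredient is the extra weight $\chi_*^{2p'}(\cdot/\varepsilon)$, which has to be bounded over $B_{2r}(x_B)$: the $\tfrac1L$-Lipschitz estimate of Remark $\ref{remark:3}$ gives only $\sup_{B_{2r}(x_B)}\chi_*(\cdot/\varepsilon)\lesssim r/\varepsilon$ (since near the spherical part of $\partial D_{2r}$ one is at distance $\sim2r$ from $x_B$), which would swallow the factor $\varepsilon^{2p'}$. Instead I would use the averaged local-supremum bound $\eqref{pri:7}$, combined with a dyadic-summation argument like the one that builds $\chi_{**}$ in the proof of Lemma $\ref{lemma:1}$, to get $\sup_{B_{2R}(x_B/\varepsilon)}\chi_*\le\mathcal G(x_B/\varepsilon)\,R^{\alpha}$ for an arbitrarily small $\alpha>0$ and a stationary $\mathcal G$ with finite moments of every order, uniformly in the base point; this makes the smoothing-defect factor $\lesssim\mathcal G(x_B/\varepsilon)^2\,(\varepsilon/r)^{\frac1{\gamma p'}-2\alpha}\,\dashint_{D_{4r}}|\nabla u_\varepsilon|^2$, which keeps a positive power of $\varepsilon/r$ as long as $\alpha<\tfrac1{2\gamma p'}$. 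The corrector factor is handled in the same spirit: by $\eqref{key:1}$, the moment bound $\eqref{pri:2}$, and the same dyadic observation, $\big(\dashint_{D_r}|e_i+(\nabla\phi_i)^\varepsilon|^{2p}\big)^{1/p}\lesssim\mathcal G'(x_B/\varepsilon)\,R^{\alpha_1}$ for any small $\alpha_1>0$, with $\mathcal G'$ stationary of all moments.

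Multiplying the two factors, the net smoothing error is $\lesssim\mathcal R(x_B/\varepsilon)\,(\varepsilon/r)^{\delta_1}\,\dashint_{D_{4r}}|\nabla u_\varepsilon|^2$, with $\mathcal R:=C\,\mathcal G^2\mathcal G'$ stationary, $\langle\mathcal R^\beta\rangle\lesssim1$ for all $\beta<\infty$ uniformly in $x_B/\varepsilon$, and $\delta_1:=\tfrac1{\gamma p'}-2\alpha-\alpha_1>0$ once $\alpha,\alpha_1$ are taken small enough (these choices are available precisely because $\mathcal G,\mathcal G'$ have finite moments for \emph{every} positive exponent). It then suffices to set $\tilde\chi_*(y):=\max\big\{\chi_*(y),\,(C\mathcal R(y)/\nu^2)^{1/\delta_1}\big\}$ (re-absorbing the implicit constant into $\nu$): it is stationary, dominates $\chi_*$, satisfies $\langle\tilde\chi_*^\beta\rangle\le\langle\chi_*^\beta\rangle+(C/\nu^2)^{\beta/\delta_1}\langle\mathcal R^{\beta/\delta_1}\rangle\lesssim1$ by $\eqref{m-1}$, and forces $\mathcal R(x_B/\varepsilon)(\varepsilon/r)^{\delta_1}\le\nu^2$ whenever $r\ge\varepsilon\tilde\chi_*(x_B/\varepsilon)$; together with $\eqref{pri:2.1}$ this is $\eqref{pri:2.1-1}$. (If a Lipschitz $\tilde\chi_*$ is desired, apply the infimal-convolution device of Remark $\ref{remark:3}$.) I expect the real obstacle to be the control of the weight $\chi_*^{2p'}(\cdot/\varepsilon)$ in the smoothing error: no pointwise bound on $\chi_*$ is admissible — near the spherical part of $\partial D_{2r}$ it is only $\sim r/\varepsilon$ — so one is forced to trade it for the averaged bound $\eqref{pri:7}$ at the cost of a tiny power, and must then check that the boundary-layer gain $(\varepsilon/r)^{1/\gamma}$ still dominates the small powers accumulated from the suprema of $\chi_*$ and of the corrector averages.
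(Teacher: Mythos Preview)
Your proposal is correct and follows essentially the same route as the paper: triangle inequality against $\eqref{pri:2.1}$, H\"older split of the smoothing error, Lemma~$\ref{lemma:2}$ for the defect, the supremum bound $\eqref{pri:7}$ from Remark~$\ref{remark:1}$ to tame the weight $\chi_*^{2p'}$ at the cost of a small power of $r/\varepsilon$, and the layer/co-layer estimates $\eqref{f:2.2}$, $\eqref{f:2.29}$ for $\nabla g$. The only cosmetic differences are that the paper leaves the corrector factor $\big(1+\dashint_{B_{r/\varepsilon}}|\nabla\phi|^{2p}\big)^{1/p}$ as is and absorbs it directly into the bracket defining $\tilde\chi_*$ (rather than bounding it separately by $\mathcal G'R^{\alpha_1}$), and defines $\tilde\chi_*$ in minimal-radius form rather than as an explicit $\max$; neither changes the substance.
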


\begin{proof}
By the definition of $\varphi_i$, it follows from
the estimate $\eqref{f:2.26}$ that
\begin{equation}\label{f:2.27}
\begin{aligned}
&\dashint_{D_r} |\nabla u_\varepsilon
-(e_i+(\nabla\phi_i)^{\varepsilon})\varphi_i|^{2}
\leq \dashint_{D_r} |\nabla u_\varepsilon
-(e_i+(\nabla\phi_i)^{\varepsilon})\eta_\varepsilon
\partial_i{\bar{u}_r}|^{2}
+
\dashint_{D_r} |(e_i+(\nabla\phi_i)^{\varepsilon})
\big(\eta_\varepsilon\partial_i{\bar{u}_r}-\varphi_i\big)|^{2}\\
&\lesssim \bigg\{\Big(\frac{\varepsilon}{r}\Big)^{\frac{1}{\gamma p'}}\Big(\dashint_{B_{2r/\varepsilon}} |
(\bar{\phi},\bar{\sigma})|^{2p}
\Big)^{\frac{1}{p}}
+\Big(\frac{\varepsilon}{r}\Big)^{\frac{1}{p}}\bigg\}
\dashint_{D_{4r}}|\nabla u_\varepsilon|^2
 +
\dashint_{D_r} |(e_i+(\nabla\phi_i)^{\varepsilon})
\big(\eta_\varepsilon
\partial_i{\bar{u}_r}-\varphi_i\big)|^{2},
\end{aligned}
\end{equation}
where the notation $(\bar{\phi},\bar{\sigma})$ is taken from the proof of Lemma $\ref{lemma:1}$.
We continue to estimate the last term
above by H\"older's inequality, Lemma $\ref{lemma:2}$ and
Remark $\ref{remark:1}$ as follows:
\begin{equation}\label{f:2.28}
\begin{aligned}
&\dashint_{D_r}|(e_i+(\nabla\phi_i)^{\varepsilon})
(\eta_\varepsilon\partial_i{\bar{u}_r}
-\varphi_i)|^{2}
\leq \Big(\dashint_{D_r}|(e_i+(\nabla\phi_i)^{\varepsilon})
|^{2p}\Big)^{\frac{1}{p}}
\Big(\dashint_{D_r}
|(\eta_\varepsilon\partial_i{\bar{u}_r}
-\varphi_i)|^{2p'}\Big)^{\frac{1}{p'}} \\
&\lesssim^{\eqref{pri:4}} \varepsilon^2
\Big\{1+\dashint_{B_{\frac{r}{\varepsilon}}}|\nabla \phi|^{2p}\Big\}^{\frac{1}{p}}
\bigg\{
\dashint_{D_r}|\nabla \eta_\varepsilon\nabla{\bar{u}_r}
+ \eta_\varepsilon\nabla^2{\bar{u}_r}|^{2p'}|\chi_*(\cdot/\varepsilon)|^{2p'}
\bigg\}^{\frac{1}{p'}}\\
&\lesssim^{\eqref{pri:7}}
\Big(\frac{\varepsilon}{r}\Big)^{\frac{1}{\gamma p'}-\kappa}
X_{\frac{r}{\varepsilon}}\Big\{1+\dashint_{B_{\frac{r}{\varepsilon}}}|\nabla \phi|^{2p}\Big\}^{\frac{1}{p}}\dashint_{D_{4r}}|\nabla u_\varepsilon|^2,
\end{aligned}
\end{equation}
where we also employ the layer and co-layer type estimates
$\eqref{f:2.2}$, $\eqref{f:2.29}$ in the last step.
Then, plugging the estimate $\eqref{f:2.28}$ back into $\eqref{f:2.27}$
we have
\begin{equation*}
\begin{aligned}
\dashint_{D_r} |\nabla u_\varepsilon
-(e_i+(\nabla\phi_i)^{\varepsilon})\varphi_i|^{2}
\lesssim
\Big(\frac{\varepsilon}{r}\Big)^{\frac{1}{\gamma p'}-\kappa}
X_{\frac{r}{\varepsilon}}\bigg\{\Big(\dashint_{B_{2r/\varepsilon}} |
(\bar{\phi},\bar{\sigma},\nabla\phi)|^{2p}
\Big)^{\frac{1}{p}}
+1\bigg\}\dashint_{D_{4r}}|\nabla u_\varepsilon|^2,
\end{aligned}
\end{equation*}
where one can prefer $0<\kappa<\frac{1}{\gamma p'}$ in the above estimate.
As a result, similar to the definition of $\chi_*$
given in Lemma $\ref{lemma:1}$, for any
$\theta>0$, we can define the minimal radius as
\begin{equation*}
\tilde{\chi}_{*}(0):=
 \inf\Bigg\{l>0:
 \Big(\frac{1}{R}\Big)^{\frac{1}{\gamma p'}-\kappa}
 X_R\bigg[\Big(\dashint_{B_{2R}}
 |(\phi,\sigma)-(\phi,\sigma)_{2R}|^{2p}
\Big)^{\frac{1}{p}}
+
\Big(\dashint_{B_{2R}}
 |\nabla\phi|^{2p}
\Big)^{\frac{1}{p}}
+1
\bigg]\leq \theta;\forall R\geq l\Bigg\}.
\end{equation*}
By definition, it is not hard to observe that $\tilde{\chi}_*\geq \chi_*$.
Also, by a similar argument given for $\eqref{m-1}$,
the $\beta$-th moment of
$\tilde{\chi}_*$ follows from $\eqref{pri:2}$, $\eqref{pri:3}$,
$\eqref{pri:7-1}$, and the dyadic decomposition of scales.
\end{proof}

\begin{remark}\label{remark:4}
\emph{The minimal radius $\tilde{\chi}_*$ introduced in Corollary $\ref{cor:1}$ can be verified to have
the $\frac{1}{L}$-Lipschitz continuity by the same argument stated in Remark $\ref{remark:3}$.
Therefore, we merely use the notation $\chi_*$ to represent the minimal radius in later sections, although
it indeed comes from Corollary $\ref{cor:1}$ sometimes.}
\end{remark}


Shen's real argument\footnote{
``Real'' is referred to the real variable theory,
which includes the notion of maximal functions,
certain covering lemmas (e.g. Vitali covering lemma,
Calder\'on-Zygmund decomposition),
the splitting of functions into
their large and small parts, etc.} was developed by Z. Shen \cite{Shen05,Shen07}, originally
inspired by L. Caffarelli and I. Peral \cite{Caffarelli-Peral98}.
Recently, he provided a weighted version (stated below)
which played an important role in the present work.

\begin{lemma}[Shen's real argument of weighted version \cite{Shen20}]\label{shen's lemma2}
Let $0<p_0<p<p_1$, $\omega\in A_{\frac{p}{p_0}}$weight
and $\Omega$ be a bounded Lipschitz domain,
$F\in L^{p_0}(\Omega)$ and $\textcolor[rgb]{0.00,0.00,1.00}{g}\in L^{p_1}(\Omega)$.
Let $B_0:=B_{r_0}(x_0)$, where $x_0\in\partial\Omega$ and
$0<r_0<c_0\text{diam}(\Omega)$. Suppose that for each ball $B$
with the property that $|B|\leq  c_1|B_0|$
and either $4B\subset 2B_0\cap\Omega$ or $x_B\in\partial\Omega\cap 2B_0$, there exist two measurable functions
$F_B$ and $R_B$ on $\Omega\cap2B$,
such that $|F|\leq |W_B| +
|V_B|$ on $\Omega\cap2B$, and
\begin{subequations}
\begin{align}
\Big(\dashint_{2B\cap\Omega}|V_B|^{p_1}\omega
\Big)^{\frac{1}{p_1}}
&\lesssim_{N_1} \bigg\{
\Big(\dashint_{\alpha B\cap\Omega}|F|^{p_0}\Big)^{\frac{1}{p_0}}
+\sup_{4B_0\supseteq B^\prime\supseteq B}
\Big(\dashint_{B^\prime\cap\Omega}|g|^{p_0}
\Big)^{\frac{1}{p_0}}
\bigg\}\Big(\dashint_{B}\omega\Big)^{1/{p_1}},
\label{pri:3.3a}\\
\Big(\dashint_{2B\cap\Omega}|W_B|^{p_0}
\Big)^{\frac{1}{p_0}}
&\lesssim_{N_2}
\sup_{4B_0\supseteq B^\prime\supseteq B}
\Big(\dashint_{B^\prime\cap\Omega}|g|^{p_0}
\Big)^{\frac{1}{p_0}}
+\eta\Big(\dashint_{\alpha B\cap\Omega}
|F|^{p_0}\Big)^{\frac{1}{p_0}},
\label{pri:3.3b}
\end{align}
\end{subequations}
where $N_1, N_2>0$, $\eta\geq0$
and $0<c_1<1$ with $\alpha>2$. Then there exists $\theta_{0}>0$, depending on
$d,p_0,p_1,p,N_1,[\omega]_{A_{p/p_0}}$ and the
Lipschitz character of $\Omega$, with the property that if $0\leq \eta\leq\eta_0$, then
\begin{equation}\label{pri:3.4}
\Big(\dashint_{B_0\cap\Omega}|F|^p\omega\Big)^{\frac{1}{p}}
\lesssim \bigg\{\Big(\dashint_{4B_0\cap\Omega}|F|^{p_0}\Big)^{\frac{1}{p_0}}
\Big(\dashint_{B_0}\omega\Big)^{1/{p}}
+\Big(\dashint_{4B_0\cap\Omega}|\textcolor[rgb]{0.00,0.00,1.00}{g}|^p\omega\Big)^{\frac{1}{p}}\bigg\},
\end{equation}
where the multiplicative constant relies on
$d,p_0,p_1,p,c_1,\textcolor[rgb]{0.00,0.00,1.00}{\alpha},N_1,N_2,[\omega]_{A_{p/p_0}}$ and the Lipschitz
character of $\Omega$.
\end{lemma}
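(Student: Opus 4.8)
The statement is a weighted real-variable (Calder\'on--Zygmund type) lemma, and the plan is to run Shen's stopping-time scheme from \cite{Shen05,Shen07,Shen20}, but against the measure $d\mu=\omega\,dx$, which is doubling because $\omega\in A_{p/p_0}\subset A_\infty$. First I would pass to a normalized setting: by rescaling we may take $r_0$ fixed, and by the homogeneity of \eqref{pri:3.4} in the pair $(F,g)$ we may assume
\[
\Big(\dashint_{4B_0\cap\Omega}|F|^{p_0}\Big)^{1/p_0}+\Big(\dashint_{4B_0\cap\Omega}|g|^{p}\omega\Big)^{1/p}\Big(\dashint_{B_0}\omega\Big)^{-1/p}\le 1 .
\]
I then introduce the localized maximal operator $\mathcal{M}(h)(x):=\sup\{\dashint_{B'\cap\Omega}|h|:\ x\in B',\ |B'|\le c_1|B_0|,\ \text{and either }4B'\subset 2B_0\cap\Omega\text{ or }x_{B'}\in\partial\Omega\cap 2B_0\}$, so that the hypotheses \eqref{pri:3.3a}--\eqref{pri:3.3b} are at one's disposal for every ball that enters $\mathcal{M}$; note also that the quantity $\sup_{4B_0\supseteq B'\supseteq B}(\dashint_{B'\cap\Omega}|g|^{p_0})^{1/p_0}$ on the right-hand sides is dominated by $(\mathcal{M}(|g|^{p_0}))^{1/p_0}$.

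The core is a \emph{weighted good-$\lambda$ inequality}. For $\lambda\ge\lambda_0$ with $\lambda_0$ a large absolute constant, the level set $E_\lambda:=\{x\in B_0\cap\Omega:\ \mathcal{M}(|F|^{p_0})(x)>\lambda\}$ is a proper subset of $B_0\cap\Omega$ and admits a Vitali/Whitney-type decomposition $E_\lambda=\bigcup_j(\Omega\cap B_j)$, adapted to $\Omega$, by balls $B_j$ (each either interior with $4B_j\subset 2B_0\cap\Omega$ or boundary with $x_{B_j}\in\partial\Omega\cap 2B_0$) of bounded overlap enjoying the stopping property $\dashint_{\alpha B_j\cap\Omega}|F|^{p_0}\lesssim\lambda$, the covering constants depending only on the Lipschitz character of $\Omega$. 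On each $B_j$ I apply \eqref{pri:3.3a} and \eqref{pri:3.3b} and write $|F|\le|W_{B_j}|+|V_{B_j}|$ on $\Omega\cap 2B_j$. Excising the set $\{\mathcal{M}(|g|^{p_0})>\gamma\lambda\}$ and using Chebyshev at exponent $1$ with \eqref{pri:3.3b} on $W_{B_j}$, the Lebesgue fraction of $\{x\in\Omega\cap B_j:|W_{B_j}|^{p_0}>\tfrac{N}{2}\lambda\}$ in $B_j$ is $\lesssim N_2^{p_0}(\gamma+\eta^{p_0})/N$; using Chebyshev at exponent $p_1/p_0>1$ with \eqref{pri:3.3a} on $V_{B_j}$, the $\omega$-mass of $\{x\in\Omega\cap B_j:|V_{B_j}|^{p_0}>\tfrac{N}{2}\lambda\}$ is $\lesssim N_1^{p_1}N^{-p_1/p_0}\omega(B_j)$ --- this is precisely where $p_1>p_0$ is exploited. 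Converting the small Lebesgue fraction of the $W$-set into a small $\omega$-fraction via the reverse-doubling ($A_\infty$) inequality $\omega(S)\lesssim(|S|/|B_j|)^{\delta}\omega(B_j)$, $\delta=\delta(d,[\omega]_{A_{p/p_0}})\in(0,1)$, and summing over $j$ by finite overlap, I arrive at
\[
\omega(E_{N\lambda})\ \le\ C\Big(N^{-p_1/p_0}+\eta^{\delta p_0}+\gamma^{\delta}\Big)\,\omega(E_\lambda)+\omega\big(\{x\in B_0\cap\Omega:\ \mathcal{M}(|g|^{p_0})(x)>\gamma\lambda\}\big),
\]
with $C$ depending on $d,N_1,N_2,\alpha,c_1,[\omega]_{A_{p/p_0}}$.

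To finish, I multiply by $\lambda^{p/p_0-1}$ and integrate over $\lambda\in[\lambda_0,\infty)$. Choosing first $N$ large so that $CN^{-p_1/p_0}<\tfrac13 N^{-p/p_0}$ (possible because $p<p_1$), and then $\eta_0,\gamma$ small so that $C(\eta_0^{\delta p_0}+\gamma^{\delta})<\tfrac13 N^{-p/p_0}$, the first term on the right is absorbed into the left-hand side, yielding
\[
\int_{B_0\cap\Omega}\big(\mathcal{M}(|F|^{p_0})\big)^{p/p_0}\omega\ \lesssim\ \lambda_0^{p/p_0}\,\omega(B_0)+\int_{B_0\cap\Omega}\big(\mathcal{M}(|g|^{p_0})\big)^{p/p_0}\omega .
\]
Since $|F|^{p_0}\le\mathcal{M}(|F|^{p_0})$ a.e.\ on $B_0\cap\Omega$ and, because $\omega\in A_{p/p_0}$, the operator $\mathcal{M}$ is bounded on $L^{p/p_0}(\omega)$ with norm controlled by $[\omega]_{A_{p/p_0}}$, the right-hand side is $\lesssim\lambda_0^{p/p_0}\omega(B_0)+\int_{4B_0\cap\Omega}|g|^{p}\omega$; undoing the normalization (so that $\lambda_0\sim1$ absorbs $\dashint_{4B_0\cap\Omega}|F|^{p_0}$ and the normalized $g$-term) and using the doubling of $\omega$ to replace $\dashint_{B_0}\omega$ by $\dashint_{4B_0}\omega$, one recovers \eqref{pri:3.4} with the advertised dependence of the constants.

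\textbf{Main obstacle.} The technical heart is the boundary-adapted covering/stopping-time construction: one must build, uniformly for interior balls ($4B\subset 2B_0\cap\Omega$) and for boundary balls ($x_B\in\partial\Omega\cap 2B_0$), a finite-overlap Vitali/Whitney family realizing $\dashint_{\alpha B_j\cap\Omega}|F|^{p_0}\lesssim\lambda$ with all constants depending only on $M_0$, and one must make sure these are exactly the two regimes under which \eqref{pri:3.3a}--\eqref{pri:3.3b} are postulated. The second delicate point is the bookkeeping that makes a Lebesgue-measure Calder\'on--Zygmund decomposition drive a \emph{weighted} conclusion: this is where the $A_\infty$ reverse-doubling inequality and the $L^{p/p_0}(\omega)$-boundedness of $\mathcal{M}$ (both quantitative in $[\omega]_{A_{p/p_0}}$) are indispensable, and where the strict inequality $p<p_1$ must be used to manufacture the absorbable factor. (Alternatively one could deduce the weighted statement from its unweighted version by Rubio de Francia extrapolation, but the direct argument above keeps the dependence on $[\omega]_{A_{p/p_0}}$ explicit, as required for the later applications.)
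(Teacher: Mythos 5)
The paper does not actually prove this lemma; it simply cites \cite[Theorem 4.1]{Shen20} and records the change of symbols and the observation that the value of $\alpha$ is inessential, deferring to the real-variable mechanism of \cite{Shen05,Shen07}. Your proposal is therefore a reconstruction of the cited proof rather than a rival to an argument in the paper, and the reconstruction is correct: it runs precisely the weighted good-$\lambda$/stopping-time scheme that Shen's proof rests on — a boundary-adapted Whitney--Vitali covering of the superlevel set of the localized maximal function, Chebyshev at exponent $p_1/p_0>1$ applied to $V_B$ to produce the absorbable factor $N^{-p_1/p_0}$ (this is exactly where $p<p_1$ is used), conversion of the small Lebesgue fraction for the $W_B$ set into a small weighted fraction via $A_\infty$ reverse doubling, integration against $\lambda^{p/p_0-1}\,d\lambda$, and the $L^{p/p_0}(\omega)$-boundedness of the maximal operator to close the $g$-term. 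Two standard details are elided and worth flagging. First, because $E_\lambda$ is defined through $\mathcal{M}(|F|^{p_0})$, the estimate of $\omega(E_{N\lambda}\cap B_j)$ cannot come directly from Chebyshev on $|W_{B_j}|^{p_0}$ and $|V_{B_j}|^{p_0}$; one must first reduce, via the stopping value and the pointwise bound $|F|\le|W_{B_j}|+|V_{B_j}|$ on $2B_j\cap\Omega$, to the restricted maximal functions $\mathcal{M}\big(|W_{B_j}|^{p_0}\chi_{2B_j\cap\Omega}\big)$ and $\mathcal{M}\big(|V_{B_j}|^{p_0}\chi_{2B_j\cap\Omega}\big)$, and then invoke weak-$(1,1)$ (unweighted) for the former and weak-$(p_1/p_0,p_1/p_0)$ in $L^{p_1/p_0}(\omega)$ for the latter, using $\omega\in A_{p/p_0}\subset A_{p_1/p_0}$. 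Second, the absorption in the integrated good-$\lambda$ inequality requires an a priori finiteness for $\int_{B_0\cap\Omega}\big(\mathcal{M}(|F|^{p_0})\big)^{p/p_0}\omega$, which is handled by a routine truncation of $F$. Neither point changes the architecture; your sketch is a faithful account of the proof the paper is citing.
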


\begin{proof}
See \cite[Theorem 4.1]{Shen20}.
In order to ensure the consistency on symbols in later sections,
we have replaced the original symbols as follows:
$F_B$, $R_B$ and $f$ in the original context are replaced by
$W_B$, $V_B$ and $g$, respectively.
Also, $\alpha=8$ was taken in the original statement, whereas
according to the given proof (see \cite[pp.16]{Shen20})
the concrete value of $\alpha$ is not vital
for the conclusion $\eqref{pri:3.4}$, since
its proof relies on \cite[Theorem 2.1]{Shen20} and
``$100B$'' appeared in the corresponding right-hand side therein is not essential
due to the real variable theory employed there (see
\cite[Theorems 3.1,3.3]{Shen05} or \cite[Theorem 3.2]{Shen07}).
\end{proof}


\begin{lemma}[primary geometry on integrals]\label{lemma:9}
Let $\chi_{*}$ be given as in Lemma $\ref{lemma:1}$ or Corollary $\ref{cor:1}$ with the $\frac{1}{L}$-Lipschitz continuity, and $\varepsilon\in(0,1]$.
Let $f\in L^1_{\emph{loc}}(\mathbb{R}^d)$, and
$\Omega\subset\mathbb{R}^d$ be a bounded Lipschitz domain with
$X\in\partial\Omega$. Then
there hold the following inequalities:
\begin{itemize}
  \item For all $B_r(X)
  \subset\mathbb{R}^d$ and $x_0\in B_r(X)\cap\Omega$ with
   $r<\frac{\varepsilon}{4}\chi_*(x_0/\varepsilon)$, we have
  \begin{equation}\label{pri:3.5-1}
  \Big(\dashint_{U_{*,\varepsilon}(x_0)}|f|\Big)^{\frac{1}{s}}
  \lesssim
  \dashint_{D_{5r}(X)}
  \Big(
  \dashint_{U_{*,\varepsilon}(x)}
  |f|\Big)^{\frac{1}{s}} dx,
  \end{equation}
  where $s>0$.
  \item For all balls $B_r(X)
  \subset\mathbb{R}^d$ with $r\geq \frac{\varepsilon}{4}
  \chi_*(X/\varepsilon)$,
  we have
\begin{subequations}
\begin{align}
  &\dashint_{D_r(X)} |f|
  \lesssim \dashint_{D_{2r}(X)}
  \Big(\dashint_{U_{*,\varepsilon}(x)}|f|\Big)dx
  \lesssim \dashint_{D_{7r}(X)} |f|;
  \label{pri:3.6-1}\\
  &
\int_{\Omega}|f|\sim\int_{\Omega}
  \Big(\dashint_{U_{*,\varepsilon}(x)}|f|\Big)dx,
  \label{pri:3.6-1*}
\end{align}
\end{subequations}
\end{itemize}
where the multiplicative constant depends only on $d$ and $M_0$.
\end{lemma}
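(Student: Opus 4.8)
Everything is reduced to two structural facts about the averaging sets $U_{*,\varepsilon}(x)=B_{\varepsilon\chi_*(x/\varepsilon)}(x)\cap\Omega$. First, writing $\rho(x):=\varepsilon\chi_*(x/\varepsilon)$, the map $\rho$ is $\tfrac1L$-Lipschitz with $L\ge 8$: it inherits this from the $\tfrac1L$-Lipschitz continuity of $\chi_*$ recorded in Remark \ref{remark:3}, the two factors $\varepsilon$ cancelling. Consequently $\rho$ is \emph{slowly varying}: if $|x-y|\le\rho(x)$ then $\tfrac78\rho(x)\le\rho(y)\le\tfrac98\rho(x)$, and more generally for $|x-y|\lesssim\rho(x)$ the sets $U_{*,\varepsilon}(x)$ and $U_{*,\varepsilon}(y)$ are each contained in a fixed dilate of the other. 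Second, a bounded Lipschitz domain satisfies the interior corkscrew/measure–density property $|B_t(z)\cap\Omega|\sim_{d,M_0}\min(t,R_0)^d$ for every $z\in\overline\Omega$, so that $|U_{*,\varepsilon}(x)|\sim\min(\rho(x),R_0)^d$ and, in particular, $|U_{*,\varepsilon}(x)|\lesssim_{d,M_0}|U_{*,\varepsilon}(x')|$ whenever $\rho(x)\le C\rho(x')$. The common device is Fubini: for a measurable $E\subseteq\Omega$,
\[
\int_E\Big(\dashint_{U_{*,\varepsilon}(x)}|f|\Big)dx=\int_\Omega|f(y)|\,K_E(y)\,dy,\qquad
K_E(y):=\int_{\{x\in E:\,|x-y|<\rho(x)\}}\frac{dx}{|U_{*,\varepsilon}(x)|},
\]
so the lemma becomes a list of two–sided pointwise bounds on $K_E$.

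I would first prove \eqref{pri:3.6-1*}. With $E=\Omega$, if $|x-y|<\rho(x)$ then $\rho(x)\sim\rho(y)$, hence $\{x\in\Omega:|x-y|<\rho(x)\}$ is trapped between $B_{\rho(y)/2}(y)\cap\Omega$ and $B_{2\rho(y)}(y)\cap\Omega$, on which $|U_{*,\varepsilon}(x)|\sim\min(\rho(y),R_0)^d$; the measure–density bound then gives $K_\Omega(y)\sim_{d,M_0}1$, which is exactly \eqref{pri:3.6-1*}. For \eqref{pri:3.6-1} I localize to $E=D_{2r}(X)$ with $r\ge\tfrac14\rho(X)$: the $\tfrac1L$-Lipschitz bound gives $\rho(x)\le\rho(X)+\tfrac{2r}{L}<5r$ for every $x\in D_{2r}(X)$, so $K_{D_{2r}(X)}(y)\ne0$ forces $|y-X|<|y-x|+|x-X|<5r+2r=7r$, confining the $y$-integral to $D_{7r}(X)$; together with $K_{D_{2r}(X)}\le K_\Omega\lesssim1$ and $|D_{2r}(X)|\sim|D_{7r}(X)|\sim r^d$ (measure density; the degenerate range $r\gtrsim R_0$, where all truncated balls equal $\Omega$, is trivial) this yields the right–hand inequality. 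For the left–hand one I check that $K_{D_{2r}(X)}(y)\gtrsim_{d,M_0}1$ for every $y\in D_r(X)$: the set $\{x:|x-y|<\rho(x)\}$ contains $B_{\rho(y)/2}(y)$, so its intersection with $D_{2r}(X)$ has measure $\gtrsim\min(\rho(y),r)^d$ by measure density, while on that set $|U_{*,\varepsilon}(x)|\sim\min(\rho(y),R_0)^d$ and $\rho(y)<5r$; the ratio is $\gtrsim_{d,M_0}1$ in every case. Replacing $|D_{2r}(X)|$ by $|D_r(X)|$ finishes \eqref{pri:3.6-1}.

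The main point is \eqref{pri:3.5-1}. Since $r<\tfrac14\rho(x_0)$ and $x_0\in D_r(X)$, every $x\in D_{5r}(X)$ satisfies $|x-x_0|<6r<\tfrac32\rho(x_0)$, so $\rho(x)\sim\rho(x_0)$ and $|U_{*,\varepsilon}(x)|\sim|U_{*,\varepsilon}(x_0)|\sim\min(\rho(x_0),R_0)^d$. The crux is the geometric estimate
\[
\bigl|\{x\in D_{5r}(X):\ |x-y|<\rho(x)\}\bigr|\ \gtrsim_{d,M_0}\ r^d
\qquad\text{for every } y\in U_{*,\varepsilon}(x_0),
\]
equivalently $K_{D_{5r}(X)}(y)\gtrsim_{d,M_0}|D_{5r}(X)|/|U_{*,\varepsilon}(x_0)|$; it is obtained from the interior corkscrew condition and the proximity $|x_0-X|<r$, by selecting a corkscrew ball of a suitable truncated ball about $X$ positioned on the side toward $y$ and using that $\rho$ varies by at most $\tfrac{6r}{L}$ across $D_{5r}(X)$. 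Granting it, put $\phi(x):=\bigl(\int_{U_{*,\varepsilon}(x_0)}|f|\bigr)^{-1}\int_{U_{*,\varepsilon}(x)\cap U_{*,\varepsilon}(x_0)}|f|\in[0,1]$; Fubini and the displayed bound give $\dashint_{D_{5r}(X)}\phi\,dx\gtrsim_{d,M_0}1$, so by a reverse Markov inequality $\phi\gtrsim_{d,M_0}1$ on a subset $S\subseteq D_{5r}(X)$ with $|S|\gtrsim_{d,M_0}|D_{5r}(X)|$, and there $\dashint_{U_{*,\varepsilon}(x)}|f|\ge|U_{*,\varepsilon}(x)|^{-1}\int_{U_{*,\varepsilon}(x)\cap U_{*,\varepsilon}(x_0)}|f|\gtrsim_{d,M_0}\dashint_{U_{*,\varepsilon}(x_0)}|f|$. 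Integrating the $s$-th root over $S$ then gives \eqref{pri:3.5-1} for every $s>0$ at once. The hard part will be the boxed estimate: this is precisely where the non–smoothness of $\partial\Omega$ enters, since one has to exclude that the small patch $D_{5r}(X)$ near the boundary "misses'' a point $y$ lying far inside $U_{*,\varepsilon}(x_0)$, which forces a careful argument with the Lipschitz charts and the corkscrew points of $\Omega$ at $X$.
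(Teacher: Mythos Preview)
Your treatment of \eqref{pri:3.6-1} and \eqref{pri:3.6-1*} via the kernel $K_E$ is exactly the Fubini argument the paper uses, so that part is fine.

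For \eqref{pri:3.5-1} your route diverges from the paper's and carries a real gap. The paper does \emph{not} go through your ``hard'' pointwise lower bound $|\{x\in D_{5r}(X):|x-y|<\rho(x)\}|\gtrsim_{d,M_0} r^d$ for every $y\in U_{*,\varepsilon}(x_0)$; it uses a short translation--covering trick instead. Since the left side of \eqref{pri:3.5-1} is a constant, one may average it over $x\in D_r(X)$; for such $x$ one has $U_{*,\varepsilon}(x_0)\subset B_{\rho(x_0)+2r}(x)\cap\Omega$, and a fixed family $z_1,\dots,z_N\in B_{4r}$ with $N=N(d)$ covers $B_{\rho(x_0)+2r}$ by balls $B_{\rho(x_0)-r}(\cdot+z_i)$. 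The $\tfrac1L$-Lipschitz bound gives $\rho(x')\ge\rho(x_0)-r$ for every $x'\in B_{5r}(X)$, so $B_{\rho(x_0)-r}(x+z_i)\cap\Omega\subset U_{*,\varepsilon}(x+z_i)$. For $s\ge1$ subadditivity of $t\mapsto t^{1/s}$ then yields
\[
\Big(\dashint_{U_{*,\varepsilon}(x_0)}|f|\Big)^{1/s}\lesssim
\sum_{i=1}^N\dashint_{D_r(X)}\Big(\dashint_{U_{*,\varepsilon}(x+z_i)}|f|\Big)^{1/s}dx
\lesssim\dashint_{D_{5r}(X)}\Big(\dashint_{U_{*,\varepsilon}(x)}|f|\Big)^{1/s}dx,
\]
and the case $0<s<1$ follows from $s=1$ by Jensen. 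No reverse--Markov step, no pointwise kernel lower bound.

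Your boxed estimate, by contrast, is genuinely delicate and your sketch does not prove it. When $|y-x_0|$ is within $O(r)$ of $\rho(x_0)$ the admissible $x$'s must lie on the $y$-side of $x_0$, and it is not obvious that this half of $B_{5r}(X)$ meets $\Omega$ in measure $\gtrsim_{d,M_0} r^d$: the segment from $x_0$ toward $y$ may exit $\Omega$, and ``selecting a corkscrew ball positioned on the side toward $y$'' is not a statement about Lipschitz domains one can quote --- the corkscrew condition gives a ball inside $D_{cr}(X)$, but not one aligned with an arbitrary direction $y-x_0$. One can likely push this through using the local graph representation $x_d>\varphi(x')$ and the fact that $y\in\Omega$ forces $y$ to sit above the graph, but that argument has to be written; as it stands the proposal for \eqref{pri:3.5-1} is incomplete. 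The paper's covering argument sidesteps the issue entirely by never needing to locate, for a \emph{fixed} $y$, a large set of admissible $x$'s: it instead shows that for \emph{every} $x\in D_r(X)$ a bounded number of translates of $U_{*,\varepsilon}$-type sets already cover $U_{*,\varepsilon}(x_0)$.
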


\begin{proof}
The proof is standard and we provide a proof for the sake of completeness. For the case of $\Omega=\mathbb{R}^d$ we
refer the reader to \cite[Lemma 6.5]{Duerinckx-Otto20} and \cite[Lemma 6.2]{Wang-Xu-Zhao21} for a deterministic case. Throughout the proof,
the key ingredient is the $\frac{1}{L}$-Lipschitz
continuity of $\chi_{*}$. Since
the stated estimates $\eqref{pri:3.5-1}$ and
$\eqref{pri:3.6-1}$ are translate-invariant alone
the boundary $\partial\Omega$, it's fine to assume $X=0$,
and we omit the center of $B_r(0)$ in the notation.

We first show the estimate $\eqref{pri:3.5-1}$ for the case of $s\geq 1$.
For any $x\in B_r\cap\Omega$, noting that
$B_{*,\varepsilon}(x_0)\cap\Omega \subset
B_{\varepsilon\chi_*(x_0/\varepsilon)+2r}(x)\cap\Omega$, we have
\begin{equation*}
 \Big(\dashint_{B_{*,\varepsilon}(x_0)\cap\Omega} |f|\Big)^{\frac{1}{s}}
 \lesssim \dashint_{B_r\cap\Omega} \Big(\dashint_{B_{\varepsilon\chi_{*}(x_0/\varepsilon)+2r}(x)\cap\Omega} |f|\Big)^{\frac{1}{s}}.
\end{equation*}
It's known that
there exists a finite integer $N>0$, depending only on $d$,
and a family of points $\{z_i\}_{i=1}^{N}\subset B_{4r}$,
such that $B_{\varepsilon\chi_{*}(x_0/\varepsilon)+2r}\subset
\cup_{i=1}^N B_{\varepsilon\chi_{*}(x_0/\varepsilon)-r}(z_i)$.
Therefore, by noting $z_i+B_r\subset B_{5r}$ and $\varepsilon\chi_*(x_0/\varepsilon)-r
\leq \varepsilon\chi_{*}(x/\varepsilon)
\leq 2\varepsilon\chi_{*}(x_0/\varepsilon)$ holding for any $x\in B_{5r}$
(due to the $\frac{1}{L}$-Lipschitz continuity of
$\chi_{*}$),  we can derive that
\begin{equation}\label{f:3.30}
\begin{aligned}
 \Big(\dashint_{B_{*,\varepsilon}(x_0)\cap\Omega} |f|\Big)^{\frac{1}{s}}
& \lesssim \dashint_{B_r\cap\Omega} \Big(\sum_{i=1}^{N} \dashint_{B_{\varepsilon\chi_{*}(x_0/\varepsilon)-r}(x+
 z_i)\cap\Omega} |f|\Big)^{\frac{1}{s}}\\
& \lesssim \sum_{i=1}^{N} \dashint_{B_r\cap\Omega} \Big( \dashint_{B_{\varepsilon\chi_{*}(x_0/\varepsilon)-r}(x+
 z_i)\cap\Omega} |f|\Big)^{\frac{1}{s}}
  \lesssim
 \dashint_{B_{5r}\cap\Omega}
 \Big(\dashint_{B_{*,\varepsilon}(x)\cap\Omega} |f|\Big)^{\frac{1}{s}}.
\end{aligned}
\end{equation}
Concerned with the case $0<s<1$, we appeal to the special case $s=1$ proved above and the desired result follows from H\"older's inequality, i.e.,
\begin{equation*}
\Big(\dashint_{B_{*,\varepsilon}(x_0)\cap\Omega} |f|\Big)^{\frac{1}{s}}
\lesssim^{\eqref{f:3.30}}
\Big(\dashint_{B_{5r}\cap\Omega}
 \dashint_{B_{*,\varepsilon}(x)\cap\Omega} |f|\Big)^{\frac{1}{s}}
\leq \dashint_{B_{5r}\cap\Omega}
 \Big(\dashint_{B_{*,\varepsilon}(x)\cap\Omega} |f|\Big)^{\frac{1}{s}}.
\end{equation*}
This coupled with $\eqref{f:3.30}$ yields the whole proof of $\eqref{pri:3.5-1}$.

\medskip

Then, we turn to the estimate $\eqref{pri:3.6-1}$.
For any $|x-y|\leq \varepsilon\chi_*(x/\varepsilon)$, it follows from the $\frac{1}{L}$-Lipschitz continuity of
$\chi_{*}$ that $\varepsilon\chi_{*}(x/\varepsilon)\sim
\varepsilon\chi_{*}(y/\varepsilon)$. Then we have
\begin{equation}\label{f:3.2-1}
\begin{aligned}
\dashint_{B_{2r}\cap\Omega}
  \dashint_{B_{*,\varepsilon}(x)\cap\Omega}|f|dx
\sim \frac{1}{|B_{2r}\cap\Omega|}
\int_{\mathbb{R}^d}|f(y)|
\frac{|B_{2r}\cap\Omega\cap B_{*,\varepsilon}(y)|}{|B_{*,\varepsilon}(y)\cap\Omega|}dy.
\end{aligned}
\end{equation}
On the one hand,
for all $y\in B_r\cap\Omega$ we have $\frac{1}{5}B_{*,\varepsilon}(y)\subset B_{r+\frac{1}{5}\varepsilon\chi_{*}(y/\varepsilon)}\subset B_{2r}$ (due to
the $\frac{1}{L}$-Lipschitz continuity of
$\chi_{*}$), and this together with $\eqref{f:3.2-1}$ leads to
\begin{equation}\label{f:3.3-1}
\dashint_{B_{2r}\cap\Omega}
  \dashint_{B_{*,\varepsilon}(x)\cap\Omega}|f|dx
\geq \frac{C_d}{|D_{2r}|}
\int_{\mathbb{R}^d}|f(y)|
\frac{|B_{2r}\cap\Omega\cap B_{*,\varepsilon}(y)|}{|B_{*,\varepsilon}(y)\cap\Omega|}dy
\gtrsim \dashint_{B_{r}\cap\Omega}|f(y)|dy.
\end{equation}
On the other hand, we collect $y\in\mathbb{R}^d$ such that $B_{2r}\cap B_{*,\varepsilon}(y)\not=\emptyset$, and it follows that
$|y|\leq 2r+\varepsilon\chi_{*}(y/\varepsilon)\leq 2r+\varepsilon\chi_{*}(0)+\frac{|y|}{L}
$. Therefore we have $|y|\leq 7r$. In this regard, from $\eqref{f:3.2-1}$ we obtain
\begin{equation}\label{f:3.4-1}
\dashint_{B_{2r}\cap\Omega}
  \dashint_{B_{*,\varepsilon}(x)\cap\Omega}|f|dx
\leq \frac{C_d}{|D_{2r}|}
\int_{\mathbb{R}^d}|f(y)|
\frac{|B_{2r}\cap\Omega\cap B_{*,\varepsilon}(y)|}{|B_{*,\varepsilon}(y)\cap\Omega|}dy
\lesssim \dashint_{B_{7r}\cap\Omega}|f(y)|dy.
\end{equation}
Consequently, combining the estimates $\eqref{f:3.3-1}$ and $\eqref{f:3.4-1}$
yields the stated estimate $\eqref{pri:3.6-1}$.

Finally, we show the estimate $\eqref{pri:3.6-1*}$.
By the same reason as given for $\eqref{f:3.2-1}$, we can derive that
\begin{equation*}
\begin{aligned}
\int_{\Omega}\Big(\dashint_{U_{*,\varepsilon}(x)}|f|\Big)dx
&=\int_{\mathbb{R}^d}
\int_{\mathbb{R}^d}\frac{1}{|U_{*,\varepsilon}(x)|}
I_{\Omega}(x)I_{\{y\in\Omega:|y-x|<\varepsilon\chi_{*}(x/\varepsilon)\}}(y)
|f(y)|dy dx\\
&\sim
\int_{\mathbb{R}^d}
\int_{\mathbb{R}^d}\frac{1}{|U_{*,\varepsilon}(y)|}
I_{\Omega}(x)I_{\{y\in\Omega:|y-x|<\theta_2^{-1}\varepsilon\chi_{*}(y/\varepsilon)\}}(y)
|f(y)|dy dx =:J,
\end{aligned}
\end{equation*}
where $\theta_2:=(1-1/L)\in[7/8,1)$ (see Remark $\ref{remark:3}$ for $L$).
By Fubini's theorem, we observe that
\begin{equation*}
\int_{\mathbb{R}^d} I_{\Omega}(x)
\Big(\int_{\mathbb{R}^d} I_{\{y\in\Omega:|y-x|<\theta_2^{-1}
\varepsilon\chi_{*}(y/\varepsilon)\}}(y)dy\Big) dx
=\int_{\mathbb{R}^d} I_{\Omega}(y)
\Big(\int_{\mathbb{R}^d} I_{\{x\in\Omega:|x-y|<\theta_2^{-1}
\varepsilon\chi_{*}(y/\varepsilon)\}}(x)
dx\Big) dy
\end{equation*}
This implies that $J=\theta_2^{-d}\int_{\Omega}|f(y)|dy$,
which consequently leads to the stated estimate $\eqref{pri:3.6-1*}$.
\end{proof}

\section{Calder\'on-Zygmund estimates}\label{section:3}

We first introduce some notations used throughout this section.
Let the index $p_*$ be fixed as follows:
\begin{equation}\label{upindex-p}
p_* := \left\{\begin{aligned}
 &\infty, &\quad&\text{if}~ \Omega \text{~is~the regular SKT~}
 (\text{or}~C^1);\\
 &2d/(d-1)+\theta
 &\quad&\text{if}~
 \Omega \text{~is the Lipschitz domain with}~M_0~
 \text{large}\footnote{If the equations under consideration
are scalar valued,
let $p_1 =3+\theta$ if $d\geq3$; and $p_1=4+\theta$ if $d=2$.},
 \end{aligned}\right.
\end{equation}
where $0<\theta\ll 1$ depends only on
$d$ and $M_0$.
The above specific value of $p_*$
is determined by Lemma $\ref{lemma:16}$.
\footnotetext[11]{If the equations under consideration
are scalar valued,
let $p_* =3+\theta$ if $d\geq3$; and $p_*=4+\theta$ if $d=2$.}

\subsection{\centering 
Outline some basic ingredients}
\label{subsection:3.0}

Let $\chi_{*}$ be given as in Corollary $\ref{cor:1}$
with the $\frac{1}{L}$-Lipschitz continuity.
Then, in order to be consistent with the notation in Lemma $\ref{shen's lemma2}$,
we can define
\begin{equation}\label{f:3.6x}
F(x):=\Big(\dashint_{U_{*,\varepsilon}(x)}
 |\nabla u_\varepsilon|^2\Big)^{\frac{1}{2}};
 \qquad
 g(x):=\Big(\dashint_{U_{*,\varepsilon}(x)}
 |f|^2\Big)^{\frac{1}{2}},
\end{equation}
where $u_\varepsilon$ and $f$ are associated with certain equations.

How to utilize Shen’s real methods to address the Calder\'on-Zygmund estimates
is crucial for understanding the proof of
Theorem $\ref{thm:1}$.
We take the demonstration of the estimate $\eqref{A}$
as an example to explain it. Let $p_1\in(2,p_*)$ be arbitrarily fixed.

\noindent
\textbf{Step A.} \emph{Decompose the domain $\Omega$ and reduce
the desired estimate $\eqref{A}$ to some local estimates.}
Let $B_0\subset\mathbb{R}^d$ be a fixed ball with zero center
and the radius satisfying $r_{B_0}\sim R_0$. For any $x\in\mathbb{R}^d$,
we can denote by $B_0(x):=B_0+x$. Then, there exist two
integers $n_0, m_0$ with $n_0>m_0>0$ such that
\begin{equation}\label{f:decom-3}
\Omega\subset \Big(\cup_{i=1}^{m_0}4B_0(x_i)\Big)
  \cup \Big(\cup_{j=m_0+1}^{n_0}B_0(x_j)\Big) =: C_{\text{bdy}}
  \cup C_{\text{int}},
\end{equation}
where
$\{x_i\}_{i=1}^{m_0}\subset\partial\Omega$; and $\{x_j\}_{j=m_0+1}^{n_0}
\subset\Omega$ is such that $4B_0(x_j)\subset\Omega$ for each $j\in[m_0+1,n_0]$,
and we denote the boundary covering of $\Omega$ by $C_{\text{bdy}}$
and the interior covering of $\Omega$ by $C_{\text{int}}$ (see Figure \ref{figure}).
For each $i\in[1,n_0]$ and any $2\leq p<p_1$,
we manage to establish the following local estimates
\begin{equation}\label{f:3.1x}
\dashint_{\tilde{B}_0(x_i)\cap\Omega} F^p
\lesssim  \Big(\dashint_{4\tilde{B}_0(x_i)\cap\Omega} F^2\Big)^{\frac{p}{2}}
+ \dashint_{4\tilde{B}_0(x_i)\cap\Omega} g^p,
\end{equation}
where
$\tilde{B}_0(x_i) := 4B_0(x_i)$ if $i\in[1,m_0]$;
and $\tilde{B}_0(x_i) := B_0(x_i)$ if $i\in[m_0+1,n_0]$.
\emph{A covering argument} leads to
\begin{equation}\label{f:3.2x}
\int_{\Omega}F^p\lesssim \int_{\Omega}g^p.
\end{equation}
Then, by \emph{a duality argument}, the above estimate $\eqref{f:3.2x}$
also holds for $p_1'<p<2$, where $p_1'$ is the conjugate number of
$p_1$ (i.e., $1/p_1'+1/p_1=1$).
Thus, the desired estimate
$\eqref{A}$ has been reduced to $\eqref{f:3.1x}$.
\begin{figure}[htbp] 
\centering 
\includegraphics[width=1\textwidth]{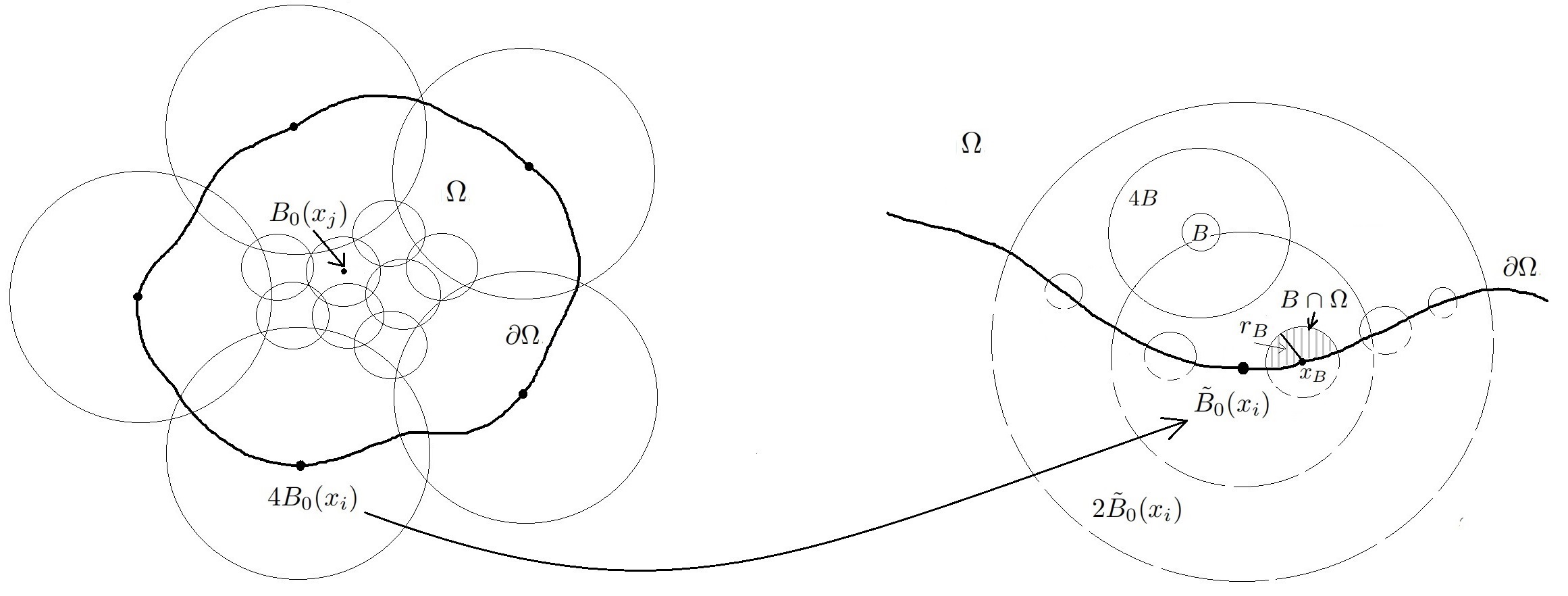} 
\caption{A covering of the domain $\Omega$} 
\label{figure} 
\end{figure}

\noindent
\textbf{Step B.} \emph{Employ Shen's real methods
(presented in Lemma $\ref{shen's lemma2}$ as a weighted version,
simply taking $\omega =1$ therein for the case without weights) to obtain
the stated local estimates $\eqref{f:3.1x}$.} By the decomposition $\eqref{f:decom-3}$,
the demonstration of $\eqref{f:3.1x}$ should be divided into two parts
according to $C_{\text{bdy}}$ and $C_{\text{int}}$, respectively.
Here we merely introduce the arguments for the part of $C_{\text{bdy}}$,
since the part of $C_{\text{int}}$ can be similarly obtained.

In view of Lemma $\ref{shen's lemma2}$,
for the given $\tilde{B}_0(x_i)$ with $x_i\in\partial\Omega$, we concern any ball
$B$ with the properties: $|B|\leq c_1|\tilde{B}_0|$ and either
$x_B\in 2\tilde{B}_0(x_i)\cap\partial\Omega$ or $4B\subset 2\tilde{B}_0(x_i)\cap\Omega$,
where $c_1\in(0,1)$ (see Figure \ref{figure}).
Then, we need to
split $F$ according to $B$.
Recalling the decomposition $\eqref{f:decom-2}$,
it heuristically suggests that
\begin{equation*}
\dashint_{U_{*,\varepsilon}(x)}\eqref{f:decom-2}
\qquad \forall x\in 2B\cap\Omega,
\end{equation*}
and the right-hand side of $\eqref{f:decom-2}$
provides us a concrete form of $W_B$ and $V_B$
stated in Lemma $\ref{shen's lemma2}$ as follows:
\begin{equation}\label{f:3.5x}
W_B(x):= \Big(\dashint_{U_{*,\varepsilon}(x)}|W_\varepsilon|^2\Big)^{\frac{1}{2}}
\quad \text{and} \quad
V_B(x):= \Big(\dashint_{U_{*,\varepsilon}(x)}|V|^2\Big)^{\frac{1}{2}},
\end{equation}
by which we immediately have $F\leq W_B + V_B$ on $2B\cap\Omega$ in our hand.

However, the decomposition of $F$ is subtle because
geometry properties of integrals make
the case complicated under the multi-scale action
(see Lemma $\ref{lemma:9}$). Therefore, the decomposition of $F$
is based on the scale of this ball $B$, and
the following two cases should be considered\footnote{We
impose the notation $W_B$ (and $V_B$) to stress that
the decomposition of $F$ is indeed subjected to the ball $B$.}:
\begin{itemize}
  \item [(I).] $0<r_B<\frac{\varepsilon}{4}\chi_{*}(x_B/\varepsilon)$
  and, in such the case, we merely prefer
  $W_B = 0$ and $V_B = F$;
  \item [(II).] $r_B\geq \frac{\varepsilon}{4}\chi_{*}(x_B/\varepsilon)$
  and, in such the case, $W_B$ and $V_B$ are given by $\eqref{f:3.5x}$.
\end{itemize}

Then,
the remaining work is to verify
the estimates $\eqref{pri:3.3a}$
and $\eqref{pri:3.3b}$
satisfied by $W_B$ and $V_B$,
respectively, for each of the above two cases.
We will address them with details in the next subsection, and
finally obtain the desired estimate $\eqref{f:3.1x}$.

\begin{remark}\label{remark:3.1}
\emph{The form of $W_B$ and $V_B$ in $\eqref{f:3.5x}$
is still a heuristic statement.
Therefore,
unlike the symbols $F$ and $g$ introduced
in $\eqref{f:3.6x}$, $W_B$ and $V_B$
will own different forms according to the concrete context
in Subsections $\ref{subsection:3.1}, \ref{subsection:3.2}$.}
\end{remark}

The covering argument and duality one mentioned in \textbf{Step A}
will be silently used several times later in Subsections
$\ref{subsection:3.1},\ref{subsection:3.2}$ and,
for the reader's convenience, we provide some details below.

1.\emph{ Show the covering argument for $\eqref{f:3.2x}$.}
We start from the energy type estimate
\begin{equation}\label{f:3.8x}
\int_{\Omega} F^2
 \lesssim^{\eqref{pri:3.6-1*}} \int_{\Omega}|\nabla u_\varepsilon|^2
\lesssim_{d,\lambda,M_0}\int_{\Omega}|f|^2
\lesssim^{\eqref{pri:3.6-1*}} \int_{\Omega}|g|^2,
\end{equation}
where the second inequality follows from the energy estimate of
$u_\varepsilon$. We now acquire that
\begin{equation}\label{f:3.7x}
\begin{aligned}
\int_{\Omega} F^p \leq
\sum_{i=1}^{n_0}\int_{\tilde{B}_0(x_i)\cap\Omega} F^p
&\lesssim^{\eqref{f:3.1x}}|B_0|^{1-\frac{p}{2}}
\sum_{i=1}^{n_0}
\Big(\int_{4\tilde{B}_0(x_i)\cap\Omega} F^2\Big)^{\frac{p}{2}}
+ \sum_{i=1}^{n_0}\int_{4\tilde{B}_0(x_i)\cap\Omega} g^p\\
&\lesssim |B_0|^{1-\frac{p}{2}}
\Big(\int_{\Omega}F^2\Big)^{\frac{p}{2}}
+ \int_{\Omega}g^p
\lesssim^{\eqref{f:3.8x}}
|\Omega|^{1-\frac{p}{2}}\Big(\int_{\Omega}g^2\Big)^{\frac{p}{2}}
+ \int_{\Omega}g^p,
\end{aligned}
\end{equation}
and then the above estimate implies
\begin{equation*}
\dashint_{\Omega} F^p
\lesssim \Big(\dashint_{\Omega} g^2\Big)^{\frac{p}{2}}
+ \dashint_{\Omega}g^p
\lesssim \dashint_{\Omega}g^p,
\end{equation*}
which consequently leads to the stated estimate $\eqref{f:3.2x}$.

2.\emph{ Show the duality argument.} For $p_1'<p< 2$,
we have the conjugate number $p'\in(2,p_1)$.
Let $h$ be any test function
subject to the certain space (related to $p'$), and
$v_\varepsilon$
is associated with $h$ by the adjoint equations:
\begin{equation*}
(\text{D}_\varepsilon^*)~\left\{\begin{aligned}
-\nabla\cdot a^{*\varepsilon}\nabla v_\varepsilon
&= \nabla\cdot h
&\quad&\text{in}~~\Omega;\\
v_\varepsilon &= 0
&\quad&\text{on}~~\partial\Omega,
\end{aligned}\right.
\quad\text{or}\quad
(\text{N}_\varepsilon^{*})~\left\{\begin{aligned}
-\nabla\cdot a^{*\varepsilon}\nabla v_\varepsilon
&= \nabla\cdot h
&\quad&\text{in}~~\Omega;\\
n\cdot a^{*\varepsilon} \nabla v_\varepsilon &= -n\cdot h
&\quad&\text{on}~~\partial\Omega.
\end{aligned}\right.
\end{equation*}
Then, integrating by parts leads to the following identity:
\begin{equation}\label{f:3.4x}
\int_{\Omega}\nabla u_\varepsilon\cdot h
=\int_{\Omega}\nabla v_\varepsilon\cdot f.
\end{equation}
Since we have the estimate $\eqref{f:3.2x}$ for $v_\varepsilon$ in hand,
it follows from Lemma $\ref{lemma:9}$ and H\"older's inequality that
\begin{equation}\label{f:3.3x}
\begin{aligned}
\int_{\Omega}\nabla v_\varepsilon\cdot f
&\lesssim_{d,M_0}^{\eqref{pri:3.6-1*}} \int_{\Omega}
\Big(\dashint_{U_{*,\varepsilon}(x)}|\nabla v_\varepsilon\cdot f|\Big) dx\\
&\lesssim \bigg(
\int_{\Omega}
\Big(\dashint_{U_{*,\varepsilon}(x)}
|\nabla v_\varepsilon|^2\Big)^{\frac{p'}{2}}dx\bigg)^{\frac{1}{p'}}
\Big(\int_{\Omega} g^{p}\Big)^{\frac{1}{p}}
\lesssim
\bigg(
\int_{\Omega}
\Big(\dashint_{U_{*,\varepsilon}(x)}
|h|^2\Big)^{\frac{p'}{2}}dx\bigg)^{\frac{1}{p'}}
\Big(\int_{\Omega} g^{p}\Big)^{\frac{1}{p}}.
\end{aligned}
\end{equation}
Plugging the estimate $\eqref{f:3.3x}$ back into $\eqref{f:3.4x}$,
we can infer from the definition of the dual space that
the estimate $\eqref{f:3.2x}$ also holds for $p_1'<p< 2$.
This together with $\eqref{f:3.2x}$ subsequently implies the
desired estimate $\eqref{A}$.

\subsection{\centering Quenched estimates}\label{subsection:3.1}

\noindent
In order to overcome the difficulties caused by the non-smoothness of
the boundary, we first make use of the interior quenched $W^{1,p}$ estimates of the correctors, whose proof essentially belongs to F. Otto (see Lemma $\ref{lemma:7}$). After that, we establish the reverse H\"older's  inequality in Lemma $\ref{lemma:14}$, which will play an important role not only in the quenched estimates but also in the annealed ones in our demonstration.
We show the proof of Proposition $\ref{P:10}$ with details, the novelty of which can be easily recognized compared to the methods directly appealing to large-scale Lipschitz estimates.

\begin{proposition}\label{P:10}
Let $\Omega\subset\mathbb{R}^d$ be a bounded Lipschitz domain with
$d\geq 2$ and $\varepsilon\in(0,1]$.
Suppose that  $\langle\cdot\rangle$ is stationary,
satisfying the spectral gap condition $\eqref{a:2}$,
and the (admissible) coefficient additionally satisfies $\eqref{a:3}$ and
the symmetry condition $a=a^*$. Let $u_\varepsilon$ and $f$ be associated with the equations:
\begin{equation}\label{pde:2-D}
\left\{\begin{aligned}
\mathcal{L}_\varepsilon (u_\varepsilon)
&= \nabla\cdot f
&\quad&\text{in}~~\Omega;\\
u_\varepsilon &= 0
&\quad&\text{on}~~\partial\Omega.
\end{aligned}\right.
\end{equation}
Then, there exists a stationary random field $\chi_*$
given as in Corollary $\ref{cor:1}$ with the
$\frac{1}{L}$-Lipschitz continuity (see Remark $\ref{remark:4}$) such that
the estimate $\eqref{A}$
holds for any $p>1$ satisfying $|\frac{1}{p}-\frac{1}{2}|\leq \frac{1}{2d}+\theta$
with $0<\theta\ll 1$ depending only on $d$ and $M_0$. Also,
for some $\omega\in A_1$ we have
\begin{equation}\label{pri:8-1}
\int_{\Omega}\Big(\dashint_{U_{*,\varepsilon}(x)}
|\nabla u_\varepsilon|^2 \Big)
[\omega(x)]^{\pm1}dx
\lesssim_{\lambda,d,M_0,p}
\int_{\Omega}\Big(\dashint_{U_{*,\varepsilon}(x)}|f|^2 \Big)
[\omega(x)]^{\pm1}dx,
\end{equation}
Moreover, if $\Omega$ is a regular SKT (or $C^1$)
domain (in such the case,
the symmetry condition $a=a^*$ is not needed),
then the estimate $\eqref{A}$ holds for any $1<p<\infty$;
For any $\omega\in A_p$ with $1<p<\infty$,
we have the following weighted estimate
\begin{equation}\label{f:12}
\int_{\Omega}\Big(
\dashint_{U_{*,\varepsilon}(x)}|\nabla u_\varepsilon|^2
\Big)^{\frac{p}{2}}\omega(x)dx
\lesssim_{\lambda,d,M_0,p,[\omega]_{A_p}}
\int_{\Omega}\Big(\dashint_{U_{*,\varepsilon}(x)}|f|^2\Big)^{\frac{p}{2}}
\omega(x)dx.
\end{equation}
\end{proposition}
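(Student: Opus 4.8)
The plan is to verify the two hypotheses of Shen's weighted real argument (Lemma \ref{shen's lemma2}) for the pair $(F,g)$ defined in \eqref{f:3.6x} and then combine the resulting local estimates by the covering and duality arguments of Subsection \ref{subsection:3.0}. Concretely, fix a ball $\tilde B_0(x_i)$ as in \eqref{f:decom-3}; for a ball $B$ with $|B|\le c_1|\tilde B_0|$ and either $x_B\in 2\tilde B_0\cap\partial\Omega$ or $4B\subset 2\tilde B_0\cap\Omega$, I split $F$ according to the two scale regimes: if $0<r_B<\tfrac{\varepsilon}{4}\chi_*(x_B/\varepsilon)$ take $W_B=0$, $V_B=F$, and if $r_B\ge\tfrac{\varepsilon}{4}\chi_*(x_B/\varepsilon)$ take $W_B,V_B$ as in \eqref{f:3.5x} via the decomposition \eqref{f:decom-2} with $\varphi_i=S_{*,\varepsilon}(\eta_\varepsilon\partial_i\bar u_B)$. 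In the first (sub-minimal-radius) regime, the geometric lemma \eqref{pri:3.5-1} shows that $F$ on $U_{*,\varepsilon}(x)$ for $x\in 2B\cap\Omega$ is controlled by $\bigl(\dashint_{\alpha B\cap\Omega}F^{2}\bigr)^{1/2}$-type averages plus the $g$-term, which directly gives \eqref{pri:3.3a} with $p_1$ and $\eta=0$ in \eqref{pri:3.3b}. In the second regime, \eqref{pri:3.6-1} lets me replace the $U_{*,\varepsilon}$-averages of $F$ by honest averages of $|\nabla u_\varepsilon|$ over $D_r$, and then the decomposition \eqref{f:decom-2} reduces matters to estimating $W_\varepsilon=\nabla u_\varepsilon-(e_i+(\nabla\phi_i)^\varepsilon)\varphi_i$ and $V=(e_i+(\nabla\phi_i)^\varepsilon)\varphi_i$ separately.

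For $W_\varepsilon$ I invoke Corollary \ref{cor:1}: choosing $\nu$ small gives $\dashint_{D_r}|W_\varepsilon|^2\le\nu^2\dashint_{D_{4r}}|\nabla u_\varepsilon|^2$, which after routing through \eqref{pri:3.6-1} yields precisely \eqref{pri:3.3b} with the small constant $\eta$. For $V$, I need the higher integrability \eqref{pri:3.3a}: since $\varphi_i$ is built from $\bar u_B$, which solves the \emph{deterministic} constant-coefficient equation $-\nabla\cdot\bar a\nabla\bar u_B=\nabla\cdot f$ in $2B\cap\Omega$, the $W^{1,p_1}$-regularity of $\bar u_B$ is available for $p_1<p_*$ — this is where the range of $p$ in the statement comes from, controlled by $p_*$ from Lemma \ref{lemma:16} (i.e. the $L^{p}$-solvability of the Dirichlet/Neumann problem for $\bar a$ on a Lipschitz domain, or all $p$ if $\Omega$ is regular SKT or $C^1$). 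The factor $(e_i+(\nabla\phi_i)^\varepsilon)$ is handled by the interior quenched $W^{1,p}$ estimate of the corrector (Lemma \ref{lemma:7}) together with the reverse Hölder inequality (Lemma \ref{lemma:14}), which upgrades a local $L^2$ average of $|\nabla\phi|$ to an $L^{p_1}$ one on the relevant scale; the smoothing operator estimate (Lemma \ref{lemma:2}) and the layer/co-layer bounds \eqref{f:2.2}, \eqref{f:2.29} control the difference $\eta_\varepsilon\partial_i\bar u_B-\varphi_i$. Putting these together produces \eqref{pri:3.3a} with $N_1$ depending only on $\lambda,d,M_0,p_1$. Applying Lemma \ref{shen's lemma2} with $\omega\equiv1$, $p_0=2$, $p=p_1$ then gives \eqref{f:3.1x}, and the covering argument \eqref{f:3.7x} plus the duality argument \eqref{f:3.3x} deliver \eqref{A} for $|\tfrac1p-\tfrac12|\le\tfrac1{2d}+\theta$ (resp. all $p$ in the SKT/$C^1$ case).

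The weighted statements \eqref{pri:8-1} and \eqref{f:12} follow from exactly the same local estimates $\eqref{pri:3.3a}$–$\eqref{pri:3.3b}$, which were proved with no reference to the weight, now fed into Lemma \ref{shen's lemma2} with a genuine weight $\omega$: for \eqref{f:12} take $p_0=2$, $p=p_1$, $\omega\in A_{p_1/2}$, which (via the duality/covering mechanism and self-improvement of $A_p$) covers all $\omega\in A_p$ for $1<p<\infty$ once $\Omega$ is regular SKT or $C^1$ so that $p_*=\infty$; for \eqref{pri:8-1} specialise to $p=2$ and use that $A_1\subset A_q$ for every $q$, handling the $\omega^{-1}$ case by the duality argument applied to the adjoint equation (here the symmetry $a=a^*$ makes $(\text{D}_\varepsilon^*)$ the same type of problem). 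One technical point to be careful about: Shen's lemma requires $B_0$ centred on $\partial\Omega$, so the interior balls $B_0(x_j)$ in \eqref{f:decom-3} are treated by the interior version of the argument (the "$4B\subset 2B_0\cap\Omega$" alternative in the hypothesis), where the boundary corrector machinery is not needed and everything reduces to standard interior $W^{1,p}$ theory for $\bar a$ plus Corollary \ref{cor:1}.

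I expect the main obstacle to be the verification of \eqref{pri:3.3a} in regime (II): one must simultaneously juggle the change between $U_{*,\varepsilon}$-averages and plain averages (Lemma \ref{lemma:9}), the higher integrability of the deterministic solution $\bar u_B$ on a merely Lipschitz domain (which forces the restriction $p_1<p_*$ and is exactly the place where the layer-potential/solvability input enters), and the quenched $W^{1,p_1}$ control of the corrector gradient $\nabla\phi$ on balls of radius $\sim r/\varepsilon$ — the latter being uniform in the ball only because of the moment bounds $\langle\chi_*^\beta\rangle\lesssim1$ and the reverse Hölder inequality. Keeping the constant $N_1$ independent of $\varepsilon$ and of the particular ball $B$, while the radius $r$ ranges over all admissible scales $\ge\tfrac{\varepsilon}{4}\chi_*$, is the delicate bookkeeping that the rest of Section \ref{section:3} is devoted to.
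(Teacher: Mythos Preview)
Your overall architecture is right and for the unweighted estimate \eqref{A} the sketch is essentially what the paper does. But the weighted part has a genuine gap: keeping $p_0=2$ does \emph{not} yield \eqref{f:12} for all $\omega\in A_p$. Shen's lemma gives the conclusion at exponent $p$ only for $\omega\in A_{p/p_0}$; with $p_0=2$ this is $A_{p/2}$, which for $p$ close to $2$ is close to $A_1$ and is a strict subclass of $A_p$. The self-improvement you invoke says $\omega\in A_p\Rightarrow\omega\in A_{p-\gamma}$ for some small $\gamma$ (see \eqref{f:19-1}), and this cannot reach $A_{p/2}$. The paper instead takes $p_0$ with $0<p_0-1\ll1$ (case~(c) in the proof), so that $p/p_0$ is close to $p$ and self-improvement does bridge the gap. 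This forces one to re-verify \eqref{pri:3.3a}--\eqref{pri:3.3b} with $p_0<2$, which requires the improved Meyer inequality (Lemma~\ref{lemma:15}) to pass from $L^2$ down to $L^{p_0}$, and crucially uses the already-proven unweighted \eqref{A} at exponent $p_0$ to control the auxiliary piece --- see Steps~7b and~8.

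Two further points you glossed over. First, the claim that ``the same local estimates, proved with no reference to the weight'' can be fed into the weighted Shen lemma misses that \eqref{pri:3.3a} itself carries $\omega$ on its left side. In the SKT case the paper handles this by H\"older plus the reverse H\"older property \eqref{pri:R-1} of $A_q$ weights (Step~7b); in the general Lipschitz case one needs a genuinely weighted reverse H\"older for $\nabla\bar v$ (estimate \eqref{f:3.9}, Step~9), which holds only for the specific distance weight and only for $p_1\le 2+\theta$ --- this is why \eqref{pri:8-1} is stated merely for ``some $\omega\in A_1$''. Second, the paper does not approximate $u_\varepsilon$ directly by an inhomogeneous $\bar u_B$; it first splits $u_\varepsilon=w_\varepsilon+v_\varepsilon$ with $v_\varepsilon$ solving the \emph{homogeneous} equation on $30B\cap\Omega$ (Step~4). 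This is what makes Corollary~\ref{cor:1} and the reverse H\"older \eqref{pri:12} (both stated for homogeneous equations) directly applicable, and routes the $f$-dependence cleanly into the $W_B^{(2)}$ term rather than producing an unwanted $\|f\|_{L^{p_1}}$ contribution in the $V_B$ bound.
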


\begin{remark}
\emph{The notation $[\omega(x)]^{\pm1}$ appeared in $\eqref{pri:8-1}$
represents that the estimate $\eqref{pri:8-1}$ holds for
the weight function $\omega$ and $\omega^{-1}$, respectively.
The same results are also valid for the elliptic systems with Neumann boundary conditions if we
correspondingly modify Lemma $\ref{lemma:14}$ below. Besides, in terms of the scalar case of equation $\eqref{pde:2-D}$,
the range of $p$ satisfies $(4/3)-\theta<p<4+\theta$ if $d=2$; and $(3/2)-\theta<p<3+\theta$ if $d\geq 3$, which is due to
Lemma $\ref{lemma:14}$.}
\end{remark}

\begin{lemma}[interior $W^{1,p}$ estimates]\label{lemma:7}
Let $2\leq q<\infty$ and $\varepsilon\in(0,1]$.
Suppose that $\langle\cdot\rangle$ is stationary and
satisfies $\eqref{a:2}$ and $\eqref{a:3}$. Let $u_\varepsilon$ and $f$ be
related by
$-\nabla \cdot a^\varepsilon\nabla u_\varepsilon = \nabla \cdot f$
in $\mathbb{R}^d$. Then, there exists a stationary
random fields $\chi_*$ given as in Lemma $\ref{lemma:1}$
with the $\frac{1}{L}$-Lipschitz continuity such that
\begin{equation}\label{pri:3.7}
 \dashint_{B_R}
 \Big(\dashint_{B_{*,\varepsilon}(x)}
 |\nabla u_\varepsilon|^2\Big)^{\frac{q}{2}}dx
 \lesssim_{\lambda,d,q}
 \bigg(\dashint_{B_{4R}}
 \Big(\dashint_{B_{*,\varepsilon}(x)}|\nabla u_\varepsilon|^2
 \Big) dx\bigg)^{\frac{q}{2}}
  +
 \dashint_{B_{4R}}
 \Big(\dashint_{B_{*,\varepsilon}(x)}|f|^2\Big)^{\frac{q}{2}}
 dx
\end{equation}
holds for any $R>0$.
Moreover, we additionally assume that $f$ is $L^2$-integrable
on $\mathbb{R}^d$.
Then, for $1<p<\infty$, one can rebuild the global estimate
\begin{equation}\label{pri:3.7-1}
\int_{\mathbb{R}^d}
\Big(\dashint_{B_{*,\varepsilon}(x)}
|\nabla u_\varepsilon|^2\Big)^{\frac{p}{2}}dx
\lesssim_{\lambda,d,p}
\int_{\mathbb{R}^d}
\Big(\dashint_{B_{*,\varepsilon}(x)}|f|^2\Big)^{\frac{p}{2}}dx.
\end{equation}
\end{lemma}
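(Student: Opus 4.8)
The plan is to run Shen's real-variable argument (Lemma~\ref{shen's lemma2} with $\omega\equiv1$, in its interior form, cf.\ Subsection~\ref{subsection:3.0}) for the functions $F(x):=(\dashint_{B_{*,\varepsilon}(x)}|\nabla u_\varepsilon|^2)^{1/2}$ and $g(x):=(\dashint_{B_{*,\varepsilon}(x)}|f|^2)^{1/2}$, with $p_0=2$ and $p_1$ an arbitrary finite exponent slightly above the target $q$. I would first reduce \eqref{pri:3.7} to producing, for every ball $B=B_r(x_B)$ lying inside a fixed dilate of $B_{4R}$ and with $|B|$ suitably small, a splitting $F\le W_B+V_B$ on $2B$ obeying the hypotheses \eqref{pri:3.3a}--\eqref{pri:3.3b}; then \eqref{pri:3.7} is Shen's conclusion \eqref{pri:3.4} together with the energy estimate $\int|\nabla u_\varepsilon|^2\lesssim\int|f|^2$, which absorbs the $F^{2}$-term on the right-hand side. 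Every passage between the multi-scale averages $\dashint_{B_{*,\varepsilon}(\cdot)}$ and ordinary averages at a fixed scale will use the interior version of Lemma~\ref{lemma:9}, available because the minimal radius $\chi_*$ of Corollary~\ref{cor:1} is $\tfrac1L$-Lipschitz.

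The splitting is dictated by the size of $r$ relative to $\varepsilon\chi_*(x_B/\varepsilon)$, exactly as in cases (I)--(II) of Subsection~\ref{subsection:3.0}. In case (I), $0<r<\tfrac\varepsilon4\chi_*(x_B/\varepsilon)$, I take $W_B=0$ and $V_B=F$: here \eqref{pri:3.3a} merely expresses that $F$ does not oscillate below the minimal-radius scale, i.e.\ $\sup_{2B}F^{2}\lesssim\dashint_{\alpha B}F^{2}$, which is \eqref{pri:3.5-1} (with $s=1$, interior version) together with the Lipschitz continuity of $\chi_*$, while \eqref{pri:3.3b} is vacuous. In case (II), $r\ge\tfrac\varepsilon4\chi_*(x_B/\varepsilon)$, I split $u_\varepsilon=u_\varepsilon^{1}+u_\varepsilon^{2}$ on $2B$, where $u_\varepsilon^{1}\in H^1_0(2B)$ solves $\mathcal{L}_\varepsilon u_\varepsilon^{1}=\nabla\cdot f$ (so $\int_{2B}|\nabla u_\varepsilon^{1}|^2\lesssim\int_{2B}|f|^2$) and $u_\varepsilon^{2}$ is $a^\varepsilon$-harmonic in $2B$. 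The interior analogue of Corollary~\ref{cor:1} applied to $u_\varepsilon^{2}$ at scale $r$ then produces $\bar u_B$ solving $\nabla\cdot\bar a\nabla\bar u_B=0$ on a dilate of $B$ and $\varphi_i:=S_{*,\varepsilon}(\eta_\varepsilon\partial_i\bar u_B)$ with $\dashint_{2B}|\nabla u_\varepsilon^{2}-(e_i+(\nabla\phi_i)^\varepsilon)\varphi_i|^2\lesssim\nu^2\dashint_{cB}|\nabla u_\varepsilon^{2}|^2$. I then set $V_B:=(\dashint_{B_{*,\varepsilon}(\cdot)}|(e_i+(\nabla\phi_i)^\varepsilon)\varphi_i|^2)^{1/2}$ and $W_B:=(\dashint_{B_{*,\varepsilon}(\cdot)}|\nabla u_\varepsilon^{1}|^2)^{1/2}+(\dashint_{B_{*,\varepsilon}(\cdot)}|\nabla u_\varepsilon^{2}-(e_i+(\nabla\phi_i)^\varepsilon)\varphi_i|^2)^{1/2}$, so that $F\le W_B+V_B$ on $2B$.

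With this splitting, \eqref{pri:3.3b} for $W_B$ is routine: its first summand is controlled in $L^2(2B)$ by $\dashint_{cB}|f|^2$, hence by $\dashint_{c'B}g^{2}$ via Lemma~\ref{lemma:9}, i.e.\ by the $g$-term; its second summand is $\le\nu^2\dashint_{cB}|\nabla u_\varepsilon^{2}|^2\lesssim\nu^2(\dashint_{\alpha B}F^{2}+\dashint_{\alpha B}g^{2})$, so one may take $\eta\sim\nu$. \emph{The bulk of the work — and, I expect, the main obstacle — is \eqref{pri:3.3a} for $V_B$.} It requires combining: (a) interior Calder\'on--Zygmund and Schauder regularity for the \emph{constant-coefficient} equation solved by $\bar u_B$, converting $\int_{2B}|\nabla\bar u_B|^2\lesssim\int_{2B}|\nabla u_\varepsilon|^2+\int_{2B}|f|^2$ into high-integrability (indeed essentially pointwise) control of $\nabla\bar u_B$, and hence of its smoothing $\varphi_i$, on $2B$; (b) the $L^{2p}$-type bounds on $e_i+(\nabla\phi_i)^\varepsilon$ at scales $\gtrsim\varepsilon\chi_*$ that are built into the definition of $\chi_*$ in Corollary~\ref{cor:1}, with the exponent $p$ chosen large enough that a H\"older inequality closes against $p_1$; and (c) Lemma~\ref{lemma:9} to turn these into a bound for $(\dashint_{2B}V_B^{p_1})^{1/p_1}$ in terms of $\dashint_{\alpha B}F^{2}$ and $\dashint_{\alpha B}g^{2}$ with an absolute constant $N_1$. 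Making all this multi-scale bookkeeping close with $N_1$ independent of $B$ (and $\varepsilon$) is the delicate point. Once \eqref{pri:3.3a}--\eqref{pri:3.3b} hold with $\nu$, hence $\eta$, below Shen's threshold, Lemma~\ref{shen's lemma2} delivers \eqref{pri:3.7}.

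For the global estimate \eqref{pri:3.7-1}, assume $f\in L^2(\mathbb{R}^d)$. If $p\ge2$ (the case $p=2$ being the energy estimate), I would cover $\mathbb{R}^d$ by balls $\{B_R(z_k)\}$ with finite overlap, apply \eqref{pri:3.7} on each and sum, using $\sum_k(\int_{B_{4R}(z_k)}F^2)^{p/2}\lesssim(\int_{\mathbb{R}^d}F^2)^{p/2}$ since $p/2\ge1$; this gives $\int_{\mathbb{R}^d}F^p\lesssim|B_R|^{1-p/2}(\int_{\mathbb{R}^d}F^2)^{p/2}+\int_{\mathbb{R}^d}g^p$. Because $\int F^2\lesssim\int|f|^2<\infty$ and $1-p/2<0$ for $p>2$, letting $R\to\infty$ annihilates the first term, while $\int_{\mathbb{R}^d}g^p\sim\int_{\mathbb{R}^d}|f|^p$ by \eqref{pri:3.6-1*} applied to $|f|^p$; this proves \eqref{pri:3.7-1} for $p\ge2$. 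For $1<p<2$ I would invoke the duality argument of Subsection~\ref{subsection:3.0}: pair $\nabla u_\varepsilon$ against a test field $h$ through the adjoint solution $v_\varepsilon$ of $-\nabla\cdot a^{*\varepsilon}\nabla v_\varepsilon=\nabla\cdot h$, use the estimate just proved for $a^{*}$ at the conjugate exponent $p'\in(2,\infty)$ (with the minimal radius adapted to $a^{*}$, equivalently with $\chi_*$ replaced by $\chi_*(a)\vee\chi_*(a^{*})$, still $\tfrac1L$-Lipschitz with all moments finite), and conclude from $\int\nabla u_\varepsilon\cdot h=\int\nabla v_\varepsilon\cdot f$ via Cauchy--Schwarz and H\"older. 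This completes \eqref{pri:3.7-1}.
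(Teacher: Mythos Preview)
Your plan is correct and follows the same route as the paper: Shen's lemma with the two-case split (I)/(II), a local decomposition $u_\varepsilon=w_\varepsilon+v_\varepsilon$ with $v_\varepsilon$ $a^\varepsilon$-harmonic, a homogenization approximation of $\nabla v_\varepsilon$, then the $R\to\infty$ limit plus duality for the global statement.

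A few places where the paper's execution is simpler than yours. First, for this interior lemma the paper uses the $\chi_*$ of Lemma~\ref{lemma:1} and the \emph{unsmoothed} field $(e_i+(\nabla\phi_i)^\varepsilon)\partial_i\bar v$; the smoothing operator $S_{*,\varepsilon}$ from Corollary~\ref{cor:1} is only needed near the boundary. Second, the $V_B$-estimate you flag as ``the main obstacle'' is in fact direct: since $\bar v$ is $\bar a$-harmonic one has the interior Lipschitz bound $\sup_{6B}|\nabla\bar v|\lesssim(\dashint_{8B}|\nabla\bar v|^2)^{1/2}$, and Caccioppoli's inequality for the corrector equation combined with the very condition defining $\chi_*$ in \eqref{key:1} gives $(\dashint_{B_{*,\varepsilon}(x)}|e+\nabla\phi^\varepsilon|^2)^{1/2}\lesssim 1$ uniformly in $x$. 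This yields the \emph{sup} bound $\sup_{2B}V_B\lesssim(\dashint_{60B}(F^2+g^2))^{1/2}$, bypassing your $L^{2p}$/H\"older bookkeeping. Third, your decomposition on $2B$ is too tight: for $x\in 2B$ one only has $B_{*,\varepsilon}(x)\subset cB$ with $c>2$, so the paper performs the $w_\varepsilon/v_\varepsilon$ split on $30B$. Finally, for \eqref{pri:3.7-1} the paper simply applies \eqref{pri:3.7} on a single ball $B_R$, inserts the energy estimate, and sends $R\to\infty$; no covering is required.
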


\begin{proof}
Based upon large-scale Lipschitz estimates, the original proof is due to
\cite{Armstrong-Daniel16} and \cite{Gloria-Neukamm-Otto20},
while the present work starts from the newly introduced minimal
radius $\chi_*$ in Lemma $\ref{lemma:1}$.
The main idea of the proof is taken from Felix Otto's
lectures in Toulouse 2019, and we modified it for our purpose.
The proof is divided into four steps.
Let $B_R$ be fixed with $R>0$. For any ball $B$ with $B\subset 2B_R$
and $r_B\leq (1/100)R$,
according to Remark $\ref{remark:3.1}$, the first task is to find the concrete
forms of $W_B$ and $V_B$ if
$r_B\geq \frac{\varepsilon}{4}\chi_{*}(x_B/\varepsilon)$.

\noindent
\textbf{Step 1.} Decompose the field $\nabla u_\varepsilon$ and
make a reduction for the stated estimate $\eqref{pri:3.7}$.
Let $w_\varepsilon\in H_0^1(30B)$ and
$v_\varepsilon:=u_\varepsilon-w_\varepsilon$ satisfy the following equations:
\begin{equation}\label{pde:19}
 -\nabla\cdot a^\varepsilon\nabla w_\varepsilon = \nabla \cdot fI_{30B},
 \quad\text{and}\quad
 \nabla\cdot a^\varepsilon\nabla v_\varepsilon
 =0 \quad\text{in}\quad 30B.
\end{equation}
Then we consider the approximating problem:
$\nabla \cdot \bar{a}\nabla\bar{v} = 0$ in $9B$ with
$\bar{v} = v_\varepsilon$ on $\partial (9B)$. Thus, we have
\begin{equation}\label{}
\begin{aligned}
 \nabla u_\varepsilon &= \nabla w_\varepsilon + \nabla v_\varepsilon\\
 & = \underbrace{\nabla w_\varepsilon + \nabla v_\varepsilon -
 (e_i+(\nabla\phi)^\varepsilon)\partial_i\bar{v}}_{\text{approximating~part}}
 + (e_i+(\nabla\phi)^\varepsilon)\partial_i\bar{v}
\quad\text{on}~9B.
\end{aligned}
\end{equation}
Thus, the concrete form of
$W_B$ and $V_B$ in the case of (II) is introduced as follows:
\begin{equation}\label{f:3.7*}
\begin{aligned}
&V_{B}(x):=\Big(\dashint_{B_{*,\varepsilon}(x)}
|(e_i+(\nabla\phi_i)^\varepsilon)
\partial_i\bar{v}|^2\Big)^{1/2};
\qquad
W_{B}(x):=\Big(\dashint_{B_{*,\varepsilon}(x)}
|\nabla u_\varepsilon-(e_i+(\nabla\phi_i)^\varepsilon)
\partial_i\bar{v}|^2\Big)^{1/2};\\
&W^{(1)}_{B}(x):=\Big(\dashint_{B_{*,\varepsilon}(x)}
|\nabla v_\varepsilon - (e_i+(\nabla\phi_i)^\varepsilon)
\partial_i\bar{v}|^2\Big)^{1/2};\qquad
W^{(2)}_{B}(x)
:=\Big(\dashint_{B_{*,\varepsilon}(x)}|\nabla w_\varepsilon|^2\Big)^{1/2}.
\end{aligned}
\end{equation}
For $\Omega = \mathbb{R}^d$, we need to
rewrite $\eqref{f:3.6x}$ as
\begin{equation}
F(x):=\Big(\dashint_{B_{*,\varepsilon}(x)}|\nabla u_\varepsilon|^2\Big)^{1/2};
\qquad
g(x):=\Big(\dashint_{B_{*,\varepsilon}(x)}|f|^2\Big)^{1/2}.
\end{equation}

Now, combining the two cases (I) and (II),
one can immediately observe the following relationships:
\begin{equation}\label{f:3.8}
F\leq W_{B}+V_{B};\quad \text{and}
\quad
W_{B}\leq W_{B}^{(1)} + W_{B}^{(2)}
\quad\text{on}~2B.
\end{equation}
By Lemma $\ref{shen's lemma2}$ (we simply take $\omega=1$ and $\Omega=\mathbb{R}^d$
therein),
the desired estimate $\eqref{pri:3.7}$ therefore
follows from the next two estimates:
\begin{equation}\label{f:3.5}
\sup_{2B}
V_B \lesssim
\Big(\dashint_{60B}F^2
+g^2\Big)^{\frac{1}{2}};
\end{equation}
and (for \textcolor[rgb]{0.00,0.00,1.00}{$0<\nu\ll 1$}, there holds)
\begin{equation}\label{f:3.6}
\dashint_{2B}
W_B^2
\lesssim \dashint_{60B}
(g^2+\nu^2
F^2).
\end{equation}

As mentioned in Subsection $\ref{subsection:3.0}$,
the proof on the above estimates should be divided into
two cases: (I) $0<r_B<\frac{\varepsilon}{4}\chi_{*}(x_B/\varepsilon)$;
(II) $r_B\geq \frac{\varepsilon}{4}\chi_{*}(x_B/\varepsilon)$.
Since $W_B = 0$ and $V_B = F$ are taken for the case (I),
the estimate $\eqref{f:3.6}$ is trivial while the estimate
$\eqref{f:3.5}$ is reduced to show
\begin{equation*}
\sup_{2B}F
\lesssim
\Big(\dashint_{20B}F^2\Big)^{\frac{1}{2}}.
\end{equation*}
This relies on
the $\frac{1}{L}$-Lipschitz continuity of $\chi_{*}$ and
the estimate $\eqref{pri:3.5-1}$,
and related details can be found in Step 2 in
the proof of Lemma $\ref{lemma:14}$.
Next, we put more effort
into verifying $\eqref{f:3.5}$ and $\eqref{f:3.6}$
for the case (II).

\noindent
\textbf{Step 2.} Show the estimate $\eqref{f:3.5}$
in the case of (II).
For any $x\in 2B$, recalling the notation
$V_B(x)$ in $\eqref{f:3.7*}$,
and noting that $B_{*,\varepsilon}(x)\subset 6B$ for
all $x\in 2B$ (due to the $\frac{1}{L}$-Lipschitz continuity of
$\chi_{*}$), we have
\begin{equation*}
\begin{aligned}
  \Big(\dashint_{B_{*,\varepsilon}(x)} |(e_i+(\nabla\phi_i)^\varepsilon)\partial_i\bar{v}|^2\Big)^{\frac{1}{2}}
  &\leq  \Big(\dashint_{B_{*,\varepsilon}(x)}
  |e+(\nabla\phi)^\varepsilon|^2\Big)^{\frac{1}{2}}
  \sup_{6B}|\nabla\bar{v}| \\
  &\lesssim \bigg\{\Big(\dashint_{B_{\chi_*(x/\varepsilon)}(x/\varepsilon)}
  |\nabla\phi|^2\Big)^{\frac{1}{2}}+1\bigg\}
  \Big(\dashint_{8B}|\nabla\bar{v}|^2\Big)^{1/2}.
\end{aligned}
\end{equation*}
By using Caccioppoli's inequality and energy estimates (see e.g.
\cite{Giaquinta-Martinazzi12}),
as well as the definition of $\chi_*$ in Lemma $\ref{lemma:1}$, we consequently derive that
\begin{equation}\label{f:2.30}
\begin{aligned}
\sup_{2B}  V_B
&\lesssim \sup_{\varepsilon z\in B}
\bigg\{\frac{1}{\chi_*(z)}\Big(\dashint_{B_{2\chi_*(z)}(z)}
|\phi-(\phi)_{2\chi_{*}(z)}|^2\Big)^{\frac{1}{2}}+1\bigg\}
  \Big(\dashint_{9B}|\nabla v_\varepsilon|^2\Big)^{\frac{1}{2}}\\
&\lesssim
  \Big(\dashint_{9B}|\nabla u_\varepsilon|^2
  \Big)^{\frac{1}{2}} +
  \Big(\dashint_{30B}|f|^2\Big)^{\frac{1}{2}}
\lesssim^{\eqref{pri:3.6-1}}
\Big(\dashint_{60B}
F^2 + g^2\Big)^{\frac{1}{2}}.
\end{aligned}
\end{equation}

\noindent
\textbf{Step 3.} Show the estimate $\eqref{f:3.6}$
in the case of (II). In view of
the second inequality of $\eqref{f:3.8}$ and Lemmas $\ref{lemma:1}$ and
$\ref{lemma:9}$,
we have
\begin{equation*}
\begin{aligned}
\dashint_{2B} W_B^2
&\lesssim^{\eqref{pri:3.6-1}}
\dashint_{7B}|\nabla v_\varepsilon
-(e_i+(\nabla\phi_i)^\varepsilon)\partial_i \bar{v}|^2
 + \dashint_{7B}|\nabla w_\varepsilon|^2\\
&\lesssim^{\eqref{pri:2.0}}
\nu^2
\dashint_{28B}|\nabla v_\varepsilon|^2
+ \dashint_{30B}|f|^2
\lesssim^{\eqref{pri:3.6-1}} \dashint_{60B}
(g^2 +
\nu^2 F^2),
\end{aligned}
\end{equation*}
where we also employ the energy estimate for the
first equation of $\eqref{pde:19}$ in the last two inequalities.

\noindent
\textbf{Step 4.}
\emph{Show the estimate $\eqref{pri:3.7-1}$}.
In the case of $p=2$, we can derive
the desired estimate $\eqref{pri:3.7-1}$ by
a similar argument given by $\eqref{f:3.8x}$.
Then, for the case $p>2$, it follows from
$\eqref{pri:3.7}$ and the previous result ($p=2$)
that
\begin{equation*}
\int_{B_R}
 \Big(\dashint_{B_{*,\varepsilon}(x)}
 |\nabla u_\varepsilon|^2\Big)^{\frac{p}{2}}dx
 \lesssim_{\lambda,d,p}
 R^{d(1-\frac{p}{2})}
 \bigg(\int_{\mathbb{R}^d}
 |f|^2\bigg)^{\frac{p}{2}}
  +
 \int_{B_{4R}}
 \Big(\dashint_{B_{*,\varepsilon}(x)}|f|^2\Big)^{\frac{p}{2}}
 dx.
\end{equation*}
By letting $R\to\infty$, the stated estimate $\eqref{pri:3.7-1}$
holds for any $p\in(2,\infty)$. Finally, the duality argument
as shown in Subsection $\ref{subsection:3.0}$ leads to
the desired estimate $\eqref{pri:3.7-1}$ for any $p\in(1,\infty)$.
This ends the whole proof.
\end{proof}

\begin{remark}
\emph{The estimate $\eqref{pri:3.7}$ is not only
translation invariant but also scaling invariant. Thus, by the
covering argument similar to that shown in Subsection $\ref{subsection:3.0}$,
for any $\alpha>1$,
one can derive that
\begin{equation}\label{pri:3.7-2}
\dashint_{B_R}
 \Big(\dashint_{B_{*,\varepsilon}(x)}
 |\nabla u_\varepsilon|^2\Big)^{\frac{q}{2}}dx
 \lesssim_{\lambda,d,q,\alpha}
 \bigg(\dashint_{B_{\alpha R}}
 \Big(\dashint_{B_{*,\varepsilon}(x)}|\nabla u_\varepsilon|^2
 \Big)dx\bigg)^{\frac{q}{2}}
  +
 \dashint_{B_{\alpha R}}
 \Big(\dashint_{B_{*,\varepsilon}(x)}|f|^2\Big)^{\frac{q}{2}}
 dx.
\end{equation}
Moreover, if $f=0$, it follows
from the estimates $\eqref{pri:3.7-2}$,
$\eqref{pri:10-a}$, and
Lemma $\ref{lemma:9}$ (merely taking $\Omega=\mathbb{R}^d$) that
\begin{equation}\label{pri:3.7-3}
\dashint_{B_R}
 \Big(\dashint_{B_{*,\varepsilon}(x)}
 |\nabla u_\varepsilon|^2\Big)^{\frac{q}{2}}dx
 \lesssim_{\lambda,d,q,p_0,\alpha}
 \bigg(\dashint_{B_{\alpha R}}
 \Big(\dashint_{B_{*,\varepsilon}(x)}|\nabla u_\varepsilon|^2\Big)^{\frac{p_0}{2}}
 dx\bigg)^{\frac{q}{p_0}}
\end{equation}
holds for any $0<p_0<q<\infty$.
The detail is left to the reader.}
\end{remark}


\begin{lemma}[quenched reverse H\"older's inequality]\label{lemma:14}
Let $p_*$ be given as in $\eqref{upindex-p}$ and $0\in\partial\Omega$.
The ensemble $\langle\cdot\rangle$
is assumed as in Proposition $\ref{P:10}$.
Let $0<\varepsilon\leq 1$ and $R>0$.
Suppose that $u_\varepsilon$ is the solution of
\begin{equation}\label{pde:1}
\left\{\begin{aligned}
\nabla\cdot a^\varepsilon\nabla u_\varepsilon &= 0
&\quad&\text{in}~~D_{2R};\\
u_\varepsilon &= 0
&\quad&\text{on}~~\Delta_{2R}.
\end{aligned}\right.
\end{equation}
Then, there exists a stationary random field $\chi_*$
given as in Corollary $\ref{cor:1}$ with the
$\frac{1}{L}$-Lipschitz continuity such that,
for $0<p_0<q<p_*$, there holds
\begin{equation}\label{pri:3.8}
\bigg(\dashint_{D_{R/2}}
\Big(\dashint_{U_{*,\varepsilon}(x)}|\nabla u_\varepsilon|^2
\Big)^{\frac{q}{2}}dx\bigg)^{\frac{1}{q}}
\lesssim_{\lambda,d,p_0,q,M_0}
\bigg(\dashint_{D_{2R}}
\Big(\dashint_{U_{*,\varepsilon}(x)}|\nabla u_\varepsilon|^2
\Big)^{\frac{p_0}{2}}dx\bigg)^{\frac{1}{p_0}}.
\end{equation}
\end{lemma}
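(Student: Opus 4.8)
The plan is to derive \eqref{pri:3.8} from Shen's real argument (Lemma \ref{shen's lemma2}) applied with $\omega\equiv 1$ and $g\equiv 0$, in the manner outlined in Subsection \ref{subsection:3.0}. Set $F(x):=\big(\dashint_{U_{*,\varepsilon}(x)}|\nabla u_\varepsilon|^2\big)^{1/2}$, fix $p_1\in(q\vee 2,\,p_*)$ (this interval is non-empty since $p_*>2$ always), and take $B_0:=B_{R/2}(0)$, so that $4B_0=B_{2R}(0)$ and $2B_0\subset D_{2R}$ (we may assume $2R<c_0\,\mathrm{diam}(\Omega)$ as required by Lemma \ref{shen's lemma2}; the remaining range of $R$ is reduced to the global energy estimate by a covering argument). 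For every ball $B$ with $|B|\le c_1|B_0|$ and either $x_B\in\partial\Omega\cap 2B_0$ or $4B\subset 2B_0\cap\Omega$ I will split $F\le W_B+V_B$ on $2B\cap\Omega$ and verify \eqref{pri:3.3a}--\eqref{pri:3.3b} with $p_0$ and $p_1$ as above and $\eta$ as small as needed; Lemma \ref{shen's lemma2} then gives $\big(\dashint_{D_{R/2}}F^q\big)^{1/q}\lesssim\big(\dashint_{D_{2R}}F^{p_0}\big)^{1/p_0}$, which is the claim. Since Shen's lemma only requires $p_0<p<p_1$, taking $p=q$ handles any $0<p_0<q<p_*$ directly, with no extra H\"older or self-improvement step. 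Throughout I will use freely the bound $\big(\dashint_{D_\rho(X)}F^2\big)^{1/2}\lesssim\big(\dashint_{D_{c\rho}(X)}F^{p_0}\big)^{1/p_0}$ (and its interior and $\mathbb{R}^d$ analogues): for $p_0\ge 2$ this is Jensen's inequality, and for $p_0<2$ it is the improved Meyer's estimate of the appendix (Caccioppoli with Sobolev--Poincar\'e and Lemma \ref{lemma:9}) together with its standard self-improvement.

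\textbf{Small balls.} When $0<r_B<\tfrac{\varepsilon}{4}\chi_*(x_B/\varepsilon)$ I take $W_B\equiv 0$ and $V_B\equiv F$, so \eqref{pri:3.3b} is vacuous. For \eqref{pri:3.3a}, the $\tfrac1L$-Lipschitz continuity of $\chi_*$ forces $\varepsilon\chi_*$ to dominate $r_B$ uniformly on $2B$, and \eqref{pri:3.5-1} (with $s=2/p_0$, $f=|\nabla u_\varepsilon|^2$) yields $\sup_{2B\cap\Omega}F\lesssim\big(\dashint_{\alpha B\cap\Omega}F^{p_0}\big)^{1/p_0}$, whence $\big(\dashint_{2B\cap\Omega}V_B^{p_1}\big)^{1/p_1}\le\sup_{2B\cap\Omega}F\lesssim\big(\dashint_{\alpha B\cap\Omega}F^{p_0}\big)^{1/p_0}$. (This is the step cited in the proofs of Lemma \ref{lemma:7} and Proposition \ref{P:10}.)

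\textbf{Large balls.} When $r_B\ge\tfrac{\varepsilon}{4}\chi_*(x_B/\varepsilon)$ I introduce the homogenized comparison function $\bar u_B$ solving $\nabla\cdot\bar a\nabla\bar u_B=0$ on $D_{2r_B}(x_B)$ (resp. $B_{2r_B}(x_B)$ in the interior case) with $\bar u_B=u_\varepsilon$ on the boundary of that region, and the smoothed cut-off two-scale reconstruction $\varphi_i^{(B)}$ from Corollary \ref{cor:1}, and set $W_B(x):=\big(\dashint_{U_{*,\varepsilon}(x)}|\nabla u_\varepsilon-(e_i+(\nabla\phi_i)^\varepsilon)\varphi_i^{(B)}|^2\big)^{1/2}$ and $V_B(x):=\big(\dashint_{U_{*,\varepsilon}(x)}|(e_i+(\nabla\phi_i)^\varepsilon)\varphi_i^{(B)}|^2\big)^{1/2}$, so $F\le W_B+V_B$ on $2B\cap\Omega$. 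For \eqref{pri:3.3b}: Jensen ($p_0<2$; for $p_0\ge 2$ argue on $L^{p_0}$ directly) reduces it to an $L^2$ estimate, which follows from \eqref{pri:3.6-1} (passing between averaged and unaveraged gradients), the approximation estimate \eqref{pri:2.1-1} of Corollary \ref{cor:1} with small parameter $\nu$, \eqref{pri:3.6-1} again, and the preliminary reverse H\"older bound; this gives \eqref{pri:3.3b} with $\eta=C\nu\le\eta_0$. For \eqref{pri:3.3a}: writing $\tilde v_B:=\bar u_B+\varepsilon\phi_i^\varepsilon\eta_\varepsilon\partial_i\bar u_B$, one has $(e_i+(\nabla\phi_i)^\varepsilon)\varphi_i^{(B)}\approx\nabla\tilde v_B$ up to the commutator $(e+(\nabla\phi)^\varepsilon)(\eta_\varepsilon\partial_i\bar u_B-\varphi_i^{(B)})$ controlled by Lemma \ref{lemma:2}, and, by the flux corrector identity as in \eqref{pde:3}, $\tilde v_B$ is almost $a^\varepsilon$-harmonic on $\{\eta_\varepsilon=1\}$ with right-hand side $\nabla\cdot\big[\varepsilon(a^\varepsilon\phi_i^\varepsilon-\sigma_i^\varepsilon)\nabla(\eta_\varepsilon\partial_i\bar u_B)\big]$. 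On the part of $\alpha B\cap\Omega$ at distance $\gtrsim\varepsilon$ from $\partial\Omega$ I invoke the quenched interior $W^{1,p}$ estimate \eqref{pri:3.7-2} of Lemma \ref{lemma:7} for $\tilde v_B$; on the thin $O(\varepsilon)$-layer near $\partial\Omega$, where $\nabla\tilde v_B$ is controlled by $|\nabla\bar u_B|(1+|\phi^\varepsilon|)$, I use the deterministic boundary $W^{1,p_1}$ estimate for $\bar a$-harmonic functions vanishing on a Lipschitz (resp. regular SKT) boundary portion (Lemma \ref{lemma:16}, the source of the restriction $p_1<p_*$) together with the pointwise corrector bounds from \eqref{a:3} via Lemma \ref{lemma:3}, the layer's thinness $\varepsilon/r_B\le 1/\chi_*(x_B/\varepsilon)$ absorbing the growth of the corrector. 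Transferring back through \eqref{pri:3.6-1}, the energy comparison $\|\nabla\tilde v_B\|_{L^2}\lesssim\|\nabla\bar u_B\|_{L^2}\lesssim\|\nabla u_\varepsilon\|_{L^2}$, and the preliminary reverse H\"older bound, this yields $\big(\dashint_{2B\cap\Omega}V_B^{p_1}\big)^{1/p_1}\lesssim\big(\dashint_{\alpha B\cap\Omega}F^{p_0}\big)^{1/p_0}$, i.e. \eqref{pri:3.3a}. With \eqref{pri:3.3a}--\eqref{pri:3.3b} in hand and $\eta=C\nu$ below the threshold $\eta_0$, Lemma \ref{shen's lemma2} gives \eqref{pri:3.8}.

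I expect the verification of \eqref{pri:3.3a} in the large-ball case to be the main obstacle: one must marry the \emph{quenched} interior $W^{1,p}$ bound of Lemma \ref{lemma:7} (which is exactly what hides the random scale $\chi_*$ inside the average $\dashint_{U_{*,\varepsilon}}$) with the \emph{sharp deterministic} boundary $W^{1,p}$ estimate for the homogenized operator on Lipschitz/regular SKT domains (Lemma \ref{lemma:16}), control the boundary-layer contribution via the pointwise corrector estimates, and then check --- by careful bookkeeping, using that each corrector contribution carries a non-positive power of $\chi_*$ thanks to the defining inequality \eqref{key:1} (with $\gamma p'>p_*$) --- that the final multiplicative constant depends only on $\lambda,d,p_0,q,M_0$ and not on the realization of $\chi_*$.
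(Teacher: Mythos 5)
Your overall architecture matches the paper: Shen's real argument (Lemma~\ref{shen's lemma2}) with $\omega\equiv 1$, $g\equiv 0$; the same case split on $r_B$ versus $\tfrac{\varepsilon}{4}\chi_*(x_B/\varepsilon)$; in the small-ball case $W_B=0$, $V_B=F$ controlled by \eqref{pri:3.5-1} and the $\tfrac1L$-Lipschitz continuity of $\chi_*$; and in the large-ball case the same $W_B/V_B$ built from $\bar u_B$ and the smoothed cut-off $\varphi_i=S_{*,\varepsilon}(\eta_\varepsilon\partial_i\bar u_B)$, with $W_B$ handled exactly as in the paper via \eqref{pri:3.6-1} and \eqref{pri:2.1-1}.

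The genuine divergence, and the place where your argument has a gap, is the estimate of $V_B$ in the large-ball case. You propose to replace $(e_i+(\nabla\phi_i)^\varepsilon)\varphi_i^{(B)}$ by $\nabla\tilde v_B$, use the (near) $a^\varepsilon$-equation satisfied by $\tilde v_B$, and then split into a bulk region (quenched interior $W^{1,p}$ via Lemma~\ref{lemma:7}) and an $O(\varepsilon)$-layer where you invoke "$\nabla\tilde v_B\lesssim|\nabla\bar u_B|(1+|\phi^\varepsilon|)$" and "pointwise corrector bounds from \eqref{a:3} via Lemma~\ref{lemma:3}." This is not what the paper does, and two specific problems arise. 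First, Lemma~\ref{lemma:3} only provides moment bounds \eqref{pri:2}--\eqref{pri:3} on $(\phi,\sigma)$; no pointwise ($L^\infty$ in $x$) corrector estimate is stated or used anywhere in the proof of Lemma~\ref{lemma:14}, and the paper's whole design keeps everything at the level of spatial averages controlled by the definition \eqref{min_rad} of $\chi_*$ and the quantities $X_R$ from Remark~\ref{remark:1}. Deriving a genuine sup-norm estimate on $\phi^\varepsilon$ from \eqref{a:3} and then fitting its stochastic size into the moment budget for $\chi_*$ is additional, nontrivial work that the paper deliberately avoids. Second, even granting a pointwise bound, the quantity you actually need to estimate is $\bigl(\dashint_{2B\cap\Omega}V_B^{p_1}\bigr)^{1/p_1}$ where $V_B(x)$ already carries the inner average $\dashint_{U_{*,\varepsilon}(x)}$; the layer/bulk split then has to be done \emph{after} that inner average, and the right-hand side $\nabla\cdot f$ in the $\tilde v_B$-equation contains $\nabla^2\bar u_B$ weighted by $\varepsilon(\phi^\varepsilon,\sigma^\varepsilon)$, whose averaged quenched $W^{1,p}$-norm is not a quantity that \eqref{pri:3.7-2} disposes of for free.

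The paper's proof is more economical precisely because it never passes through the proxy $\tilde v_B$. After applying Fubini and H\"older inside the definition of $V_B$, it establishes the two claims \eqref{f:3.24} and \eqref{f:3.25}: the corrector factor $\sup_{x\in 2D}\bigl(\dashint_{2B_{*,\varepsilon}(x)}\bigl(\dashint_{B_{*,\varepsilon}(y)}|(\nabla\phi_i)^\varepsilon+e_i|^2\bigr)^\alpha dy\bigr)^{p_1/(2\alpha)}$ is $O(1)$ by the quenched interior $W^{1,p}$ estimate \eqref{pri:3.7} applied to the corrector equation plus the defining inequality \eqref{key:1} of $\chi_*$ (this is the computation of \eqref{f:2.30}), while the $\nabla\bar u_B$ factor is handled by the deterministic reverse H\"older estimate of Lemma~\ref{lemma:16} (the source of $p_*$) together with Lemma~\ref{lemma:15} and the geometric Lemma~\ref{lemma:9}. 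The product of these two claims yields \eqref{pri:3.3a} directly, with no boundary-layer integral and no pointwise corrector input. If you want to salvage your route you would need to (i) prove a pointwise corrector bound from \eqref{a:3} and absorb its random amplitude into an enlarged $\chi_*$, and (ii) redo the bulk step carefully because of the $\nabla^2\bar u_B$ term in the right-hand side; at that point you have reinvented a harder version of the proof of Lemma~\ref{lemma:1}, not Lemma~\ref{lemma:14}.
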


\begin{proof}
The main idea is similar to that given in Lemma $\ref{lemma:7}$.
Before giving the concrete proof, we mention that in the case of $p_0>2$,
it suffices to show
\begin{equation*}
  \Big(\dashint_{\textcolor[rgb]{1.00,0.00,0.00}{D_{R/2}}}
   F^q\bigg)^{\frac{1}{q}}
   \lesssim \bigg(\dashint_{\textcolor[rgb]{1.00,0.00,0.00}{D_{2R}}}
   F^2\Big)^{\frac{1}{2}},
\end{equation*}
where we use the notation $F$ stated in $\eqref{f:3.6x}$, and then
the desired estimate directly follows from the H\"older inequality.
Thus, we merely focus on the demonstration in the case of $p_0\in(0,2]$,
and the whole proof will be divided into four steps.
Let $B$ be any ball that owns the properties:
$r_B\leq (1/100)R$ and either
$x_B\in \partial\Omega\cap B_{R}$ or $4B\subset D_{R}$.
According to Remark $\ref{remark:3.1}$,
we need to find the concrete
forms of $W_B$ and $V_B$ if
$r_B\geq \frac{\varepsilon}{4}\chi_{*}(x_B/\varepsilon)$.

\noindent
\textbf{Step 1.}
Decompose the field $\nabla u_\varepsilon$ and
make a reduction for the stated estimate $\eqref{pri:3.8}$.
In terms of the decomposition,
the case $x_B\in\partial\Omega\cap B_{R}$ is more important
than the case of $4B\subset D_{R}$ here\footnote{
To see the case of $4B\subset D_{R}\subset\Omega$,
we can take $V_B = F$ and $W_B = 0$.
On account of $\eqref{pri:3.7-3}$, it is not hard to verify
the condition $\eqref{f:3.31}$, while $\eqref{f:3.32}$ is trivial in such the case.}.
Thus, we merely focus on the case $x_B\in\partial\Omega\cap B_R$,
and set $D:=B\cap\Omega$.
We now consider the approximating equation:
$\nabla \cdot \bar{a}\nabla\bar{u} = 0$ in $9D$ with
$\bar{u} = u_\varepsilon$ on $\partial (9D)$,
and introduce
the following notation:
\begin{equation*}
\begin{aligned}
&V_{B}(x):=
\Big(\dashint_{U_{*,\varepsilon}(x)}
|(e_i+(\nabla\phi_i)^\varepsilon)
\varphi_i|^2\Big)^{\frac{1}{2}};\\
&W_{B}(x):=
\Big(\dashint_{U_{*,\varepsilon}(x)}
|\nabla u_\varepsilon-(e_i+(\nabla\phi_i)^\varepsilon)
\varphi_i|^2\Big)^{\frac{1}{2}},
\end{aligned}
\end{equation*}
where $\varphi_i:=S_{*,\varepsilon}(\eta_{\varepsilon}
\partial_i\bar{u})$ (see Lemma $\ref{lemma:2}$ for
the definition of $S_{*,\varepsilon}$), and we mention that the random field $\chi_*$ is taken from Corollary $\ref{cor:1}$.
Then, combining the two cases (I)
and (II) presented in Subsection $\ref{subsection:3.0}$, we observe that
$F\leq W_{B}+V_{B}$ on $2B\cap\Omega$.
Let $p_1\in(2,p_*)$ be arbitrarily fixed.
By Lemma $\ref{shen's lemma2}$ (simply take $\omega=1$ therein), the desired estimate $\eqref{pri:3.8}$ immediately follows from the next two estimates:
\begin{equation}\label{f:3.31}
\Big(\dashint_{2B\cap\Omega}
V_B^{p_1}\Big)^{\frac{1}{p_1}}
\lesssim \Big(\dashint_{20B\cap\Omega}F^{p_0}\Big)^{\frac{1}{p_0}};
\end{equation}
and (for $0<\nu\ll 1$, there holds)
\begin{equation}\label{f:3.32}
\dashint_{2B\cap\Omega}
W_B^{p_0}
\lesssim \nu^{p_0}
\dashint_{58B\cap\Omega}
F^{p_0}.
\end{equation}
As mentioned in Subsection $\ref{subsection:3.0}$,
the proof of the above estimates will be divided into two cases:
(I). $0<r_B<\frac{\varepsilon}{4}\chi_{*}(x_B/\varepsilon)$;
(II). $r_B\geq \frac{\varepsilon}{4}\chi_{*}(x_B/\varepsilon)$.
We consider the first case in Step 2, and the second case in Steps 3 and 4
(note that it suffices to address the situation of $x_B\in\partial\Omega$
for the second case).

\medskip
\noindent
\textbf{Step 2.}
Establish the
estimates $\eqref{f:3.31}$ and $\eqref{f:3.32}$
in the case of (I). In such the case, we merely choose
$\textcolor[rgb]{1.00,0.00,0.00}{V_B} = F$
and $\textcolor[rgb]{1.00,0.00,0.00}{W_B} = 0$, and
the estimate $\eqref{f:3.32}$ is trivial while the
estimate $\eqref{f:3.31}$ is reduced to show
\begin{equation}\label{f:3.31*}
\Big(\dashint_{2D}
F^{p_1}\Big)^{\frac{1}{p_1}}
\lesssim \Big(\dashint_{15D}F^{p_0}\Big)^{\frac{1}{p_0}}.
\end{equation}
To see the estimate $\eqref{f:3.31*}$,
on account of the $\frac{1}{L}$-Lipschitz continuity of
$\chi_{*}$, for any
$x_0\in 3D$, we have $0< \vartheta
r_B<\frac{\varepsilon}{4}\chi_{*}(x_0/\varepsilon)$ with
$\vartheta:=(1-\frac{3}{4L})\in[7/8,1)$ by recalling the definition of
$L$ in Remark $\ref{remark:3}$.
Let $\tilde{B}$ be the concentric ball with
the radius $r_{\tilde{B}}=\vartheta r_B$
. We can infer that
$2B\subset(5/2)\tilde{B}\subset 3B$.
Then, for any $x_0\in (5/2)\tilde{B}$,
appealing to Lemma $\ref{lemma:9}$ we can derive that
\begin{equation*}
 \Big(\dashint_{U_{*,\varepsilon}(x_0)}
 |\nabla u_\varepsilon|^2\Big)^{\frac{p_0}{2}}
 \lesssim^{\eqref{pri:3.5-1}}
 \dashint_{15D}\Big(
 \dashint_{U_{*,\varepsilon}(x)}|\nabla u_\varepsilon|^2
 \Big)^{\frac{p_0}{2}}dx,
\end{equation*}
which further implies that for any $p_1\geq p_0$,
\begin{equation*}
\Big(\dashint_{U_{*,\varepsilon}(x_0)}
|\nabla u_\varepsilon|^2\Big)^{\frac{p_1}{2}}
\lesssim \bigg(\dashint_{15D}\Big(
 \dashint_{U_{*,\varepsilon}(x)}
 |\nabla u_\varepsilon|^2
 \Big)^{\frac{p_0}{2}}dx\bigg)^{\frac{p_1}{p_0}}.
\end{equation*}
Integrating both sides above with respective to
$x_0\in 2D$, we arrive at
\begin{equation}\label{f:3.33}
\bigg(\dashint_{\textcolor[rgb]{1.00,0.00,0.00}{2D}}
\Big(\dashint_{\textcolor[rgb]{1.00,0.00,0.00}{U_{*,\varepsilon}(x)}}
|\nabla u_\varepsilon|^2\Big)^{\frac{p_1}{2}}
dx\bigg)^{\frac{1}{p_1}}
\lesssim \bigg(\dashint_{\textcolor[rgb]{1.00,0.00,0.00}{15D}}\Big(
 \dashint_{\textcolor[rgb]{1.00,0.00,0.00}{U_{*,\varepsilon}(x)}}|\nabla u_\varepsilon|^2
 \Big)^{\frac{p_0}{2}}dx\bigg)^{\frac{1}{p_0}},
\end{equation}
which indicates the stated estimate $\eqref{f:3.31*}$ and therefore
has verified $\eqref{f:3.31}$.

\medskip
\noindent
\textbf{Step 3.} Show the estimate
$\eqref{f:3.31}$ in the case of (II).
According to the definition of $S_{*,\varepsilon}$ in $\eqref{def1}$,
for any $x\in 2D$,
it follows from H\"older's inequality and Fubini's theorem that
\begin{equation*}
\begin{aligned}
\dashint_{U_{*,\varepsilon}(x)} \big|(e_i+(\nabla\phi_i)^\varepsilon)
S_{*,\varepsilon}(\eta_\varepsilon\partial_i\bar{u})\big|^2
&\lesssim \dashint_{U_{*,\varepsilon}(x)} |(e_i+(\nabla\phi_i)^\varepsilon)(z)|^2
\int_{\mathbb{R}^d}\zeta_{\varepsilon\chi_{*}(z/\varepsilon)}(z-y)
|\eta_\varepsilon\partial_i\bar{u}(y)|^2 dy dz\\
&\lesssim \dashint_{2U_{*,\varepsilon}(x)} |\partial_i\bar{u}(y)|^2
\int_{\mathbb{R}^d}\zeta_{\varepsilon\chi_{*}(y/\varepsilon)}(y-z)
|(e_i+(\nabla\phi_i)^\varepsilon)(z)|^2 dzdy.
\end{aligned}
\end{equation*}
In the second inequality above, we also need to analyze the
compact support under the convolution, and it relies
on two facts (1): $\chi_*(y/\varepsilon)\sim \chi_*(z/\varepsilon)$
provided $|z-y|\leq \varepsilon\chi_*(z/\varepsilon)$;
(2): For any $z\in U_{*,\varepsilon}(x)$ and
$y\in\mathbb{R}^d$ satisfying $|y-z|\leq \varepsilon\chi_*(z/\varepsilon)$
we can infer that $|y-x|\leq 2\varepsilon\chi_*(x/\varepsilon)$. Then we
continue to have
\begin{equation*}
\begin{aligned}
\dashint_{U_{*,\varepsilon}(x)} \big|(e_i+(\nabla\phi_i)^\varepsilon)
S_{*,\varepsilon}(\eta_\varepsilon\partial_i\bar{u})\big|^2
&\lesssim \dashint_{2U_{*,\varepsilon}(x)} |\partial_i\bar{u}(y)|^2
\dashint_{B_{*,\varepsilon}(y)}
|(e_i+(\nabla\phi_i)^\varepsilon)|^2dy.
\end{aligned}
\end{equation*}
Suppose that $\alpha'$ is the conjugate index of $\alpha$ satisfying $0<\alpha'-1\ll 1$.
Thus, by H\"older's inequality we have
\begin{equation}\label{f:3.34}
\begin{aligned}
\Big(\dashint_{2D}
 V_B^{p_1}\Big)^{\frac{1}{p_1}}
\lesssim  \Bigg(\dashint_{2D}
\Big(\dashint_{2U_{*,\varepsilon}(x)}
|\partial_i\bar{u}|^{2\alpha'}\Big)^{\frac{p_1}{2\alpha'}}
\Big(\dashint_{2U_{*,\varepsilon}(x)}
\Big(\dashint_{B_{*,\varepsilon}(y)}
|(\nabla\phi_{i})^\varepsilon+e_i|^{2}\Big)^{\alpha}
dy\Big)^{\frac{p_1}{2\alpha}}dx
\Bigg)^{\frac{1}{p_1}}.
\end{aligned}
\end{equation}
In terms of the right-hand side of $\eqref{f:3.34}$, we claim that
\begin{subequations}
\begin{align}
&\sup_{x\in 2D\atop i=1,\cdots,d}
\Big(\dashint_{2B_{*,\varepsilon}(x)}
\Big(\dashint_{B_{*,\varepsilon}(y)}
|(\nabla\phi_i)^\varepsilon+e_i|^{2}\Big)^{\alpha}
dy\Big)^{\frac{p_1}{2\alpha}}
\lesssim 1; \label{f:3.24}\\
&\bigg(\dashint_{2D}
\Big(\dashint_{2U_{*,\varepsilon}(x)}
|\nabla\bar{u}|^{2\alpha'}\Big)^{\frac{p_1}{2\alpha'}}
dx\bigg)^{\frac{1}{p_1}}
\lesssim \Big(\dashint_{10D}
|\nabla u_\varepsilon|^{p_0}\Big)^{\frac{1}{p_0}}.
\label{f:3.25}
\end{align}
\end{subequations}
Admitting them for a while, plugging them back into
$\eqref{f:3.34}$ and applying Lemma $\ref{lemma:9}$ we derive that
\begin{equation*}
\Big(\dashint_{2D}
 V_B^{p_1}\Big)^{\frac{1}{p_1}}
\lesssim^{\eqref{f:3.34},\eqref{f:3.24},\eqref{f:3.25}}
\Big(\dashint_{10D}
|\nabla u_\varepsilon|^{p_0}\Big)^{\frac{1}{p_0}}
\lesssim^{\eqref{pri:3.6-1}} \Big(\dashint_{20D}
F^{p_0}\Big)^{\frac{1}{p_0}},
\end{equation*}
which is the stated estimate $\eqref{f:3.31}$ in such the case.

We now turn to address the estimate $\eqref{f:3.24}$.
For any $x\in2D$,
it follows from Lemma $\ref{lemma:7}$ that,
\begin{equation*}
\begin{aligned}
\bigg(\dashint_{2B_{*,\varepsilon}(x)}
\Big(\dashint_{B_{*,\varepsilon}(y)}
|(\nabla\phi_i)^\varepsilon+e_i|^{2}\Big)^{\alpha}
dy\bigg)^{\frac{1}{2\alpha}}
\lesssim^{\eqref{pri:3.7}}
\bigg(\dashint_{8B_{*,\varepsilon}(x)}
\Big(\dashint_{B_{*,\varepsilon}(y)}
|(\nabla\phi_i)^\varepsilon+e_i|^{2}\Big)dy
\bigg)^{\frac{1}{2}}
+ 1
\lesssim 1,
\end{aligned}
\end{equation*}
where we also employ the same computation as those given in $\eqref{f:2.30}$ for the last inequality.
We proceed to show the estimate $\eqref{f:3.25}$.
By H\"older's inequality, we have
\begin{equation}\label{f:3.36}
\bigg(\dashint_{2D}
\Big(\dashint_{2U_{*,\varepsilon}(x)}
|\nabla\bar{u}|^{2\alpha'}\Big)^{\frac{p_1}{2\alpha'}}
dx\bigg)^{\frac{1}{p_1}}
\leq
\bigg(\dashint_{2D}
\Big(\dashint_{2U_{*,\varepsilon}(x)}
|\nabla\bar{u}|^{p_1}\Big)
dx\bigg)^{\frac{1}{p_1}}
=\bigg(\dashint_{D}
\Big(\dashint_{U_{*,\varepsilon}(\tilde{x})}
|\widetilde{\nabla\bar{u}}|^{p_1}\Big)
d\tilde{x}\bigg)^{\frac{1}{p_1}},
\end{equation}
where $\widetilde{\nabla\bar{u}} := (\nabla\bar{u})(2\cdot)$ on $D$, and
the last equality is due to the variable substitution.
Then, we can apply Lemmas $\ref{lemma:9}$ to the right-hand side of
$\eqref{f:3.36}$, and obtain
\begin{equation}\label{f:3.37}
\bigg(\dashint_{D}
\Big(\dashint_{U_{*,\varepsilon}(\tilde{x})}
|\widetilde{\nabla\bar{u}}|^{p_1}\Big)
d\tilde{x}\bigg)^{\frac{1}{p_1}}
\lesssim^{\eqref{pri:3.6-1}}
\Big(\dashint_{2D}
|\widetilde{\nabla \bar{u}}|^{p_1}\Big)^{\frac{1}{p_1}}
= \Big(\dashint_{4D}|\nabla \bar{u}|^{p_1}\Big)^{\frac{1}{p_1}}.
\end{equation}
In the sequel, by using Lemmas $\ref{lemma:16}$,
energy estimates and Lemma $\ref{lemma:15}$ in the order, we derive that
\begin{equation*}
\begin{aligned}
\bigg(\dashint_{2D}
\Big(\dashint_{2U_{*,\varepsilon}(x)}
|\nabla\bar{u}|^{2\alpha'}&\Big)^{\frac{p_1}{2\alpha'}}
dx\bigg)^{\frac{1}{p_1}}
\lesssim^{\eqref{f:3.36},\eqref{f:3.37}} \Big(\dashint_{4D}
|\nabla \bar{u}|^{p_1}\Big)^{\frac{1}{p_1}}\\
&\lesssim^{\eqref{pri:12}}  \Big(\dashint_{5D}
|\nabla \bar{u}|^{2}\Big)^{\frac{1}{2}}
\lesssim \Big(\dashint_{9D}
|\nabla u_\varepsilon|^{2}\Big)^{\frac{1}{2}}
\lesssim^{\eqref{pri:10-b}} \Big(\dashint_{10D}
|\nabla u_\varepsilon|^{p_0}\Big)^{\frac{1}{p_0}}.
\end{aligned}
\end{equation*}

\medskip
\noindent
\textbf{Step 4.} Show the
estimate $\eqref{f:3.32}$ in the case of (II).
By using H\"older's inequality,
Corollary $\ref{cor:1}$ and Lemma $\ref{lemma:9}$ in the order, there holds
\begin{equation*}
\begin{aligned}
\dashint_{2D} W_B^{p_0}
&\lesssim^{\eqref{pri:3.6-1}}
\Big(\dashint_{7D} |\nabla u_\varepsilon
-(e_i+(\nabla\phi_i)^{\varepsilon})\varphi_i|^{2}
\Big)^{\frac{p_0}{2}}
\lesssim^{\eqref{pri:2.1-1},\textcolor[rgb]{1.00,0.00,0.00}{\eqref{pri:10-b}}}
\nu^{p_0}
\dashint_{29D}
|\nabla u_\varepsilon|^{p_0}
\lesssim^{\eqref{pri:3.6-1}} \nu^{p_0}
\dashint_{58D}
U^{p_0}.
\end{aligned}
\end{equation*}
We have the stated estimate $\eqref{f:3.32}$, and
this ends the whole proof.
\end{proof}

\medskip
\noindent
\textbf{Proof of Proposition $\ref{P:10}$.}
The basic idea has been shown
in Subsection $\ref{subsection:3.0}$.
First of all,
let us fix the range of values of parameters $p_1,p_0$ as follows:
\begin{itemize}
\item [(a)] To handle the estimate $\eqref{A}$, we take $p_1\in(2,p_*)$
with $p_0=2$ and, in such the case, we take $\omega=1$;
\item [(b)] To handle the estimate $\eqref{pri:8-1}$,
we take $p_1=2+\theta$ with $p_0\in(p_1',2]$, where
$1/p_1+1/p_1'=1$ and
$0<\theta\ll 1$ depends only on $d$ and $M_0$;
\item [(c)] To handle the estimate $\eqref{f:12}$,
  we take $p_1\in(2,p_*)$ and $p_0\in(1,2]$.
\end{itemize}

\noindent
\textbf{Step 1.} Outline the proof of
$\eqref{f:12}$. It is consistent with the idea given for
the estimate $\eqref{A}$
stated in subsection $\ref{subsection:3.0}$.
Let $B_0$ and $\{B_{0}(x_i)\}_{i=1}^{n_0}$ be given as in $\eqref{f:decom-3}$
and $\omega\in A_{p/p_0}$ with $p\in(p_0,p_1)$.
By the same decomposition as in \textbf{Step A},
the desired estimate $\eqref{f:12}$ is rooted in the following local estimates:
\begin{equation}\label{f:2.8}
\Big(\dashint_{\tilde{B}_0(x_i)\cap\Omega}F^p\omega\Big)^{\frac{1}{p}}
\lesssim\bigg\{
\Big(\dashint_{4\tilde{B}_0(x_i)\cap\Omega}F^{p_0}\Big)^{\frac{1}{p_0}}
\Big(\dashint_{\tilde{B}_0(x_i)}\omega\Big)^{1/{p}}
+\Big(\dashint_{4\tilde{B}_0(x_i)\cap\Omega}|g|^p\omega\Big)^{\frac{1}{p}}\bigg\},
\end{equation}
where we recall that $\tilde{B}_0(x_i) = 4B_0(x_i)$ if $i\in[1,m_0]$;
$\tilde{B}_0(x_i) = B_0(x_i)$ if $i\in[m_0+1,n_0]$,
which played a similar role as $\eqref{f:3.1x}$
did in the estimate $\eqref{A}$\footnote{
\textcolor[rgb]{1.00,0.00,0.00}{If we take $\omega=1$ and
$p_0=2$ in $\eqref{f:2.8}$, then the estimate
$\eqref{f:2.8}$ leads to $\eqref{f:3.1x}$.}\label{footnote1}}.
To see this,  the covering argument (shown in Subsection $\ref{subsection:3.0}$)
provides us with
\begin{equation}\label{f:3.18}
\begin{aligned}
\dashint_{\Omega}F^p\omega
&\lesssim
\Big(\dashint_{\Omega}F^{p_0}\Big)^{\frac{p}{p_0}}
\max_{1\leq i\leq n_0}\Big(\dashint_{\tilde{B}_0(x_i)}\omega\Big)
+\dashint_{\Omega}g^p\omega\\
&\lesssim^{\eqref{A}} \Big(\dashint_{\Omega}g^{p_0}\Big)^{\frac{p}{p_0}}
\Big(\dashint_{2\Omega}\omega\Big)
+\dashint_{\Omega}g^p\omega
\lesssim^{\eqref{f:18}}
\Big(\dashint_{\Omega}g^{p_0}\Big)^{\frac{p}{p_0}}
\Big(\dashint_{\Omega}\omega\Big)
+\dashint_{\Omega}g^p\omega.
\end{aligned}
\end{equation}

We further set
$0<p_0-1\ll1$ and, by using H\"older's inequality and the property of
$A_{p/p_0}$ class (it is known from $\eqref{f:19-1}$ that
$\omega\in A_p$ implies $\omega\in A_{p/p_0}$), we can derive that
\begin{equation}\label{f:3.38}
\Big(\dashint_{\Omega}g^{p_0}\Big)^{\frac{1}{p_0}}
\leq \Big(\dashint_{\Omega}g^{p}\omega\Big)^{\frac{1}{p}}
\Big(\dashint_{\Omega}\omega^{-\frac{p_0}{p-p_0}}\Big)^{\frac{p-p_0}{p_0p}}
\lesssim^{\eqref{f:w-1}}_{d,p,[\omega]_{A_p}} \Big(\dashint_{\Omega}g^{p}\omega\Big)^{\frac{1}{p}}
\Big(\dashint_{\Omega}\omega\Big)^{-\frac{1}{p}}.
\end{equation}
Plugging the estimate $\eqref{f:3.38}$ back into the right-hand side of $\eqref{f:3.18}$,
we establish the desired estimate $\eqref{f:12}$.


\noindent
\textbf{Step 2.} Outline the proof of
$\eqref{pri:8-1}$. As the Lipschitz domain is concerned, the estimate $\eqref{f:3.18}$ will be demonstrated
to be true for $p=p_0=2$ and some $\omega\in A_1$.
This inspires us to
modify the estimate $\eqref{f:3.38}$. Thus, by using the property of $A_1$ class,
one can show that
\begin{equation}\label{f:3.35}
\Big(\dashint_{\Omega}g^{2}\Big)^{\frac{1}{2}}
\leq \Big(\dashint_{\Omega}g^{2}\omega\Big)^{\frac{1}{2}}
\Big(\esssup_{\Omega}\omega^{-1}\Big)^{\frac{1}{2}}
\lesssim_{d,[\omega]_{A_1}}^{\eqref{f:w-2}} \Big(\dashint_{\Omega}g^{2}\omega\Big)^{\frac{1}{2}}
\Big(\dashint_{\Omega}\omega\Big)^{-\frac{1}{2}}.
\end{equation}

As a result, combining the estimates $\eqref{f:3.18}$ and $\eqref{f:3.35}$ leads to
\begin{equation*}
\int_{\Omega} F^2 \omega \lesssim \int_{\Omega} g^2\omega,
\end{equation*}
and then the stated estimate $\eqref{pri:8-1}$ finally follows from
the duality argument similar to that in Subsection $\ref{subsection:3.0}$.

\textbf{Note that} we indeed have the following flow chart:
\begin{equation*}
\boxed{~\eqref{f:2.8}~\overset{\omega=1}{\Longrightarrow}~\eqref{f:3.1x}
~\Longrightarrow~\eqref{A}~}
\quad
\overset{+\eqref{f:2.8}}{\Longrightarrow}\quad \eqref{f:3.18}
~+\quad
\left\{\begin{aligned}
&\eqref{f:3.38}~\Longrightarrow~ \eqref{f:12};\\
&\eqref{f:3.35}~\Longrightarrow~ \eqref{pri:8-1}.
\end{aligned}\right.
\end{equation*}
Hence,
the following is devoted to studying how to use Lemma $\ref{shen's lemma2}$
to get the required estimate $\eqref{f:2.8}$,
analogous to that stated in \textbf{Step B} and Remark $\ref{remark:3.1}$.
We remind the reader to pay more attention to
the range of values for $p_1$ and $p_0$ based on (a), (b), and (c),
respectively.

\noindent
\textbf{Step 3.} Make a reduction for
$\eqref{f:2.8}$. As the reduction of Step B shown in
Subsection $\ref{subsection:3.0}$,
it suffices to show $\eqref{f:2.8}$ for the case of $x_i\in\partial\Omega$,
since the interior estimate $\eqref{pri:3.7}$ makes the interior part
$C_{\text{int}}$ easier (see Step 10).
Moreover, it is fine to assume that $0\in\partial\Omega$ and,
by translation invariance of
$\eqref{f:2.8}$ alone $\partial\Omega$, it can be further
reduced to illustrate
$\eqref{f:2.8}$ for $x_i=0$.
Given $\tilde{B}_0$ (which is equal to
4$B_0$ by definition), let $B$ be arbitrary ball
with the properties: $r_B\leq (1/100)r_{\tilde{B}_0}$ and
either $x_B\in 2\tilde{B}_0\cap\partial\Omega$ or
$4B\subset 2\tilde{B}_0\cap\Omega$.
Proceeding as in the proof of Lemma $\ref{lemma:14}$, we merely
fucus on the case $x_B\in 2\tilde{B}_0\cap \partial\Omega$ and
let $D:=B\cap\Omega$.
Thus, we need to find the concrete
forms of $W_B$ and $V_B$ in the case of
$r_B\geq \frac{\varepsilon}{4}\chi_{*}(x_B/\varepsilon)$ and
$x_B\in 2\tilde{B}_0\cap\partial\Omega$.

\medskip
\noindent
 \textbf{Step 4.} Decompose the field $\nabla u_\varepsilon$ and
make a further reduction for $\eqref{f:2.8}$ based upon Step 3.
Let $B$ be given as in the previous step.
We suppose that $w_\varepsilon\in H_0^1(\Omega)$ and
$v_\varepsilon:=u_\varepsilon-w_\varepsilon$ satisfy the following equations:
\begin{equation*}
 -\nabla\cdot a^\varepsilon\nabla w_\varepsilon =
 \nabla \cdot fI_{30D},
 \quad\text{and}\quad
 \nabla\cdot a^\varepsilon\nabla v_\varepsilon
 =0 \quad\text{in}\quad 30D.
\end{equation*}
Then we consider the approximating function $\bar{v}$ satisfying
$\nabla \cdot \bar{a}\nabla\bar{v} = 0$ in $\textcolor[rgb]{1.00,0.00,0.00}{9D}$
with $\bar{v} = v_\varepsilon$ on $\partial (\textcolor[rgb]{1.00,0.00,0.00}{9D})$.
We also introduce
the following notation:
\begin{equation*}
\begin{aligned}
&V_{B}(x)
:=\Big(\dashint_{U_{*,\varepsilon}(x)}
|(e_i+(\nabla\phi_i)^\varepsilon)
\varphi_i|^{2}\Big)^{1/2};
\qquad
W_{B}(x)
:=\Big(\dashint_{U_{*,\varepsilon}(x)}
|\nabla u_\varepsilon-(e_i+(\nabla\phi_i)^\varepsilon)
\varphi_i|^{2}\Big)^{1/2};\\
&W^{(1)}_{B}(x)
:=\Big(\dashint_{U_{*,\varepsilon}(x)}
|\nabla v_\varepsilon - (e_i+(\nabla\phi_i)^\varepsilon)
\varphi_i|^{2}\Big)^{1/2};\qquad
W^{(2)}_{B}(x)
:=\Big(\dashint_{U_{*,\varepsilon}(x)}|\nabla w_\varepsilon|^{2}\Big)^{1/2},
\end{aligned}
\end{equation*}
where $\varphi_i = S_{*,\varepsilon}(\eta_\varepsilon\partial_i\bar{v})$ similar to that defined in the proof of Lemma $\ref{lemma:14}$.

Then, combining the cases (I) and (II),
one can observe that
$F\leq W_{B}+V_{B}$ and $W_{B}\leq W_{B}^{(1)} + W_{B}^{(2)}$
on $2D$.
Let $p_0, p_1$ be fixed as in (a), (b), and (c) above, and $p_0<p<p_1$ with
$\omega\in A_{p/p_0}$ being a weight function. We claim that:
\begin{equation}\label{f:3.16}
\Big(\dashint_{2D}
V_B^{p_1}\omega
\Big)^{\frac{1}{p_1}} \lesssim
\Big(\dashint_{45D}
(F^{p_0}+g^{p_0})\Big)^{\frac{1}{p_0}}
\Big(\dashint_{B}\omega\Big)^{\frac{1}{p_1}};
\end{equation}
and
\begin{equation}\label{f:3.17}
\dashint_{2D}
W_B^{p_0}
\lesssim \dashint_{60D}
(g^{p_0}
+\nu^{p_0}
F^{p_0}).
\end{equation}

By Lemma $\ref{shen's lemma2}$, we have
\begin{equation*}
\Big(\dashint_{\tilde{B}_0\cap\Omega}F^p\omega\Big)^{\frac{1}{p}}
\lesssim^{\eqref{pri:3.4}}\bigg\{
\Big(\dashint_{4\tilde{B}_0\cap\Omega}F^{p_0}\Big)^{\frac{1}{p_0}}
\Big(\dashint_{\tilde{B}_0}\omega\Big)^{1/{p}}
+\Big(\dashint_{4\tilde{B}_0\cap\Omega}g^p\omega\Big)^{\frac{1}{p}}\bigg\},
\end{equation*}
which indeed leads to the desired estimate $\eqref{f:2.8}$ by
translations alone $\partial\Omega$.
Thus, the crucial job is to verify the claims $\eqref{f:3.16}$
and $\eqref{f:3.17}$.
Similar to those given for Lemmas $\ref{lemma:7}$ and $\ref{lemma:14}$,
the proof of the estimates $\eqref{f:3.16}$ and $\eqref{f:3.17}$ should be divided into
two cases: (I) $0<r_B<\frac{\varepsilon}{4}\chi_{*}(x_B/\varepsilon)$;
(II) $r_B\geq \frac{\varepsilon}{4}\chi_{*}(x_B/\varepsilon)$.
We plan to address the first case in Step 6,
and the second case in Steps 7a (together with Step 9), 7b and 8.

\noindent
\textbf{Step 5.} Show the estimate $\eqref{A}$.
Let $p_0,p_1$ be given as in (a), and one
can verify $\eqref{f:3.16}$ and $\eqref{f:3.17}$
in the cases (I) and (II),
which has already been done by Steps 6, 7a (together with Step 9), 7b, and 8.
(Note the explanation in the corresponding proof
for the specific case of $\omega=1$.)
Then, by Lemma $\ref{shen's lemma2}$, we obtain
\begin{equation*}
\Big(\dashint_{\tilde{B}_0\cap\Omega}F^p\Big)^{\frac{1}{p}}
\lesssim^{\eqref{pri:3.4}}\bigg\{
\Big(\dashint_{4\tilde{B}_0\cap\Omega}F^{2}\Big)^{\frac{1}{2}}
+\Big(\dashint_{4\tilde{B}_0\cap\Omega}g^p\Big)^{\frac{1}{p}}\bigg\},
\end{equation*}
which gives the stated estimate $\eqref{f:3.1x}$ for the boundary part
$C_{\text{bdy}}$ by translations alone $\partial\Omega$.
On the other hand,
by taking $R=r_{B_{0}}$, the estimate $\eqref{pri:3.7}$ indeed provides us
with the stated estimate $\eqref{f:3.1x}$ for the interior part
$C_{\text{int}}$ by translations. Therefore, by
Step A in Subsection $\ref{subsection:3.0}$,
we have proved the estimate $\eqref{A}$.

\medskip
\noindent
\textbf{Step 6. } Establish the estimates $\eqref{f:3.16}$ and
$\eqref{f:3.17}$ in the case of \textcolor[rgb]{1.00,0.00,0.00}{(I)}.
In such the case, the following uniform proof can be derived
under the assumption that $p_1$ and $p_0$
satisfy (a), (b), and (c), respectively.
Here, we adopt the same strategy as that given in Lemma $\ref{lemma:14}$.
We merely recall that $W_B = F$ and $V_B = 0$ in such the case,
and therefore focus on the estimate $\eqref{f:3.16}$.
Similar to that given for $\eqref{f:3.33}$,
for any $\alpha\geq 1$, we have
\begin{equation}\label{f:3.19}
\bigg(\dashint_{2D}
\Big(\dashint_{U_{*,\varepsilon}(x)}
|\nabla u_\varepsilon|^2\Big)^{\frac{\alpha}{2}}
dx\bigg)^{\frac{1}{\alpha}}
\lesssim \dashint_{15D}\Big(
 \dashint_{U_{*,\varepsilon}(x)}|\nabla u_\varepsilon|^2\Big)^{\frac{1}{2}}dx.
\end{equation}
Moreover, from H\"older's inequality and the
reverse H\"older's property of $A_p$ class, it follows that for any $0<\beta'-1\ll 1$ with $1/\beta+1/\beta'=1$,
\begin{equation}\label{f:3.20}
\begin{aligned}
\bigg(\dashint_{2D}
\Big(\dashint_{U_{*,\varepsilon}(x)}
|\nabla u_\varepsilon|^2\Big)^{\frac{p_1}{2}}
\omega(x)dx\bigg)^{\frac{1}{p_1}}
&\leq
\bigg(\dashint_{2D}
\Big(\dashint_{U_{*,\varepsilon}(x)}
|\nabla u_\varepsilon|^2\Big)^{\frac{\beta p_1}{2}} \bigg)^{\frac{1}{\beta p_1}}
\Big(\dashint_{2B}\omega^{\beta'}\Big)^{\frac{1}{\beta'p_1}}\\
&\lesssim^{\eqref{pri:R-1},\eqref{f:18}}
\bigg(\dashint_{2D}
\Big(\dashint_{U_{*,\varepsilon}(x)}|\nabla u_\varepsilon|^2\Big)^{\frac{\beta p_1}{2}} \bigg)^{\frac{1}{\beta p_1}}
\Big(\dashint_{B}\omega\Big)^{\frac{1}{p_1}}.
\end{aligned}
\end{equation}

Combining the estimates $\eqref{f:3.19}$ and $\eqref{f:3.20}$ we obtain
\begin{equation*}
\bigg(\dashint_{2D}
F^{p_1}\omega\bigg)^{\frac{1}{p_1}}
\lesssim
\bigg(\dashint_{15D}
F^{p_0}\bigg)^{\frac{1}{p_0}}
\Big(\dashint_{B}\omega\Big)^{\frac{1}{p_1}},
\end{equation*}
which in fact proves the stated estimate $\eqref{f:3.16}$ by noting
that we choose $W_B =F$ in such the case.

\medskip
\noindent
\textbf{Step 7a.} Show the estimate $\eqref{f:3.16}$ in the case of
(II) for the general
Lipschitz domain.
In such the case, the following uniform proof can be derived
under the assumption that $p_1$ with $p_0$
satisfies (a) and (b), respectively.
Let $\alpha'$ be the conjugate index of $\alpha$ satisfying
$0<\alpha'-1\ll 1$.
As a preparation,
we claim that for some $\omega\in A_1$, there holds that
\begin{equation}\label{f:3.9}
\bigg(\dashint_{4D}
|\nabla\bar{v}|^{p_1}\omega
\bigg)^{\frac{1}{p_1}}
\lesssim \bigg(\dashint_{8D}
|\nabla\bar{v}|^{2}
\bigg)^{\frac{1}{2}}
\Big(\dashint_{B}\omega\Big)^{\frac{1}{p_1}}
\end{equation}
(and we give a short proof in Step 9).
In view of the estimates $\eqref{f:3.34}$ and $\eqref{f:3.24}$,
we have
\begin{equation}\label{f:3.10}
\begin{aligned}
\Big(\dashint_{2D}
 V_B^{p_1}\omega\Big)^{\frac{1}{p_1}}
\lesssim  \bigg(\dashint_{2D}
\Big(\dashint_{2U_{*,\varepsilon}(x)}
|\nabla\bar{v}|^{2\alpha'}\Big)^{\frac{p_1}{2\alpha'}}
\omega(x)dx\bigg)^{\frac{1}{p_1}}.
\end{aligned}
\end{equation}
Then, by Fubini's theorem and the definition of $A_1$
weight, we can further derive that
\begin{equation*}
\begin{aligned}
\Big(\dashint_{2D}
 V_B^{p_1}\omega\Big)^{\frac{1}{p_1}}
&\lesssim
\bigg(\dashint_{4D}
|\nabla\bar{v}(x)|^{p_1}\Big(\dashint_{2B_{*,\varepsilon}(x)}\omega\Big)
dx\bigg)^{\frac{1}{p_1}}\\
&\lesssim^{\eqref{f:w-2}} \bigg(\dashint_{4D}
|\nabla\bar{v}|^{p_1}\omega
\bigg)^{\frac{1}{p_1}}
\lesssim^{\eqref{f:3.9}} \bigg(\dashint_{8D}
|\nabla\bar{v}|^{2}
\bigg)^{\frac{1}{2}}
\Big(\dashint_{B}\omega\Big)^{\frac{1}{p_1}}.
\end{aligned}
\end{equation*}
It follows that
\begin{equation}\label{f:3.23}
\begin{aligned}
\Big(\dashint_{2D}
 V_B^{p_1}\omega\Big)^{\frac{1}{p_1}}
&\lesssim
\bigg(\dashint_{
8D}
|\nabla\bar{v}|^{2}
\bigg)^{\frac{1}{2}}
\Big(\dashint_{B}
\omega \Big)^{\frac{1}{p_1}}
\lesssim
\bigg(\dashint_{9D}
|\nabla v_\varepsilon|^{2}
\bigg)^{\frac{1}{2}}
\Big(\dashint_{B}
\omega \Big)^{\frac{1}{p_1}}\\
&\lesssim^{\eqref{pri:10}} \Big(\dashint_{10D}
|\nabla v_\varepsilon|^{p_0}\Big)^{\frac{1}{p_0}}
\Big(\dashint_{B}
\omega\Big)^{\frac{1}{p_1}}
\lesssim^{\eqref{pri:3.6-1}} \Big(\dashint_{20D}
F^{p_0}
+{W_B^{(2)}}^{p_0}\Big)^{\frac{1}{p_0}}
\Big(\dashint_{B}
\omega\Big)^{\frac{1}{p_1}}\\
&\lesssim^{\eqref{A}}
\Big(\dashint_{45D}
F^{p_0}
+g^{p_0}\Big)^{\frac{1}{p_0}}
\Big(\dashint_{B}
\omega\Big)^{\frac{1}{p_1}},
\end{aligned}
\end{equation}
which consequently leads to the stated estimate $\eqref{f:3.16}$
for the general Lipschitz domain.
Also, we mention that if $\omega=1$, the last inequality in $\eqref{f:3.23}$
is due to the energy estimate $\eqref{f:3.8x}$ instead of $\eqref{A}$.

\medskip
\noindent
\textbf{Step 7b.} Show the estimate $\eqref{f:3.16}$ in the case of
(II) for the regular SKT domain.
In such the case, the following uniform proof can be derived
under the assumption that $p_1$ with $p_0$
satisfies (a) and (c), respectively.
We start from the estimate $\eqref{f:3.10}$
and, by H\"older's inequality, the reverse H\"older's property
of $A_p$ class, and Lemmas $\ref{lemma:15},\ref{lemma:16}$, we obtain
\begin{equation}\label{f:3.28}
\begin{aligned}
\Big(\dashint_{2D}
 V_B^{p_1}\omega\Big)^{\frac{1}{p_1}}
&\lesssim_d
\bigg(\dashint_{2D}
|\nabla\bar{v}|^{p_1\beta}
\bigg)^{\frac{1}{p_1\beta}}
\Big(\dashint_{4B}
\omega^{\beta'}\Big)^{\frac{1}{p_1\beta'}}\\
&\lesssim^{\eqref{pri:12},\eqref{pri:10},\eqref{pri:R-1}}
\Big(\dashint_{10D}
|\nabla v_\varepsilon|^{p_0}\Big)^{\frac{1}{p_0}}
\Big(\dashint_{B}
\omega\Big)^{\frac{1}{p_1}}
\lesssim^{\eqref{pri:3.6-1}} \Big(\dashint_{20D}
F^{p_0}
+{W_B^{(2)}}^{p_0}\Big)^{\frac{1}{p_0}}
\Big(\dashint_{B}
\omega\Big)^{\frac{1}{p_1}}\\
&\lesssim^{\eqref{A}}
\Big(\dashint_{45D}
F^{p_0}
+g^{p_0}\Big)^{\frac{1}{p_0}}
\Big(\dashint_{B}
\omega\Big)^{\frac{1}{p_1}}.
\end{aligned}
\end{equation}
This consequently leads to the stated estimate $\eqref{f:3.16}$
for the regular SKT (or $C^1$) domain. Similar
as in Step 7a, if $\omega =1$,
the last inequality in $\eqref{f:3.28}$ follows from the energy estimate
$\eqref{f:3.8x}$ instead of $\eqref{A}$.

\medskip
\noindent
\textbf{Step 8.} Show the estimate $\eqref{f:3.17}$ in the case of (II).
The argument is similar to that given for $\eqref{f:3.32}$.
In view of $W_{B}\leq W_{B}^{(1)} + W_{B}^{(2)}$,
we first have
\begin{equation}\label{f:3.27}
\begin{aligned}
\dashint_{2D}
W_B^{p_0}
&\leq
\dashint_{2D}
\Big(\dashint_{U_{*,\varepsilon}(x)}
|\nabla v_\varepsilon
-(e_i+(\nabla\phi_i)^\varepsilon)\varphi_i|^2\Big)^{\frac{p_0}{2}}
dx
 + \dashint_{2D}
 \Big(\dashint_{U_{*,\varepsilon}(x)}|\nabla w_\varepsilon
 |^2\Big)^{\frac{p_0}{2}}
 dx\\
&\lesssim^{\eqref{pri:3.6-1},\eqref{A}}
\Big(\dashint_{7D}
|\nabla v_\varepsilon
-(e_i+(\nabla\phi_i)^\varepsilon)\varphi_i|^2\Big)^{\frac{p_0}{2}}
 + \dashint_{45D}
 \Big(\dashint_{U_{*,\varepsilon}(x)}|f|^2\Big)^{\frac{p_0}{2}}.
\end{aligned}
\end{equation}

If $p_1$ with $p_0$ satisfies (a),
then it follows from the first line of $\eqref{f:3.27}$ that
\begin{equation*}
\begin{aligned}
\dashint_{2D} W_B^{2}
&\lesssim^{\eqref{pri:3.6-1}}
\dashint_{7D}
|\nabla v_\varepsilon
-(e_i+(\nabla\phi_i)^\varepsilon)\varphi_i|^2
 + \dashint_{7D}|\nabla w_\varepsilon|^2
\lesssim^{\eqref{pri:2.1-1}}
\textcolor[rgb]{0.00,0.00,1.00}{\nu^2}\dashint_{28D}
|\nabla v_\varepsilon|^2
 + \dashint_{7D}|\nabla w_\varepsilon|^2\\
 &\lesssim
\textcolor[rgb]{0.00,0.00,1.00}{\nu^2}\dashint_{28D}
|\nabla u_\varepsilon|^2
 + \dashint_{30D}|f|^2
\lesssim^{\eqref{pri:3.6-1}} \textcolor[rgb]{0.00,0.00,1.00}{\nu^2}
\dashint_{56D}F^2
+\dashint_{60D}g^2
\end{aligned}
\end{equation*}
(which leads to $\eqref{f:3.17}$ in such the case),
where the third inequality above is due to the energy estimate
and the fact that $\nabla v_\varepsilon =
\nabla u_\varepsilon-\nabla w_\varepsilon$ in $30B\cap\Omega$.

If $p_1$ with $p_0$ satisfies (b) and (c), respectively,
it suffices to address the first term in the second line of $\eqref{f:3.27}$.
With the help of Corollary $\ref{cor:1}$ together with Lemma $\ref{lemma:15}$, there holds
\begin{equation*}
\begin{aligned}
\Big(\dashint_{7D}
|\nabla v_\varepsilon
-(e_i+(\nabla\phi_i)^\varepsilon)\varphi_i|^2\Big)^{\frac{1}{2}}
\lesssim^{\eqref{pri:2.1-1}} \nu
\Big(\dashint_{28D}
|\nabla v_\varepsilon|^2\Big)^{\frac{1}{2}}
\lesssim^{\eqref{pri:10}}
\nu
\dashint_{29D}
|\nabla v_\varepsilon|,
\end{aligned}
\end{equation*}
and then
a similar computation as that given for the second and
third lines of $\eqref{f:3.28}$ leads to
\begin{equation*}
\Big(\dashint_{7D}
|\nabla v_\varepsilon
-(e_i+(\nabla\phi_i)^\varepsilon)\varphi_i|^2\Big)^{\frac{p_0}{2}}
\lesssim \dashint_{45D}g^{p_0}+
\nu^{p_0}\dashint_{60D} F^{p_0}.
\end{equation*}
Plugging this back into $\eqref{f:3.27}$, we have already derived the stated
estimate $\eqref{f:3.17}$.

\medskip
\noindent
\textbf{Step 9.} Show the details on the estimate $\eqref{f:3.9}$.
According to (a) and (b), we recall that
if $\omega=1$ we take $p_1\in(2,p_*)$; if $\omega\in A_1$
we take $p_1 = 2+\theta$ here. Let $\omega(x):=[\text{dist}(x,\partial\Omega)]^{\sigma}$
with $\sigma\in(-1,0)$. It concludes that $\omega\in A_1$ (see \cite[pp.3]{Shen20}).
In terms of the equation that $\bar{v}$ satisfies,
by decomposing the region $4D$ into ``layer'' part  and ``co-layer'' part, and then
applying nontangential maximal function estimates to the layer part\footnote{In this regard, we refer the reader to the proof of $\eqref{f:6.15}$ for
the details.},
we obtain
\begin{equation*}
\begin{aligned}
\dashint_{4D}|\nabla\bar{v}|^{p_1}\omega
&\lesssim r^{\sigma}\Big(\dashint_{4B\cap\partial\Omega}
|(\nabla\bar{v})^{*}|^{p_1}  dS +
\dashint_{4D}|\nabla\bar{v}|^{p_1} \Big)\\
&\lesssim r^{\sigma}
\dashint_{6D}|\nabla\bar{v}|^{p_1}
\lesssim \Big(\dashint_{8D}|\nabla\bar{v}|^{2}\Big)^{\frac{p_1}{2}}
\Big(\dashint_{B}\omega\Big),
\end{aligned}
\end{equation*}
which implies the stated estimate $\eqref{f:3.9}$.
Note that the above estimate may not be true
if $p_1>2+\theta$ (see Remark $\ref{remark:5}$).
However, if $\omega=1$, the above estimate will follow
from Lemma $\ref{lemma:16}$, and it allows $p_1\in(2,p_*)$ in
such the case.

\noindent
\textbf{Step 10.} Show the estimate $\eqref{f:2.8}$ for the interior part
$C_{\text{int}}$. In such the case,
we have $\tilde{B}_0(x_i) = B_0(x_i)$ with $i\in[m_0+1,n_0]$
and $4B_0(x_i)\subset\Omega$.
Analogous to the reduction for the boundary part $C_{\text{bdy}}$,
it is fine to assume $x_i =0$. Given $B_0$, let $B$ be
arbitrary ball with $B\subset 2B_0$
and $r_B\lesssim (1/20)r_{B_0}$. In terms of the case (II), we make
the following decomposition.
We suppose that $w_\varepsilon\in H_0^1(\mathbb{R}^d)$ and
$v_\varepsilon:=u_\varepsilon-w_\varepsilon$ satisfy the equations:
\begin{equation*}
-\nabla\cdot a^\varepsilon\nabla w_\varepsilon =
 \nabla \cdot fI_{18B},
 \quad\text{and}\quad
 \nabla\cdot a^\varepsilon\nabla v_\varepsilon
 =0 \quad\text{in}\quad 18B.
\end{equation*}
We also introduce
the following notation:
\begin{equation*}
V_{B}(x):=\Big(\dashint_{B_{*,\varepsilon}(x)}
|\nabla v_\varepsilon|^{2}\Big)^{1/2};
\quad
W_{B}(x):=\Big(\dashint_{B_{*,\varepsilon}(x)}
|\nabla w_\varepsilon|^{2}\Big)^{1/2};
\quad
\tilde{g}:=\Big(\dashint_{B_{*,\varepsilon}(x)}
|fI_{18B}|^{2}\Big)^{1/2}.
\end{equation*}

Then, combining the cases (I) and (II)
(recall the case (I) in Subsection $\ref{subsection:3.0}$),
one can observe that
$F\leq W_{B}+V_{B}$
on $2B$.
Let $p_0, p_1$ be fixed as (c) above, $p_0<p<p_1$ with
$\omega\in A_{p/p_0}$ being a weight function. We claim that:
\begin{subequations}
\begin{align}
&\Big(\dashint_{2B}W_B^{p_0}\Big)^{\frac{1}{p_0}}
\lesssim \Big(\dashint_{25B}
g^{p_0}\Big)^{\frac{1}{p_0}}; \label{f:3.17-*}\\
& \Big(\dashint_{2B}
V_B^{p_1}\omega
\Big)^{\frac{1}{p_1}} \lesssim
\Big(\dashint_{36B}(F^{p_0}+g^{p_0})\Big)^{\frac{1}{p_0}}
\Big(\dashint_{B}\omega\Big)^{\frac{1}{p_1}}. \label{f:3.16-*}
\end{align}
\end{subequations}

First, we can use the same argument shown in Step 6
to illustrate that $\eqref{f:3.17-*}$ and
$\eqref{f:3.16-*}$ hold for the case of (I).
Then, we handle the case (II). To see $\eqref{f:3.17-*}$, we start from
\begin{equation}\label{f:3.39}
\dashint_{2B}W_B^{p_0}
\leq \frac{1}{|2B|}\int_{\mathbb{R}^d}
W_B^{p_0}
\lesssim^{\eqref{pri:3.7-1}}
\frac{1}{|2B|}\int_{\mathbb{R}^d}
\tilde{g}^{p_0}
\lesssim \dashint_{25B}g^{p_0},
\end{equation}
where we employ the fact that
$\text{supp}(\tilde{g})\subset 25B$ due to the condition
$r_B\geq \frac{\varepsilon}{4}\chi_{*}(x_B/\varepsilon)$
for the last inequality. This indeed gives the stated estimate $\eqref{f:3.17-*}$.
Now, we proceed to deal with $\eqref{f:3.16-*}$, and obtain 
\begin{equation}\label{f:3.40}
\begin{aligned}
\Big(\dashint_{2B}V_B^{p_1}\omega\Big)^{\frac{1}{p_1}}
&\leq \Big(\dashint_{2B}V_B^{p_1\beta}\Big)^{\frac{1}{p_1\beta}}
\Big(\dashint_{2B}\omega^{\beta'}\Big)^{\frac{1}{p_1\beta'}}\\
&\lesssim^{\eqref{pri:3.7},\eqref{pri:R-1}}
\Big(\dashint_{8B}V_B^2\Big)^{\frac{1}{2}}
\Big(\dashint_{2B}\omega\Big)^{\frac{1}{p_1}}
\lesssim^{\eqref{pri:3.6-1},\eqref{f:18}}
\Big(\dashint_{16B}|\nabla v_\varepsilon|^2\Big)^{\frac{1}{2}}
\Big(\dashint_{B}\omega\Big)^{\frac{1}{p_1}}.
\end{aligned}
\end{equation}
Moreover, we can infer that
\begin{equation*}
\begin{aligned}
\Big(\dashint_{16B}|\nabla v_\varepsilon|^2\Big)^{\frac{1}{2}}
&\lesssim^{\eqref{pri:10-a}} \dashint_{18B}|\nabla v_\varepsilon|
\lesssim  \dashint_{18B}|\nabla u_\varepsilon| + |\nabla w_\varepsilon|\\
&\lesssim^{\eqref{pri:3.6-1}}
 \Big(\dashint_{36B} \big(F^{p_0} + W_B^{p_0}\big)\Big)^{\frac{1}{p_0}}
 \lesssim^{\eqref{f:3.39}}
 \Big(\dashint_{36B} \big(F^{p_0} + g^{p_0}\big)\Big)^{\frac{1}{p_0}}.
\end{aligned}
\end{equation*}
Plugging this back into $\eqref{f:3.40}$, we have derived
the stated estimate $\eqref{f:3.16-*}$. Therefore, by using
Lemma $\ref{shen's lemma2}$, we obtain
\begin{equation*}
\Big(\dashint_{B_0}F^p\omega\Big)^{\frac{1}{p}}
\lesssim^{\eqref{pri:3.4}}\bigg\{
\Big(\dashint_{4B_0}F^{p_0}\Big)^{\frac{1}{p_0}}
\Big(\dashint_{B_0}\omega\Big)^{1/{p}}
+\Big(\dashint_{4B_0}g^p\omega\Big)^{\frac{1}{p}}\bigg\},
\end{equation*}
which implies the estimate $\eqref{f:2.8}$ for the interior part
$C_{\text{int}}$ by translations.
This ends the whole proof.
\qed


\subsection{\centering Annealed estimates}\label{subsection:3.2}

\noindent
Because the integrable index of spatial variables is
determined by the regularity of the boundaries, weighted
Calder\'on-Zygmund estimates are unlikely to hold for all the
members in the Muckenhoupt’s weight class (otherwise
there will be a contradiction by extrapolation\footnote{
Concretely, if the estimate $\eqref{pri:9}$ is true
for some $q\in(1,\infty)$,  then the extrapolation
argument (see e.g. \cite[Theorem 7.8]{Duoandikoetxea01}) implies that
the estimate $\eqref{pri:9}$ will hold
for each $q\in(1,\infty)$. However, it is a contradiction to
the established estimate $\eqref{pri:9-3*}$ for
the case of Lipschitz domains, when
we trivially take $\omega=1$ in $\eqref{pri:9}$.}). Therefore,
we first establish the weighted estimates for the regular SKT (or $C^1$) domains (see Lemma $\ref{lemma:11}$).
Here, similar to an extrapolation argument, we merely show
the desired estimate for any $A_1$-weight at first,
and then pass to any $A_q$-weight.
In addition, as shown in Duerinckx and Otto's work
\cite[Theorem 6.1]{Duerinckx-Otto20}, when
we transfer the corresponding Calder\'on-Zygmund estimates from
averages at the scale $\chi_*$ to the unit scale,
it destroyed the
previous equilibrium of the stochastic integrability on both sides
(see the difference between $\eqref{pri:9-3}$ and $\eqref{pri:9-3*}$,
as well as, $\eqref{pri:9}$ and $\eqref{pri:11}$).  Finally, we mention
that by modifying the proof in a few places, the
corresponding estimates still hold true for the Neumann boundary value problems.


\begin{proposition}\label{P:7}
Let $\Omega$ be a general Lipschitz domain with $\varepsilon\in(0,1]$,
and $p_{*}$ is given by $\eqref{upindex-p}$.
Suppose that $\langle\cdot\rangle$ is stationary, satisfying the spectral gap condition $\eqref{a:2}$, and the (admissible) coefficient additionally satisfies $\eqref{a:3}$ and
the symmetry condition $a=a^*$. Let $u_\varepsilon$ be a weak solution to $\eqref{pde:2-D}$.
Then, there exists a stationary random field $\chi_*$
given as in Corollary $\ref{cor:1}$ with the $\frac{1}{L}$-Lipschitz continuity such that
\begin{equation}\label{pri:9-3}
\int_{\Omega}
\Big\langle\Big(\dashint_{
U_{*,\varepsilon}(x)}
|\nabla u_\varepsilon|^2 \Big)^{\frac{p}{2}}
\Big\rangle^{\frac{q}{p}}
\lesssim_{\lambda,d,M_0,p,q}
\int_{\Omega}
\Big\langle\Big(\dashint_{U_{*,\varepsilon}(x)}
|f|^{2}\Big)^{\frac{p}{2}}\Big\rangle^{\frac{q}{p}},
\end{equation}
holds for any $p_{*}'<p,q<p_*$ (where $p_{*}'$ is the conjugate number of
$p_{*}$),
and for any $p<\bar{p}<\infty$ we have
\begin{equation}\label{pri:9-3*}
\int_{\Omega}
\Big\langle\Big(\dashint_{
U_{\varepsilon}(x)}|\nabla u_\varepsilon|^2 \Big)^{\frac{p}{2}}
\Big\rangle^{\frac{q}{p}}
\lesssim_{\lambda,\lambda_1,
\lambda_2,d,M_0,p,\bar{p},q}
\int_{\Omega}
\Big\langle\Big(\dashint_{
U_{\varepsilon}(x)}
|f|^{2}\Big)^{\frac{\bar{p}}{2}}\Big\rangle^{\frac{q}{\bar{p}}}.
\end{equation}
Moreover, if $\Omega$ is a bounded regular SKT (or $C^1$) domain
(in such the case the symmetry condition $a=a^*$ is not needed), and $1<p,q<\infty$.
Then,
for any $\omega\in A_q$, we have
\begin{equation}\label{pri:9}
\bigg(\int_{\Omega}
\Big\langle\Big(\dashint_{
U_{*,\varepsilon}(x)}|\nabla u_\varepsilon|^2 \Big)^{\frac{p}{2}}
\Big\rangle^{\frac{q}{p}}\omega\bigg)^{\frac{1}{q}}
\lesssim_{\lambda,d,M_0,p,q,[\omega]_{A_q}}
\bigg(\int_{\Omega}
\Big\langle\Big(\dashint_{U_{*,\varepsilon}(x)}
|f|^2\Big)^{\frac{p}{2}}\Big\rangle^{\frac{q}{p}}\omega\bigg)^{\frac{1}{q}},
\end{equation}
Also, for any $p<\bar{p}<\infty$, there holds
\begin{equation}\label{pri:11}
\bigg(\int_{\Omega}\Big\langle\Big(\dashint_{
U_{\varepsilon}(x)}
|\nabla u_\varepsilon|^2 dx\Big)^{\frac{p}{2}}
\Big\rangle^{\frac{q}{p}}\dashint_{B_\varepsilon(x)}\omega\bigg)^{\frac{1}{q}}
\lesssim_{\lambda,\lambda_1,\lambda_2,d,M_0,p,\bar{p},q,[\omega]_{A_q}} \bigg(\int_{\Omega}
\Big\langle\Big(\dashint_{
U_{\varepsilon}(x)}|f|^2 dx\Big)^{\frac{\bar{p}}{2}}\Big\rangle^{\frac{q}{\bar{p}}}
\dashint_{B_\varepsilon(x)}\omega\bigg)^{\frac{1}{q}}.
\end{equation}
In particular, we also have
\begin{equation}\label{pri:26}
\bigg(\int_{\Omega}\Big\langle
\Big(\dashint_{U_\varepsilon(x)}|\nabla u_\varepsilon|^2\Big)^{\frac{p}{2}}
\Big\rangle^{\frac{q}{p}}\bigg)^{\frac{1}{q}}
\lesssim_{\lambda,\lambda_1,\lambda_2,d,M_0,p,\bar{p},q} \bigg(\int_{\Omega}
\Big\langle\Big(\dashint_{
U_{\varepsilon}(x)}|f|^2
\Big)^{\frac{\bar{p}}{2}}\Big\rangle^{\frac{q}{\bar{p}}}\bigg)^{\frac{1}{q}},
\end{equation}
and by taking $\omega_\sigma(x):=
[\text{dist}(x,\partial\Omega_0)]^{q-1}$ with
$\Omega_0\supseteq \Omega$
satisfying $\partial\Omega_0\in C^2$ and   $\text{dist}(\cdot,\partial\Omega_0)=O(\varepsilon)$ on $\partial\Omega$,  we obtain
\begin{equation}\label{pri:11B}
\bigg(\int_{\Omega}\Big\langle
\Big(\dashint_{
U_\varepsilon(x)}
|\nabla u_\varepsilon|^2 \Big)^{\frac{p}{2}}
\Big\rangle^{\frac{q}{p}}\omega_\sigma\bigg)^{\frac{1}{q}}
\lesssim_{\lambda,\lambda_1,\lambda_2,d,M_0,p,\bar{p},q} \bigg(\int_{\Omega}
\Big\langle\Big(\dashint_{
U_{\varepsilon}(x)}
|f|^2 \Big)^{\frac{\bar{p}}{2}}\Big\rangle^{\frac{q}{\bar{p}}}
\omega_\sigma\bigg)^{\frac{1}{q}}.
\end{equation}
\end{proposition}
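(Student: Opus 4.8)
The plan is to run Shen's real argument (Lemma~\ref{shen's lemma2}) at the level of \emph{annealed norms}. Fixing the stochastic exponent $p$, set
\begin{equation*}
\mathcal{F}(x):=\Big\langle\Big(\dashint_{U_{*,\varepsilon}(x)}|\nabla u_\varepsilon|^{2}\Big)^{\frac{p}{2}}\Big\rangle^{\frac{1}{p}},\qquad
\mathcal{G}(x):=\Big\langle\Big(\dashint_{U_{*,\varepsilon}(x)}|f|^{2}\Big)^{\frac{p}{2}}\Big\rangle^{\frac{1}{p}},
\end{equation*}
which are \emph{deterministic} functions on $\Omega$, so that \eqref{pri:9} reads $\|\mathcal{F}\|_{L^{q}(\omega)}\lesssim\|\mathcal{G}\|_{L^{q}(\omega)}$. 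As in the proof of Proposition~\ref{P:10} I would decompose $\Omega$ as in \eqref{f:decom-3} and, for each boundary or interior ball $B$ with $|B|\le c_{1}|B_{0}|$, split $\mathcal{F}\le W_{B}+V_{B}$ on $2B\cap\Omega$ via the two-scale expansion \eqref{f:decom-2}: when $0<r_{B}<\tfrac{\varepsilon}{4}\chi_{*}(x_{B}/\varepsilon)$ I take $W_{B}=0$ and $V_{B}=\mathcal{F}$ (so \eqref{pri:3.3b} is trivial and \eqref{pri:3.3a} follows from the $\tfrac{1}{L}$-Lipschitz continuity of $\chi_{*}$, \eqref{pri:3.5-1} and the reverse H\"older property of $\omega$), while when $r_{B}\ge\tfrac{\varepsilon}{4}\chi_{*}(x_{B}/\varepsilon)$ I take the annealed counterparts of the quantities in \eqref{f:3.5x}, namely $W_{B}(x):=\langle(\dashint_{U_{*,\varepsilon}(x)}|\nabla u_\varepsilon-(e_i+(\nabla\phi_i)^\varepsilon)\varphi_i|^{2})^{p/2}\rangle^{1/p}$ and $V_{B}(x):=\langle(\dashint_{U_{*,\varepsilon}(x)}|(e_i+(\nabla\phi_i)^\varepsilon)\varphi_i|^{2})^{p/2}\rangle^{1/p}$ with $\varphi_i=S_{*,\varepsilon}(\eta_\varepsilon\partial_i\bar v)$ and $\bar v$ the homogenized approximant attached to $B$ as in Lemma~\ref{lemma:14}; Minkowski's inequality in $L^{p}_{\langle\cdot\rangle}$ gives $\mathcal{F}\le W_{B}+V_{B}$.

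For the smallness bound \eqref{pri:3.3b} on $W_{B}$ I would take annealed norms in the pathwise estimate of Corollary~\ref{cor:1} and combine it with Lemma~\ref{lemma:9}, the energy estimate, and the already-established quenched estimate \eqref{A}, obtaining $\dashint_{2B\cap\Omega}W_{B}^{p_{0}}\lesssim\dashint_{cB\cap\Omega}(\mathcal{G}^{p_{0}}+\nu^{p_{0}}\mathcal{F}^{p_{0}})$ with $\nu\ll1$. The genuinely delicate point is \eqref{pri:3.3a} for $V_{B}$: since $V$ is the corrector-modulated gradient of $\bar v$, I would need a \emph{deterministic-coefficient} weighted Calder\'on--Zygmund/Meyer estimate for $\nabla(-\nabla\cdot\bar{a}\nabla)^{-1}_{2B\cap\Omega}$ at the higher exponent $p_{1}$ (Lemma~\ref{lemma:16}), together with the moment bounds $\langle|\nabla\phi|^{2p_{1}}\rangle\lesssim1$ from Lemma~\ref{lemma:3}, the two being coupled by H\"older in the probability variable at the price of an arbitrarily small loss $\alpha'\downarrow1$ in the corrector exponent that is absorbed by the higher integrability of $\nabla\phi$. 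For a general Lipschitz domain the deterministic estimate is available only for $p_{1}<p_{*}$ --- this is exactly where the solvability of \eqref{pde:DN}, i.e.\ the layer-potential theory for $\bar{a}$ on $\partial\Omega$, enters --- which forces the range $p_{*}'<p,q<p_{*}$ in \eqref{pri:9-3}; for a regular SKT (or $C^{1}$) domain the compactness of $\mathcal{K}_{\bar{a}}$ makes \eqref{pde:DN} solvable for every exponent, so $p_{1}$ is unconstrained and the symmetry of $a$ is not needed. Once \eqref{pri:3.3a}--\eqref{pri:3.3b} are verified, Lemma~\ref{shen's lemma2} (with $\omega=1$ for the Lipschitz case, $\omega\in A_{1}$ for the SKT case) and a covering argument as in Subsection~\ref{subsection:3.0} yield \eqref{pri:9-3} and the $A_{1}$-weighted SKT estimate, and the factorization $\omega=\omega_{1}\omega_{2}^{1-q}$ with $\omega_{1},\omega_{2}\in A_{1}$ then promotes the latter to all $\omega\in A_{q}$, proving \eqref{pri:9}.

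It remains to pass from averages at the scale $\chi_{*}$ to averages at the unit scale and to collect the corollaries. Since $B_{*,\varepsilon}(x)$ has radius $\varepsilon\chi_{*}(x/\varepsilon)\ge\varepsilon\theta^{-p}\gg\varepsilon$, one replaces $\dashint_{U_{*,\varepsilon}(x)}$ by $\dashint_{U_{\varepsilon}(x)}$ by means of Lemma~\ref{lemma:12}; this exchange is not isometric in the stochastic variable --- it consumes the moment bounds on $\chi_{*}$ and therefore forces the strictly larger exponent $\bar{p}>p$ on the right-hand side --- and gives \eqref{pri:9-3*} and \eqref{pri:11}, hence \eqref{pri:26} on taking $\omega\equiv1$. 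Finally \eqref{pri:11B} is \eqref{pri:11} specialized to $\omega=\omega_{\sigma}$, using that for $\Omega_{0}\supseteq\Omega$ with $\partial\Omega_{0}\in C^{2}$ lying within $O(\varepsilon)$ of $\partial\Omega$ one has $\omega_{\sigma}=[\text{dist}(\cdot,\partial\Omega_{0})]^{q-1}\in A_{q}$ with $A_{q}$-characteristic bounded uniformly in $\varepsilon$ and $\dashint_{B_{\varepsilon}(x)}\omega_{\sigma}\sim\omega_{\sigma}(x)$ on $\Omega$. The Neumann problems are handled by the same scheme, replacing throughout the Dirichlet problem for $\bar v$ (and for the homogenized approximant in \eqref{pde:DN}) by the corresponding Neumann problem, as indicated in the Remark following Lemma~\ref{lemma:1}.

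The main obstacle is the annealed reverse-H\"older inequality \eqref{pri:3.3a} for $V_{B}$: it is there that the regularity of $\partial\Omega$ is felt, through the admissible range of $p_{1}$ coming from the layer-potential theory for $\bar{a}$; that the stochastic and spatial integrabilities must be disentangled by a H\"older splitting at the cost of a controlled corrector loss; and that the multiscale geometry of $U_{*,\varepsilon}(x)$ together with the random case split $r_{B}\lessgtr\tfrac{\varepsilon}{4}\chi_{*}(x_{B}/\varepsilon)$ must be reconciled with the deterministic hypotheses of Lemma~\ref{shen's lemma2}. The subsequent descent to the unit scale then introduces the loss $\bar p>p$, which is intrinsic to this route and not an artifact of the argument.
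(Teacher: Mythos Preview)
Your high-level plan (run Shen's lemma on deterministic annealed quantities, then descend to the unit scale via Lemma~\ref{lemma:12}) is correct, but your decomposition and your handling of $V_B$ differ from the paper's in a way that matters.

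\textbf{Different decomposition.} You re-run the two-scale expansion split from the quenched proof, with $W_B$ and $V_B$ built from the corrector approximation $(e_i+(\nabla\phi_i)^\varepsilon)\varphi_i$. The paper instead uses a \emph{global source-splitting}: for each ball $B$ it sets $w_\varepsilon,v_\varepsilon\in H_0^1(\Omega)$ solving $-\nabla\cdot a^\varepsilon\nabla w_\varepsilon=\nabla\cdot fI_{2B\cap\Omega}$ and $-\nabla\cdot a^\varepsilon\nabla v_\varepsilon=\nabla\cdot fI_{\Omega\setminus 2B}$, and defines $W_B,V_B$ as the annealed averages of $\nabla w_\varepsilon,\nabla v_\varepsilon$. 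The point is that $v_\varepsilon$ is $a^\varepsilon$-harmonic in $2B\cap\Omega$, so the already-proved \emph{quenched} reverse H\"older inequality (Lemma~\ref{lemma:14}) applies to it \emph{pathwise}. One then transfers this to the annealed $V_B$ by a single Minkowski inequality (in $L^{q_1/p}$ over the probability space), yielding $\big(\dashint_{\frac18 B\cap\Omega}V_B^{q_1}\big)^{p/q_1}\lesssim\dashint_{\frac12 B\cap\Omega}\langle(\dashint_{U_{*,\varepsilon}}|\nabla v_\varepsilon|^2)^{p/2}\rangle$ with the stochastic exponent $p$ preserved exactly. The $W_B$ bound follows from the quenched estimate~\eqref{A} applied pathwise and then averaged.

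\textbf{The gap in your $V_B$ step.} Your proposed route---moment bounds $\langle|\nabla\phi|^{2p_1}\rangle\lesssim 1$ ``coupled by H\"older in the probability variable'' with a deterministic estimate on $\nabla\bar v$---is problematic because $\bar v$ depends on the random boundary data $u_\varepsilon|_{\partial(9D)}$. A H\"older split in probability forces the stochastic exponent on the $\nabla\bar v$ factor (hence on $\nabla u_\varepsilon$) to exceed $p$, so you would end up needing $\langle(\dashint|\nabla u_\varepsilon|^2)^{p'/2}\rangle$ with $p'>p$ on the right-hand side---a stochastic loss that is \emph{not} present in \eqref{pri:9-3} or \eqref{pri:9}. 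The paper avoids this entirely: the quenched reverse H\"older is invoked inside $\langle\cdot\rangle$, so the only exponent that moves is the spatial one. (If you instead kept the pathwise $\chi_*$-based corrector bound from \eqref{f:3.24} rather than moments, your approach could be repaired, but then you are essentially reproving Lemma~\ref{lemma:14} at the annealed level rather than quoting it.)

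\textbf{Weighted case.} For \eqref{pri:9} the paper does not pass through $A_1$ and factorization/extrapolation; it first proves the unweighted annealed estimate \eqref{pri:9-2} and then \emph{iterates} Shen's lemma with $\omega\in A_{q/q_0}$, using \eqref{pri:9-2} as input and the reverse H\"older property of $A_q$ weights. Your extrapolation route is legitimate in principle but is not what is done here. The remainder of your argument---Lemma~\ref{lemma:12} for \eqref{pri:9-3*}, \eqref{pri:11}, \eqref{pri:26}, and the choice $\omega_\sigma$ with $\dashint_{B_\varepsilon(x)}\omega_\sigma\sim\omega_\sigma(x)$ for \eqref{pri:11B}---matches the paper.
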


\begin{remark}
\emph{Similar to Proposition $\ref{P:10}$, in the scalar case of the equations $\eqref{pde:2-D}$
established on a general Lipschitz domain, we take $q_1=4+\theta$ if $d=2$; and $q_1=3+\theta$ if $d\geq 3$, where $0<\theta\ll 1$. When we pass from the estimate $\eqref{pri:11}$
to $\eqref{pri:11B}$, not all of the weights can be equally
interchangeable with average integrals.
Finally, we mention that
the loss in the stochastic integrability
of $\eqref{pri:9-3*}$ and $\eqref{pri:11}$ is caused by the fact
that $\chi_{*}$ and $\nabla u_\varepsilon$ are not independent, and
the reader can observe it in the proof of Lemma $\ref{lemma:12}$
(see e.g. the estimate $\eqref{loss}$).}
\end{remark}

\begin{lemma}\label{lemma:11}
Let $\Omega$ be a bounded regular SKT (or $C^1$) domain, and $1<p,q<\infty$.
Assume the same conditions hold as in Proposition $\ref{P:7}$ in such the case. Let
$u_\varepsilon$ and $f$ be associated with
the equations $\eqref{pde:2-D}$. Then, there exists a
stationary random field $\chi_*$
given as in Corollary $\ref{cor:1}$ with the $\frac{1}{L}$-Lipschitz
continuity such that
\begin{equation}\label{pri:9-2}
\int_{\Omega}
\Big\langle\Big(\dashint_{U_{*,\varepsilon}(x)}|\nabla u_\varepsilon|^2 \Big)^{\frac{p}{2}}
\Big\rangle^{\frac{q}{p}}
\lesssim_{\lambda,d,M_0,p,q}
\int_{\Omega}
\Big\langle\Big(\dashint_{U_{*,\varepsilon}(x)}
|f|^{2}\Big)^{\frac{p}{2}}\Big\rangle^{\frac{q}{p}}.
\end{equation}
Moreover, for any $\omega\in A_q$, we have
\begin{equation}\label{pri:9-1}
\int_{\Omega}
\Big\langle\Big(\dashint_{U_{*,\varepsilon}(x)}|\nabla u_\varepsilon|^2 \Big)^{\frac{p}{2}}
\Big\rangle^{\frac{q}{p}}\omega
\lesssim_{\lambda,d,M_0,p,q,[\omega]_{A_q}}
\int_{\Omega}
\Big\langle\Big(\dashint_{U_{*,\varepsilon}(x)}
|f|^{2}\Big)^{\frac{p}{2}}\Big\rangle^{\frac{q}{p}}\omega.
\end{equation}
\end{lemma}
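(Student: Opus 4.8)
The strategy is to reduce \eqref{pri:9-1} and \eqref{pri:9-2} to the already available quenched theory by a Fubini step followed by a Rubio de Francia–type weighted extrapolation. Put $\mathbf F(x):=\big\langle(\dashint_{U_{*,\varepsilon}(x)}|\nabla u_\varepsilon|^2)^{p/2}\big\rangle^{1/p}$ and $\mathbf G(x):=\big\langle(\dashint_{U_{*,\varepsilon}(x)}|f|^2)^{p/2}\big\rangle^{1/p}$, two nonnegative functions on $\Omega$ (the probability exponent $p$ being frozen). Since $\Omega$ is a regular SKT (or $C^1$) domain, Proposition~\ref{P:10} provides, for every $\omega\in A_p$, the \emph{quenched} weighted estimate \eqref{f:12}, with constant depending only on $\lambda,d,M_0,p,[\omega]_{A_p}$; taking the expectation $\langle\cdot\rangle$ of \eqref{f:12} and using Tonelli's theorem yields the ``seed'' bound
\[
\int_\Omega\mathbf F^p\,\omega\,dx\;=\;\Big\langle\int_\Omega\Big(\dashint_{U_{*,\varepsilon}(x)}|\nabla u_\varepsilon|^2\Big)^{p/2}\omega\,dx\Big\rangle\;\lesssim\;\Big\langle\int_\Omega\Big(\dashint_{U_{*,\varepsilon}(x)}|f|^2\Big)^{p/2}\omega\,dx\Big\rangle\;=\;\int_\Omega\mathbf G^p\,\omega\,dx,
\]
valid for all $\omega\in A_p$.

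Now apply the weighted extrapolation theorem (see \cite[Theorem~7.8]{Duoandikoetxea01}) to the family of pairs $\{(\mathbf F,\mathbf G)\}$ indexed by the admissible data $f$: an $L^{p}(\omega)$-bound for \emph{all} $\omega\in A_{p}$ self-improves to an $L^{q}(\omega)$-bound for every $q\in(1,\infty)$ and every $\omega\in A_{q}$, with constant controlled by $[\omega]_{A_q}$. This is precisely $\|\mathbf F\|_{L^q_\omega(\Omega)}\lesssim\|\mathbf G\|_{L^q_\omega(\Omega)}$, i.e.\ \eqref{pri:9-1}; specializing to $\omega\equiv1$ gives \eqref{pri:9-2}. (This is the meaning of the announced ``$A_1$ first, then $A_q$'': one may equally run the extrapolation from the $A_1$ sub-case of the seed.) The Neumann version is identical, using the Neumann analogue of \eqref{f:12} in Proposition~\ref{P:10}.

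The only point requiring care is the a priori qualitative hypothesis under which the abstract extrapolation theorem may be invoked, namely that $\mathbf F\in L^q_\omega(\Omega)$ for the data one tests against. This is arranged by first proving \eqref{pri:9-1} for $f\in C_0^\infty(\Omega;\mathbb R^d)$ — for which $\nabla u_\varepsilon$ has all $\langle\cdot\rangle$-moments locally, so $\mathbf F$ is bounded and trivially in every $L^q_\omega(\Omega)$ — and then extending to general $f$ by density, using the seed bound (with $q$ replaced by $p$) to pass to the limit. If instead a self-contained argument avoiding extrapolation is preferred, the alternative is to re-run Shen's weighted real argument (Lemma~\ref{shen's lemma2}) directly on $\mathbf F,\mathbf G$ with exponents $1<p_0<q<p_1<\infty$ (here $p_*=\infty$), mimicking the proof of Proposition~\ref{P:10}: the two-scale decomposition of Corollary~\ref{cor:1} together with the interior estimate of Lemma~\ref{lemma:7} controls the error part $W_B$ pathwise before one takes $\langle(\cdot)^p\rangle^{1/p}$, and the main obstacle there is the ``good part'' $V_B$, where one must decouple the randomness of $(\nabla\phi)^\varepsilon$ (handled by the moment bounds \eqref{pri:2} and the defining property of $\chi_*$) from that of the local homogenized solution $\bar v$ (handled by the deterministic higher integrability of $\bar a$-harmonic functions on regular SKT domains, Lemmas~\ref{lemma:15} and~\ref{lemma:16}) while carrying the weight $\omega$ through every change of scale via the $A_q$ reverse-Hölder inequality — exactly the bookkeeping of Steps~7a and~7b in the proof of Proposition~\ref{P:10}.
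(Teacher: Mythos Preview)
Your main argument is correct and takes a genuinely different route from the paper. You observe that the quenched weighted estimate \eqref{f:12} of Proposition~\ref{P:10} holds for \emph{every} $\omega\in A_p$ with constant depending only on $[\omega]_{A_p}$; taking $\langle\cdot\rangle$ and using Tonelli (legitimate since $\omega$ is deterministic, while both $\nabla u_\varepsilon$ and the averaging region $U_{*,\varepsilon}(x)$ are random) gives the seed $\int_\Omega\mathbf F^p\omega\lesssim\int_\Omega\mathbf G^p\omega$ uniformly in $\omega\in A_p$, and then Rubio de Francia extrapolation yields \eqref{pri:9-1} for all $q\in(1,\infty)$ and $\omega\in A_q$ in one stroke. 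This is shorter than the paper's proof and exploits the full strength of \eqref{f:12}.

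The paper proceeds differently: it reapplies Shen's real argument (Lemma~\ref{shen's lemma2}) directly to the annealed quantities $\mathbf F,\mathbf G$, but with a \emph{different} decomposition than the one you sketch in your last paragraph. Instead of the two-scale approximation via $\bar v$ and Corollary~\ref{cor:1}, the paper splits $u_\varepsilon=w_\varepsilon+v_\varepsilon$ globally on $\Omega$, with $w_\varepsilon$ solving \eqref{pde:2-D} for $fI_{2B\cap\Omega}$ and $v_\varepsilon$ for $fI_{\Omega\setminus 2B}$; then $W_B$ is controlled by the already-established quenched estimate \eqref{A}, and the crucial reverse-H\"older bound on $V_B$ comes from the quenched inequality \eqref{pri:3.8} of Lemma~\ref{lemma:14} combined with Minkowski to pull $\langle\cdot\rangle^{1/p}$ outside the spatial average. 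This is done first with $\omega=1$ (Steps~1--3) to get \eqref{pri:9-2}, and then reused with a general weight (Steps~4--6) to get \eqref{pri:9-1}. The paper's route is more self-contained (no appeal to extrapolation) and, more importantly, its decomposition and use of Lemma~\ref{lemma:14} are exactly what is needed in the Lipschitz case \eqref{pri:9-3}, where \eqref{f:12} is unavailable for all $p$ and your extrapolation shortcut would not apply; so the two proofs of the lemma reflect different trade-offs between brevity here and uniformity across geometric settings.
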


\begin{proof}
Based upon Proposition $\ref{P:10}$, the idea is to repeat
using Shen's real arguments, which was first applied
to studying annealed Calder\'on-Zygmund estimates for
the stochastic singular integral given by
$\nabla(\nabla\cdot a\nabla)^{-1}
\nabla\cdot a$ in \cite{Duerinckx-Otto20}. We modify
the original proof to fulfill our purpose, and
the whole proof will be completed in two parts:
Part A (Steps 1-3) and Part B (Steps 4-6).
We prove the estimate $\eqref{pri:9-2}$ in
Part A. Then, based upon the derived result $\eqref{pri:9-2}$,
reusing Shen's real arguments leads to the desired estimate $\eqref{pri:9-1}$
for any weight $\omega\in A_q$.
The basic idea is similar to that given for Proposition $\ref{P:10}$,
but unlike the decomposition made in Proposition $\ref{P:10}$,
the concrete form of $W_B$ and $V_B$ defined below is
independent of $r_B$.
In addition,
we omit the detail from a family of ``local estimates''
to global ones
(since it has been well presented in the previous two subsections),
whereas we merely focus on verifying the preconditions
satisfied by $W_B$ and $V_B$, respectively,
in Lemma $\ref{shen's lemma2}$.

\medskip
\noindent
\textbf{Step 1.} Reduction and outline of the proof of $\eqref{pri:9-2}$.
Let $p,q_1\in(1,\infty)$ be arbitrarily fixed, and we need to show
the estimate $\eqref{pri:9-2}$ for the case $p\leq q<q_1$, and then
the case $q<p$ follows from a duality argument.
We start from making the decomposition according to
$\eqref{pde:2-D}$.
For any ball $B$ with
the property that either $x_B\in\partial\Omega$ or
$4B\subset\Omega$, let $w_\varepsilon,v_\varepsilon\in H_0^1(\Omega)$ satisfy the following equations:
\begin{equation}\label{pde:21}
 -\nabla\cdot a^\varepsilon\nabla w_\varepsilon =
 \nabla \cdot fI_{2B\cap\Omega},
 \quad\text{and}\quad
 -\nabla\cdot a^\varepsilon\nabla v_\varepsilon
 =\nabla \cdot fI_{\Omega\setminus2B}\quad\text{in}\quad \Omega.
\end{equation}
For the ease of statement,
we introduce the following notation:
\begin{equation*}
 F(x)
 :=\Big\langle\Big(\dashint_{U_{*,\varepsilon}(x)}|\nabla u_\varepsilon|^2\Big)^{\frac{p}{2}}\Big\rangle^{\frac{1}{p}};
 \qquad
 g(x)
 :=\Big\langle\Big(\dashint_{
 U_{*,\varepsilon}(x)}
 |f|^{2}\Big)^{\frac{p}{2}}\Big\rangle^{\frac{1}{p}},
\end{equation*}
and
\begin{equation*}
 W_B(x):=\Big\langle
 \Big(\dashint_{U_{*,\varepsilon}(x)}
 |\nabla w_\varepsilon|^2\Big)^{\frac{p}{2}}\Big\rangle^{\frac{1}{p}};
 \qquad
 V_B(x):=\Big\langle\Big(\dashint_{
 U_{*,\varepsilon}(x)}
 |\nabla v_\varepsilon|^2\Big)^{\frac{p}{2}}\Big\rangle^{\frac{1}{p}}.
\end{equation*}
Thus we have $F\leq W_B+V_B$ due to the relationship
$u_\varepsilon = w_\varepsilon + v_\varepsilon$ in $\Omega$.
According to the preconditions stated in
Lemma $\ref{shen's lemma2}$ (taking $\omega=1$ here), we have to establish
the following two estimates\footnote{
To make the estimates consistent
with the left-hand side of
the estimates $\eqref{pri:3.3a}$ and $\eqref{pri:3.3b}$
in Lemma $\ref{shen's lemma2}$, one can replace
$B$ in $\eqref{f:14},\eqref{f:13}$ with $16B$, correspondingly,
since the concrete form of $V_B$ and $W_B$ is independent of $r_B$ here.}:
\begin{subequations}
\begin{align}
& \dashint_{\frac{1}{2}B\cap\Omega}W_B^{p}
 \lesssim \dashint_{7B\cap\Omega}
 g^{p};
\label{f:14} \\
& \Big(\dashint_{\frac{1}{8}B\cap\Omega}V_B^{q_1}\Big)^{\frac{p}{q_1}}
 \lesssim \dashint_{7B\cap\Omega}(F^{p}+g^{p}).
\label{f:13}
\end{align}
\end{subequations}

Then, a family of local estimates like $\eqref{f:3.1x}$ can be established
by applying Lemma $\ref{shen's lemma2}$ (taking $\omega=1$ here),
which together with the covering argument leads to the
stated estimate $\eqref{pri:9-2}$
in the case of $p\leq q<\infty$.
Therefore, the remainder of the proof is mainly to demonstrate the estimates $\eqref{f:14}$ and $\eqref{f:13}$.

\medskip
\noindent
\textbf{Step 2.} Show the estimate $\eqref{f:14}$.
It is reduced to showing
\begin{equation}\label{f:14-1}
\dashint_{\frac{1}{2}B\cap\Omega}
\Big(\dashint_{U_{*,\varepsilon}(x)}
|\nabla w_\varepsilon|^2\Big)^{\frac{p}{2}}
 \lesssim \dashint_{7B\cap\Omega}
\Big(\dashint_{U_{*,\varepsilon}(x)}
|f|^{2}\Big)^{\frac{p}{\bar{2}}}.
\end{equation}
The main work is to analyze the size of
support set of $\dashint_{U_{*,\varepsilon}(x)} fI_{2B\cap\Omega}$,
and we discuss it by two cases: (1) $r_B\geq \frac{\varepsilon}{4}\chi_{*}(x_B/\varepsilon)$;
(2) $r_B < \frac{\varepsilon}{4}\chi_{*}(x_B/\varepsilon)$.
We start from the first case, and notice that if
$\dashint_{U_{*,\varepsilon}(x)}
fI_{2B\cap\Omega}\not=0$, then there must
hold $|x-x_B|-\varepsilon\chi_{*}(x/\varepsilon)<2r_B$ due to
the geometry fact. In this respect, we conclude that
\begin{equation}\label{f:4.1}
 |x-x_B|\leq 2r_B+ \varepsilon \chi_{*}(x/\varepsilon)
 \leq  2r_B+ \varepsilon \chi_{*}(x_B/\varepsilon)
 + |x-x_B|/L \quad
 \Rightarrow \quad
 |x-x_B| \leq 7r_B.
\end{equation}
Hence, with the help of Proposition $\ref{P:10}$ we have
\begin{equation}\label{f:4.2}
\begin{aligned}
\dashint_{B\cap\Omega}
\Big(\dashint_{U_{*,\varepsilon}(x)}
|\nabla w_\varepsilon|^2\Big)^{\frac{p}{2}}
&\leq \frac{1}{|B|}
\int_{\Omega}
\Big(\dashint_{U_{*,\varepsilon}(x)}
|\nabla w_\varepsilon|^2\Big)^{\frac{p}{2}}\\
&\lesssim^{\eqref{A}}
\frac{1}{|B|}
\int_{\Omega}
\Big(\dashint_{U_{*,\varepsilon}(x)}
|fI_{2B\cap\Omega}|^2\Big)^{\frac{p}{2}}
\lesssim^{\eqref{f:4.1}} \dashint_{7B\cap\Omega}
\Big(\dashint_{U_{*,\varepsilon}(x)}
|f|^2\Big)^{\frac{p}{2}}.
\end{aligned}
\end{equation}

Now, we proceed to deal with the case (2)
$r_B < \frac{\varepsilon}{4}\chi_{*}(x_B/\varepsilon)$.
For any $x_0\in\frac{1}{2}B\cap\Omega$,
by noting $2B\subset B_{*,\varepsilon}(x_0)$ and using
the energy estimate, we have
\begin{equation}\label{pri:11A}
 \dashint_{U_{*,\varepsilon}(x_0)} |\nabla w_\varepsilon|^2
 \lesssim
 \dashint_{U_{*,\varepsilon}(x_0)} |f|^{2}.
\end{equation}
Then, appealing to Lemma $\ref{lemma:9}$ (and the fact
$r_B<(\frac{2L}{2L-1})\frac{\varepsilon}{4}\chi_{*}(x_0/\varepsilon)$),
it follows that
\begin{equation*}
\Big(\dashint_{U_{*,\varepsilon}(x_0)} |\nabla w_\varepsilon|^2\Big)^{\frac{p}{2}}
\lesssim
\Big(\dashint_{U_{*,\varepsilon}(x_0)} |f|^{2}
\Big)^{\frac{p}{2}}
\lesssim^{\eqref{pri:3.5-1}}
\dashint_{5B\cap\Omega}
\Big(\dashint_{U_{*,\varepsilon}(x)} |f|^{2}\Big)^{\frac{p}{2}}.
\end{equation*}
Integrating both sides above with respect to $x_0\in\frac{1}{2}B\cap\Omega$ yields the estimate $\eqref{f:14-1}$ in such the case. This together with
$\eqref{f:4.2}$ completes the whole argument of the estimate $\eqref{f:14-1}$, and finally taking expectation on both
sides of $\eqref{f:14-1}$ offers us the stated estimate $\eqref{f:14}$.

\medskip
\noindent
\textbf{Step 3.} Establish the estimate $\eqref{f:13}$.
By using Minkowski's inequality, H\"older's inequality
and Lemmas $\ref{lemma:14}$
in the order, we obtain
\begin{equation}\label{f:2.13}
\begin{aligned}
\Big(\dashint_{\frac{1}{8}B\cap\Omega}
V_B^{q_1}\Big)^{\frac{p}{q_1}}
&= \bigg(\dashint_{\frac{1}{8}B\cap\Omega}
\Big\langle\Big(\dashint_{U_{*,\varepsilon}(x)}
|\nabla v_\varepsilon|^2\Big)^{\frac{p}{2}}\Big\rangle^{\frac{q_1}{p}}
dx\bigg)^{\frac{p}{q_1}}
\leq
\bigg\langle
\Big(\dashint_{\frac{1}{8}B\cap\Omega}
\Big(\dashint_{
U_{*,\varepsilon}(x)}|\nabla v_\varepsilon|^2
\Big)^{\frac{q_1}{2}}dx\Big)^{\frac{p}{q_1}}\bigg\rangle\\
&\lesssim^{\eqref{pri:3.8}}
\dashint_{\frac{1}{2}B\cap\Omega}\Big\langle
\Big(\dashint_{U_{*,\varepsilon}(x)}
|\nabla v_\varepsilon|^2\Big)^{\frac{p}{2}}\Big\rangle,
\end{aligned}
\end{equation}
which together with the fact $u_\varepsilon = w_\varepsilon + v_\varepsilon$ in $\Omega$ and the estimate $\eqref{f:14-1}$
leads to the desired estimate $\eqref{f:13}$. This ends the first part of the proof.

\medskip
\noindent
\textbf{Step 4.} Show the estimate $\eqref{pri:9-1}$.
Let $0<q_0-1\ll 1$, and $q_1\in(1,\infty)$ is preferred large sufficiently. On account of
Lemma $\ref{shen's lemma2}$, for any $q_0<q<q_1$ and $\omega\in A_{q/q_0}$, it suffices to establish
\begin{subequations}
\begin{align}
& \dashint_{\frac{1}{2}B\cap\Omega}W_B^{q_0}
 \lesssim \dashint_{7B\cap\Omega}
 g^{q_0};
\label{f:14-2} \\
& \Big(\dashint_{\frac{1}{8}B\cap\Omega}V_B^{q_1}\omega\Big)^{\frac{q_0}{q_1}}
 \lesssim \Big(\dashint_{7B\cap\Omega}(F^{q_0}+g^{q_0})\Big)
 \Big(\dashint_{B}\omega\Big)^{\frac{q_0}{q_1}}.
\label{f:13-2}
\end{align}
\end{subequations}
Then, the same argument given for the estimate
$\eqref{f:3.18}$ and $\eqref{f:3.38}$ leads to
\begin{equation*}
\begin{aligned}
\dashint_{\Omega}F^q\omega
&\lesssim
\Big(\dashint_{\Omega}F^{q_0}\Big)^{\frac{q}{q_0}}
\Big(\dashint_{\Omega}\omega\Big)
+\dashint_{\Omega}g^{q}\omega\\
&\lesssim^{\eqref{pri:9-2}}
\Big(\dashint_{\Omega}g^{q_0}\Big)^{\frac{q}{q_0}}
\Big(\dashint_{\Omega}\omega\Big)
+\dashint_{\Omega}|F|^{q}\omega
\lesssim \dashint_{\Omega}|F|^{q}\omega,
\end{aligned}
\end{equation*}
where the last inequality above follows from H\"older's inequality and
the property of $A_{q/q_0}$ class
(see the argument given for $\eqref{f:3.38}$). By noting the condition $0<q_0-1\ll 1$,
we have proved the estimate $\eqref{pri:9-1}$ for any $\omega\in A_{q}$.

\medskip
\noindent
\textbf{Step 5.} Show the estimate $\eqref{f:14-2}$, which is parallel to Step 2. For the case $r_B\geq\frac{\varepsilon}{4}\chi_*(x_B/\varepsilon)$,
with the help of the estimate $\eqref{pri:9-2}$ and the fact $\eqref{f:4.1}$, we have
\begin{equation}\label{f:2.12}
\begin{aligned}
\dashint_{B\cap\Omega}
\Big\langle\Big(\dashint_{U_{*,\varepsilon}(x)}
|\nabla w_\varepsilon|^2
\Big)^{\frac{p}{2}}\Big\rangle^{\frac{q_0}{p}}dx
&\lesssim \frac{1}{|B|}
\int_{\Omega}
\Big\langle\Big(\dashint_{U_{*,\varepsilon}(x)}
|\nabla w_\varepsilon|^2\Big)^{\frac{p}{2}}\Big\rangle^{\frac{q_0}{p}}
dx\\
&\lesssim^{\eqref{pri:9-2}}
\frac{1}{|B|}
\int_{\Omega}
\Big\langle\Big(\dashint_{U_{*,\varepsilon}(x)}
|fI_{2B\cap\Omega}|^2\Big)^{\frac{p}{2}}\Big\rangle^{\frac{q_0}{p}}dx
\lesssim^{\eqref{f:4.1}} \dashint_{7B\cap\Omega}
F^{q_0}.
\end{aligned}
\end{equation}
Then we turn to the case $r_B<\frac{\varepsilon}{4}\chi_*(x_B/\varepsilon)$.
For any $x_0\in\frac{1}{2}B\cap\Omega$, in view of the estimate
$\eqref{pri:11A}$, we have
\begin{equation*}
\Big\langle\Big(\dashint_{U_{*,\varepsilon}(x_0)} |\nabla w_\varepsilon|^2
\Big)^{\frac{p}{2}}\Big\rangle^{\frac{q_0}{p}}
 \lesssim
\Big\langle\Big(\dashint_{U_{*,\varepsilon}(x_0)} |f|^{2}
\Big)^{\frac{p}{2}}\Big\rangle^{\frac{q_0}{p}},
\end{equation*}
and integrating both sides above with respect to $x_0\in\frac{1}{2}B\cap\Omega$ also leads to \eqref{f:2.12}. This completes the proof for the estimate $\eqref{f:14-2}$.

\medskip
\noindent
\textbf{Step 6.} Show the estimate $\eqref{f:13-2}$,
which is parallel to Step 3.
Let $\gamma,\gamma'$ be such that $1/\gamma+1/\gamma'=1$
with $0<\gamma'-1\ll 1$. By using H\"older's inequality,
the reverse H\"older property of $A_q$-class and Minkowski's inequality in
the order, we obtain
\begin{equation}\label{f:3.41}
\begin{aligned}
\Big(\dashint_{\frac{1}{8}B\cap\Omega}V_B^{q_1}\omega\Big)^{\frac{1}{q_1}}
&\leq
\bigg(\dashint_{\frac{1}{8}B\cap\Omega}
\Big\langle\big(
\dashint_{U_{*,\varepsilon}(x)}|\nabla v_\varepsilon|^2
\big)^{\frac{p}{2}}\Big\rangle^{\frac{q_1\gamma}{p}}dx
\bigg)^{\frac{1}{q_1\gamma}}\Big(\dashint_{B}
\omega^{\gamma'}\Big)^{\frac{1}{q_1\gamma'}}\\
&\lesssim^{\eqref{pri:R-1}}
\bigg(\dashint_{\frac{1}{8}B\cap\Omega}
\Big\langle\big(
\dashint_{U_{*,\varepsilon}(x)}|\nabla v_\varepsilon|^2
\big)^{\frac{p}{2}}\Big\rangle^{q_1\gamma}dx
\bigg)^{\frac{1}{q_1\gamma p}}\Big(\dashint_{B}
\omega\Big)^{\frac{1}{q_1}}\\
&\lesssim
\bigg\langle
\bigg(\dashint_{\frac{1}{8}B\cap\Omega}
\Big(
\dashint_{U_{*,\varepsilon}(x)}
|\nabla v_\varepsilon|^2\Big)^{\frac{pq_1\gamma}{2}}
dx\bigg)^{\frac{1}{q_1\gamma}}\bigg\rangle^{\frac{1}{p}}
\Big(\dashint_{B}
\omega\Big)^{\frac{1}{q_1}}
\end{aligned}
\end{equation}
On account of the estimate $\eqref{pri:3.8}$,
the right-hand side of $\eqref{f:3.41}$ is controlled by
\begin{equation}\label{f:3.42}
\bigg\langle
\bigg(\dashint_{\frac{1}{2}B\cap\Omega}
\Big(
\dashint_{U_{*,\varepsilon}(x)}
|\nabla v_\varepsilon|^2\Big)^{\frac{1}{2}}
dx\bigg)^{p}\bigg\rangle^{\frac{1}{p}}
\Big(\dashint_{B}
\omega\Big)^{\frac{1}{q_1}}
\leq
\bigg(\dashint_{\frac{1}{2}B\cap\Omega}
\Big\langle\Big(
\dashint_{U_{*,\varepsilon}(x)}
|\nabla v_\varepsilon|^2\Big)^{\frac{p}{2}}
\Big\rangle^{\frac{1}{p}}
dx\bigg)
\Big(\dashint_{B}
\omega\Big)^{\frac{1}{q_1}},
\end{equation}
where we also employ Minkowski's inequality for the inequality above.
Plugging $\eqref{f:3.42}$ back into $\eqref{f:3.41}$,
\begin{equation*}
\begin{aligned}
\Big(\dashint_{\frac{1}{8}B\cap\Omega}V_B^{q_1}\omega\Big)^{\frac{1}{q_1}}
&\lesssim
\bigg(\dashint_{\frac{1}{2}B\cap\Omega}
V_B
\bigg)
\Big(\dashint_{B}
\omega\Big)^{\frac{1}{q_1}}
\lesssim
\Big(\dashint_{\frac{1}{2}B\cap\Omega}
V_B^{q_0}
\Big)^{\frac{1}{q_0}}
\Big(\dashint_{B}
\omega\Big)^{\frac{1}{q_1}}\\
&\lesssim
\Big(\dashint_{\frac{1}{2}B\cap\Omega}
(F^{q_0}+W_B^{q_0})
\Big)^{\frac{1}{q_0}}
\Big(\dashint_{B}
\omega\Big)^{\frac{1}{q_1}}
\lesssim^{\eqref{f:14-2}}
\Big(\dashint_{7B\cap\Omega}
(F^{q_0}+g^{q_0})
\Big)^{\frac{1}{q_0}}
\Big(\dashint_{B}
\omega\Big)^{\frac{1}{q_1}}
\end{aligned}
\end{equation*}
which consequently leads to $\eqref{f:13-2}$.
This ends the whole proof.
\end{proof}

\medskip

\begin{lemma}\label{lemma:12}
Let $\chi_{*}$ be given as in Corollary $\ref{cor:1}$ with the $\frac{1}{L}$-Lipschitz continuity.
Assume that $F\in C^{\infty}_0(\Omega;L^\infty_{\langle\cdot\rangle})$\footnote{
It represents the space of smooth functions with compact support and values taken in $L^\infty_{\langle\cdot\rangle}$,
and the subscript of $L^\infty_{\langle\cdot\rangle}$ stresses that its probability measure is introduced from
the ensemble $\langle\cdot\rangle$.},
$1\leq p\leq q<\infty$ and $1\leq s<\infty$.
Then, for any $p_1>p$,  there hold
\begin{equation}\label{pri:13}
\bigg(\int_{\Omega}\Big\langle\big(
\dashint_{U_{*,\varepsilon}(x)}|F|^s\big)^{\frac{p}{s}}
\Big\rangle^{\frac{q}{p}}\omega(x) dx\bigg)^{\frac{1}{q}}
\lesssim
\bigg(\int_{\Omega}\Big\langle\big(
\dashint_{U_\varepsilon(x)}|F|^s\big)^{\frac{p_1}{s}}
\Big\rangle^{\frac{q}{p_1}}\dashint_{B_{\varepsilon}(x)}\omega dx\bigg)^{\frac{1}{q}},
\end{equation}
and
\begin{equation}\label{pri:14}
\bigg(\int_{\Omega}\Big\langle\big(
\dashint_{U_{\varepsilon}(x)}|F|^s\big)^{\frac{p}{s}}
\Big\rangle^{\frac{q}{p}}
\dashint_{B_{\varepsilon}(x)}\omega dx\bigg)^{\frac{1}{q}}
\lesssim
\bigg(\int_{\Omega}\Big\langle\big(
\dashint_{U_{*,\varepsilon}(x)}|F|^s\big)^{\frac{p_1}{s}}
\Big\rangle^{\frac{q}{p_1}}\omega(x) dx\bigg)^{\frac{1}{q}}
\end{equation}
for any $\omega\in A_q$, where the multiplicative constant depends
on $\lambda,\lambda_1,\lambda_2,d,M_0,s,p_1,p,q$ and $[\omega]_{A_q}$.
\end{lemma}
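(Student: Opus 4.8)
The two estimates $\eqref{pri:13}$ and $\eqref{pri:14}$ are dual in structure: each transfers between the spatial average over the random ball $U_{*,\varepsilon}(x)=\Omega\cap B_{\varepsilon\chi_*(x/\varepsilon)}(x)$ and the one over the fixed-scale ball $U_\varepsilon(x)=\Omega\cap B_\varepsilon(x)$, at the price of a loss $p_1>p$ in the stochastic integrability and a replacement of the weight $\omega(x)$ by its $\varepsilon$-average $\dashint_{B_\varepsilon(x)}\omega$. I describe the plan for $\eqref{pri:13}$; the proof of $\eqref{pri:14}$ runs along the same lines with the roles of the two scales interchanged, using in addition the trivial inclusion $U_\varepsilon(x)\subset U_{*,\varepsilon}(x)$ (recall $\chi_*\ge\theta^{-p}\ge1$ by $\eqref{min_rad}$). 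The first, purely pathwise, step is a sub-averaging inequality in the spirit of Lemma $\ref{lemma:9}$: using Fubini's theorem together with the interior measure-density of Lipschitz domains ($|U_\varepsilon(y)|\sim_{d,M_0}\varepsilon^d$ for $y\in\Omega$ and $|U_{*,\varepsilon}(x)|\sim\varepsilon^d\chi_*^d(x/\varepsilon)$ for $x\in\Omega$) and the $\tfrac1L$-Lipschitz continuity of $\chi_*$ (so that $\chi_*(y/\varepsilon)\sim\chi_*(x/\varepsilon)$ on $B_{*,\varepsilon}(x)$), I obtain, for every admissible $a$ and every $x\in\Omega$,
\[
\dashint_{U_{*,\varepsilon}(x)}|F|^s\ \lesssim_{d,M_0}\ \dashint_{B_{*,\varepsilon}(x)}\Big(\dashint_{U_\varepsilon(y)}|F|^s\Big)\,dy .
\]
Covering $B_{*,\varepsilon}(x)$ by $\sim\chi_*^d(x/\varepsilon)$ balls of radius $\varepsilon$ and raising to the power $p/s$ (by convexity when $p\ge s$, and by subadditivity of $t\mapsto t^{p/s}$ balanced against the bookkeeping of $\sim\chi_*^d$ balls versus the normalisation $\chi_*^{-d}$ when $p<s$), this upgrades to a pointwise bound of the form
\[
\Big(\dashint_{U_{*,\varepsilon}(x)}|F|^s\Big)^{p/s}\ \lesssim\ \chi_*^{m}(x/\varepsilon)\ \mathcal{M}_{\ge\varepsilon}\!\Big(\dashint_{B_\varepsilon(\cdot)}\big(\big(\dashint_{U_\varepsilon(\cdot)}|F|^s\big)^{p/s}I_\Omega\big)\Big)(x),
\]
where $\mathcal{M}_{\ge\varepsilon}$ is the Hardy--Littlewood maximal operator restricted to balls of radius $\ge\varepsilon$, and $m=d(s-p)_+/s\ge0$ (so $m=0$ when $p\ge s$); the key point is that the right-hand factor no longer involves $\chi_*$, only fixed scales $\ge\varepsilon$.

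The second step passes to the probability space. Taking $\langle\cdot\rangle$ in the last display and applying Hölder's inequality in $\langle\cdot\rangle$ with exponents $\tfrac{p_1}{p}$ and $\tfrac{p_1}{p_1-p}$ separates the factor $\chi_*^m(x/\varepsilon)$, which is then absorbed by the moment bound $\langle\chi_*^\beta(x/\varepsilon)\rangle=\langle\chi_*^\beta\rangle\lesssim_\beta1$ of $\eqref{m-1}$ (stationarity removes the $x$-dependence). This is the only place where the probabilistic Hölder inequality enters, and it is precisely what forces the strict loss $p_1>p$ — the step referred to in the remark following Proposition $\ref{P:7}$. Commuting $\langle\cdot\rangle^{p/p_1}$ past $\mathcal{M}_{\ge\varepsilon}$ and past the inner $B_\varepsilon$-average by Minkowski's integral inequality (licit since $p_1/p\ge1$) then yields
\[
\Big\langle\Big(\dashint_{U_{*,\varepsilon}(x)}|F|^s\Big)^{p/s}\Big\rangle\ \lesssim\ \mathcal{M}_{\ge\varepsilon}\!\Big(\dashint_{B_\varepsilon(\cdot)}\big(\Psi\,I_\Omega\big)\Big)(x),\qquad \Psi(z):=\Big\langle\Big(\dashint_{U_\varepsilon(z)}|F|^s\Big)^{p_1/s}\Big\rangle^{p/p_1}.
\]

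The final step raises this to the power $q/p$, integrates against $\omega(x)\,dx$ over $\Omega$, and invokes the weighted Muckenhoupt maximal inequality together with a Fubini argument to move the $B_\varepsilon$-average from the integrand onto the weight, ending at
\[
\int_\Omega\Big\langle\Big(\dashint_{U_{*,\varepsilon}(x)}|F|^s\Big)^{p/s}\Big\rangle^{q/p}\omega(x)\,dx\ \lesssim\ \int_\Omega\Psi(x)^{q/p}\,\dashint_{B_\varepsilon(x)}\omega\,dx\ =\ \int_\Omega\Big\langle\Big(\dashint_{U_\varepsilon(x)}|F|^s\Big)^{p_1/s}\Big\rangle^{q/p_1}\dashint_{B_\varepsilon(x)}\omega\,dx ,
\]
which is $\eqref{pri:13}$. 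I expect this last step to be the main obstacle: a naive application of the weighted maximal inequality would require $\omega\in A_{q/p}$, whereas only $\omega\in A_q$ is assumed, so reconciling the two demands that one genuinely exploit (i) that the maximal operator in play acts only over the scale window $[\varepsilon,\ \varepsilon\chi_*(\cdot/\varepsilon)]$, whose upper end is random but has all stochastic moments, and (ii) that one is already working with the $\varepsilon$-mollified data and the $\varepsilon$-mollified weight $\dashint_{B_\varepsilon(\cdot)}\omega$. It is exactly this mechanism that accounts for the fact, emphasised before Proposition $\ref{P:7}$, that the stochastic integrability on the two sides of $\eqref{pri:13}$ and $\eqref{pri:14}$ cannot be equal.
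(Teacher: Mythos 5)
Your high-level strategy --- deterministic spatial sub-averaging to move between the random and fixed scales, a H\"older step in probability to separate the $\chi_*$-dependent factor, moment bounds on $\chi_*$ to absorb it, and the observation that this probabilistic H\"older is precisely what forces $p_1>p$ --- is consonant with the spirit of the paper's proof. But the specific route through the restricted maximal operator $\mathcal{M}_{\ge\varepsilon}$ has two unfixed gaps. First, the commutation $\langle[\mathcal{M}_{\ge\varepsilon}(g)]^{p_1/p}\rangle^{p/p_1}\lesssim\mathcal{M}_{\ge\varepsilon}\big(\langle g^{p_1/p}\rangle^{p/p_1}\big)$ is not an instance of Minkowski's inequality: Minkowski lets you move $\langle\cdot\rangle^{p/p_1}$ through $\dashint_{B_R}$ at each \emph{fixed} $R$, but $\mathcal{M}_{\ge\varepsilon}$ is a supremum over $R$, and that supremum sits inside the expectation with a maximiser $R$ that depends on the realisation; the claimed pointwise bound is false in general. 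Second, as you yourself note, the weighted $L^{q/p}$ bound for $\mathcal{M}_{\ge\varepsilon}$ would require $\omega\in A_{q/p}$, and since $q/p\le q$ the class $A_{q/p}$ is strictly \emph{smaller} than $A_q$, so the hypothesis does not supply it. You gesture at exploiting the random upper cut-off of the scale window and the $\varepsilon$-mollification, but you do not carry this out, and this is exactly the part of the argument that cannot be hand-waved.

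The paper closes both gaps at once by \emph{not} invoking a maximal operator. It decomposes the realisations dyadically according to the slab $2^n-1\le\chi_*(y)<2^{n+1}-1$, so that on each slab the random ball $B_{*,\varepsilon}(y)$ sits inside a \emph{fixed} ball of radius $2^{n+1}\varepsilon$. For each fixed $R=2^{n+1}$ it proves a deterministic weighted growth estimate (the estimate $\eqref{f:16}$) with a polynomial factor $R^{d(\frac{q}{p}-\frac{q}{s})_{+}+d(q-1)}$, whose proof needs only Fubini plus the $A_q$ doubling estimate $\eqref{f:18}$ --- no maximal inequality, hence only $\omega\in A_q$. The probabilistic H\"older is then applied to the \emph{indicator} $I_{\{\chi_*\ge 2^n-1\}}$ (not to $\chi_*^m$ wholesale), see $\eqref{loss}$, and Chebyshev with arbitrarily high moments of $\chi_*$ turns the indicator into a geometric decay $2^{-nN_0(p_1-p)/p_1}$; choosing $N_0$ large makes this beat the polynomial growth, and it is this summability over dyadic slabs --- available precisely because $p_1>p$ --- that closes the argument. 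To repair your plan, replace $\mathcal{M}_{\ge\varepsilon}$ by the explicit dyadic decomposition of $\chi_*$ and rerun the computation level by level; you will then essentially reproduce the paper's proof, and the mysterious $A_{q/p}$ requirement disappears because at each fixed level only the $A_q$ doubling is ever used.
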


\begin{proof}
\vspace{0.2cm}
\noindent
\textbf{Arguments for $\eqref{pri:13}$.}
The main proof is similar to that given in \cite{Duerinckx-Otto20}, and we provide a proof with details for the reader's convenience. The difficulty is how to estimate the average integral over $D_{*,\varepsilon}(x)$, while the basic idea is to use the conditional expectation.
This requires us to decompose the random radius $\chi_*$ into dyadic ones, and then for each
dyadic radius, the related average integral owns a polynomial growth with respect to
this radius. Then, on account of $\beta$-moment estimates of the random radius $\chi_*$ for
any $\beta<\infty$,
we use the loss of random index to trade the boundedness (see Step 3).
By rescaling arguments, it suffices to show
\begin{equation}\label{pri:a-1}
\int_{\Omega/\varepsilon}\Big\langle\big(
\dashint_{U_{*}(y)}|\tilde{F}|^s\big)^{\frac{p}{s}}
\Big\rangle^{\frac{q}{p}}\tilde{\omega}(y) dy
\lesssim_{\lambda,\lambda_1,\lambda_2,d,M_0,
[\omega]_{A_q},s,p_1,p,q}
\int_{\Omega/\varepsilon}\Big\langle\big(
\dashint_{U_1(y)}|\tilde{F}|^s\big)^{\frac{p_1}{s}}
\Big\rangle^{\frac{q}{p_1}}\dashint_{B_1(y)}\tilde{\omega} dy,
\end{equation}
where $x=\varepsilon y\in \Omega$, and we use the following notation
throughout the proof: $\tilde{F}(y) = F(\varepsilon y)=F(x)$,
and $\tilde{\omega}(y)=\omega(\varepsilon y)$, as well as
$U_{*}(y):=B_{\chi_{*}(y)}(y)\cap(\Omega/\varepsilon)$ and
$U_R(y):=B_R(y)\cap(\Omega/\varepsilon)$.
We now split the proof into three steps to finish the argument for
$\eqref{pri:13}$.

\medskip
\noindent
\textbf{Step 1.}
We start from establishing the following deterministic radius's growth estimate, i.e.,
for any $R\geq 1$, there holds
\begin{equation}\label{f:16}
\begin{aligned}
\int_{\Omega/\varepsilon}\Big\langle
\big(\dashint_{U_R(y)}|\tilde{F}|^{s}\big)^{\frac{p}{s}} \Big\rangle^{\frac{q}{p}}\tilde{\omega} dy
&\lesssim_{d,M_0,[\omega]_{A_q}}
R^{d(\frac{q}{p}-\frac{q}{s})_{+}+d(q-1)}
\int_{\Omega/\varepsilon}
\Big\langle
\big(\dashint_{U_1(y)}|\tilde{F}|^s\big)^{\frac{p}{s}}
\Big\rangle^{\frac{q}{p}} \dashint_{B_{1}(y)}\tilde{\omega}dy,
\end{aligned}
\end{equation}
in which $1\leq p\leq q<\infty$, and $(\frac{q}{p}-\frac{q}{s})_{+}
:=\max\{0,\frac{q}{p}-\frac{q}{s}\}$. We claim that
\begin{equation}\label{f:15}
\Big\langle\big(\dashint_{U_R(y)}|\tilde{F}|^{s}\big)^{\frac{p}{s}} \Big\rangle^{\frac{q}{p}}
\lesssim_{d,M_0}
R^{d(\frac{q}{p}-\frac{q}{s})_{+}}
\dashint_{U_R(y)}\Big\langle
\big(\dashint_{U_1(z)}|\tilde{F}|^s\big)^{\frac{p}{s}}
\Big\rangle^{\frac{q}{p}},
\end{equation}
which will be proved in Step 2.
By using Fubini's theorem, for any $R>0$ we have
\begin{equation}\label{f:17}
\begin{aligned}
 \int_{\Omega/\varepsilon}
 \Big(\dashint_{U_R(x)}f\Big) g(x) dx
 &= \int_{\Omega/\varepsilon}
 \Big(\dashint_{U_R(y)}g\Big) f(y) dy,
\end{aligned}
\end{equation}
where we note that $\text{supp}(f)\subset\Omega/\varepsilon$.
Therefore, it follows from the estimate $\eqref{f:15}$ and
Lemma $\ref{weight}$ that
\begin{equation*}
\begin{aligned}
\int_{\Omega/\varepsilon}\Big\langle\big(
\dashint_{U_R(y)}|\tilde{F}|^{s}\big)^{\frac{p}{s}} \Big\rangle^{\frac{q}{p}}\tilde{\omega} dy
&\lesssim_{d,M_0}
R^{d(\frac{q}{p}-\frac{q}{s})_{+}}
\int_{\Omega/\varepsilon}
\dashint_{U_R(y)}
\Big\langle
\big(\dashint_{U_1(z)}|\tilde{F}|^s\big)^{\frac{p}{s}}
\Big\rangle^{\frac{q}{p}}
\tilde{\omega} dy\\
&=^{\eqref{f:17}} R^{d(\frac{q}{p}-\frac{q}{s})_{+}}
\int_{\Omega/\varepsilon}
\Big\langle
\big(\dashint_{U_1(y)}|\tilde{F}|^s\big)^{\frac{p}{s}}
\Big\rangle^{\frac{q}{p}}
\dashint_{D_R(y)}\tilde{\omega} dy\\
&\lesssim_{d,M_0,[\omega]_{A_q}}^{\eqref{f:18},\eqref{f:18-0}}
R^{d(\frac{q}{p}-\frac{q}{s})_{+}+d(q-1)}
\int_{\Omega/\varepsilon}
\Big\langle
\big(\dashint_{U_1(y)}|\tilde{F}|^s\big)^{\frac{p}{s}}
\Big\rangle^{\frac{q}{p}} \dashint_{B_{1}(y)}\tilde{\omega}dy.
\end{aligned}
\end{equation*}

\medskip
\noindent
\textbf{Step 2.} Show the estimate $\eqref{f:15}$. For any $R\geq 1$, it suffices to establish
\begin{equation}\label{f:20}
\Big(\dashint_{U_R(y)}|\tilde{F}|^{s}\Big)^{\frac{p}{s}}
\lesssim_{d,M_0} R^{d(1-\frac{p}{s})_{+}}
\dashint_{U_R(y)}
\big(\dashint_{U_1(z)}|\tilde{F}|^s\big)^{\frac{p}{s}},
\end{equation}
since by taking $\langle\cdot\rangle^{\frac{q}{p}}$ on both sides above
we can immediately derive $\eqref{f:15}$. The estimate $\eqref{f:20}$ is demonstrated by two cases:

Case 1. For any $p\geq s$, it follows from
Lemma $\ref{lemma:9}$ and H\"older's inequality that
\begin{equation}\label{f:2.14}
\begin{aligned}
\Big(\dashint_{U_R(y)}|\tilde{F}|^s\Big)^{\frac{p}{s}}
\lesssim_{d,M_0}^{\eqref{pri:3.6-1}}
\Big(\dashint_{U_R(y)} \dashint_{U_1(z)}|\tilde{F}|^s\Big)^{\frac{p}{s}}
\lesssim
\dashint_{U_R(y)} \Big(\dashint_{U_1(z)}|\tilde{F}|^s\Big)^{\frac{p}{s}}.
\end{aligned}
\end{equation}

Case 2. For any $p\leq s$, we employ a covering argument and Lemma $\ref{lemma:9}$ to have
\begin{equation*}
\begin{aligned}
\Big(\int_{U_R(y)}|\tilde{F}|^s
\Big)^{\frac{p}{s}}
&\leq \Big(\sum_{i}
\int_{U_{\frac{1}{4}}(z_i)}
|\tilde{F}|^s\Big)^{\frac{p}{s}}
\lesssim_d \sum_{i}
\Big(\dashint_{U_{\frac{1}{4}}(z_i)}
|\tilde{F}|^s\Big)^{\frac{p}{s}}\\
&\lesssim_{d,M_0}^{\eqref{pri:3.5-1}}
\sum_{i} \dashint_{U_{\frac{1}{4}}(z_i)}
\big(\dashint_{U_1(z)}|\tilde{F}|^{s}\big)^{\frac{p}{s}}
\lesssim
\int_{U_R(y)}
\big(\dashint_{U_1(z)}|\tilde{F}|^{s}\big)^{\frac{p}{s}},
\end{aligned}
\end{equation*}
which together with $\eqref{f:2.14}$ leads to the stated estimate $\eqref{f:20}$.

\medskip
\noindent
\textbf{Step 3.} Show the estimate $\eqref{pri:a-1}$.
For the ease of statement, let $d\mu(y) := \tilde{w}dy$.
On account of the property of conditional expectations (and for any $\delta>0$ to be fixed later), we have
\begin{equation*}
\begin{aligned}
\int_{\Omega/\varepsilon}
\Big<\big(\dashint_{U_{*}(y)}
|\tilde{F}|^s\big)^{\frac{p}{s}}\Big>^{\frac{q}{p}}
d\mu(y)
&\lesssim_{d} \int_{\Omega/\varepsilon}
\Big<\sum_{n=1}^\infty I_{\{
2^{n}-1\leq \chi_*(y)<2^{n+1}-1\}}\big(\dashint_{U_{2^{n+1}}(y)}
|\tilde{F}|^s\big)^{\frac{p}{s}}\Big>^{\frac{q}{p}} d\mu(y)\\
&\lesssim \int_{\Omega/\varepsilon}
\Big<\sum_{n=1}^\infty 2^{-nd\delta}\big[\chi_*(y)\big]^{d\delta} I_{\{
2^{n}-1\leq \chi_*(y)<2^{n+1}-1\}}
\big(\dashint_{U_{2^{n+1}}(y)}
|\tilde{F}|^s\big)^{\frac{p}{s}}\Big>^{\frac{q}{p}}d\mu(y)\\
&\lesssim \int_{\Omega/\varepsilon}
\Big(\sum_{n=1}^\infty 2^{-nd\delta}
\Big<\underbrace{\big[\chi_*(y)\big]^{d\delta} I_{\{
2^{n}-1\leq \chi_*(y)<2^{n+1}-1\}}\big(\dashint_{U_{2^{n+1}}(y)}
|\tilde{F}|^s\big)^{\frac{p}{s}}}_{f_n}\Big>\Big)^{\frac{q}{p}}
d\mu(y).
\end{aligned}
\end{equation*}
By H\"{o}lder's inequality (we split
$nd\delta = nd\delta(\frac{p}{q}+\frac{q-p}{q})$ and choose $\delta$
to be such that
$\sum_{n=0}^{\infty}2^{-nd\delta}=\delta^{-1}$ for some $\delta\in(0,1)$),
the last line above is controlled by
\begin{equation*}
\begin{aligned}
\int_{\Omega/\varepsilon}
\Big(
&\sum_{n=1}^\infty 2^{-nd\delta}
\big<f_n\big>^{\frac{q}{p}}\Big)
\Big(\sum_{n=1}^\infty 2^{-nd\delta}\Big)^{\frac{q}{p}-1} d\mu
\lesssim
\delta^{1-\frac{p}{q}}
\sum_{n=1}^\infty
2^{-nd\delta}\int_{\Omega/\varepsilon} \big<f_n\big>^{\frac{q}{p}}d\mu
\\
&\lesssim\delta^{1-\frac{p}{q}}
\sum_{n=1}^\infty
2^{nd\delta(\frac{q}{p}-1)}
 \int_{\Omega/\varepsilon}
\Big<
\big(\dashint_{U_{2^{n+1}(y)}}
I_{\{
2^{n}-1\leq
\chi_*(y)<2^{n+1}-1\}}|\tilde{F}|^s\big
)^{\frac{p}{s}}\Big>^{\frac{q}{p}}
d\mu(y)\\
&\lesssim_{d,M_0,[\omega]_{A_q}}^{\eqref{f:16}}
\delta^{1-\frac{p}{q}}
\sum_{n=1}^\infty
(2^{ndq})^{(\frac{1}{p}-\frac{1}{s})_{+}
+\delta(\frac{1}{p}-\frac{1}{q})
+1-\frac{1}{q}}
\int_{\Omega/\varepsilon}
\Big<
\big(\dashint_{U_1(y)}
I_{\{
2^{n}-1\leq
\chi_*(y)<2^{n+1}-1\}}|\tilde{F}|^s\big
)^{\frac{p}{s}}\Big>^{\frac{q}{p}}
\dashint_{B_{1}(y)}\tilde{\omega}dy.
\end{aligned}
\end{equation*}
Moreover, we notice that for
any $y\in\Omega/\varepsilon$,
\begin{equation}\label{loss}
\begin{aligned}
\Big<
\big(\dashint_{U_1(y)}
I_{\{
2^{n}-1\leq
\chi_*(y)<2^{n+1}-1\}}|\tilde{F}|^s\big
)^{\frac{p}{s}}\Big>
&=\Big<I_{\{
2^{n}-1\leq
\chi_*(y)<2^{n+1}-1\}}
\big(\dashint_{U_1(y)}
|\tilde{F}|^s\big
)^{\frac{p}{s}}\Big>\\
&\leq \big\langle I_{\chi_*(y)\geq 2^n-1}
\big\rangle^{\frac{p_1-p}{p_1}}
\Big<
\big(\dashint_{U_1(y)}
|\tilde{F}|^s\big
)^{\frac{p_1}{s}}\Big>^{\frac{p}{p_1}}.
\end{aligned}
\end{equation}
For any $N_0\geq 1$ (to be fixed later), by using Chebyshev’s inequality, one can derive that
\begin{equation*}
\big\langle I_{\chi_*(y)\geq 2^n-1}
\big\rangle^{\frac{p_1-p}{p_1}}
= \mathbb{P}({\chi}_{*}(y)\geq 2^n-1)^{\frac{p_1-p}{p_1}}
\leq   \Big(\frac{\langle|\chi_{*}(y)|^{N_0}\rangle}{(2^n-1)^{N_0}}\Big)^{\frac{p_1-p}{p_1}}
\lesssim_{\lambda,\lambda_1,\lambda_2,d,N_0} 2^{N_0} 2^{-nN_0(\frac{p_1-p}{p_1})},
\end{equation*}
where $\mathbb{P}$ is the probability measure associated with the ensemble
$\langle\cdot\rangle$. Set $N_1:=
dq\big[(\frac{1}{p}-\frac{1}{s})_{+}
+\delta(\frac{1}{p}-\frac{1}{q})
+1-\frac{1}{q}\big]$
and
the above computations consequently yield the desired estimate $\eqref{pri:a-1}$, i.e.,
\begin{equation*}
\begin{aligned}
\int_{\Omega/\varepsilon}  \Big<\big(\dashint_{U_{*}(y)}
|\tilde{F}|^s\big)^{\frac{p}{s}}\Big>^{\frac{q}{p}}
\tilde{\omega}
dy
&\lesssim\delta^{1-\frac{p}{q}}
\sum_{n=0}^{\infty}2^{n[N_1
- N_0(\frac{(p_1-p)q}{p_1p})]}
\int_{\Omega/\varepsilon}
\Big<
\big(\dashint_{U_1(y)}
|\tilde{F}|^s\big
)^{\frac{p_1}{s}}\Big>^{\frac{q}{p_1}}
\dashint_{B_{1}(y)}\tilde{\omega} dy\\
&\lesssim_{\lambda,\lambda_1,\lambda_2,d,M_0,
[\omega]_{A_q},
s,p_1,p,q}
\int_{\Omega/\varepsilon}
\Big<
\big(\dashint_{U_1(y)}
|\tilde{F}|^s\big
)^{\frac{p_1}{s}}\Big>^{\frac{q}{p_1}}
\dashint_{B_{1}(y)}\tilde{\omega} dy,
\end{aligned}
\end{equation*}
where the second inequality follows by fixing
$N_0>\frac{N_1p_1p}{(p_1-p)q}$,
and this implies $\eqref{pri:13}$ by a rescaling argument (
where we mention that $[\omega]_{A_q}$ is a scaling invariant quantity
due to $\eqref{f:18-0}$).

\vspace{0.2cm}
\noindent
\textbf{Arguments for $\eqref{pri:14}$.}
The idea is similar to that given for the estimate $\eqref{pri:13}$. We
therefore focus on verifying the following deterministic radius’s growth estimate.
For any $1\leq p\leq q<\infty$ with $1\leq s<\infty$ and $R\geq 1$,
we first need to establish
\begin{equation}\label{f:21}
\begin{aligned}
\bigg(\int_{\Omega/\varepsilon}
\Big\langle
\Big(\dashint_{U_{2R}(y)}|\tilde{F}|^s
\Big)^{\frac{p}{s}}
\Big\rangle^{\frac
{q}{p}}
\tilde{\omega}(y)dy\bigg)^{\frac{1}
{q}}
\gtrsim
R^{-d(\frac{1}{s}-\frac{1}{q})_{+}
+\frac{d\delta_0}{q}}
\bigg(\int_{\Omega/\varepsilon}
\Big\langle
\big(\dashint_{U_1(y)}|\tilde{F}|^s\big)^{\frac{p}{s}}
\Big\rangle^{\frac
{q}{p}}
\dashint_{B_1(y)} \tilde{\omega} dy\bigg)^{\frac{1}{q}},
\end{aligned}
\end{equation}
where the index $\delta_0>0$ is usually very small,
coming from the reverse H\"older property of
$A_p$-weight (see Lemma $\ref{weight}$),
and the multiplicative constant
above relies on $d,M_0,q$, and $[\omega]_{A_q}$.
In fact, the proof of $\eqref{f:21}$ is more complicated than its counterpart $\eqref{f:16}$,
and therefore we divide it into three cases to complete the whole argument.

\medskip
\noindent
\textbf{Case 1.}  For $p\leq q\leq s$,
we show the estimate
\begin{equation}\label{f:22}
\int_{\Omega/\varepsilon}
\Big\langle
\Big(\dashint_{U_{2R}(y)}|\tilde{F}|^s
\Big)^{\frac{p}{s}}
\Big\rangle^{\frac{q}{p}}
\tilde{\omega}(y)dy
\gtrsim R^{\delta_0 d}
\int_{\Omega/\varepsilon}
\Big\langle
\big(\dashint_{U_1(y)}|\tilde{F}|^s\big)^{\frac{p}{s}}
\Big\rangle^{\frac{q}{p}}
\dashint_{B_1(y)} \tilde{\omega} dy.
\end{equation}
By using Lemma $\ref{weight}$ and Fubini's theorem, we have
\begin{equation}\label{f:2.15}
\begin{aligned}
\int_{\Omega/\varepsilon}
\Big\langle
\big(\dashint_{U_1(y)}|\tilde{F}|^s\big)^{\frac{p}{s}}
\Big\rangle^{\frac{q}{p}}
\dashint_{B_1(y)} \tilde{\omega} dy
&\lesssim^{\eqref{f:19}} R^{-\delta_0 d}
\int_{\Omega/\varepsilon}
\Big\langle
\big(\dashint_{U_1(y)}|\tilde{F}|^s\big)^{\frac{p}{s}}
\Big\rangle^{\frac{q}{p}}
\dashint_{B_R(y)} \tilde{\omega} dy \\
&\leq^{\eqref{f:17}}
 R^{-\delta_0 d}\int_{\Omega/\varepsilon}
\dashint_{U_R(y)}
\Big\langle
\big(\dashint_{U_1(z)}|\tilde{F}|^s\big)^{\frac{p}{s}}
\Big\rangle^{\frac{q}{p}}
\tilde{\omega}(y)dy.
\end{aligned}
\end{equation}

Recalling the notation $d\mu(y)=\tilde{\omega}(y)dy$,
and then employing H\"older's inequality, Minkowski's inequality,
and Lemma $\ref{lemma:9}$, in the order, we obtain
\begin{equation}\label{f:2.16}
\begin{aligned}
&\int_{\Omega/\varepsilon}
\dashint_{U_R(y)}
\Big\langle
\big(\dashint_{U_1(z)}
|\tilde{F}|^s\big)^{\frac{p}{s}}
\Big\rangle^{\frac{q}{p}}
d\mu(y)
\leq
\int_{\Omega/\varepsilon}
\Big(\dashint_{U_R(y)}
\Big\langle
\big(\dashint_{U_1(z)}
|\tilde{F}|^s\big)^{\frac{p}{s}}
\Big\rangle^{\frac{s}{p}}\Big)^{\frac{q}{s}}
d\mu(y) \\
&\leq
\int_{\Omega/\varepsilon}
\Big\langle
\Big(\dashint_{U_R(y)}
\dashint_{U_1(z)}|\tilde{F}|^s
\Big)^{\frac{p}{s}}
\Big\rangle^{\frac{q}{p}}
d\mu(y)
\lesssim^{\eqref{pri:3.6-1}}
\int_{\Omega/\varepsilon}
\Big\langle
\Big(\dashint_{U_{2R}(y)}|\tilde{F}|^s
\Big)^{\frac{p}{s}}
\Big\rangle^{\frac{q}{p}}
d\mu(y).
\end{aligned}
\end{equation}
Combining the estimates $\eqref{f:2.15}$ and $\eqref{f:2.16}$, we have
the stated estimate $\eqref{f:22}$.

\medskip
\noindent
\textbf{Case 2.} For $p\leq s\leq q$, we show the estimate
\begin{equation}\label{f:23}
\int_{\Omega/\varepsilon}
\Big\langle
\Big(\dashint_{U_{2R}(y)}|\tilde{F}|^s
\Big)^{\frac{p}{s}}
\Big\rangle^{\frac{q}{p}}
\tilde{\omega}dy
\gtrsim
R^{-\frac{dq}{s}+d+\delta_0d}\int_{\Omega/\varepsilon}
\Big\langle
\big(\dashint_{U_1(y)}|\tilde{F}|^s\big)^{\frac{p}{s}}
\Big\rangle^{\frac{q}{p}}
\dashint_{B_1(y)} \tilde{\omega} dy.
\end{equation}
For any $y\in\Omega/\varepsilon$,
by applying Lemma $\ref{lemma:9}$ and Minkowski's inequality, we first arrive at
\begin{equation}\label{f:2.18}
\begin{aligned}
\Big\langle
\Big(\dashint_{U_{2R}(y)}|\tilde{F}|^s
\Big)^{\frac{p}{s}}
\Big\rangle^{\frac{q}{p}}
&\gtrsim
\Big\langle
\Big(\dashint_{U_R(y)}
\dashint_{U_1(z)}|\tilde{F}|^s
\Big)^{\frac{p}{s}}
\Big\rangle^{\frac{q}{p}}
\geq
\Big(\dashint_{U_R(y)}
\Big\langle\big(\dashint_{U_1(z)}
|\tilde{F}|^s\big)^{\frac{p}{s}}
\Big\rangle^{\frac{s}{p}}
\Big)^{\frac{q}{s}} =: I.
\end{aligned}
\end{equation}
Then, by covering the region $D_R(y)$ by a family of balls (or half-balls)
denoted by $\{D_1(y_i)\}$, and
using H\"older's inequality of $l^p$-version and $L^p$-version in
the order, we obtain
\begin{equation}\label{f:2.17}
\begin{aligned}
R^{\frac{dq}{s}}I
&\gtrsim_d\Big(\sum_i\int_{U_1(y_i)}
\Big\langle\big(\dashint_{U_1(z)}
|\tilde{F}|^s\big)^{\frac{p}{s}}
\Big\rangle^{\frac{s}{p}}
\Big)^{\frac{q}{s}}
\geq
\sum_i\Big(\int_{U_1(y_i)}
\Big\langle\big(\dashint_{U_1(z)}
|\tilde{F}|^s\big)^{\frac{p}{s}}
\Big\rangle^{\frac{s}{p}}
\Big)^{\frac{q}{s}}\\
&\geq
\sum_i\Big(\int_{U_1(y_i)}
\Big\langle\big(\dashint_{U_1(z)}
|\tilde{F}|^s\big)^{\frac{p}{s}}
\Big\rangle
\Big)^{\frac{q}{p}}
\sim
\int_{U_R(y)}\Big\langle
\int_{U_1(z)}
\big(\dashint_{U_1(z')}|\tilde{F}|^s\big)^{\frac{p}{s}}
\Big\rangle^{\frac{q}{p}}.
\end{aligned}
\end{equation}
Combining the inequalities $\eqref{f:2.18}$ and $\eqref{f:2.17}$, and integrating both sides with respect to $y\in\Omega/\varepsilon$ in terms of
$\tilde{\omega}dy$ (also denoted by $d\mu$), we obtain
\begin{equation*}
\begin{aligned}
&\int_{\Omega/\varepsilon}
\Big\langle
\Big(\dashint_{U_{2R}(y)}|\tilde{F}|^s
\Big)^{\frac{p}{s}}
\Big\rangle^{\frac{q}{p}}
d\mu(y)
\gtrsim
R^{-\frac{dq}{s}+d}\int_{\Omega/\varepsilon}
\dashint_{U_R(y)}\Big\langle
\int_{U_1(z)}
\big(\dashint_{U_1(z')}
|\tilde{F}|^s\big)^{\frac{p}{s}}
\Big\rangle^{\frac{q}{p}}
d\mu(y)\\
&\gtrsim^{\eqref{f:17},\eqref{f:19}}
R^{-\frac{dq}{s}+d+\delta_0d}\int_{\Omega/\varepsilon}
\Big\langle \int_{U_1(y)}
\big(\dashint_{U_1(z)}
|\tilde{F}|^s\big)^{\frac{p}{s}}
\Big\rangle^{\frac{q}{p}}
\dashint_{B_1(y)} \tilde{\omega} dy\\
&\gtrsim^{\eqref{f:20}}
R^{-\frac{dq}{s}+d+\delta_0d}\int_{\Omega/\varepsilon}
\Big\langle
\big(\dashint_{U_1(y)}
|\tilde{F}|^s\big)^{\frac{p}{s}}
\Big\rangle^{\frac{q}{p}}
\dashint_{B_1(y)} \tilde{\omega} dy,
\end{aligned}
\end{equation*}
which completes the proof of $\eqref{f:23}$.

\medskip
\noindent
\textbf{Case 3.} For $s\leq p\leq q$, we establish
the estimate \eqref{f:23}.
The argument is similar to that given for Case 2.
For any $y\in\Omega/\varepsilon$, a routine decomposition leads to
\begin{equation*}
\begin{aligned}
\Big\langle
\Big(\dashint_{U_{2R}(y)}|\tilde{F}|^s
\Big)^{\frac{p}{s}}
\Big\rangle^{\frac{q}{p}}
\gtrsim_{d} R^{-\frac{dq}{s}}
\Big\langle
\Big(\int_{U_{2R}(y)}|\tilde{F}|^s
\Big)^{\frac{p}{s}}
\Big\rangle^{\frac{q}{p}}
\gtrsim
R^{-\frac{dq}{s}}
\Big\langle
\Big(\sum_{i}\int_{U_1(y_i)}|\tilde{F}|^s
\Big)^{\frac{p}{s}}
\Big\rangle^{\frac{q}{p}}.
\end{aligned}
\end{equation*}
By using H\"older's inequality of $l^p$-version twice, one can further obtain that
\begin{equation*}
\begin{aligned}
\Big\langle
\Big(\dashint_{U_{2R}(y)}|\tilde{F}|^s
\Big)^{\frac{p}{s}}
\Big\rangle^{\frac{q}{p}}
&\gtrsim R^{-\frac{dq}{s}}
\Big(\sum_{i}
\Big\langle\big(\int_{U_1(y_i)}|\tilde{F}|^s
\big)^{\frac{p}{s}}\Big\rangle
\Big)^{\frac{q}{p}}\\
&\gtrsim R^{-\frac{dq}{s}}
\sum_{i}
\Big\langle\big(\int_{U_1(y_i)}|\tilde{F}|^s
\big)^{\frac{p}{s}}\Big\rangle^{\frac{q}{p}}
\approx
R^{-\frac{dq}{s}+d}
\dashint_{U_R(y)}
\Big\langle\big(\dashint_{U_1(z)}|\tilde{F}|^s
\big)^{\frac{p}{s}}\Big\rangle^{\frac{q}{p}}.
\end{aligned}
\end{equation*}
Integrating both sides above with respect to $y\in\Omega/\varepsilon$ in terms of $d\mu$, and then appealing to the following estimate:
\begin{equation*}
\begin{aligned}
\int_{\Omega/\varepsilon}
\dashint_{U_R(y)}
\Big\langle\big(\dashint_{U_1(z)}|\tilde{F}|^s
\big)^{\frac{p}{s}}\Big\rangle^{\frac{q}{p}}d\mu(y)
&=^{\eqref{f:17}}\int_{\Omega/\varepsilon}
\Big\langle\big(\dashint_{U_1(y)}|\tilde{F}|^s
\big)^{\frac{p}{s}}\Big\rangle^{\frac{q}{p}}
\dashint_{D_R(y)} \tilde{\omega}dy\\
&\gtrsim^{\eqref{f:19}}
R^{\delta_0 d}
\int_{\Omega/\varepsilon}
\Big\langle\big(\dashint_{U_1(y)}|\tilde{F}|^s
\big)^{\frac{p}{s}}\Big\rangle^{\frac{q}{p}}
\dashint_{B_1(y)} \tilde{\omega}dy,
\end{aligned}
\end{equation*}
we can immediately reach the stated estimate $\eqref{f:23}$ in the case of $s\leq p\leq q$.

As a result,
combining the estimates \eqref{f:22} and \eqref{f:23},
we have the stated estimate $\eqref{f:21}$. The remainder of the proof
for $\eqref{pri:14}$
is analogy to those computations shown in Step 3, and we don't reproduce it here.
\end{proof}

%

%
%

\medskip
\noindent
\textbf{Proof of Proposition $\ref{P:7}$.}
In terms of the Lipschitz domain, the proof of the estimate $\eqref{pri:9-3}$ can be analogically demonstrated by Steps 1,2,3 in the proof of Lemma $\ref{lemma:11}$, so we don't repeat the details here.
By setting $s=2$ and $\omega = 1$, we can directly apply Lemma $\ref{lemma:12}$ to the both sides
of $\eqref{pri:9-3}$, and get the stated result $\eqref{pri:9-3*}$.
Concerned with the regular SKT(or $C^1$) domain,
the estimate $\eqref{pri:9}$
 comes from $\eqref{pri:9-1}$ in Lemma $\ref{lemma:11}$.
 Based upon Lemma $\ref{lemma:12}$,
 combining the estimates $\eqref{pri:9}$, $\eqref{pri:13}$, and
 $\eqref{pri:14}$ leads to $\eqref{pri:11}$.
By taking $\omega=1$ in $\eqref{pri:11}$, we have the estimate $\eqref{pri:26}$. Moreover, let $\Omega_0\supseteq \Omega$
satisfy $\partial\Omega_0\in C^2$ and   $\text{dist}(\cdot,\partial\Omega_0)=O(\varepsilon)$ on $\partial\Omega$.
We define $\omega_\sigma(x):=[\text{dist}(x,\partial\Omega_0)]^{q-1}$,
it is known that $\omega_\sigma\in A_q$
(see e.g. \cite[Theorem 3.1]{Duran-Sanmartino-Toschi04}), and
$\dashint_{B_\varepsilon(x)}\omega_\sigma \sim \omega_\sigma(x)$
for any $x\in\Omega$. Therefore, $\eqref{pri:11}$ provides us
with $\eqref{pri:11B}$ for $\omega_\sigma$.
\qed

\medskip
\noindent
\textbf{Proof of Theorem $\ref{thm:1}$.}
The results of Theorem $\ref{thm:1}$ follows from Propositions $\ref{P:10}$ and $\ref{P:7}$.
\qed

\section{Homogenization errors}\label{section:4}

\subsection{\centering Strong norm estimates}\label{subsection:4.1}

\noindent
Essentially, it continues the idea of using duality and weight functions to overcome the loss of the convergence rate, caused by boundary layers, and we will take the scheme developed by the second author in \cite{Xu16} to handle this problem.

\begin{proposition}[optimal errors]\label{P:8}
Let $\Omega\ni\{0\}$ be a bounded Lipschitz domain and $0<\varepsilon\ll 1$.
Suppose that  $\langle\cdot\rangle$ is stationary, satisfying the spectral gap condition $\eqref{a:2}$, and the (admissible) coefficient additionally satisfies $\eqref{a:3}$ and
the symmetry condition $a=a^*$.
Let $u_\varepsilon,\bar{u}\in H_0^1(\Omega)$ be associated with $f\in L^2(\Omega)$ and $b\in H^1(\partial\Omega)$ by
\begin{equation}\label{pde:16}
\left\{\begin{aligned}
-\nabla\cdot a^{\varepsilon}\nabla u_\varepsilon
&= f &\quad&\text{in}\quad\Omega;\\
u_\varepsilon
&= b &\quad&\text{on}\quad\partial\Omega,
\end{aligned}\right.\qquad \text{and}
\qquad
\left\{\begin{aligned}
-\nabla\cdot \bar{a}\nabla \bar{u}
&= f &\quad&\text{in}\quad\Omega;\\
\bar{u}
&= b &\quad&\text{on}\quad\partial\Omega.
\end{aligned}\right.
\end{equation}
Then, for any $p<\infty$, we have
\begin{equation}\label{pri:5.1}
 \Big\langle \big(\int_{\Omega}|u_\varepsilon-\bar{u}|^2
 \big)^{\frac{p}{2}}\Big\rangle^{\frac{1}{p}}
 \lesssim_{\lambda,\lambda_1,
\lambda_2,d,M_0,p} \mu_d(R_0/\varepsilon)\varepsilon\ln(R_0/\varepsilon)
 \Big\{\|f\|_{L^2(\Omega)}+\|b\|_{H^1(\partial\Omega)}\Big\}.
\end{equation}
\end{proposition}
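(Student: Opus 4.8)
The plan is to reduce the error estimate to a weighted Calder\'on--Zygmund estimate for an auxiliary equation obtained by duality, in the spirit of \cite{Xu16}. First I would reduce to the homogeneous boundary data case: by subtracting a suitable harmonic-type lifting of $b$ (using that $b\in H^1(\partial\Omega)$ and $\Omega$ is Lipschitz), the contribution of $b$ can be absorbed into the right-hand side at the cost of $\|b\|_{H^1(\partial\Omega)}$, so it suffices to treat $b=0$ and bound $\langle(\int_\Omega|u_\varepsilon-\bar u|^2)^{p/2}\rangle^{1/p}$ by $\mu_d(R_0/\varepsilon)\,\varepsilon\ln(R_0/\varepsilon)\,\|f\|_{L^2(\Omega)}$. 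Here I would rescale so that $\mathrm{diam}(\Omega)\sim 1$, consistent with the framework of Lemma~\ref{lemma:3} (keeping track that the minimal radius $\chi_*$ and its moments behave well under this rescaling).

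Next, by duality, write $\|u_\varepsilon-\bar u\|_{L^2(\Omega)}=\sup\{\int_\Omega (u_\varepsilon-\bar u)\,g : \|g\|_{L^2(\Omega)}\le 1\}$, and let $v_\varepsilon$ solve the adjoint equation $-\nabla\cdot a^{*\varepsilon}\nabla v_\varepsilon = g$ in $\Omega$ with $v_\varepsilon=0$ on $\partial\Omega$ (using $a=a^*$ here), with $\bar v$ the corresponding homogenized solution. Introducing the two-scale expansion error $w_\varepsilon := u_\varepsilon-\bar u-\varepsilon\phi_i^\varepsilon\eta\,\partial_i\bar u$ with a cut-off $\eta$ supported away from $\partial\Omega$ at distance $O(\varepsilon)$ (as in \eqref{expansion:1-00}), and likewise $\tilde w_\varepsilon$ for the adjoint problem, integration by parts turns $\int_\Omega(u_\varepsilon-\bar u)g$ into a sum of bulk terms involving $\varepsilon(\phi,\sigma)^\varepsilon\nabla(\eta\nabla\bar u)$ paired against $\nabla v_\varepsilon$ (and symmetrically) plus boundary-layer terms supported on the $O(\varepsilon)$-collar $\{(1-\eta)\ne 0\}$. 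The bulk terms are controlled by H\"older's inequality using the pointwise corrector bounds $\|(\phi,\sigma)^\varepsilon\|\lesssim \mu_d(R_0/\varepsilon)$ from Lemma~\ref{lemma:3} together with the (weighted) annealed Calder\'on--Zygmund estimate \eqref{D} for $\nabla v_\varepsilon$ with the weight $\omega_\sigma(x)=[\mathrm{dist}(x,\partial\Omega_0)]^{q-1}$; the key gain is that this weight converts the worst-case bounds \eqref{pri:pri:la-colay-1} into $\|(1-\eta)\nabla\bar u\,\delta^{s/q}\|_{L^q}=O(\varepsilon)$ and $\|\eta\nabla^2\bar u\,\delta^{s/q}\|_{L^q}=O(\ln(1/\varepsilon))$, which is where the single logarithmic factor enters. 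The collar terms are handled by a trace/Hardy inequality on the $O(\varepsilon)$-layer, again costing only $O(\varepsilon\ln(1/\varepsilon))$, using the $W^{1,q}$-regularity of $\bar u$ and $\bar v$ on the Lipschitz domain guaranteed through the layer-potential solvability \eqref{pde:DN}.

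The stochastic integrability is then assembled as follows: the bulk estimate has the schematic form $|\int_\Omega(u_\varepsilon-\bar u)g|\lesssim \mu_d(R_0/\varepsilon)\,\varepsilon\ln(R_0/\varepsilon)\,\|f\|_{L^2}\cdot(\text{a product of }L^q\text{-norms of }\dashint_{U_\varepsilon}|(\phi,\sigma,\nabla\phi)|^2\text{ and }\dashint_{U_\varepsilon}|\nabla v_\varepsilon|^2)$; taking $\langle\cdot\rangle^{1/p}$ and applying H\"older in probability, the corrector moments are bounded by \eqref{pri:2}--\eqref{pri:3} (for any exponent), and the $v_\varepsilon$-factor by the weighted annealed estimate \eqref{D} of Theorem~\ref{thm:1} (note $g$ is deterministic, so its annealed norm reduces to an $L^q$ norm). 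Since all stochastic exponents can be taken arbitrarily large at the expense of constants depending on $\lambda,\lambda_1,\lambda_2,d,M_0,p$, one obtains \eqref{pri:5.1} for every $p<\infty$.

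\textbf{Main obstacle.} The delicate point is the boundary-layer bookkeeping: one must simultaneously (i) choose the cut-off $\eta$ so that $w_\varepsilon\in H^1_0(\Omega)$ while $\mathrm{supp}(1-\eta)$ is an $O(\varepsilon)$-collar, (ii) control $\|\eta\nabla^2\bar u\,\delta^{s/q}\|_{L^q}$ using only the $C^1$/Lipschitz (not $C^{1,\alpha}$) regularity of $\partial\Omega$, which forces the use of the weighted CZ estimate and produces exactly the extra $\ln(1/\varepsilon)$, and (iii) verify that the weight $\omega_\sigma$ adapted to $\partial\Omega_0$ (a $C^2$ enlargement) is comparable to $\dashint_{B_\varepsilon}\omega_\sigma$ and lies in the $A_q$ class with the right range of $q$ dictated by \eqref{upindex-p}. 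Keeping the logarithm to first power — rather than accumulating a second factor from the corrector growth in $d=2$ and a third from the collar — requires balancing these estimates carefully, and this is the technical heart of the proof.
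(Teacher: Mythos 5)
Your high-level plan---duality against an adjoint problem, two-scale expansions on both sides, and trading powers of the distance function $\delta$ between the layer and co-layer pieces---is indeed the skeleton of the paper's argument. But there is a concrete gap in the mechanism you propose for controlling $\nabla v_\varepsilon$.

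You invoke the weighted annealed Calder\'on--Zygmund estimate \eqref{D} with the $A_q$-weight $\omega_\sigma$ to bound $\nabla v_\varepsilon$. In Theorem \ref{thm:1}, the weighted estimates \eqref{C} and \eqref{D} are \emph{only} established for bounded regular SKT (or $C^1$) domains; for a general Lipschitz domain the Calder\'on--Zygmund conclusions are limited to the unweighted range $|\tfrac1p-\tfrac12|\le\tfrac1{2d}+\theta$. Proposition \ref{P:8}, however, is stated (and proved) for a \emph{bounded Lipschitz domain}. So your route would only produce \eqref{pri:5.1} on the more restrictive class of domains. This appears to be a conflation with Proposition \ref{P:9}: the fluctuation (weak-norm) estimate does use \eqref{pri:11B} and the $A_q$ weight $\omega_\sigma$, and is accordingly stated for regular SKT/$C^1$ domains; the strong-norm estimate is not.

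The paper avoids the weighted CZ machinery altogether. Instead of bounding $\nabla v_\varepsilon$ directly, it further expands the adjoint solution as $\nabla v_\varepsilon=\nabla\tilde z_\varepsilon+\nabla\bar v+\nabla\phi^{*\varepsilon}_j(\tilde\eta\partial_j\bar v)+\varepsilon\phi^{*\varepsilon}_j\nabla(\tilde\eta\partial_j\bar v)$ (formula \eqref{f:61}) and treats the four pieces separately. The random piece $\nabla\tilde z_\varepsilon$ is handled by the $H^1$-error Lemma~\ref{lemma:6} (which costs $\mu_d\,\varepsilon^{1/2}$ and holds on Lipschitz domains), while the remaining three pieces involve only the deterministic functions $\bar v$ and the correctors; the distance-weight cancellation happens entirely at the deterministic level through the layer/co-layer estimates \eqref{f:5.2} ($\|\nabla\bar v\,\delta^{-1/2}\|_{L^2(\Omega\setminus O_\varepsilon)}\lesssim\ln(R_0/\varepsilon)$, $\|\nabla^2\bar u\,\delta^{1/2}\|_{L^2(\Omega\setminus O_\varepsilon)}\lesssim\ln(R_0/\varepsilon)$, etc.), which are constant-coefficient elliptic estimates valid on Lipschitz domains and do not require any $A_q$-weighted CZ theory for the random operator. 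Your bookkeeping point (ii)---that controlling $\|\eta\nabla^2\bar u\,\delta^{s/q}\|_{L^q}$ ``forces the use of the weighted CZ estimate''---is therefore mistaken: that quantity is a purely deterministic bound obtained from interior Lipschitz estimates for $\nabla\cdot\bar a\nabla$ plus co-area, not from Theorem \ref{thm:1}. If you replace your bulk-term treatment with the two-scale re-expansion of the adjoint solution and Lemma~\ref{lemma:6}, your argument closes on all Lipschitz domains, as required.
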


Let $\eta, \tilde{\eta}\in C_{0}^{1}(\Omega)$
be cut-off functions, satisfying
\begin{equation}\label{cutoff1}
\left\{\begin{aligned}
  & 0\leq \eta,\tilde{\eta}\leq 1,\quad \max\{|\nabla \eta|,
  |\nabla \tilde{\eta}|\}\leq C \varepsilon^{-1};\\
  & \text{supp}(\eta)
  \subset \Omega\backslash O_{3\varepsilon},~~
  \text{supp}(\tilde{\eta})\subset
  \Omega\backslash O_{7\varepsilon};\\
  & \eta=1\quad \text{in}~~
  \Omega\backslash O_{4\varepsilon},\quad
  \tilde{\eta}=1\quad \text{in}~~
  \Omega\backslash O_{8\varepsilon},
  \end{aligned}\right.
\end{equation}
where $O_r:=\{x\in\Omega:\text{dist}(x,\partial\Omega)\leq r\}$.
By the above definition, it's known that
$(1-\eta)\tilde{\eta}
=0~\text{in}~ \Omega$.
For the ease of statement it's fine to assume that
$(\phi(0),\sigma(0))=0$, otherwise we can replace
$(\phi,\sigma)$ by $(\bar{\phi},\bar{\sigma}) =
(\phi-\phi(0),\sigma-\sigma(0))$.

\begin{lemma}[errors in $H^1$-norm]\label{lemma:6}
Assume the same conditions hold as in Proposition $\ref{P:8}$.
Let $u_\varepsilon,\bar{u}\in H_0^1(\Omega)$ be associated with $f\in L^2(\Omega)$ and $b\in H^1(\partial\Omega)$ by
the equations $\eqref{pde:16}$, respectively.
Let $\eta\in C_0^1(\Omega)$ be a cut-off function satisfying $\eqref{cutoff1}$,
and we consider the following error term\footnote{
Compared to
the error $w_\varepsilon$ stated in $\eqref{expansion:1}$,
we set $u=\bar{u}$ in $\eqref{expansion:1}$ here,
where $\bar{u}$ is the effective solution in $\eqref{pde:16}$.}
\begin{equation}\label{eq:5.1}
\begin{aligned}
  z_\varepsilon = u_\varepsilon - \bar{u}
  -\varepsilon\phi_i^\varepsilon \varphi_i
  \quad \text{with}\quad
  \varphi_i = \eta\partial_i\bar{u}.
\end{aligned}
\end{equation}
Then, for any $p<\infty$, there holds
\begin{equation}\label{pri:5.2}
\Big\langle\big(\int_{\Omega}|\nabla z_\varepsilon|^2\big)^{\frac{p}{2}}\Big\rangle^{\frac{1}{p}}
\lesssim_{\lambda,\lambda_1,
\lambda_2,d,M_0,p}
\mu_d(R_0/\varepsilon) \varepsilon^{\frac{1}{2}}
\Big\{\|f\|_{L^2(\Omega)}+\|b\|_{H^1(\partial\Omega)}\Big\},
\end{equation}
where the up to constant is independent of $\varepsilon$.
\end{lemma}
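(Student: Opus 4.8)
The plan is to run the classical two-scale expansion argument for the error $z_\varepsilon = u_\varepsilon - \bar u - \varepsilon\phi_i^\varepsilon\varphi_i$, $\varphi_i = \eta\partial_i\bar u$ (as in \eqref{eq:5.1}), tracking precisely where the boundary layer and the corrector growth enter. Since $u_\varepsilon - \bar u\in H_0^1(\Omega)$ (both solutions in \eqref{pde:16} share the trace $b$) and $\varepsilon\phi_i^\varepsilon\varphi_i$ is compactly supported in $\Omega$ because $\mathrm{supp}\,\eta\subset\Omega\setminus O_{3\varepsilon}$, we have $z_\varepsilon\in H_0^1(\Omega)$. Repeating the computation in the proof of Lemma \ref{lemma:1} — using the corrector/flux-corrector equations \eqref{eq:2.1} and the skew-symmetry $\sigma_{ijk} = -\sigma_{ikj}$, and noting that the source $f$ is identical for $u_\varepsilon$ and $\bar u$ and therefore cancels — one obtains
\[
-\nabla\cdot a^\varepsilon\nabla z_\varepsilon = \nabla\cdot\big[\varepsilon(a^\varepsilon\phi_i^\varepsilon - \sigma_i^\varepsilon)\nabla\varphi_i + (a^\varepsilon - \bar a)(1-\eta)\nabla\bar u\big]\quad\text{in }\Omega,\qquad z_\varepsilon = 0\ \text{on }\partial\Omega .
\]
Testing with $z_\varepsilon$, using \eqref{a:1}, $\varepsilon|\nabla\eta|\lesssim 1$ and $\mathrm{supp}\,\nabla\eta\subset O_{4\varepsilon}\setminus O_{3\varepsilon}$, gives
\[
\int_\Omega|\nabla z_\varepsilon|^2 \lesssim_\lambda \underbrace{\int_{O_{4\varepsilon}\setminus O_{3\varepsilon}}\!\!|(\phi_i^\varepsilon,\sigma_i^\varepsilon)|^2|\partial_i\bar u|^2}_{=:I_1} + \underbrace{\int_{\Omega\setminus O_{3\varepsilon}}\!\!\varepsilon^2|(\phi_i^\varepsilon,\sigma_i^\varepsilon)|^2|\nabla^2\bar u|^2}_{=:I_2} + \underbrace{\int_{O_{4\varepsilon}}|\nabla\bar u|^2}_{=:I_3},
\]
so it suffices to bound $\langle I_j^{p/2}\rangle^{2/p}$ by $\mu_d(R_0/\varepsilon)^2\,\varepsilon\,\big(\|f\|_{L^2(\Omega)}^2 + \|b\|_{H^1(\partial\Omega)}^2\big)$ for $j=1,2,3$.

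The second step records two deterministic facts about $\bar u$. First, the boundary-layer estimate $\int_{O_r}|\nabla\bar u|^2\lesssim r\,\big(\|f\|_{L^2(\Omega)}^2 + \|b\|_{H^1(\partial\Omega)}^2\big)$ for $0<r\leq R_0$: since $\bar a$ is a symmetric constant matrix, the $L^2$ regularity problem for $-\nabla\cdot\bar a\nabla$ on the Lipschitz domain $\Omega$ is solvable (Rellich identity; cf. the discussion around \eqref{f:decom-1} and \cite{Jerison-Kenig95,Verchota84}), which yields $\|(\nabla\bar u)^*\|_{L^2(\partial\Omega)}\lesssim\|f\|_{L^2(\Omega)} + \|b\|_{H^1(\partial\Omega)}$, and then $\int_{O_r}|\nabla\bar u|^2\lesssim r\int_{\partial\Omega}|(\nabla\bar u)^*|^2\,dS$ by fibering $O_r$ over $\partial\Omega$ (using $R_0\geq 1$ for the range $r\sim R_0$). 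Second, interior regularity for the constant-coefficient equation gives $\dashint_{B(x,\delta(x)/8)}|\nabla^2\bar u|^2\lesssim\delta(x)^{-2}\dashint_{B(x,\delta(x)/4)}|\nabla\bar u|^2 + \dashint_{B(x,\delta(x)/4)}|f|^2$ with $\delta(x) = \mathrm{dist}(x,\partial\Omega)$. A Whitney covering of $\Omega\setminus O_{3\varepsilon}$ together with the dyadic splitting $\delta(x)\sim 2^k\varepsilon$, $1\lesssim 2^k\lesssim R_0/\varepsilon$, combines these into $\int_{\Omega\setminus O_{3\varepsilon}}|\nabla^2\bar u|^2\lesssim\varepsilon^{-1}\big(\|f\|_{L^2(\Omega)}^2 + \|b\|_{H^1(\partial\Omega)}^2\big)$: the geometric series $\sum_k 2^{-k}$ converges, and the $f$-contributions from the different shells sum to at most $\|f\|_{L^2(\Omega)}^2$ because the shells have bounded overlap, so no extra logarithm is generated.

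In the third step one estimates the three terms. For $I_3$ the boundary-layer estimate with $r=4\varepsilon$ gives directly $I_3\lesssim\varepsilon\big(\|f\|_{L^2}^2 + \|b\|_{H^1}^2\big)$, which is deterministic. For $I_1$ and $I_2$, since $\bar u$ carries no randomness, Minkowski's integral inequality in $L^{p/2}(\langle\cdot\rangle)$ (for $p<2$ use Jensen instead) reduces matters to the corrector moment bound \eqref{pri:3}, which after the normalization $(\phi,\sigma)(0) = 0$ reads $\langle|(\phi_i^\varepsilon,\sigma_i^\varepsilon)(x)|^p\rangle^{1/p}\lesssim\mu_d(x/\varepsilon)\leq\mu_d(R_0/\varepsilon)$ for $x\in\Omega$ (using $0\in\Omega$ and $\mathrm{diam}\,\Omega = R_0$). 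Hence
\[
\langle I_1^{p/2}\rangle^{2/p}\lesssim\mu_d(R_0/\varepsilon)^2\int_{O_{4\varepsilon}}|\nabla\bar u|^2\lesssim\mu_d(R_0/\varepsilon)^2\,\varepsilon\,\big(\|f\|_{L^2}^2 + \|b\|_{H^1}^2\big),
\]
\[
\langle I_2^{p/2}\rangle^{2/p}\lesssim\varepsilon^2\mu_d(R_0/\varepsilon)^2\!\int_{\Omega\setminus O_{3\varepsilon}}\!\!|\nabla^2\bar u|^2\lesssim\varepsilon^2\mu_d(R_0/\varepsilon)^2\,\varepsilon^{-1}\big(\|f\|_{L^2}^2 + \|b\|_{H^1}^2\big) = \mu_d(R_0/\varepsilon)^2\,\varepsilon\,\big(\|f\|_{L^2}^2 + \|b\|_{H^1}^2\big).
\]
Taking square roots and summing the three contributions yields \eqref{pri:5.2}.

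The main obstacle is not the probabilistic side — which is a single application of Minkowski's inequality plus \eqref{pri:3}, whose constant already absorbs the dependence on $\lambda_1,\lambda_2$ — but the two deterministic regularity inputs for $\bar u$ on a Lipschitz domain: the layer bound $\int_{O_r}|\nabla\bar u|^2\lesssim r\,(\text{data})^2$, which genuinely uses the symmetry $\bar a = \bar a^*$ through the Rellich-identity solvability of the $L^2$ regularity problem, and the accompanying weighted bound $\int_{\Omega\setminus O_{3\varepsilon}}|\nabla^2\bar u|^2\lesssim\varepsilon^{-1}(\text{data})^2$, where one must check that summing over the $\sim\log(R_0/\varepsilon)$ dyadic shells does not cost a logarithm. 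Equally important is the bookkeeping that isolates the single factor $\mu_d(R_0/\varepsilon)$: it enters only through the sublinear growth of $(\phi,\sigma)$ measured by \eqref{pri:3} and never through the geometry of $\partial\Omega$ — the boundary-induced logarithm will instead surface one step later, in the $L^2$ duality estimate of Proposition \ref{P:8}.
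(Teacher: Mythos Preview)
Your proof is correct and follows essentially the same route as the paper: derive the equation for $z_\varepsilon$, apply the energy estimate, push the expectation inside via Minkowski together with the corrector moment bound \eqref{pri:3}, and reduce to the deterministic layer/co-layer estimates $\int_{O_{4\varepsilon}}|\nabla\bar u|^2\lesssim\varepsilon$ and $\varepsilon^2\int_{\Omega\setminus O_\varepsilon}|\nabla^2\bar u|^2\lesssim\varepsilon$. The only difference is cosmetic: the paper records these deterministic inputs as the black box \eqref{f:5.2} (citing \cite{Xu16}), whereas you sketch their proof via the nontangential maximal function estimate and a Whitney/dyadic decomposition---your observation that the geometric series $\sum_k 2^{-k}$ avoids a logarithm here is exactly the content of the first line of \eqref{f:5.2}.
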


\begin{proof}
By using the antisymmetry of flux corrector and the equations $\eqref{pde:16}$, a routine computation leads to
\begin{equation}\label{pde:17}
 -\nabla\cdot a^\varepsilon\nabla z_\varepsilon
 =\nabla \cdot \Big[\varepsilon(a^\varepsilon\phi_i^\varepsilon-\sigma_i^\varepsilon)
 \nabla \varphi_i
 -(a^\varepsilon-\bar{a})(\nabla u_\varepsilon -\varphi)\Big]
 \quad \text{in}\quad \Omega,
\end{equation}
with $z_\varepsilon = 0$ on $\partial\Omega$, where $\varphi
=(\varphi_1,\cdots,\varphi_d)$, and
$\text{supp}(\varphi)\subset\Omega\setminus O_{2\varepsilon}$.

Therefore, with the help of the energy estimate and taking
$\big\langle(\cdot)^{p}\big\rangle^{\frac{1}{p}}$, we have
\begin{equation*}
\begin{aligned}
\Big\langle\big(\int_{\Omega}|\nabla z_\varepsilon|^2\big)^{\frac{p}{2}}\Big\rangle^{\frac{1}{p}}
&\lesssim_{\lambda,d}
\Big\langle\big(\int_{\Omega}
|\varepsilon(\phi_i^\varepsilon,\sigma_i^\varepsilon)
 \nabla \varphi_i |^2\big)^{\frac{p}{2}}\Big\rangle^{\frac{1}{p}}
+ \Big\langle\big(\int_{\Omega}
|(1-\eta)\nabla\bar{u}|^2\big)^{\frac{p}{2}}\Big\rangle^{\frac{1}{p}}\\
&\lesssim \Big(\int_{\Omega}\big\langle|\varepsilon(\phi_i^\varepsilon,\sigma_i^\varepsilon)|^p
\big\rangle^{\frac{2}{p}}|\nabla\varphi_i|^2\Big)^{\frac{1}{2}}
+ \Big(\int_{O_{4\varepsilon}}|\nabla\bar{u}|^2\Big)^{\frac{1}{2}}\\
&\lesssim_{\lambda,\lambda_1,\lambda_2,d,p}^{\eqref{pri:3}}
\varepsilon
\Big(\int_{\Omega}
|\nabla(\eta\partial\bar{u})|^2\mu_d^2(\cdot/\varepsilon)\Big)^{\frac{1}{2}}
+ \Big(\int_{O_{4\varepsilon}}|\nabla\bar{u}|^2\Big)^{\frac{1}{2}},
\end{aligned}
\end{equation*}
where we employ Minkowski's inequality in the second line above.
This implies that
\begin{equation}\label{f:5.12}
\begin{aligned}
\Big\langle\big(\int_{\Omega}|\nabla z_\varepsilon|^2\big)^{\frac{p}{2}}\Big\rangle^{\frac{2}{p}}
&\lesssim_{\lambda,\lambda_1,\lambda_2,d,p}
\mu_d^2(R_0/\varepsilon)
\int_{O_{4\varepsilon}}
|\nabla\bar{u}|^2
+ \varepsilon^2\mu_d^2(R_0/\varepsilon)
\int_{\Omega\setminus O_{\varepsilon}}
|\nabla^2\bar{u}|^2.
\end{aligned}
\end{equation}
For the ease of statement, it is fine to assume $\|f\|_{L^2(\Omega)}+\|b\|_{H^1(\partial\Omega)}=1$.
In this regard, the problem is reduced to  the following ``layer'' and ``co-layer'' type estimates
(which was introduced in \cite{Xu16}, inspired by Shen's work \cite{Shen16}):
\begin{equation}\label{f:5.2}
\begin{aligned}
&\|\nabla \bar{u}\|_{L^2(O_{4\varepsilon})}
+ \varepsilon^{-\frac{1}{2}}\|\nabla \bar{u}\delta^{\frac{1}{2}}\|_{L^2(O_{4\varepsilon})}
+ \varepsilon\|\nabla^2 \bar{u}\|_{L^2(\Omega\setminus O_\varepsilon)}
\lesssim_{\lambda,d,M_0} \varepsilon^{\frac{1}{2}};\\
&\|\nabla \bar{u}\delta^{-\frac{1}{2}}\|_{L^2(\Omega\setminus O_\varepsilon)}
+ \|\nabla^2 \bar{u}\delta^{\frac{1}{2}}\|_{L^2(\Omega\setminus O_\varepsilon)}
\lesssim_{\lambda,d,M_0} \ln(R_0/\varepsilon),
\end{aligned}
\end{equation}
where $\delta(x):=\text{dist}(x,\partial\Omega)$ for any $x\in\Omega$.

Consequently, by plugging the estimates $\eqref{f:5.2}$ back into $\eqref{f:5.12}$, we have
\begin{equation*}
\begin{aligned}
\Big\langle\big(\int_{\Omega}|\nabla z_\varepsilon|^2\big)^{\frac{p}{2}}\Big\rangle^{\frac{1}{p}}
&\lesssim_{\lambda,\lambda_1,
\lambda_2,d,M_0,p} \mu_d(R_0/\varepsilon) \varepsilon^{\frac{1}{2}},
\end{aligned}
\end{equation*}
which offers us the desired estimate $\eqref{pri:5.2}$.
\end{proof}

\medskip

\noindent
\textbf{Proof of Proposition $\ref{P:8}$.}
We finish the whole argument by three steps.

\noindent
\textbf{Step 1.} Construct the auxiliary equations and simplify the problem.
Given arbitrary $g\in C_0^\infty(\Omega)$, we consider the following
adjoint problems:
\begin{equation}\label{pde:18}
\left\{\begin{aligned}
-\nabla\cdot a^{*\varepsilon}\nabla v_\varepsilon
&= g &\quad&\text{in}\quad\Omega;\\
v_\varepsilon
&= 0 &\quad&\text{on}\quad\partial\Omega;
\end{aligned}\right. \qquad \text{and}
\qquad
\left\{\begin{aligned}
-\nabla\cdot \bar{a}^{*}\nabla \bar{v}
&= g &\quad&\text{in}\quad\Omega;\\
\bar{v}
&= 0 &\quad&\text{on}\quad\partial\Omega.
\end{aligned}\right.
\end{equation}
We denote the related homogenization error term by  $\tilde{z}_\varepsilon:= v_\varepsilon-\bar{v} - \varepsilon\phi_j^{*\varepsilon}(\tilde{\eta}\partial_j\bar{v})$,
and have
\begin{equation}\label{f:61}
\nabla v_\varepsilon
= \nabla \tilde{z}_\varepsilon
+ \nabla \bar{v}
+ \nabla\phi_j^{*\varepsilon}
(\tilde{\eta}\partial_j\bar{v})
+ \varepsilon\phi_j^{*\varepsilon}
\nabla(\tilde{\eta}\partial_j\bar{v}),
\end{equation}
where $\tilde{\eta}$ is the cut-off function defined in $\eqref{cutoff1}$,
and $\phi^*$ is the corresponding corrector associated with $\nabla\cdot a^{*}\nabla$.
Moreover, recalling the error term $\eqref{eq:5.1}$, it follows from
the equations $\eqref{pde:17}$ and $\eqref{pde:18}$ that
\begin{equation*}
\begin{aligned}
\int_{\Omega} z_\varepsilon g
=-\int_{\Omega}\nabla v_\varepsilon\cdot
\varepsilon(a^\varepsilon\phi_i^{\varepsilon}-\sigma^\varepsilon_i)
\nabla\varphi_i
-\int_{\Omega}\nabla v_\varepsilon\cdot
(a^\varepsilon-\bar{a})(\nabla\bar{u}-\varphi).
\end{aligned}
\end{equation*}
By the linearity of the equations, one may assume $\|g\|_{L^2(\Omega)}=1$ and
$\|f\|_{L^2(\Omega)}+\|b\|_{H^1(\partial\Omega)}=1$.
Thus, the desired estimate $\eqref{pri:5.1}$ immediately follows from
the duality argument, triangle inequality and the following two estimates: for any $p<\infty$, (it suffices to show the case $p>2$ while the case $p<2$ follows from H\"older's inequality trivially.)
\begin{subequations}
\begin{align}
 & \Big\langle\Big|\int_{\Omega}\nabla v_\varepsilon\cdot
\varepsilon(a^\varepsilon\phi_i^{\varepsilon}-\sigma^\varepsilon_i)
\nabla\varphi_i\Big|^p\Big\rangle^{\frac{1}{p}}
\lesssim \mu_d(R_0/\varepsilon)\varepsilon\ln(R_0/\varepsilon);
\label{pri:5.6}\\
 & \Big\langle\Big|\int_{\Omega}\nabla v_\varepsilon\cdot
(a^\varepsilon-\bar{a})(\nabla\bar{u}-\varphi)\Big|^p\Big\rangle^{\frac{1}{p}}
\lesssim \varepsilon\ln(R_0/\varepsilon),
\label{pri:5.7}
\end{align}
\end{subequations}
where the multiplicative constant depends on
$\lambda,\lambda_1,\lambda_2,d,p$, and $M_0$.

\medskip
\noindent
\textbf{Step 2.} Arguments for $\eqref{pri:5.6}$.
The main idea is to accelerate the convergence rate by appealing to the constructed error term $\eqref{f:61}$. Therefore, plugging
the formula $\eqref{f:61}$ into the left-hand side of $\eqref{pri:5.6}$,
we have four terms in hand. By the
properties of cut-off functions $\eta,\tilde{\eta}$ defined in $\eqref{cutoff1}$, we reduce the error estimate to the ``layer'' and ``co-layer'' type estimates $\eqref{f:5.2}$.
Replacing $\nabla v_\varepsilon$ with
$\nabla\tilde{z}_\varepsilon$ in the left-hand side of $\eqref{pri:5.6}$,
by using H\"older's inequality ($\frac{1}{p}=\frac{1}{p_1}+\frac{1}{p_2}$), Minkowski's inequality and Lemma $\ref{lemma:6}$
in the order, we first obtain
\begin{equation*}
\begin{aligned}
&\Big\langle\big|\int_{\Omega}\nabla \tilde{z}_\varepsilon\cdot
\varepsilon(a^\varepsilon\phi_i^{\varepsilon}-\sigma^\varepsilon_i)
\nabla\varphi_i\big|^p\Big\rangle^{\frac{1}{p}}
\lesssim
\varepsilon
\Big\langle\big(\int_{\Omega}|\nabla \tilde{z}_\varepsilon|^2\big)^{\frac{p_1}{2}}
\Big\rangle^{\frac{1}{p_1}}
\Big(\int_{\Omega}\big\langle|(\phi_i^{\varepsilon},\sigma_i^{\varepsilon})
|^{p_2}\big\rangle^{\frac{2}{p_2}}|\nabla(\eta\partial_i\bar{u})|^2\Big)^{\frac{1}{2}}\\
&\lesssim^{\eqref{pri:3}}
\varepsilon
\Big\langle\big(\int_{\Omega}|\nabla \tilde{z}_\varepsilon|^2\big)^{\frac{p_1}{2}}
\Big\rangle^{\frac{1}{p_1}}
\Big(\int_{\Omega}
|\nabla(\eta\partial_i\bar{u})|^2
\mu_d^2(\cdot/\varepsilon)\Big)^{\frac{1}{2}}
\lesssim^{\eqref{pri:5.2},\eqref{f:5.2}}_{
\lambda,\lambda_1,
\lambda_2,d,M_0,p}
\mu_d^2(R_0/\varepsilon)\varepsilon,
\end{aligned}
\end{equation*}
where it is fine to choose $p_1=p_2=2p$ in the above calculus (also
used in later computations).

Replacing $\nabla v_\varepsilon$ with
$\nabla\bar{v}$ in the left-hand side of $\eqref{pri:5.6}$,
we then similarly have
\begin{equation}\label{f:56}
\begin{aligned}
&\Big\langle\big|\int_{\Omega}
\nabla\bar{v}
\cdot
\varepsilon(a^\varepsilon\phi_i^{\varepsilon}-\sigma^\varepsilon_i)
\nabla\varphi_i\big|^p\Big\rangle^{\frac{1}{p}}
\lesssim
\Big\langle \big(\int_{\text{supp}(\eta)}
|\nabla\bar{v}|^2\delta^{-1}\big)^{\frac{p_1}{2}}
\Big\rangle^{\frac{1}{p_1}}
\Big\langle\big(\int_{\Omega}
|\varepsilon(\phi_i^\varepsilon,\sigma_i^\varepsilon)\nabla\varphi_i|^2\delta\big)^{\frac{p_2}{2}}
\Big\rangle^{\frac{1}{p_2}}\\
&\lesssim \Big(\int_{\text{supp}(\eta)}
|\nabla\bar{v}|^2\delta^{-1}\Big)^{\frac{1}{2}}
\Big(\int_{\Omega}\big\langle|\varepsilon(\phi_i^\varepsilon,\sigma_i^\varepsilon)|^{p_2}
\big\rangle^{\frac{2}{p_2}}|\nabla(\eta\partial_i\bar{u})|^2\delta
\Big)^{\frac{1}{2}}
\lesssim^{\eqref{pri:3},\eqref{f:5.2}}_{
\lambda,\lambda_1,
\lambda_2,d,M_0,p}
\mu_d(R_0/\varepsilon)\varepsilon\ln(R_0/\varepsilon).
\end{aligned}
\end{equation}
Replacing $\nabla v_\varepsilon$ with
$\nabla\phi_j^{*\varepsilon}
(\tilde{\eta}\partial_j\bar{v})$ in the left-hand side of $\eqref{pri:5.6}$,
by H\"older's inequality, Minkowski's inequality and
Lemma $\ref{lemma:3}$,
we arrive at
\begin{equation*}
\begin{aligned}
&\quad\Big\langle\big|\int_{\Omega}
\nabla\phi_j^{*\varepsilon}
(\tilde{\eta}\partial_j\bar{v})
\cdot
\varepsilon(a^\varepsilon\phi_i^{\varepsilon}-\sigma^\varepsilon_i)
\nabla\varphi_i\big|^p\Big\rangle^{\frac{1}{p}}\\
&\leq
\Big\langle\big(\int_{\text{supp}(\eta)}
|\nabla\phi_j^{*\varepsilon}(\tilde{\eta}\partial_j\bar{v})|^2\delta^{-1}\big)^{\frac{p_1}{2}}
\Big\rangle^{\frac{1}{p_1}}
\Big\langle\big(\int_{\text{supp}(\eta)}|\varepsilon(\phi_i^\varepsilon,\sigma_i^\varepsilon)
\nabla\varphi_i|^2\delta\big)^{\frac{p_2}{2}}\Big\rangle^{\frac{1}{p_2}}\\
&\lesssim^{\eqref{pri:2},\eqref{pri:3}}
\varepsilon
\Big(\int_{\text{supp}(\tilde{\eta})}
|\nabla\bar{v}|^2 \delta^{-1}\Big)^{\frac{1}{2}}
\Big(\int_{\text{supp}(\eta)}|\nabla^2\bar{u}|^2
\mu_d^2(\cdot/\varepsilon) \delta\Big)^{\frac{1}{2}}
\lesssim^{\eqref{f:5.2}}_{\lambda,\lambda_1,
\lambda_2,d,M_0,p}\mu_d
(R_0/\varepsilon)\varepsilon\ln(R_0/\varepsilon).
\end{aligned}
\end{equation*}
where the second inequality is due to the fact $\text{supp}(\nabla\eta)\cap
\text{supp}(\tilde{\eta})=\emptyset$.
Replacing $\nabla v_\varepsilon$ with
$\varepsilon\phi_j^{*\varepsilon}
\nabla(\tilde{\eta}\partial_j\bar{v})$ in the left-hand side of $\eqref{pri:5.6}$,
by the same token, we derive that
\begin{equation}\label{f:60}
\begin{aligned}
&\Big\langle\big|\int_{\Omega}
\varepsilon\phi_j^{*\varepsilon}
S_\varepsilon\big(\nabla(\tilde{\eta}\partial_j\bar{v})\big)
\cdot
\varepsilon(a^\varepsilon\phi_i^{\varepsilon}-\sigma^\varepsilon_i)
\nabla\varphi_i\big|^p\Big\rangle^{\frac{1}{p}}\\
&\leq \Big\langle
\Big(\int_{\Omega}|\varepsilon\phi_j^{*\varepsilon}
\nabla(\tilde{\eta}\partial_j\bar{v})|^2\Big)^{\frac{p_1}{2}}\Big\rangle^{\frac{1}{p_1}}
\Big\langle\Big(\int_{\Omega}
|\varepsilon(\phi_i^\varepsilon,\sigma_i^\varepsilon)\nabla\varphi_i|^2
\Big)^{\frac{p_2}{2}}\Big\rangle^{\frac{1}{p_2}}
\lesssim^{\eqref{pri:3},\eqref{f:5.2}}_{\lambda,\lambda_1,
\lambda_2,d,M_0,p}\mu_d^2(R_0/\varepsilon)\varepsilon.
\end{aligned}
\end{equation}

\medskip
\noindent
\textbf{Step 3.} Arguments for $\eqref{pri:5.7}$.
With the help of the cut-off function $\eta$, we can find that
the left-hand side of $\eqref{pri:5.7}$ merely involves the ``layer'' part:
\begin{equation}\label{f:5.3}
\int_{\Omega}\nabla v_\varepsilon\cdot
(a^\varepsilon-\bar{a})(\nabla\bar{u}-\varphi)
= \int_{\Omega}\nabla v_\varepsilon\cdot
(a^\varepsilon-\bar{a})\nabla\bar{u}(1-\eta).
\end{equation}
By noting the fact
$\text{supp}(\tilde{\eta})\cap\text{supp}(\nabla\eta)
=\emptyset$ again, it follows from \eqref{f:61} and H\"older's inequality that
\begin{equation*}
\begin{aligned}
\int_{\Omega}\nabla v_\varepsilon\cdot
(a^\varepsilon-\bar{a})\nabla\bar{u}(1-\eta)
&=^{\eqref{f:61}} \int_{\Omega}\big(\nabla z_\varepsilon+\nabla\bar{v}\big)\cdot
(a^\varepsilon-\bar{a})\nabla\bar{u}(1-\eta)\\
&\lesssim
\bigg[\Big(\int_{\Omega}|\nabla z_\varepsilon|^2\Big)^{\frac{1}{2}}
+\Big(\int_{O_{4\varepsilon}}|\nabla \bar{v}|^2\Big)^{\frac{1}{2}}\bigg]
\Big(\int_{O_{4\varepsilon}}|\nabla \bar{u}|^2\Big)^{\frac{1}{2}}.
\end{aligned}
\end{equation*}
Only the term containing $z_\varepsilon$ involves randomness, and
appealing to Lemma $\ref{lemma:6}$ we have
\begin{equation}\label{f:5.10}
\begin{aligned}
&\Big\langle\big|\int_{\Omega}\nabla v_\varepsilon
\cdot
(a^\varepsilon-\bar{a})
\nabla\bar{u}(1-\eta)\big|^p
\Big\rangle^{\frac{1}{p}}\\
&\lesssim
\bigg[\Big\langle\big(\int_{\Omega}
|\nabla z_\varepsilon|^2\big)^{\frac{p}{2}}\Big\rangle^{\frac{1}{p}}
+\Big(\int_{O_{4\varepsilon}}|\nabla \bar{v}|^2\Big)^{\frac{1}{2}}\bigg]
\Big(\int_{O_{4\varepsilon}}|\nabla \bar{u}|^2\Big)^{\frac{1}{2}}
\lesssim^{\eqref{pri:5.2},\eqref{f:5.2}}_{\lambda,\lambda_1,
\lambda_2,d,M_0,p} \mu_d(R_0/\varepsilon)\varepsilon.
\end{aligned}
\end{equation}
This completes the whole proof.
\qed

\subsection{\centering Weak norm estimates}\label{subsection:4.2}

\noindent
In this subsection,
modulo the homogenization commutator (see e.g.
\cite[pp.29]{Josien-Otto22}),
we discuss the estimate of the homogenization error in the weak norm
for the regular SKT (or $C^1$) domain. To overcome the loss introduced by boundary layer, we employ the weighted annealed Calder\'on-Zygmund estimates to accelerate the convergence rate.

\begin{proposition}\label{P:9}
Let $\Omega$ be a bounded regular SKT (or $C^1$) domain including the zero point and $0<\varepsilon\ll1$.
Suppose that  $\langle\cdot\rangle$ is stationary, satisfying the spectral gap condition $\eqref{a:2}$, as well as $\eqref{a:3}$.
Let $u_\varepsilon,u_0\in H_0^1(\Omega)$ be associated with
$f\in C_0^{1}(\Omega;\mathbb{R}^d)$ by
\begin{equation}\label{pde:22-2}
\left\{\begin{aligned}
-\nabla\cdot a^{\varepsilon}\nabla u_\varepsilon
&= \nabla\cdot f &\quad&\text{in}\quad\Omega;\\
u_\varepsilon
&= 0 &\quad&\text{on}\quad\partial\Omega,
\end{aligned}\right.\qquad \text{and}
\qquad
\left\{\begin{aligned}
-\nabla\cdot \bar{a}\nabla \bar{u}
&= \nabla\cdot f &\quad&\text{in}\quad\Omega;\\
\bar{u}
&= 0 &\quad&\text{on}\quad\partial\Omega.
\end{aligned}\right.
\end{equation}
For any $h\in C_0^\infty(\Omega;\mathbb{R}^d)$, the random
variable $H^\varepsilon$ is defined as in $\eqref{eq:5.2}$.
Then for all $p<\infty$ we have
\begin{equation}\label{pri:6}
\begin{aligned}
\varepsilon^{-\frac{d}{2}} \big\langle (H^\varepsilon -\langle H^\varepsilon\rangle)^{2p} \big\rangle^{\frac{1}{2p}}
\lesssim_{\lambda,\lambda_1,
\lambda_2,d,M_0,p,s}
 \mu_d(R_0/\varepsilon)\varepsilon\ln^{\frac{1}{2s'}}(R_0/\varepsilon)
\Big(\int_{\Omega} |\nabla h|^{2s}\Big)^{\frac{1}{2s}}
\Big(\int_{\Omega}
 |R_0\nabla f|^{2s'}\Big)^{\frac{1}{2s'}},
\end{aligned}
\end{equation}
where $s,s'>1$ are associated with $1/s'+1/s=1$.
\end{proposition}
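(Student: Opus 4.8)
The plan is to follow the sensitivity calculus of Gloria--Neukamm--Otto, using the weighted annealed Calder\'on--Zygmund estimate \eqref{D} of Theorem \ref{thm:1} as the main input and a rescaling to unit scale as the backbone. First I would invoke the $L^p$-version of the spectral gap \eqref{a:2} (a standard consequence of it): since $H^\varepsilon$ in \eqref{eq:5.2} is a functional of $a$, for every $p<\infty$,
$$\big\langle (H^\varepsilon-\langle H^\varepsilon\rangle)^{2p}\big\rangle^{\frac{1}{2p}}
\lesssim_p \Big\langle\Big(\int_{\mathbb{R}^d}\Big(\dashint_{B_1(x)}\big|\tfrac{\partial H^\varepsilon}{\partial a}\big|\Big)^2 dx\Big)^{p}\Big\rangle^{\frac{1}{2p}},$$
which reduces \eqref{pri:6} to an estimate of the vertical derivative $\partial H^\varepsilon/\partial a$ defined via \eqref{functional}. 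Next I would compute this derivative: perturbing $a\mapsto a+\delta a$ in \eqref{eq:5.2} yields, besides the explicit term $(\delta a)^\varepsilon$, the linearized corrections $\delta\nabla u_\varepsilon$ and $\delta\nabla\phi_i^\varepsilon$; introducing the adjoint solution $v_\varepsilon$ (of the type considered in \eqref{pde:18}, sourced now by $(a^\varepsilon-\bar a)^*h$) and adjoint-corrector auxiliaries $\psi_j^{*}$ (sourced by $(a^\varepsilon-\bar a)^*h\,\varphi_j$), a Green-function duality gives a representation of $\partial H^\varepsilon/\partial a(x)$ as a finite sum of pointwise products of two random gradients among $\nabla u_\varepsilon$, $e_i+\nabla\phi_i^\varepsilon$, $\nabla v_\varepsilon$, $\nabla\psi_j^{*}$, each carrying a deterministic weight built from $h$, from $\varphi_i=\eta\partial_i\bar u$, and from $\nabla\eta$ with $\eta$ as in \eqref{cutoff1}.

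The third step inserts this representation into the right-hand side above, takes the $B_1$-average, and splits the products by H\"older's inequality in $x$ and in $\langle\cdot\rangle$ so that each factor is controlled by a result already available: the gradient factors $(\dashint_{U_\varepsilon}|\nabla u_\varepsilon|^2)^{1/2}$ and $(\dashint_{U_\varepsilon}|\nabla v_\varepsilon|^2)^{1/2}$ and the analogous quantities for $\psi_j^{*}$ by the weighted annealed estimate \eqref{D}/\eqref{pri:11B} with $\omega_\sigma(x)=[\mathrm{dist}(x,\partial\Omega_0)]^{q-1}$; the corrector factors $(\dashint_{B_{*,\varepsilon}}|e+\nabla\phi^\varepsilon|^2)^{1/2}$ and the increments of $(\phi,\sigma)^\varepsilon$ by Lemma \ref{lemma:3}, which also produces the factor $\mu_d(R_0/\varepsilon)$ in dimension two; and the weighted layer/co-layer norms of the homogenized solutions $\bar u,\bar v$ by \eqref{f:5.2}. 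Working on the rescaled domain $\Omega/\varepsilon$ at unit scale makes the corrector bounds \eqref{pri:2}, \eqref{pri:3} and the minimal-radius moment bounds directly applicable, and the associated Jacobian produces the prefactor $\varepsilon^{-d/2}$ in \eqref{pri:6}. The weight $\omega_\sigma$, which is $\sim\varepsilon^{q-1}$ on $\mathrm{supp}(\nabla\eta)$ once $\Omega_0$ is chosen as in \eqref{pri:11B}, is exactly what turns the boundary-layer loss $O(\varepsilon^{-1/q})$ into $O(\ln(R_0/\varepsilon))$ on the right-hand data (through the co-layer estimate $\|\nabla^2\bar u\,\delta^{1/2}\|_{L^2}\lesssim\ln(R_0/\varepsilon)$ of \eqref{f:5.2}); the H\"older split with exponents $2s$, $2s'$ redistributes it as $\ln^{1/(2s')}(R_0/\varepsilon)$ while pairing $\nabla h$ with $L^{2s}$ and $\nabla f$ with $L^{2s'}$.

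I expect the main obstacle to be, first, deriving the representation of $\partial H^\varepsilon/\partial a$ cleanly --- one has to keep track of several adjoint problems (for $u_\varepsilon$ and for each corrector $\phi_i^\varepsilon$) and verify that their right-hand sides lie in spaces for which \eqref{D} applies, which is where the regular SKT (or $C^1$) hypothesis on $\Omega$ enters --- and, second, the bookkeeping of stochastic integrability: as noted after Proposition \ref{P:7}, $\chi_*$ and $\nabla u_\varepsilon$ are not independent, so \eqref{D} comes only with a strict loss $\bar p>p$ on the data side; one must choose the H\"older exponents in the splitting with enough slack that this loss is absorbed yet \eqref{pri:6} still holds for all $p<\infty$. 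Tracking the powers of $\mu_d$ and of $\ln(R_0/\varepsilon)$ through these splittings, so that precisely $\mu_d(R_0/\varepsilon)\,\varepsilon\,\ln^{1/(2s')}(R_0/\varepsilon)$ survives, is the remaining delicate bit; the rest is routine once the template of Proposition \ref{P:8} and Lemma \ref{lemma:6} is in place.
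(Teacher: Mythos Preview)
Your overall strategy coincides with the paper's: apply the $L^p$-spectral gap, compute $\partial H^\varepsilon/\partial a$ via adjoint problems, rescale to unit scale, and close with annealed Calder\'on--Zygmund estimates and the corrector bounds of Lemma~\ref{lemma:3}. A few technical points are either missing or differ from what the paper actually does.

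First, the auxiliary equations are not quite those you describe. The paper follows \cite{Josien-Otto22} and introduces $v^*$ and $z_j^*$ via
\[
-\nabla\cdot a^*\nabla v^* = \nabla\cdot(a^*\phi_j^*-\sigma_j^*)\nabla\check h_j\ \text{in}\ \Omega/\varepsilon,\qquad
-\nabla\cdot a^*\nabla z_j^* = \nabla\cdot(a^*\phi_i^*-\sigma_i^*)\nabla(\check h_i\hat\varphi_j)\ \text{in}\ \mathbb{R}^d,
\]
i.e.\ the sources are the \emph{two-scale expansion errors} for the adjoint, not $(a^\varepsilon-\bar a)^*h$ directly. This is what produces the representation \eqref{f:6.0} in which the factor $\nabla\tilde w_\varepsilon+\phi_i\nabla\hat\varphi_i$ appears as a block and can be hit by the weighted CZ estimate \eqref{pri:11B} to gain the full $\varepsilon$.

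Second, you invoke \eqref{f:5.2} for the layer/co-layer bounds on $\bar u$. Those are $L^2$ estimates; after the $(2s,2s')$ H\"older split you need the $L^{2s'}$ versions
\[
\int_{O_\varepsilon}|\nabla\bar u|^{2s'}\lesssim \varepsilon R_0^{2s'-1}\!\int_\Omega|\nabla f|^{2s'},\qquad
\int_{\Omega\setminus O_\varepsilon}|\nabla^2\bar u|^{2s'}\delta^{2s'-1}\lesssim \ln(R_0/\varepsilon)R_0^{2s'-1}\!\int_\Omega|\nabla f|^{2s'},
\]
which the paper derives separately (Step~2) by decomposing $\bar u=\bar u^{(1)}+\bar u^{(2)}$ into a Newtonian potential on $\mathbb{R}^d$ and a harmonic extension, then using Riesz/singular integral bounds for $\bar u^{(1)}$ and nontangential maximal function estimates (available precisely because $\Omega$ is regular SKT or $C^1$) for $\bar u^{(2)}$.

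Third, to pair $h$ with the negative power of the distance that arises from the weight $\omega_\sigma$, the paper uses a weighted Hardy inequality $\|h\,\delta_\sigma^{1/(2s')-1}\|_{L^{2s}}\lesssim\|\nabla h\,\delta_\sigma^{1/(2s')}\|_{L^{2s}}$; this is how $\nabla h$ (rather than $h$) appears on the right of \eqref{pri:6}. You do not mention this step.

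With these three ingredients in place your sketch is the paper's proof; without them, the $(2s,2s')$ split does not close and the exponent $\ln^{1/(2s')}$ does not emerge.
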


\begin{proof}
The main idea is appealing to sensitive arguments, which is
similar to that given in \cite[Proposition 6.1]{Josien-Otto22}, while
the present contribution is to establish
the fluctuation property of the two-scale expansion error with
a bounded domain considered. Let $s,s'>1$ satisfy $1/s+1/s'=1$, and
the proof is divided into four steps.

\medskip
\noindent
\textbf{Step 1.} Representation of the functional derivative
of $H^\varepsilon$ and reduction.
Let $\check{h},\hat{h},\tilde{h}$ represent the different rescale way
of $h$, as follows:
\begin{equation}\label{scaling1}
 \check{h}:=\varepsilon^d h(\varepsilon\cdot);
 \quad \hat{h}:= h(\varepsilon\cdot);
 \quad \tilde{h}:= \frac{1}{\varepsilon}h(\varepsilon\cdot).
\end{equation}
Throughout the proof, we will not distinguish the corresponding derivative symbols since the symbols of the functions under different scale transformations are marked as in $\eqref{scaling1}$.
Thus, in view of the equations $\eqref{pde:22-2}$ we can rewrite it as
\begin{equation*}
\left\{\begin{aligned}
-\nabla\cdot a \nabla \tilde{u}_{\varepsilon}
&= \nabla\cdot \hat{f}
&\quad&\text{in}~~\Omega/\varepsilon;\\
\tilde{u}_{\varepsilon}
&= 0
&\quad&\text{on}~\partial\Omega/\varepsilon,
\end{aligned}\right.
\qquad
\left\{\begin{aligned}
-\nabla\cdot \bar{a} \nabla \tilde{\bar{u}}
&= \nabla\cdot \hat{f}
&\quad&\text{in}~~\Omega/\varepsilon;\\
\tilde{\bar{u}}
&= 0
&\quad&\text{on}~\partial\Omega/\varepsilon.
\end{aligned}\right.
\end{equation*}
Recall that $w_\varepsilon:= u_\varepsilon-\bar{u}-\varepsilon\phi_i^\varepsilon\varphi_i$,
and we now fix $\varphi_i = \eta\partial_i\bar{u}$, where $\eta\in C_0^1(\Omega)$ is a cut-off function satisfying
$\eqref{cutoff1}$.
It follows that
\begin{equation}\label{scaling2}
\hat{\varphi}_i = \hat{\eta}\partial_i\tilde{\bar{u}},
\quad \tilde{w}_{\varepsilon} = \tilde{u}_{\varepsilon} - \tilde{\bar{u}}-\phi_i\hat{\varphi}_i,
\quad\text{and}\quad
\nabla\tilde{w}_{\varepsilon} = \nabla w_\varepsilon.
\end{equation}

Therefore, the error of two-scale expansions satisfies
\begin{equation*}
\begin{aligned}
-\nabla \cdot a\nabla
\tilde{w}_{\varepsilon}
= \nabla\cdot \big[(a\phi_i-\sigma_i)\nabla\hat{\varphi}_i
+ (a-\bar{a})(1-\hat{\eta})\nabla\tilde{\bar{u}}\big]
\quad \text{in}\quad \Omega/\varepsilon,
\end{aligned}
\end{equation*}
with $\tilde{w}_{\varepsilon} = 0$ on $\partial\Omega/\varepsilon$.
We now denote the random variable by
\begin{equation*}
H_\varepsilon : = \int_{\Omega/\varepsilon}
\check{h} \cdot (a-\bar{a})\big(\nabla \tilde{u}_{\varepsilon}
-\nabla\tilde{\bar{u}}
-\nabla\phi_i\hat{\varphi}_i\big).
\end{equation*}
and it follows from changing variable in $\eqref{eq:5.2}$ that $H_{\varepsilon}=H^{\varepsilon}$.
The advantage of the expression of $H_\varepsilon$ is that we can
directly appeal to the $L^p$-version
spectral gap inequality\footnote{It can be derived from
the spectral gap condition $\eqref{a:2}$ (see e.g.
\cite[pp.17-18]{Josien-Otto22}).}
\begin{equation}\label{Lp-SG}
\big\langle (H_\varepsilon -
\langle H_\varepsilon \rangle)^{2p} \big\rangle^{\frac{1}{p}}
\lesssim \Big\langle\Big(\int_{\mathbb{R}^d}
\big(\dashint_{B_1(y)}|\frac{\partial H_\varepsilon}{\partial a}|\big)^2 dy\Big)^p\Big\rangle^{\frac{1}{p}}
\end{equation}
to obtain the desired estimate $\eqref{pri:6}$, whenever we have
the concrete expression of $\frac{\partial H_\varepsilon}{\partial a}$.

To do so, for any $h\in C_0^\infty(\Omega;\mathbb{R}^d)$, we construct the auxiliary equations:
\begin{equation}\label{pde:22}
\left\{\begin{aligned}
-\nabla \cdot a^*\nabla v^*
&=\nabla\cdot(a^{*}\phi_{j}^{*}-\sigma_{j}^{*})\nabla\check{h}_j
&\quad&\text{in}~~\Omega/\varepsilon;\\
v^*&=0 &\quad&\text{on}~~\partial\Omega/\varepsilon,
\end{aligned}\right.
\end{equation}
and
\begin{equation*}
-\nabla \cdot a^*\nabla z_j^*
=\nabla\cdot(a^{*}\phi_{i}^{*}-\sigma_{i}^{*})\nabla(\check{h}_i
\hat{\varphi}_j)
\quad \text{in}~~\mathbb{R}^d,
\end{equation*}
where $-\nabla\cdot a^{*}\nabla$ is the adjoint operator of
$-\nabla\cdot a\nabla$, and $(\phi^{*},\sigma^{*})$ is the corresponding
extended corrector.
Then, a routine computation leads to
\begin{equation}\label{f:6.0}
\begin{aligned}
\frac{\partial H_\varepsilon}{\partial a}
&= \check{h}_j(e_j+\nabla\phi_j^{*})\otimes (\nabla\tilde{w}_\varepsilon+\phi_i\nabla\hat{\varphi}_i)
+ \nabla\phi_j^*\check{h}_j\otimes(\nabla\tilde{\bar{u}}-\hat{\varphi})\\
&-\big[\nabla z_j^* + \phi_i^*\nabla(\check{h}_i\hat{\varphi}_j)\big]
\otimes (\nabla\phi_j+e_j)
+ (\nabla v^* + \phi^{*}_j\nabla\check{h}_j)\otimes\nabla\tilde{u}_\varepsilon,
\end{aligned}
\end{equation}
and the details can be found in \cite[pp.31-32]{Josien-Otto22} and we leave it to the reader. To complete the whole proof, we
need to establish the following estimates:
\begin{subequations}
\begin{align}
&\begin{aligned}
&\bigg\langle
 \bigg(\int_{\mathbb{R}^d}
 \Big(\dashint_{B_1(y)}\big|
 \check{h}_j(e_j+\nabla\phi_j^{*})\otimes (\nabla\tilde{w}_{\varepsilon}
 +\phi_i\nabla\hat{\varphi}_i)\big|\Big)^2 dy
 \bigg)^{p}\bigg\rangle^{\frac{1}{p}}\\
&\qquad\qquad\qquad \qquad\qquad
\lesssim \varepsilon^{d+2}\mu_d^{2}(R_0/\varepsilon)
\ln^{\frac{1}{s'}}(R_0/\varepsilon)
\Big(\int_{\Omega} |\nabla h|^{2s} dx\Big)^{\frac{1}{s}}
\Big(\int_{\Omega} |R_0\nabla f|^{2s'} dx\Big)^{\frac{1}{s'}};
\end{aligned} \label{f:6.a}\\
 & \begin{aligned}
\bigg\langle
 \bigg(\int_{\Omega/\varepsilon}
 \Big(\dashint_{U_1(y)}\big|
 \nabla\phi^{*}_j \check{h}_j \otimes
& (\nabla\tilde{\bar{u}}-\hat{\varphi})\big|\Big)^2 dy
 \bigg)^{p}\bigg\rangle^{\frac{1}{p}}\\
& \lesssim  
\varepsilon^{d+2}\Big(\int_{\Omega}
|\nabla h|^{2s}\Big)^{\frac{1}{s}}
\Big(\int_{\Omega}
|R_0\nabla f|^{2s'}
\Big)^{\frac{1}{s'}},
\end{aligned}\label{f:6.b}
\end{align}
\end{subequations}
and
\begin{subequations}
\begin{align}
&\begin{aligned}
\bigg\langle
 \bigg(\int_{\Omega/\varepsilon}
 \Big(\dashint_{U_1(y)}\big|
 (\nabla v^* + \phi^{*}_j\nabla\check{h}_j)
 &\otimes\nabla\tilde{u}_\varepsilon\big|\Big)^2 dy
 \bigg)^{p}\bigg\rangle^{\frac{1}{p}}\\
&\lesssim
\varepsilon^{d+2}\mu_d^{2}(R_0/\varepsilon)
\Big(\int_{\Omega}
|\nabla h|^{2s}\Big)^{\frac{1}{s}}
\Big(\int_{\Omega}
 |f|^{2s'}\Big)^{\frac{1}{s'}};
\end{aligned}  \label{f:6.c}\\
&\begin{aligned}
\bigg\langle
 \bigg(\int_{\mathbb{R}^d}
& \Big(\dashint_{B_1(y)}
 \big|
 \big(\nabla z_j^* + \phi^{*}_i\nabla(\check{h}_i\hat{\varphi}_j)\big)\otimes
 (\nabla\phi_j+e_j)\big|\Big)^2 dy
 \bigg)^{p}\bigg\rangle^{\frac{1}{p}}\\
&\lesssim
\varepsilon^{d+2}\mu_d^2(R_0/\varepsilon)\Big(\int_{\Omega}|
\nabla h|^{2s}\Big)^{\frac{1}{s}}
\bigg\{\Big(\int_{\Omega}|
f|^{2s'}\Big)^{\frac{1}{s'}}
+ \ln^{\frac{1}{s'}}(R_0/\varepsilon)
\Big(\int_{\Omega}
|R_0\nabla f|^{2s'}\Big)^{\frac{1}{s'}}\bigg\},
\end{aligned} \label{f:6.d}
\end{align}
\end{subequations}
where the multiplicative constant depends on
$\lambda,\lambda_1,\lambda_2,d,s,p$, and $M_0$.

Admitting them for a while, combining the estimates $\eqref{Lp-SG}$, $\eqref{f:6.0}$, $\eqref{f:6.a}$,
$\eqref{f:6.b}$, $\eqref{f:6.c}$ and $\eqref{f:6.d}$ leads to the desired
estimate $\eqref{pri:6}$.

\medskip
\noindent
\textbf{Step 2.} As a preparation, we need to show the following layer and co-layer type estimates:
\begin{subequations}\label{}
\begin{align}\label{}
& \int_{O_\varepsilon} |\nabla \bar{u}|^{2s'}
\lesssim_{d,\lambda,M_0,s} \varepsilon R_0^{2s'-1}\int_{\Omega}|\nabla f|^{2s'}; \label{f:6.10a}\\
& \int_{\Omega\setminus O_\varepsilon}
|\nabla^2\bar{u}|^{2s'}\delta^{2s'-1}(x)dx
\lesssim_{d,\lambda,M_0,s}
\ln(R_0/\varepsilon) R_0^{2s'-1}\int_{\Omega}|\nabla f|^{2s'}, \label{f:6.10b}
\end{align}
\end{subequations}
where $\delta(x):=\text{dist}(x,\partial\Omega)$ for any $x\in\Omega$.
The original idea of the proof is similar to that given in \cite{Shen16}, and we provide a proof for
the reader's convenience. Due to the non-smoothness of the boundary, we first divide
the effective equation of $\eqref{pde:22-2}$ into
\begin{equation}\label{pde:22-3}
 (1)~-\nabla\cdot \bar{a}\nabla\bar{u}^{(1)} = \nabla\cdot f_0\quad\text{in}\quad \mathbb{R}^d;
 \qquad
 (2)~\left\{\begin{aligned}
 \nabla\cdot \bar{a}\nabla \bar{u}^{(2)}
&= 0 &\quad&\text{in}\quad\Omega;\\
\bar{u}^{(2)}
&= -\bar{u}^{(1)} &\quad&\text{on}\quad\partial\Omega,
 \end{aligned}\right.
\end{equation}
where $f_0$ is the zero-extension of $f$ to the whole space $\mathbb{R}^d$.
It is not hard to see that $\bar{u} = \bar{u}^{(1)}+\bar{u}^{(2)}$ on $\overline{\Omega}$,
and our later analysis is based upon this decomposition.

We first address the estimate $\eqref{f:6.10a}$, and obtain
\begin{equation}\label{f:6.13}
\Big(\int_{O_\varepsilon} |\nabla \bar{u}|^{2s'}\Big)^{\frac{1}{2s'}}
 \leq
 \Big(\int_{O_\varepsilon} |\nabla \bar{u}^{(1)}|^{2s'}\Big)^{\frac{1}{2s'}}
 + \Big(\int_{O_\varepsilon} |\nabla \bar{u}^{(2)}|^{2s'}\Big)^{\frac{1}{2s'}}.
\end{equation}
For any
$t\in [0,2\varepsilon]$, one may define $S_t:=\Gamma_t(\partial\Omega)$ and
$\Gamma_t(x') := x'-tn(x')$ for a.e. $x'\in\partial\Omega$ and $n$ is the outward unit normal
vector associated with $\partial\Omega$. There exists a vector field $\rho\in C^1(\Omega;\mathbb{R}^d)$ such that $\rho\cdot n \geq c_0>0$ on $S_t$ for each $t\in[0,2\varepsilon]$, as well as $|\nabla \rho|\lesssim 1/R_0$.
It follows from the divergence theorem that
\begin{equation}\label{f:6.11*}
\begin{aligned}
\int_{S_t} n\cdot\rho|\nabla\bar{u}^{(1)}|^{2s'} dS
&= \int_{\Omega\setminus O_{t}}\nabla\cdot \big(\rho|\nabla\bar{u}^{(1)}|^{2s'}\big)\\
&= \int_{\Omega\setminus O_{t}}
\nabla\cdot\rho|\nabla\bar{u}^{(1)}|^{2s'}
+\int_{\Omega\setminus O_{t}}|\nabla\bar{u}^{(1)}|^{2s'-2}
\rho\cdot\nabla^2\bar{u}^{(1)}\nabla\bar{u}^{(1)},
\end{aligned}
\end{equation}
where we note that $S_t$ is the boundary of
$\Omega\setminus O_{t}$ and the outward
unit normal vector associated with $S_t$ is regarded as the same as
that of $\partial\Omega$ since they are sufficiently close to each other.
This together with H\"older's inequality leads to
\begin{equation}\label{f:6.11}
\begin{aligned}
\int_{S_t}|\nabla \bar{u}^{(1)}|^{2s'} dS
&\leq \frac{1}{c_0}\int_{S_t} n\cdot \rho |\nabla \bar{u}^{(1)}|^{2s'} dS
\lesssim^{\eqref{f:6.11*}} \frac{1}{R_0}\int_{\Omega}|\nabla \bar{u}^{(1)}|^{2s'}
+  \int_{\Omega}|\nabla \bar{u}^{(1)}|^{2s'-1}|\nabla^2 \bar{u}^{(1)}|\\
&\lesssim \Big(\int_{\Omega}|\nabla \bar{u}^{(1)}|^{\frac{2s'd}{d-1}}\Big)^{\frac{d-1}{d}}
+ \Big(\int_{\Omega}|\nabla \bar{u}^{(1)}|^{\frac{2s'd}{d-1}}\Big)^{\frac{(2s'-1)(d-1)}{2s'd}}
\Big(\int_{\Omega}|\nabla^2 \bar{u}^{(1)}|^{\frac{2s'd}{2s'+d-1}}\Big)^{\frac{2s'+d-1}{2s'd}}.
\end{aligned}
\end{equation}
Then, by boundedness of the Riesz potential and singular integrals
(see e.g. \cite[Chapter 7]{Giaquinta-Martinazzi12}), we have
\begin{subequations}
\begin{align}
&\Big(\int_{\mathbb{R}^d}
|\nabla \bar{u}^{(1)}|^{\frac{2s'd}{d-1}}\Big)^{\frac{d-1}{d}}
\lesssim_{d,s} \Big(\int_{\mathbb{R}^d}|\nabla\cdot f_0|^{\frac{2s'd}{2s'+d-1}}\Big)^{\frac{2s'+d-1}{d}}
\lesssim \Big(\int_{\Omega}|\nabla f|^{\frac{2s'd}{2s'+d-1}}\Big)^{\frac{2s'+d-1}{d}};
\label{f:6.16a}\\
&\int_{\mathbb{R}^d}|\nabla^2 \bar{u}^{(1)}|^{q}
\lesssim_{d,q} \int_{\mathbb{R}^d}|\nabla\cdot f_0|^{q}
\lesssim \int_{\Omega}|\nabla f|^{q}\qquad\forall~ 1<q<\infty.
\label{f:6.16b}
\end{align}
\end{subequations}
Plugging the estimates $\eqref{f:6.16a}$ and $\eqref{f:6.16b}$ back into $\eqref{f:6.11}$, there holds
\begin{equation}\label{f:6.12}
\int_{S_t}|\nabla \bar{u}^{(1)}|^{2s'} dS
\lesssim \Big(\int_{\Omega}|\nabla f|^{\frac{2ds'}{2s'+d-1}}\Big)^{\frac{2s'+d-1}{d}}
\lesssim_d R_0^{2s'-1} \int_{\Omega}|\nabla f|^{2s'}
\end{equation}
for any $t\in[0,2\varepsilon]$. By co-area formula, we have
\begin{equation}\label{f:6.14}
 \int_{O_\varepsilon} |\nabla \bar{u}^{(1)}|^{2s'}
 \lesssim \varepsilon \max_{t\in[0,2\varepsilon]}\int_{S_t}|\nabla \bar{u}^{(1)}|^{2s'} dS
 \lesssim \varepsilon R_0^{2s'-1} \int_{\Omega}|\nabla f|^{2s'}.
\end{equation}

Moreover, by the definition of the non-tangential maximal function (see \cite[pp.207-209]{Shen18}), as well
as the corresponding estimates (see \cite[Theorem 7.2]{Hofmann-Mitrea-Taylor10}
for a regular SKT domain; and
\cite[Theorem 2.4]{Fabes-Jodeit-Riviere78} for a $C^1$ domain), we have
\begin{equation}\label{f:6.15}
\begin{aligned}
\int_{O_\varepsilon} |\nabla \bar{u}^{(2)}|^{2s'}
&\lesssim \varepsilon\int_{\partial\Omega} |(\nabla \bar{u}^{(2)})^{*}|^{2s'}dS \\
&\lesssim_{d,M_0,s} \varepsilon\int_{\partial\Omega} |\nabla_{\text{tan}} \bar{u}^{(1)}|^{2s'}dS
\lesssim^{\eqref{f:6.12}} \varepsilon R_0^{2s'-1} \int_{\Omega}|\nabla f|^{2s'},
\end{aligned}
\end{equation}
where we also recall that $\nabla_{\text{tan}}$ is the tangential derivative with respect to $\partial\Omega$\footnote{We
refer the reader to \cite[pp.213-214]{Shen18} for the concrete definition and properties of the tangential derivative.}.
As a result, combining the estimates $\eqref{f:6.13}$, $\eqref{f:6.14}$,  and  $\eqref{f:6.15}$
leads to the desired estimate $\eqref{f:6.10a}$.

We now turn to the estimate $\eqref{f:6.10b}$. To do so, we first have
\begin{equation}\label{f:6.17}
\int_{\Omega\setminus O_\varepsilon}
|\nabla^2\bar{u}^{(1)}|^{2s'}\delta^{2s'-1}(x)dx
\lesssim R_0^{2s'-1}\int_{\mathbb{R}^d} |\nabla^2\bar{u}^{(1)}|^{2s'}
\lesssim^{\eqref{f:6.16b}} R_0^{2s'-1}\int_{\Omega}|\nabla f|^{2s'}.
\end{equation}
Then, by using the interior Lipschitz estimates for the solution of the equations (2) in $\eqref{pde:22-3}$,
i.e.,
\begin{equation*}
 |\nabla^2 \bar{u}^{(2)}(x)|^{2s'}
 \lesssim \frac{1}{[\delta(x)]^{2s'}}\dashint_{B_{\delta(x)/4}(x)}|\nabla \bar{u}^{(2)}|^{2s'},
\end{equation*}
we obtain
\begin{equation}\label{f:6.18}
\begin{aligned}
\int_{\Omega\setminus O_\varepsilon}
|\nabla^2\bar{u}^{(2)}|^{2s'}\delta^{2s'-1}(x)dx
&\lesssim \int_{O_{\frac{1}{100}R_0}\setminus O_\varepsilon}\dashint_{B_{\delta(x)/4}(x)}
|\nabla \bar{u}^{(2)}|^{2s'} \delta^{-1}(x)dx
+ \frac{1}{R_0}\int_{\Omega}|\nabla\bar{u}^{(2)}|^{2s'} \\
&\lesssim \ln(R_0/\varepsilon)\int_{\partial\Omega}
|(\nabla\bar{u}^{(2)})^{*}|^{2s'} +
\frac{1}{R_0}\int_{\Omega}|\nabla\bar{u}^{(1)}|^{2s'}\\
&\lesssim^{\eqref{f:6.15},\eqref{f:6.16a},\eqref{f:6.12}}
\ln (R_0/\varepsilon) R_0^{2s'-1} \int_{\Omega}|\nabla f|^{2s'}.
\end{aligned}
\end{equation}
By the same token, combining the estimates $\eqref{f:6.17}$ and
$\eqref{f:6.18}$ offers us the desired estimate $\eqref{f:6.10b}$.

\medskip
\noindent
\textbf{Step 3.} Show the estimates $\eqref{f:6.a}$ and $\eqref{f:6.b}$.
By changing variables, we have
\begin{equation}\label{f:6.3}
\begin{aligned}
& \bigg\langle
 \bigg(\int_{\mathbb{R}^d}
 \Big(\dashint_{B_1(y)}\big|
 \check{h}_j(e_j+\nabla\phi_j^{*})\otimes (\nabla\tilde{w}_{\varepsilon}
 +\phi_i\nabla\hat{\varphi}_i)\big|\Big)^2 dy
 \bigg)^{p}\bigg\rangle^{\frac{1}{p}}\\
&= \varepsilon^{d} \bigg\langle
 \bigg(\int_{\mathbb{R}^d}
 \Big(\dashint_{B_\varepsilon(x)}\big|
 h_j(e_j+\nabla\phi_j^{*\varepsilon})\otimes
 (\underbrace{\nabla w_\varepsilon+\varepsilon\phi_i^\varepsilon
 \nabla \varphi_i}_{F_\varepsilon})\big|\Big)^2dx
 \bigg)^{p}\bigg\rangle^{\frac{1}{p}}.
\end{aligned}
\end{equation}
Then, by using H\"older's inequality, Minkowski's inequality
and Lemma $\ref{lemma:3}$ in the order, we derive that
\begin{equation}\label{f:6.19}
\begin{aligned}
&\bigg\langle
 \bigg(\int_{\mathbb{R}^d}
 \Big(\dashint_{B_\varepsilon(x)}\big|
 h_j(e_j+\nabla\phi_j^{*\varepsilon})\otimes F_\varepsilon\big|\Big)^2dx
 \bigg)^{p}\bigg\rangle^{\frac{1}{p}}\\
&\leq
 \int_{\mathbb{R}^d}
 \Big\langle
 \Big(\dashint_{U_\varepsilon(x)}|
 (e_j+\nabla\phi_j^{*\varepsilon})|^{2s'}\Big)^{\frac{sp}{s'}}
 \Big\rangle^{\frac{1}{sp}}
 \Big\langle\Big(\dashint_{U_\varepsilon(x)}|
 F_\varepsilon|^{2}\Big)^{s'p}\Big\rangle^{\frac{1}{s'p}}
 \Big(\dashint_{U_\varepsilon(x)}|
 h_j|^{2s}\Big)^{\frac{1}{s}} dx\\
&\lesssim^{\eqref{pri:2}}
 \int_{\Omega}
 \Big\langle\Big(\dashint_{U_\varepsilon(x)}|
 F_\varepsilon|^{2}\Big)^{s'p}\Big\rangle^{\frac{1}{s'p}}
 \Big(\dashint_{U_\varepsilon(x)}|
 h_j|^{2s}\Big)^{\frac{1}{s}} dx\\
&\lesssim
\bigg(\int_{\Omega}
 \Big\langle\Big(\dashint_{U_\varepsilon(x)}|
 F_\varepsilon|^{2}\Big)^{s'p}\Big\rangle^{\frac{1}{p}}
 \delta_{\sigma}^{2s'-1}(x)dx\bigg)^{\frac{1}{s'}}
\bigg(\int_{\Omega}
\dashint_{U_\varepsilon(x)}|
 h_j|^{2s}\delta_{\sigma}^{\frac{s}{s'}-2s}(x) dx\bigg)^{\frac{1}{s}},
\end{aligned}
\end{equation}
where $\delta_\sigma(x):=\text{dist}(x,\partial\Omega_0)$ and
$\Omega_0\supseteq \Omega$ satisfying $\partial\Omega_0\in C^2$ and   $\text{dist}(\cdot,\partial\Omega_0)=O(\varepsilon)$ on $\partial\Omega$
(similar to that given in Proposition $\ref{P:7}$).
Due to the Lipschitz continuity of distance function, for any $\alpha>0$, there exist $C_1,C_2$ depending only on $\alpha$ such that
\begin{equation}\label{f:5.13}
C_1\delta_{\sigma}^{\pm\alpha}(x)
\leq \dashint_{B_\varepsilon(x)}\delta_{\sigma}^{\pm\alpha}
\leq C_2\delta_{\sigma}^{\pm\alpha}(x)
\qquad\forall x\in\Omega.
\end{equation}
On the one hand, from Fubini's theorem and a weighted Hardy's inequality
(see e.g. \cite[Theorem 1.1]{Lehrback14}), it follows that
\begin{equation}\label{f:6.2}
\begin{aligned}
\bigg(\int_{\Omega}
\dashint_{U_\varepsilon(x)}|
 h|^{2s}
\delta_{\sigma}^{\frac{s}{s'}-2s}(x) dx\bigg)^{\frac{1}{s}}
&\lesssim_d^{\eqref{f:17}} \bigg(\int_{\Omega}
|h|^{2s} \dashint_{B_\varepsilon(x)}\delta_{\sigma}^{\frac{s}{s'}-2s} dx\bigg)^{\frac{1}{s}}
\lesssim^{\eqref{f:5.13}}_{d,s}
\bigg(\int_{\Omega}
|h|^{2s}\delta_{\sigma}^{\frac{s}{s'}-2s} dx\bigg)^{\frac{1}{s}}\\
&\lesssim \bigg(\int_{\Omega_0}
|h|^{2s} \delta_{\sigma}^{\frac{s}{s'}-2s} \bigg)^{\frac{1}{s}}
 \lesssim \Big(\int_{\Omega_0} |\nabla h|^{2s} \delta_{\sigma}^{\frac{s}{s'}}\Big)^{\frac{1}{s}}
 \lesssim R_0^{\frac{1}{s'}}\Big(\int_{\Omega} |\nabla h|^{2s} \Big)^{\frac{1}{s}},
\end{aligned}
\end{equation}
where we also note that $\text{supp}(h)\subset\Omega$.
On the other hand, by appealing to Proposition $\ref{P:7}$ we have
\begin{equation*}
\begin{aligned}
&\bigg(\int_{\Omega}
\Big\langle
\big(\dashint_{U_\varepsilon(x)}|\nabla w_\varepsilon|^2\big)^{s'p}\Big\rangle^{\frac{1}{p}}
\delta_{\sigma}^{2s'-1}dx\bigg)^{\frac{1}{s'}}\\
&\lesssim^{\eqref{pri:11B}}
\bigg(\int_{\Omega}
\Big\langle
\big(\dashint_{U_\varepsilon(x)}
|\varepsilon(\phi_i^\varepsilon,
\sigma_i^\varepsilon)\nabla\varphi_i|^2\big)^{s'\bar{p}}\Big\rangle^{\frac{1}{\bar{p}}}
\delta_{\sigma}^{2s'-1}dx\bigg)^{\frac{1}{s'}} + \Big(\int_{O_\varepsilon}
|\nabla\bar{u}|^{2s'}\delta_{\sigma}^{2s'-1}\Big)^{\frac{1}{s'}},
\end{aligned}
\end{equation*}
while the second term of $F_\varepsilon$ can be controlled
by the same right-hand side above. Hence,
we obtain
\begin{equation}\label{f:6.1}
\begin{aligned}
&\bigg(\int_{\Omega}
 \Big\langle\Big(\dashint_{U_\varepsilon(x)}|
 F_\varepsilon|^{2}\Big)^{s'p}\Big\rangle^{\frac{1}{p}}
 \delta_{\sigma}^{2s'-1}dx\bigg)^{\frac{1}{s'}}\\
&\lesssim^{\eqref{f:6.10a}}
\bigg(\int_{\Omega}
\Big\langle
\big(\dashint_{U_\varepsilon(x)}
|\varepsilon(\phi_i^\varepsilon,
\sigma_i^\varepsilon)\nabla\varphi_i|^2
\big)^{s'\bar{p}}\Big\rangle^{\frac{1}{\bar{p}}}
\delta_{\sigma}^{2s'-1}dx\bigg)^{\frac{1}{s'}}
+ \varepsilon^2 R_0^{2-\frac{1}{s'}}
\Big(\int_{\Omega}|\nabla f|^{2s'}\Big)^{\frac{1}{s'}}.
\end{aligned}
\end{equation}
Applying Minkowski' inequality, H\"older's inequality and Lemma $\ref{lemma:3}$ to the first term in
the second line of $\eqref{f:6.1}$, it can be estimated by
\begin{equation*}
\begin{aligned}
&\bigg(\int_{\Omega}
\Big(\dashint_{U_\varepsilon(x)}
\big\langle|\varepsilon(\phi_i^\varepsilon,
\sigma_i^\varepsilon)|^{2s'\bar{p}}\big\rangle^{\frac{1}{\bar{p}}}
|\nabla\varphi_i|^{2s'}\Big)
\delta_{\sigma}^{2s'-1}dx\bigg)^{\frac{1}{s'}}
\lesssim_d^{\eqref{f:17}}
\bigg(\int_{\Omega}
\big\langle|\varepsilon(\phi_i^\varepsilon,
\sigma_i^\varepsilon)|^{2s'\bar{p}}\big\rangle^{\frac{1}{\bar{p}}}
|\nabla\varphi_i|^{2s'}
\dashint_{B_\varepsilon(x)}\delta_{\sigma}^{2s'-1}dx\bigg)^{\frac{1}{s'}}\\
&\lesssim^{\eqref{pri:3},\eqref{f:5.13}} \varepsilon^2\mu_d^2(R_0/\varepsilon)
\Big(\int_{\Omega}
|\nabla (\eta\nabla\bar{u})|^{2s'}\delta_{\sigma}^{2s'-1}\Big)^{\frac{1}{s'}}
\lesssim \varepsilon^2\mu_d^2(R_0/\varepsilon)
\Big( \varepsilon^{-1}\int_{O_\varepsilon}|\nabla\bar{u}|^{2s'}
+\int_{\Omega\setminus O_\varepsilon}
|\nabla^2\bar{u}|^{2s'}\delta^{2s'-1}\Big)^{\frac{1}{s'}},
\end{aligned}
\end{equation*}
where we also employ the fact
$\delta\sim \delta_{\sigma}$ on $\Omega\setminus O_\varepsilon$ for the last inequality.
Thus, plugging this back into $\eqref{f:6.1}$ and then using the layer and co-layer type estimates $\eqref{f:6.10a}$ and $\eqref{f:6.10b}$, we derive that
\begin{equation*}
\begin{aligned}
\bigg(\int_{\Omega}
 \Big\langle\Big(\dashint_{U_\varepsilon(x)}|
 F_\varepsilon|^{2}\Big)^{s'p}\Big\rangle^{\frac{1}{p}}
 \delta_{\sigma}^{2s'-1}dx\bigg)^{\frac{1}{s'}}
\lesssim R_0^{2-\frac{1}{s'}} \varepsilon^2\mu_d^2(R_0/\varepsilon)\ln^{\frac{1}{s'}}(R_0/\varepsilon)
\Big(\int_{\Omega}|\nabla f|^{2s'}\Big)^{\frac{1}{s'}},
\end{aligned}
\end{equation*}
and this combining with the estimates $\eqref{f:6.2}$, $\eqref{f:6.19}$, and $\eqref{f:6.3}$ gives
the stated estimate $\eqref{f:6.a}$.

We now turn to the estimate $\eqref{f:6.b}$. In view of the notations in $\eqref{scaling1}$ and
$\eqref{scaling2}$ we have
\begin{equation*}\label{f:62}
\begin{aligned}
\bigg\langle
 \bigg(\int_{\Omega/\varepsilon}
 \Big(\dashint_{U_1(y)}\big|
 \nabla\phi^{*}_j \check{h}_j \otimes
 (\nabla\tilde{\bar{u}}-\hat{\varphi})\big|\Big)^2 dy
 \bigg)^{p}\bigg\rangle
= \varepsilon^{pd}\bigg\langle
 \bigg(\int_{\Omega}
 \Big(\dashint_{U_\varepsilon(x)}\big|
 (\nabla\phi^{*\varepsilon}_j h_j)\otimes (1-\eta)\nabla \bar{u}\big|\Big)^2dx
 \bigg)^{p}\bigg\rangle,
\end{aligned}
\end{equation*}
and it follows from Minkowski's inequality, H\"older's inequality, and Lemma $\ref{lemma:3}$ that
\begin{equation*}
\begin{aligned}
&\bigg\langle
 \bigg(\int_{\Omega}
 \Big(\dashint_{U_\varepsilon(x)}\big|
 (\nabla\phi^{*\varepsilon}_j h_j)\otimes (1-\eta)\nabla \bar{u}\big|\Big)^2dx
 \bigg)^{p}\bigg\rangle^{\frac{1}{p}}\\
&\leq
\int_{\Omega}
\dashint_{U_\varepsilon(x)}\big\langle|\nabla\phi_j^{*\varepsilon}|^{2p}
\big\rangle^{\frac{1}{p}}|h|^2|(1-\eta)\nabla\bar{u}|^2
\lesssim^{\eqref{pri:2}}_{\textcolor[rgb]{0.00,0.00,1.00}{\lambda,\lambda_1,\lambda_2,d,p}}
\Big(\int_{\Omega}
|h|^{2s}\delta^{\frac{s}{s'}-2s}\Big)^{\frac{1}{s}}
\Big(\int_{O_\varepsilon}
|\nabla\bar{u}|^{2s'}
\delta^{2s'-1}
\Big)^{\frac{1}{s'}}:=I_0.
\end{aligned}
\end{equation*}
By using the weighted Hardy's inequality again and the layer type estimate $\eqref{f:6.10a}$, the right-hand side above
is given by
\begin{equation}\label{f:63}
\begin{aligned}
I_{0}\lesssim_{\textcolor[rgb]{0.00,0.00,1.00}{d,\lambda,M_0,s}}
 R_0^{2-\frac{1}{s'}} \varepsilon^2\Big(\int_{\Omega}
|\nabla h|^{2s}\delta^{\frac{s}{s'}}\Big)^{\frac{1}{s}}
\Big(\int_{\Omega}
|\nabla f|^{2s'}
\Big)^{\frac{1}{s'}}
\lesssim
 \varepsilon^2\Big(\int_{\Omega}
|\nabla h|^{2s}\Big)^{\frac{1}{s}}
\Big(\int_{\Omega}
|R_0\nabla f|^{2s'}
\Big)^{\frac{1}{s'}},
\end{aligned}
\end{equation}
and this implies the desired estimate $\eqref{f:6.b}$.

\medskip
\noindent
\textbf{Step 4.} Show the estimate $\eqref{f:6.c}$ and $\eqref{f:6.d}$. We start from
establishing $\eqref{f:6.c}$.
By a rescaling argument,
the auxiliary equations $\eqref{pde:22}$ can be rewritten as
\begin{equation}\label{pde:22-1}
\left\{\begin{aligned}
-\nabla \cdot a^{*\varepsilon}\nabla v_\varepsilon^*
&=\varepsilon^{d+1}\nabla\cdot(a^{*\varepsilon}\phi_{j}^{*\varepsilon}
-\sigma_{j}^{*\varepsilon})\nabla h_j
&\quad&\text{in}~~\Omega;\\
v_\varepsilon^*&=0 &\quad&\text{on}~~\partial\Omega,
\end{aligned}\right.
\end{equation}
where $v_\varepsilon^{*}(x):=\varepsilon v^{*}(x/\varepsilon)=\varepsilon v^{*}(y)$ with $y\in\Omega/\varepsilon$, and we also have
\begin{equation*}
\begin{aligned}
\bigg\langle
 \bigg(\int_{\Omega/\varepsilon}
 \Big(\dashint_{U_1(y)}\big|
 (\nabla v^* + \phi^{*}_j\nabla\check{h}_j)
 &\otimes\nabla\tilde{u}_\varepsilon\big|\Big)^2 dy
 \bigg)^{p}\bigg\rangle^{\frac{1}{p}}\\
 & = \bigg\langle
 \bigg(\int_{\Omega}
 \Big(\dashint_{U_\varepsilon(x)}\big|
 (\varepsilon^{-\frac{d}{2}}\nabla v_\varepsilon^{*}+\varepsilon^{1+\frac{d}{2}}\phi_j^\varepsilon\nabla h_j)\otimes \nabla u_\varepsilon\big|\Big)^2dx
 \bigg)^{p}\bigg\rangle^{\frac{1}{p}}.
\end{aligned}
\end{equation*}
The right-hand side above can be further controlled by
\begin{equation}\label{f:6.8}
\begin{aligned}
\varepsilon^{-d}\underbrace{\bigg\langle
 \bigg(\int_{\Omega}
 \dashint_{U_\varepsilon(x)}\big|
 \nabla v_\varepsilon^{*}|^2
 \dashint_{U_\varepsilon(x)}
 |\nabla u_\varepsilon|^2 dx
 \bigg)^{p}\bigg\rangle^{\frac{1}{p}}}_{I_1}
 +\varepsilon^{2+d}
 \underbrace{\bigg\langle
 \bigg(\int_{\Omega}\dashint_{U_\varepsilon(x)}
 |\phi_j^\varepsilon\nabla h_j|^2
 \dashint_{U_\varepsilon(x)}|\nabla u_\varepsilon|^2dx\bigg)^p
 \bigg\rangle^{\frac{1}{p}}}_{I_2}.
\end{aligned}
\end{equation}

We first deal with $I_1$. It follows from Minkowski's inequality and H\"older's inequality that,
\begin{equation}\label{f:6.4}
\begin{aligned}
I_1
&\leq
 \int_{\Omega}
 \Big\langle\Big(
 \dashint_{U_\varepsilon(x)}\big|
 \nabla v_\varepsilon^{*}|^2\Big)^{sp}
 \Big\rangle^{\frac{1}{sp}}
 \Big\langle
 \Big(\dashint_{U_\varepsilon(x)}
 |\nabla u_\varepsilon|^2\Big)^{s'p}
 \Big\rangle^{\frac{1}{s'p}} dx\\
& \leq
\bigg(\int_{\Omega}
 \Big\langle\Big(
 \dashint_{U_\varepsilon(x)}\big|
 \nabla v_\varepsilon^{*}|^2\Big)^{sp}
 \Big\rangle^{\frac{1}{p}}dx\bigg)^{\frac{1}{s}}
 \bigg(\int_{\Omega}
 \Big\langle
 \Big(\dashint_{U_\varepsilon(x)}
 |\nabla u_\varepsilon|^2\Big)^{s'p}
 \Big\rangle^{\frac{1}{p}}dx\bigg)^{\frac{1}{s'}}.
\end{aligned}
\end{equation}
In view of the equations $\eqref{pde:22-1}$, by applying Proposition $\ref{P:7}$ and
Lemma $\ref{lemma:3}$, we have
\begin{equation}\label{f:6.5}
\begin{aligned}
\bigg(\int_{\Omega}
 \Big\langle\Big(
 \dashint_{U_\varepsilon(x)}\big|
 \nabla v_\varepsilon^{*}|^2\Big)^{sp}
 \Big\rangle^{\frac{1}{p}}dx\bigg)^{\frac{1}{s}}
&\lesssim^{\eqref{pri:26}} \varepsilon^{2d+2}
\bigg(\int_{\Omega}
 \Big\langle\Big(
 \dashint_{U_\varepsilon(x)}\big|
 (\phi_j^{*\varepsilon},\sigma_j^{*\varepsilon})\nabla h_j|^2\Big)^{s\bar p}
 \Big\rangle^{\frac{1}{\bar p}}dx\bigg)^{\frac{1}{s}}\\
& \lesssim
\varepsilon^{2d+2}
\bigg(\int_{\Omega}
 \Big(
 \dashint_{U_\varepsilon(x)}\big\langle\big|
 (\phi_j^{*\varepsilon},\sigma_j^{*\varepsilon})\big|^{2s\bar{p}}
 \big\rangle^{\frac{1}{s\bar{p}}} |\nabla h_j|^2
 \Big)^{s}dx\bigg)^{\frac{1}{s}}\\
& \lesssim^{\eqref{pri:3}}
\varepsilon^{2d+2}
\mu_d^2(R_0/\varepsilon)
\Big(\int_{\Omega}
|\nabla h|^{2s}dx\Big)^{\frac{1}{s}},
\end{aligned}
\end{equation}
and
\begin{equation}\label{f:6.6}
\begin{aligned}
\bigg(\int_{\Omega}
 \Big\langle
 \Big(\dashint_{U_\varepsilon(x)}
 |\nabla u_\varepsilon|^2\Big)^{s'p}
 \Big\rangle^{\frac{1}{p}}dx\bigg)^{\frac{1}{s'}}
\lesssim^{\eqref{pri:26}}
\bigg(\int_{\Omega}
 \Big\langle
 \Big(\dashint_{U_\varepsilon(x)}
 |f|^2\Big)^{s'\bar{p}}
 \Big\rangle^{\frac{1}{\bar{p}}}dx\bigg)^{\frac{1}{s'}}
\lesssim
\Big(\int_{\Omega}
 |f|^{2s'}\Big)^{\frac{1}{s'}}.
\end{aligned}
\end{equation}
Combining the estimates $\eqref{f:6.4}$, $\eqref{f:6.5}$, and $\eqref{f:6.6}$, it follows that
\begin{equation}\label{f:6.7}
\varepsilon^{-d} I_1 \lesssim \varepsilon^{d+2}
\mu_d^2(R_0/\varepsilon)
\Big(\int_{\Omega}|\nabla h|^{2s}\Big)^{\frac{1}{s}}
\Big(\int_{\Omega}
 |f|^{2s'}\Big)^{\frac{1}{s'}}.
\end{equation}

We now proceed to handle $I_2$, a similar computation as given for $I_1$ leads to
\begin{equation*}
\begin{aligned}
I_2
&\leq
\int_{\Omega}
 \Big\langle\Big(
 \dashint_{U_\varepsilon(x)}\big|
 \phi_j^{*\varepsilon}\nabla h_j|^2\Big)^{sp}
 \Big\rangle^{\frac{1}{sp}}
 \Big\langle
 \Big(\dashint_{U_\varepsilon(x)}
 |\nabla u_\varepsilon|^2\Big)^{s'p}
 \Big\rangle^{\frac{1}{s'p}} dx\\
& \leq
\bigg(\int_{\Omega}
 \Big\langle\Big(
 \dashint_{U_\varepsilon(x)}\big|
 \phi_j^{*\varepsilon}\nabla h_j|^2\Big)^{sp}
 \Big\rangle^{\frac{1}{p}}dx\bigg)^{\frac{1}{s}}
 \bigg(\int_{\Omega}
 \Big\langle
 \Big(\dashint_{U_\varepsilon(x)}
 |\nabla u_\varepsilon|^2\Big)^{s'p}
 \Big\rangle^{\frac{1}{p}}dx\bigg)^{\frac{1}{s'}} \\
& \lesssim^{\eqref{f:6.6}} \mu_d^2(R_0/\varepsilon)
\Big(\int_{\Omega}|\nabla h|^{2s}\Big)^{\frac{1}{s}}
\Big(\int_{\Omega}
 |f|^{2s'}\Big)^{\frac{1}{s'}}.
\end{aligned}
\end{equation*}
As a result, plugging this and $\eqref{f:6.7}$ back into the estimate $\eqref{f:6.8}$,
we can derive the stated estimate $\eqref{f:6.c}$.

Finally, we turn to the estimate $\eqref{f:6.d}$.
From H\"older's inequality, Minkowski's inequality, Lemma $\ref{lemma:3}$, and a triangle inequality, it follows that
\begin{equation}\label{f:6.9}
\begin{aligned}
I_3 &:= \bigg\langle
 \bigg(\int_{\mathbb{R}^d}
 \Big(\dashint_{B_1(y)}\big|
 \big(\nabla z_j^*+\phi_i^{*}\nabla(\check{h}_i\hat{\varphi}_j)\big)\otimes
 (\nabla\phi_j+e_j)\big|\Big)^2 dy
 \bigg)^{p}\bigg\rangle^{\frac{1}{p}}\\
&\leq
\int_{\mathbb{R}^d}
\Big\langle \big(\dashint_{B_1(y)}\big|
 \big(\nabla z_j^*+\phi_i^{*}\nabla(\check{h}_i\hat{\varphi}_j)\big)\big|^2\big)^{ps}
\Big\rangle^{\frac{1}{ps}}
 \Big\langle\big(\dashint_{B_1(y)}|\nabla\phi_j+e_j|^2\big)^{ps'}
 \Big\rangle^{\frac{1}{ps'}} dy \\
&\lesssim^{\eqref{pri:2}} \int_{\mathbb{R}^d}
\Big\langle \big(\dashint_{B_1(y)}\big|
 \nabla z^*\big|^2\big)^{ps}
\Big\rangle^{\frac{1}{ps}}dy
+
\int_{\mathbb{R}^d}
\Big\langle \big(\dashint_{B_1(y)}\big|\phi_i^{*}\nabla(\check{h}_i\hat{\varphi})\big|^2\big)^{ps}
\Big\rangle^{\frac{1}{ps}}dy.
\end{aligned}
\end{equation}
By the annealed Calder\'on-Zygmund estimate (see \cite[Theorem 6.1]{Duerinckx-Otto20}), for each $j=1,\cdots,d$, we have
\begin{equation*}\label{}
\begin{aligned}
\int_{\mathbb{R}^d}
\Big\langle \big(\dashint_{B_1(y)}\big|
 \nabla z_j^*\big|^2\big)^{ps}
\Big\rangle^{\frac{1}{ps}}dy
\lesssim
\int_{\mathbb{R}^d}
\Big\langle
\Big(\dashint_{B_1(y)}|(\phi_i^{*},\sigma_i^{*})\nabla
(\check{h}_i\hat{\varphi}_j)|^2\Big)^{\bar{p}s}\Big\rangle^{\frac{1}{s\bar{p}}}
dy.
\end{aligned}
\end{equation*}
This together with $\eqref{f:6.9}$ provides us with
\begin{equation*}
\begin{aligned}
I_3
&\lesssim
\int_{\mathbb{R}^d}
\Big\langle
\Big(\dashint_{B_1(y)}|(\phi_i^{*},\sigma_i^{*})\nabla
(\check{h}_i\hat{\varphi})|^2\Big)^{\bar{p}s}\Big\rangle^{\frac{1}{s\bar{p}}}
dy
= \varepsilon^{d+2}
\int_{\mathbb{R}^d}
\Big\langle
\Big(\dashint_{B_\varepsilon(x)}|(\phi_i^{*\varepsilon},
\sigma_i^{*\varepsilon})\nabla
(h_i\varphi)|^2\Big)^{\bar{p}s}\Big\rangle^{\frac{1}{s\bar{p}}}dx\\
& \lesssim^{\eqref{pri:3}}
\varepsilon^{d+2}\mu_d^2(R_0/\varepsilon)
\bigg\{\Big(\int_{\Omega}|
\nabla h|^{2s}\Big)^{\frac{1}{s}}
\Big(\int_{\Omega}|
\varphi|^{2s'}\Big)^{\frac{1}{s'}}
+ \Big(\int_{\Omega}
|h|^{2s}\delta^{\frac{s}{s'}-2s}\Big)^{\frac{1}{s}}
\Big(\int_{\Omega}
|\nabla\varphi|^{2s'}\delta^{2s'-1}\Big)^{\frac{1}{s'}}\bigg\}.
\end{aligned}
\end{equation*}
Applying $W^{1,p}$ estimates, the estimates
$\eqref{f:63}$ and $\eqref{f:6.10b}$ to the right-hand side above,
we further derive that
\begin{equation*}
\begin{aligned}
I_3\lesssim \varepsilon^{d+2}\mu_d^2(R_0/\varepsilon)
\bigg\{\Big(\int_{\Omega}|
\nabla h|^{2s}\Big)^{\frac{1}{s}}
\Big(\int_{\Omega}|
f|^{2s'}\Big)^{\frac{1}{s'}}
+ \ln^{\frac{1}{s'}}(R_0/\varepsilon)\Big(\int_{\Omega}
|\nabla h|^{2s}\Big)^{\frac{1}{s}}
\Big(\int_{\Omega}
|R_0\nabla f|^{2s'}\Big)^{\frac{1}{s'}}\bigg\},
\end{aligned}
\end{equation*}
which proves the desired estimate $\eqref{f:6.d}$,
and this ends the whole proof.
\end{proof}

\medskip
\noindent
\textbf{Proof of Theorem $\ref{thm:2}$.}
Theorem $\ref{thm:2}$ follows from Propositions $\ref{P:8}$ and $\ref{P:9}$.
\qed

\section{Appendix}\label{section:5}

\noindent
 This section mainly contains three
 lemmas and a brief introduction of regular SKT domains.
 The first lemma states some properties of the $A_q$ class
 (see Definition $\ref{def:weight}$);
 The second one is an
 improved Meyer's estimate by using a convexity argument
 (see Lemma $\ref{lemma:15}$);
 The last one is related to the reverse H\"older's inequality for the elliptic operator with constant coefficients under different boundary conditions (see Lemma $\ref{lemma:16}$). In fact,
 we provide a new scheme to establish the reverse
 H\"older's inequality in the case of the regular SKT (or $C^1$) domain,
 and the related issues are of separate concern.

\begin{lemma}[\cite{Duoandikoetxea01}]\label{weight}
Let $\omega\in A_p$, $1\leq p<\infty$. Then, we have the following properties:
\begin{subequations}
\begin{align}
& [\tilde{\omega}]_{A_p} = [\omega]_{A_p},
~\text{where}~\tilde{\omega} = \omega(\varepsilon\cdot);  \label{f:18-0}\\
& \omega(Q)\big(\frac{|S|}{|Q|}\big)^{p}\leq [\omega]_{A_p}\omega(S)
\quad \forall S\subset Q\subset\mathbb{R}^d. \label{f:18}
\end{align}
\end{subequations}
Also, there exists $\epsilon>0$, depending only on $p$ and $[\omega]_{A_p}$, such that
for any $Q$ we have
\begin{subequations}
\begin{align}
&\Big(\dashint_{Q}\omega^{1+\epsilon}\Big)^{\frac{1}{1+\epsilon}}
\lesssim_{d,p,[\omega]_{A_p}}\dashint_{Q}\omega;\label{pri:R-1} \\
& \omega(S)\lesssim_{d,p,[\omega]_{A_p}}  \big(\frac{|S|}{|Q|}\big)^{\delta_0}\omega(Q),
\qquad \forall S\subset Q,
\label{f:19}
\end{align}
\end{subequations}
where $\delta_0 = \epsilon/(1+\epsilon)$. Moreover, given any $1<p<\infty$ and $c_0>1$
there exist positive constants $C=C(d,p,c_0)$ and $\gamma=\gamma(d,p,c_0)$ such that
for all $\omega\in A_p$ we have
\begin{equation}\label{f:19-1}
  [\omega]_{A_p}\leq c_0
\quad \Rightarrow \quad
  [\omega]_{A_{p-\gamma}}\leq C.
\end{equation}
\end{lemma}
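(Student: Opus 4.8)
The statement collects five standard facts about Muckenhoupt weights, and the plan is to derive each of them from the defining $A_p$ inequality \eqref{f:w-1}--\eqref{f:w-2}, with the reverse H\"older estimate \eqref{pri:R-1} playing the role of the central mechanism.

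First I would dispose of the two elementary items. The scaling identity \eqref{f:18-0} follows by the change of variables $x=\varepsilon y$: every ball is carried to a ball, the averages $\dashint_B\omega(\varepsilon\,\cdot)$ and $\dashint_B(\omega(\varepsilon\,\cdot))^{-1/(p-1)}$ coincide with the corresponding averages of $\omega$ over the image ball, so the supremum over balls defining $[\,\cdot\,]_{A_p}$ is unchanged. For \eqref{f:18}, given $S\subset Q$ I would write $|S|=\int_S\omega^{1/p}\omega^{-1/p}$ and apply H\"older's inequality with exponents $p$ and $p'$ to get $|S|^p\le\omega(S)\big(\int_Q\omega^{-1/(p-1)}\big)^{p-1}$; dividing by $|Q|^p$ and invoking \eqref{f:w-1} yields $\big(|S|/|Q|\big)^p\le[\omega]_{A_p}\,\omega(S)/\omega(Q)$, which is \eqref{f:18} (the case $p=1$ being immediate from \eqref{f:w-2}).

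The substance of the lemma, and the step I expect to be the real obstacle, is the reverse H\"older inequality \eqref{pri:R-1}, for which I would run the classical Coifman--Fefferman stopping-time argument. A first consequence of \eqref{f:18} (applied with $S=Q\setminus E$) is the quantitative dichotomy: there are $\alpha,\beta\in(0,1)$, depending only on $d,p,[\omega]_{A_p}$, such that $|E|\le\alpha|Q|$ forces $\omega(E)\le\beta\,\omega(Q)$. Now fix a cube $Q_0$ normalized so that $\dashint_{Q_0}\omega=1$ and perform Calder\'on--Zygmund decompositions of $Q_0$ at the geometric sequence of heights $\lambda_k=b^k$ with $b\sim2^{d+1}$. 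Since each maximal subcube at height $\lambda_{k+1}$ sits inside a maximal subcube at height $\lambda_k$ and occupies at most a fraction $2^d/b\le1/2$ of it, the dichotomy gives the geometric decay $\omega(\{M_{Q_0}\omega>\lambda_{k+1}\})\le\beta\,\omega(\{M_{Q_0}\omega>\lambda_k\})$ for the dyadic maximal function $M_{Q_0}$ relative to $Q_0$. A layer-cake summation over $k$, together with the weak-type bound $|\{M_{Q_0}\omega>\lambda_k\}|\le\lambda_k^{-1}\omega(\{M_{Q_0}\omega>\lambda_k\})$ and a choice of $\epsilon>0$ small enough that $b^\epsilon\beta<1$, shows $\dashint_{Q_0}(M_{Q_0}\omega)^{1+\epsilon}\lesssim1=(\dashint_{Q_0}\omega)^{1+\epsilon}$; since $\omega\le M_{Q_0}\omega$ a.e.\ on $Q_0$ by Lebesgue differentiation, \eqref{pri:R-1} follows. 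All the quantitative dependence of $\epsilon$ and of the multiplicative constant on $d,p,[\omega]_{A_p}$ is born here, and this is the only place a genuine real-variable construction is needed.

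Finally, \eqref{f:19} and \eqref{f:19-1} fall out of \eqref{pri:R-1}. For \eqref{f:19}, H\"older's inequality with exponents $1+\epsilon$ and its conjugate gives, for $S\subset Q$, $\omega(S)\le|S|^{\epsilon/(1+\epsilon)}\big(\int_Q\omega^{1+\epsilon}\big)^{1/(1+\epsilon)}\lesssim\big(|S|/|Q|\big)^{\delta_0}\omega(Q)$ with $\delta_0=\epsilon/(1+\epsilon)$. For the openness of the classes \eqref{f:19-1}, I would use that $\omega\in A_p$ is equivalent to $\sigma:=\omega^{-1/(p-1)}\in A_{p'}$ with $[\sigma]_{A_{p'}}$ controlled by $[\omega]_{A_p}$; applying \eqref{pri:R-1} to $\sigma$ produces a reverse H\"older exponent $1+\epsilon'$ depending only on $d,p,c_0$, and choosing $\gamma=\gamma(d,p,c_0)>0$ with $\gamma\le\epsilon'(p-1)/(1+\epsilon')$ makes $t:=(p-1)/(p-\gamma-1)\le1+\epsilon'$, whence by Jensen's inequality $\big(\dashint_Q\sigma^{t}\big)^{1/t}\le\big(\dashint_Q\sigma^{1+\epsilon'}\big)^{1/(1+\epsilon')}\lesssim\dashint_Q\sigma$; raising to the power $p-1$ and multiplying by $\dashint_Q\omega$ converts the $A_p$ bound for $\omega$ into the $A_{p-\gamma}$ bound, with constant $C=C(d,p,c_0)$.
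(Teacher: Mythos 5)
The paper gives no proof of this lemma; it simply cites Duoandikoetxea's textbook, and all five items are indeed standard facts found there. Your reconstruction is correct and follows the same classical route: change of variables for \eqref{f:18-0}, H\"older against the $A_p$ condition for \eqref{f:18}, the Coifman--Fefferman stopping-time argument for the reverse H\"older inequality \eqref{pri:R-1}, H\"older with exponents $1+\epsilon$ and $(1+\epsilon)'$ for \eqref{f:19}, and duality $\omega\in A_p\Leftrightarrow\omega^{-1/(p-1)}\in A_{p'}$ plus the reverse H\"older inequality for the openness \eqref{f:19-1}. One small imprecision: in the stopping-time argument you fix $b\sim 2^{d+1}$ so that the density ratio is $\le 1/2$, but the dichotomy you invoke requires $|E|\le\alpha|Q|$, where $\alpha$ depends on $[\omega]_{A_p}$ and may be smaller than $1/2$; the correct choice is $b>2^d/\alpha$, which is still determined by $d,p,[\omega]_{A_p}$, so the conclusion and the stated dependence of $\epsilon$ are unaffected.
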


\begin{lemma}[improved Meyer's inequality]\label{lemma:15}
Let $\Omega\subset\mathbb{R}^d$ be a bounded Lipschitz domain
with $d\geq 2$, and
$\varepsilon\in(0,1]$ and $R>0$.
Suppose that the (admissible) coefficient satisfies $\eqref{a:1}$, and
$u_\varepsilon$ is the solution of $\eqref{pde:1}$.
Then, for any $0<p-2\ll 1$ and  $p_0>0$, there holds
\begin{equation}\label{pri:10}
\bigg(\dashint_{\textcolor[rgb]{1.00,0.00,0.00}{D_{R}}}
|\nabla u_\varepsilon|^p\bigg)^{\frac{1}{p}}
\lesssim_{\textcolor[rgb]{0.00,0.00,1.00}{\lambda,d,p,p_0,M_0}}
\Big(\dashint_{\textcolor[rgb]{1.00,0.00,0.00}{D_{2R}}}
|\nabla u_\varepsilon|^{p_0} \Big)^{\frac{1}{p_0}}.
\end{equation}
\end{lemma}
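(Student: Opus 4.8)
\textbf{Proof proposal for Lemma \ref{lemma:15}.}
The plan is to reduce the improved Meyer's estimate to the classical Meyer's (Gehring-type) estimate by a convexity/interpolation device, so that the exponent $p_0$ on the right-hand side can be pushed below any fixed threshold. First I would recall the standard self-improving reverse H\"older inequality for solutions of $\eqref{pde:1}$: using Caccioppoli's inequality together with the Sobolev-Poincar\'e inequality on the Lipschitz domain $\Omega$ (handling both interior balls and balls centered on $\Delta_{2R}$ with the homogeneous Dirichlet condition), one obtains a reverse H\"older inequality
\begin{equation*}
\Big(\dashint_{D_r(x)}|\nabla u_\varepsilon|^{2}\Big)^{\frac{1}{2}}
\lesssim \Big(\dashint_{D_{2r}(x)}|\nabla u_\varepsilon|^{q_0}\Big)^{\frac{1}{q_0}}
\end{equation*}
for some $q_0=q_0(d)<2$, uniformly in $\varepsilon$ (the coefficient field $a^\varepsilon$ only enters through $\lambda$, and the scaling $x\mapsto x/\varepsilon$ is irrelevant since $\eqref{a:1}$ is scale-invariant). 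Gehring's lemma then upgrades this to: there is $p_1=p_1(d,\lambda,M_0)>2$ such that $\nabla u_\varepsilon\in L^{p}_{\mathrm{loc}}$ for $2\le p<p_1$, with
\begin{equation*}
\Big(\dashint_{D_R}|\nabla u_\varepsilon|^{p}\Big)^{\frac{1}{p}}
\lesssim \Big(\dashint_{D_{2R}}|\nabla u_\varepsilon|^{2}\Big)^{\frac{1}{2}},
\end{equation*}
which is exactly $\eqref{pri:10}$ in the special case $p_0=2$.

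The remaining task is to replace the exponent $2$ on the right by an arbitrary $p_0>0$. For $p_0\ge 2$ this is immediate from H\"older's inequality, so assume $0<p_0<2$. Here I would use the convexity trick: for the fixed $p$ with $0<p-2\ll1$, choose $\beta\in(0,1)$ and an exponent $\tilde p\in(2,p_1)$ with $p$ lying between $p_0$ and $\tilde p$ on a logarithmic scale, i.e. $\frac{1}{p}=\frac{\beta}{p_0}+\frac{1-\beta}{\tilde p}$, wait --- more precisely interpolate $L^p$ between $L^{p_0}$ and $L^{\tilde p}$ so that $\|\nabla u_\varepsilon\|_{L^p}\le \|\nabla u_\varepsilon\|_{L^{p_0}}^{\beta}\|\nabla u_\varepsilon\|_{L^{\tilde p}}^{1-\beta}$ on $D_R$. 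Applying the $p_0=2$-form of $\eqref{pri:10}$ (with $\tilde p$ in place of $p$, legitimate since $\tilde p<p_1$) to control $\|\nabla u_\varepsilon\|_{L^{\tilde p}(D_R)}$ by $\|\nabla u_\varepsilon\|_{L^2(D_{2R})}$, and then by $\|\nabla u_\varepsilon\|_{L^{p_0}(D_{2R})}$ via a further reverse H\"older step (again Gehring, now starting the self-improvement from exponent $p_0$ if $p_0>q_0$, or composing two reverse H\"older inequalities if $p_0\le q_0$), one absorbs everything into $\big(\dashint_{D_{2R}}|\nabla u_\varepsilon|^{p_0}\big)^{1/p_0}$ after a Young-inequality absorption of the $\|\nabla u_\varepsilon\|_{L^p}^{\beta}$ factor. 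A rescaling reduces to $R=1$, keeping constants $\varepsilon$-independent.

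The main obstacle I anticipate is bookkeeping the boundary contribution correctly and uniformly in the Lipschitz character $M_0$: near $\Delta_{2R}$ the Sobolev-Poincar\'e inequality used in the Caccioppoli argument must exploit that $u_\varepsilon$ vanishes on the boundary patch, and one needs the Gehring iteration to be stable as balls cross from the interior into the boundary layer. This is standard but requires care to ensure the final exponent $p_1$ depends only on $d,\lambda,M_0$ and not on $R$ or $\varepsilon$. The convexity step itself is routine once the two endpoint estimates (the $p_0=2$ case of $\eqref{pri:10}$ and a downward reverse H\"older inequality to small exponents) are in hand; I would cite \cite{Giaquinta-Martinazzi12} for the Gehring lemma and the interior/boundary Caccioppoli estimates, and present the interpolation-plus-absorption as the only genuinely new (if elementary) ingredient.
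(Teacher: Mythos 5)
Your overall roadmap is close to the paper's: Caccioppoli plus Sobolev yields a reverse H\"older inequality, Gehring upgrades it to some $p>2$, and a convexity argument pushes the right-hand exponent down to arbitrary $p_0>0$. The first half of this, giving $\eqref{pri:10}$ with $p_0=2$, is indeed how the paper starts. But your convexity step as written does not close, and the gap is not a bookkeeping issue about boundary patches --- it is a genuine structural hole in the absorption argument.

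After interpolating $\|\nabla u_\varepsilon\|_{L^p(D_R)} \le \|\nabla u_\varepsilon\|_{L^{p_0}(D_R)}^{\beta}\,\|\nabla u_\varepsilon\|_{L^{\tilde p}(D_R)}^{1-\beta}$ and invoking Meyer's for $\|\nabla u_\varepsilon\|_{L^{\tilde p}(D_R)} \lesssim \|\nabla u_\varepsilon\|_{L^2(D_{2R})}$, you are left needing $\|\nabla u_\varepsilon\|_{L^2(D_{2R})} \lesssim \|\nabla u_\varepsilon\|_{L^{p_0}(D_{4R})}$. That is exactly the statement under proof (the case $p=2$), so invoking ``a further reverse H\"older step'' here is circular. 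Gehring's lemma is a one-sided upward self-improvement: starting it from exponent $p_0$, or composing two reverse H\"older inequalities, gains you a fixed multiplicative factor $\sigma>1$ on the exponent scale, not the arbitrarily large factor $2/p_0$ required when $p_0$ is small. There is no way to reach arbitrarily small $p_0$ by finitely many such compositions with constants uniform in $p_0$.

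What actually closes the argument, and what the paper does, is to make Meyer's estimate quantitative in the ratio of radii before interpolating. After rescaling to $R=1$, a covering argument upgrades $\eqref{f:7.1}$ to
\begin{equation*}
\Big(\int_{D_s}|\nabla u_\varepsilon|^p\Big)^{\frac{1}{p}}
\lesssim \frac{1}{(t-s)^{d(\frac12-\frac1p)}}\Big(\int_{D_t}|\nabla u_\varepsilon|^2\Big)^{\frac12},
\qquad 0<s<t\le 1.
\end{equation*}
One then interpolates $L^2$ between $L^{p_0}$ and $L^p$ --- crucially, not $L^p$ between $L^{p_0}$ and $L^{\tilde p}$ as you do --- so that the factor needing absorption on the right is $\big(\int_{D_t}|\nabla u_\varepsilon|^p\big)^{\theta/p}$, a power of the quantity being controlled. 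Young's inequality splits this off as $\tfrac12\big(\int_{D_t}|\nabla u_\varepsilon|^p\big)^{1/p}$ plus a constant times $\|\nabla u_\varepsilon\|_{L^{p_0}(D_1)}$ with a new negative power of $(t-s)$, and the standard iteration lemma over a shrinking sequence of radii then absorbs the $L^p$ term. Without the explicit $(t-s)^{-\alpha}$ factor there is no room to iterate, and without the iteration the absorption fails. This Fefferman--Stein convexity argument (cited in the paper to \cite[pp.173]{Fefferman-Stein72}) is precisely the ingredient your proposal replaces with a circular citation. Reorganize the interpolation to pair $L^2$ with $L^{p_0}$ and $L^p$, keep the radii dependence explicit, and run the iteration lemma; then your argument becomes the paper's.
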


\begin{proof}
Based upon the uniform ellipticity conditions $\eqref{a:1}$,
we start from Caccioppoli's inequality, i.e.,
\begin{equation}\label{f:7.7}
\bigg(\dashint_{\textcolor[rgb]{1.00,0.00,0.00}{D_{R}}}
|\nabla u_\varepsilon|^2\bigg)^{\frac{1}{2}}
\lesssim_{\lambda,d,\textcolor[rgb]{0.00,0.00,1.00}{M_0}}\frac{1}{R}
\Big(\dashint_{\textcolor[rgb]{1.00,0.00,0.00}{D_{2R}}}
|u_\varepsilon|^{2} \Big)^{\frac{1}{2}},
\end{equation}
and we omit the details (see e.g. \cite[Chapter 4]{Giaquinta-Martinazzi12}). Then, Meyer's estimate comes from reverse H\"older's inequality
(see e.g. \cite[Theorem 6.38]{Giaquinta-Martinazzi12}), i.e., there exists
$0<\theta\ll 1$ depending only on $d$ and the character of $\Omega$ such that for any $2<p<2+\theta$
we have
\begin{equation}\label{f:7.1}
\bigg(\dashint_{\textcolor[rgb]{1.00,0.00,0.00}{D_{R}}}
|\nabla u_\varepsilon|^p\bigg)^{\frac{1}{p}}
\lesssim_{\lambda,d,p,\textcolor[rgb]{0.00,0.00,1.00}{M_0}}
\Big(\dashint_{\textcolor[rgb]{1.00,0.00,0.00}{D_{2R}}}
|\nabla u_\varepsilon|^{2} \Big)^{\frac{1}{2}}.
\end{equation}
Moreover, with the help of a covering argument we can derive that
for any $0<s<t<1$ there holds 
\begin{equation*}
\Big(\int_{D_{s}}|\nabla u_\varepsilon|^p\Big)^{\frac{1}{p}}
\lesssim^{\eqref{f:7.1}} \frac{1}{(t-s)^{d(\frac{1}{2}-\frac{1}{p})}}\Big(
\int_{D_{t}}|\nabla u_\varepsilon|^2\Big)^{\frac{1}{2}}.
\end{equation*}
Thus, it follows from a convexity argument (\cite[pp.173]{Fefferman-Stein72}) that for any $p_0>0$
we have
\begin{equation*}
\Big(
\int_{D_{t_0}}|\nabla u_\varepsilon|^2\Big)^{\frac{1}{2}}
\lesssim_{\textcolor[rgb]{0.00,0.00,1.00}{\lambda,d,M_0,p_0}} \Big(
\int_{D_{1}}|\nabla u_\varepsilon|^{p_0}\Big)^{\frac{1}{p_0}},
\end{equation*}
where $t_0\in(0,1)$. This together with $\eqref{f:7.1}$ will lead to
the stated estimate $\eqref{pri:10}$ after a rescaling argument.
\end{proof}

\begin{remark}\label{remark:6}
\emph{Concerned with the Neumann boundary condition,
one can similarly derive the estimate $\eqref{pri:10}$.
To see this, the right-hand side of $\eqref{f:7.7}$ is merely replaced by $\frac{1}{R}\inf_{c\in\mathbb{R}}
\big(\dashint_{B_{2R}\cap\Omega}
|u_\varepsilon-c|^{2}\big)^{\frac{1}{2}}$, and
the rest of the proof is the same.
Also, the argument given for
$\eqref{pri:10}$ leads to
\begin{equation}\label{pri:10-1}
\Big(\dashint_{B_{R}}
|\nabla v_\varepsilon|^p\Big)^{\frac{1}{p}}
\lesssim_{\lambda,d,p,p_0}
\Big(\dashint_{B_{2R}}
|\nabla v_\varepsilon|^{p_0} \Big)^{\frac{1}{p_0}},
\end{equation}
provided $\nabla\cdot a^\varepsilon\nabla v_\varepsilon = 0$
in $B_{2R}$. Let $\alpha\in(0,1)$. By a covering argument (see
e.g. Subsection $\ref{subsection:3.0}$), one can derive
\begin{subequations}
\begin{align}
&\Big(\dashint_{B_{R}}
|\nabla v_\varepsilon|^p\Big)^{\frac{1}{p}}
\lesssim_{\lambda,d,p,p_0,\alpha}^{\eqref{pri:10-1}}
\Big(\dashint_{B_{(1+\alpha)R}}
|\nabla v_\varepsilon|^{p_0} \Big)^{\frac{1}{p_0}};
\label{pri:10-a}\\
&\Big(\dashint_{D_R}|\nabla u_\varepsilon|^p\Big)^{\frac{1}{p}}
\lesssim_{\lambda,d,p,p_0,M_0,\alpha}^{\eqref{pri:10},\eqref{pri:10-1}}
\Big(\dashint_{D_{(1+\alpha)R}}
|\nabla u_\varepsilon|^{p_0}\Big)^{\frac{1}{p_0}}.
\label{pri:10-b}
\end{align}
\end{subequations}}
\end{remark}


We now define the regular SKT domain. It is also known as
the chord-arc domains with vanishing constant.
The notion of chord-arc domains
originated in dimension two
(see \cite[Definition 2.6]{Capogna-Kenig-Lanzani05}),
while the higher dimensional counterpart
was initially developed by S. Semmens \cite{Semmes91}
and further investigated by C. Kenig and T. Toro (see e.g. \cite{Kenig-Toro99}),
who revealed how the regularity of the Poisson kernel associated with
$\Omega$ determines the geometry of its boundary, and that's relevant to
the so-called chord-arc domains with vanishing constant.
To the authors' best knowledge, the chord-arc domains with vanishing constant
were first introduced as ``regular SKT'' domains by S. Hofmann,
M. Mitrea and M. Taylor
\cite{Hofmann-Mitrea-Taylor10} to distinguish them from
their two-dimensional counterpart, where ``SKT''
stands for Semmes, Kenig, and Toro, who contributed to its crucial development.
The following definition is mainly taken from \cite{Hofmann-Mitrea-Taylor10},
originally from \cite{Capogna-Kenig-Lanzani05}.

We start from the definition of the Hausdorff distance
(denoted by $\text{D}[\cdot,\cdot]$), namely: For $A,B\subset\mathbb{R}^d$,
\begin{equation}\label{H-dist}
\text{D}[A,B]:=\sup\big\{\text{dist}(a, B) : a\in A\big\}
  + \sup\big\{\text{dist}(A,b) : b\in B\big\}.
\end{equation}

\begin{definition}[separation property]
For a domain $\Omega\subset\mathbb{R}^d$, we say that $\partial\Omega$
separates $\mathbb{R}^d$ if for
every compact $K\subset\mathbb{R}^d$
there exists $R>0$ such that
for any $x\in\partial\Omega\cap K$ and $0<r\leq R$ we can find
an $(d-1)$-dimensional plan
$\mathcal{L}_{x,r}$ containing $x$ and a choice of unit normal vector
(denoted by $n_{x,r}$) to $\mathcal{L}_{x,r}$, such that
\begin{equation*}
\begin{aligned}
&\mathcal{F}^{+}(x,r):=\big\{x+tn_{x,r}\in B_r(x): y\in\mathcal{L}_{x,r},
~ t>\frac{r}{4}\big\}\subset\Omega;\\
&\mathcal{F}^{-}(x,r):=\big\{x+tn_{x,r}\in B_r(x): y\in\mathcal{L}_{x,r},
~ t<-\frac{r}{4}\big\}
\subset\mathbb{R}^d\setminus\Omega.
\end{aligned}
\end{equation*}
\end{definition}

\begin{definition}[Reifenberg flat domain]\label{def:Rfd}
We say that $\Omega\subset\mathbb{R}^d$ is a $(\delta,R)$-Reifenberg
flat domain if $\Omega$
has the separation property, and
for every $x\in\partial\Omega$ and $r\in(0,R]$,
there exists an $(d-1)$-dimensional plane $L(x,r)$ such that
\begin{equation*}
\frac{1}{r}\text{D}\big[\Delta_r(x),L(x,r)\cap B_r(x))\big]
\leq \delta,
\end{equation*}
where $\text{D}[\cdot,\cdot]$ is defined as in $\eqref{H-dist}$.
Moreover, if $\Omega$ is unbounded, we also require that
\begin{equation*}
\sup_{r>0}\sup_{x\in\partial\Omega}\Theta(x,r)<\delta,
\end{equation*}
where $\Theta(x,r):=\inf_{L\ni x}\big\{
\frac{1}{r}\text{D}[\Delta_r(x),L\cap B_r(x)]\big\}$ with
the infimum being taken over all $(d-1)$-dimensional planes containing
$x$.
\end{definition}

\begin{definition}[regular SKT domains]
Let $\Omega\subset\mathbb{R}^d$ be a domain.
We call $\Omega$
the regular SKT domain, if $\Omega$ is a
$(\delta,R)$-Reifenberg flat domain with a locally finite perimeter
for some $\delta>0$ and $R>0$, and moreover
for each compact $K\subset\mathbb{R}^d$ with $K\cap\partial\Omega\not=
\emptyset$, we have
\begin{subequations}
\begin{align}
& \limsup_{r\to0}\Theta_{K}(r) = 0;\\
& \lim_{r\to0}\sup_{x\in\partial\Omega\cap K}
\frac{\sigma(\Delta_r(x))}{c_n r^{d-1}}
  =1,
\end{align}
\end{subequations}
where $\Theta_{K}(r):=\sup_{x\in K}\inf_{L\ni x}\big\{\frac{1}{r}
\text{D}(\Delta_r(Q),L\cap B_r(Q))\big\}$
and $\sigma:=\mathcal{H}^{d-1}\llcorner\partial\Omega$
denotes the surface measure of $\partial\Omega$ and
$\mathcal{H}^{d-1}$ is $(d-1)$-dimensional Hausdorff measure.
\end{definition}

\begin{lemma}[reverse H\"older's inequality]\label{lemma:16}
Let $\Omega\subset\mathbb{R}^d$ be a bounded regular SKT (or $C^1$) domain with $d\geq 2$.
Assume $v$ is a solution of $\nabla\cdot\bar{a}\nabla v = 0$
in $D_{2R}$
with $v=0$ on $\Delta_{2R}$ with $R>0$.
Then, for any $2<p<\infty$ and $\alpha\in(1,2]$, there holds
\begin{equation}\label{pri:12}
 \Big(\dashint_{D_R}|\nabla v|^p\Big)^{\frac{1}{p}}
 \lesssim \Big(\dashint_{D_{\alpha R}}|\nabla v|^2\Big)^{\frac{1}{2}},
\end{equation}
where the multiplicative constant relies on $\lambda,d,p,
\alpha$, and the character of $\Omega$. Moreover, if $\Omega$ is a general Lipschitz domain,
the estimate $\eqref{pri:12}$ holds for
$2<p<\frac{2d}{d-1}+\theta$ with $0<\theta\ll 1$ depending only on
$d$ and the Lipschitz character of $\Omega$. In particular,
if the equation is scalar,
the estimate $\eqref{pri:12}$ holds for
$2<p<3+\theta$ if $d\geq 3$; and $2<p<4+\theta$ if $d=2$.
\end{lemma}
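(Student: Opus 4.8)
The plan is to reduce Lemma~\ref{lemma:16} to the classical $W^{1,p}$ estimate for the \emph{deterministic}, constant-coefficient Dirichlet problem on the domain in question, and then to recover the homogeneous reverse H\"older inequality by a cutoff argument that exploits the vanishing of $v$ on $\Delta_{2R}$.

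First I would normalise the operator. As $\bar{a}$ is constant, its antisymmetric part annihilates $\nabla\cdot\bar{a}\nabla(\cdot)$, so it may be replaced by its symmetric part without altering the equation; in the Lipschitz case this is automatic, since $a=a^{*}$ forces $\bar{a}=\bar{a}^{*}$. For a scalar equation the linear change of variables $x\mapsto\bar{a}^{-1/2}x$ then turns $\nabla\cdot\bar{a}\nabla v=0$ into Laplace's equation, carries $D_{2R}$ onto a region comparable to a half-ball, and sends $\Omega$ to a domain of the same type (Lipschitz, $C^{1}$, or regular SKT) with a quantitatively comparable character, the homogeneous Dirichlet condition on $\Delta_{2R}$ being preserved throughout; for systems one keeps $\bar{a}$ symmetric and invokes the corresponding layer-potential theory directly.

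The core input is the estimate $\|\nabla u\|_{L^{p}(\Omega)}\lesssim\|f\|_{L^{p}(\Omega)}+\|h\|_{L^{q}(\Omega)}$, where $\tfrac1q=\tfrac1p+\tfrac1d$, for the problem $-\nabla\cdot\bar{a}\nabla u=\nabla\cdot f+h$ in $\Omega$ with $u=0$ on $\partial\Omega$. For a bounded regular SKT (or $C^{1}$) domain this holds for \emph{every} $1<p<\infty$: the double-layer operator $\pm\tfrac12 I+\mathcal{K}_{\bar{a}}$ is compact on $L^{r}(\partial\Omega)$ for all $1<r<\infty$ (Fabes--Jodeit--Riviere for $C^{1}$ domains; Hofmann--Mitrea--Taylor for regular SKT domains), hence invertible by Fredholm theory, which gives the $L^{r}$-solvability of the Dirichlet problem, and then --- together with a duality argument that reduces the $h$-contribution to the divergence-form estimate --- the stated bound; this route is the ``new scheme'' advertised in the introduction and needs no symmetry. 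For a general Lipschitz domain the same estimate holds only in the range $|\tfrac1p-\tfrac12|<\tfrac1{2d}+\theta$, which is the sharp solvability range of the $W^{1,p}$ Dirichlet problem and is obtained from Rellich-type identities (Jerison--Kenig for scalar equations; Geng--Shen--Song and Shen for systems when $d=2,3$), whence the restriction $a=a^{*}$; in the scalar case the range improves to $2<p<3+\theta$ for $d\ge3$ and $2<p<4+\theta$ for $d=2$. Granting this, I fix $\psi\in C_{0}^{1}(B_{\alpha R})$ with $\psi=1$ on $D_{R}$ and $|\nabla\psi|\lesssim(\alpha-1)^{-1}R^{-1}$; then $\psi v$ vanishes on all of $\partial\Omega$ (since $B_{\alpha R}\cap\partial\Omega\subset\Delta_{2R}$ and $v=0$ there) and solves $-\nabla\cdot\bar{a}\nabla(\psi v)=\nabla\cdot(-2v\bar{a}\nabla\psi)+v\,\bar{a}{:}\nabla^{2}\psi$ in $\Omega$, with data supported in the annulus $D_{\alpha R}\setminus D_{R}$ and depending only on $v$. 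Applying the core estimate and then a scale-invariant Sobolev--Poincar\'e inequality on $D_{\alpha R}$ --- legitimate because $v=0$ on the positive-measure portion $\Delta_{\alpha R}$ of $\partial D_{\alpha R}$ --- converts $\|v\|_{L^{p}}$ into $\|\nabla v\|_{L^{2}}$, and a check of the homogeneity in $R$ yields $\bigl(\dashint_{D_{R}}|\nabla v|^{p}\bigr)^{1/p}\lesssim\bigl(\dashint_{D_{\alpha R}}|\nabla v|^{2}\bigr)^{1/2}$. (Alternatively one keeps all data in divergence form, at the price of a standard absorption iteration over nested radii.) For the SKT/$C^{1}$ case, where every finite $p$ is wanted, one precedes this by a finite Sobolev--Poincar\'e bootstrap, the admissible exponent growing geometrically at each step.

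The delicate point is the core input in the regular SKT case: proving the compactness of $\pm\tfrac12 I+\mathcal{K}_{\bar{a}}$, and hence $L^{r}$-solvability for all $r\in(1,\infty)$, on a domain that is only chord-arc with vanishing constant --- with no Lipschitz graph representation of controlled modulus at our disposal --- is where the real work lies; in the general Lipschitz case the comparably delicate point is the sharpness of the range $|\tfrac1p-\tfrac12|<\tfrac1{2d}+\theta$, which forces the use of the Rellich-identity machinery and the symmetry hypothesis. Once the input is available, the passage to the local homogeneous reverse H\"older inequality is routine.
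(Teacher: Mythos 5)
Your argument is correct in spirit, but it is organised differently from the paper's, and it leans on a stronger black box than the paper chooses to use. The paper's proof is a \emph{local bootstrap}: for the regular SKT case it combines (i) the nontangential maximal function estimate
$\|(\nabla v)^{*}\|_{L^{p}(\partial D_{R})}\lesssim\|\nabla_{\mathrm{tan}}v\|_{L^{p}(\partial D_{R})}$ (Hofmann--Mitrea--Taylor, valid for every $p\in(1,\infty)$) with (ii) the embedding $\|\nabla v\|_{L^{p}(D_{R})}\lesssim\|(\nabla v)^{*}\|_{L^{q}(\partial D_{R})}$ for $p=qd/(d-1)$ (Kenig--Lin--Shen), and then a co-area slicing argument over half-balls $D_{t}$, $t\in[1,2]$, to pick a good slice $t_{0}$; this turns the pair (i)--(ii) into a gain of one geometric step in the exponent, $p_{k}=p_{k-1}\,d/(d-1)$, which is iterated to reach any finite $p$. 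The reason $v=0$ on $\Delta_{2R}$ enters is simply that $\nabla_{\mathrm{tan}}v$ vanishes there, so only the ``round'' part of $\partial D_{t}$ contributes. For the general Lipschitz case the estimate (i) is only available for $p<2+\theta$ (Shen), which is exactly what limits the range, consistent with your remarks.

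Your route instead cuts off $v$, solves a \emph{global} inhomogeneous Dirichlet problem for $\psi v$ on $\Omega$, and invokes a $W^{1,p}$ estimate $\|\nabla u\|_{L^{p}(\Omega)}\lesssim\|f\|_{L^{p}(\Omega)}+\|h\|_{L^{q}(\Omega)}$ valid for all $1<p<\infty$ on regular SKT domains, then recovers the homogeneous form via Sobolev--Poincar\'e and a bootstrap. This is a legitimate alternative, but two points deserve care. First, the global $W^{1,p}$ estimate on a regular SKT domain is not a single citation away: the paper itself flags (in the footnote attached to \eqref{pri:c-z*}) that the corresponding global $W^{1,p}$ estimate is ``unclear for general Lipschitz domains'' and deliberately constructs its argument so as not to rely on it. Even for regular SKT domains, passing from the invertibility of $\pm\tfrac12 I+\mathcal{K}_{\bar{a}}$ on $L^{r}(\partial\Omega)$ to the interior $W^{1,p}$ estimate for the divergence-form Dirichlet problem requires precisely the ingredients (regularity-problem solvability, nontangential maximal function control, and the embedding from the boundary to the interior) that the paper assembles explicitly in its bootstrap; so your ``core input'' hides roughly the same work the paper does in the open. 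Second, the bootstrap you sketch at the end (Sobolev--Poincar\'e improving the exponent one step at a time) recreates the same geometric $d/(d-1)$ progression, so the two approaches are quantitatively equivalent; the paper's version avoids the cutoff error terms and the need for a scale-invariant global estimate by working directly on nested half-balls with the co-area trick. In short: your proof would go through if you prove (rather than assume) the global $W^{1,p}$ estimate on SKT domains, and when you unwind that, you essentially arrive at the paper's slicing argument.
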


\begin{proof}
By the covering argument mentioned in Remark $\ref{remark:6}$,
it suffices to show the estimate $\eqref{pri:12}$ in the case of $\alpha=2$.
If $\Omega$ is a $C^1$ domain,
the desired estimate $\eqref{pri:12}$ will immediately follow from
interior Lipschitz estimates coupled with boundary H\"older's estimate via
the geometric property of integrals. We focus on
the case of the bounded regular SKT domain. In a nutshell, our method is a bootstrap argument, which contains two ingredients:
(1) the related nontangential maximal function estimates, i.e.,
\begin{equation}\label{f:7.2}
 \|(\nabla v)^*\|_{L^p(\partial D_R)}\lesssim_{\lambda,p,d,M_0}
 \|\nabla_{\text{tan}} v\|_{L^p(\partial D_R)}
 \qquad\forall p\in(1,\infty)
\end{equation}
(see \cite[Theorem 7.2]{Hofmann-Mitrea-Taylor10});
(2) a powerful embedding inequality associated with nontangential maximal functions, i.e.,
\begin{equation}\label{f:7.3}
 \|\nabla v\|_{L^p(D_R)}\lesssim_{p,d}
 \|(\nabla v)^*\|_{L^q(\partial D_R)},
 \qquad p = qd/(d-1)\quad\text{and}\quad q>0
\end{equation}
(see e.g. \cite[Remark 9.3]{Kenig-Lin-Shen13}), where we refer
the reader to \cite[pp.207-209]{Shen18} for the definition of
nontangential maximal functions.

\medskip

Let $t\in[1,2]$, and it follows that
\begin{equation*}
\begin{aligned}
 \|\nabla v\|_{L^p(D_t)}
\lesssim^{\eqref{f:7.3}} \|(\nabla v)^*\|_{L^q(\partial D_t)}
\lesssim^{\eqref{f:7.2}}
\|\nabla_{\text{tan}} v\|_{L^q(\partial D_t\setminus\Delta_t)}.
\end{aligned}
\end{equation*}
By co-area formula, we further derive that
\begin{equation}\label{f:7.4}
\int_{1}^{2}\|\nabla v\|_{L^p(D_t)}^{q} dt
\lesssim \int_{1}^{2}\|\nabla  v\|_{L^q(\partial D_t\setminus\Delta_t)}^qdt
\lesssim \|\nabla v\|_{L^q(D_2)}^q,
\end{equation}
On the other hand, there exists $t_0\in [1,2]$ such that
\begin{equation}\label{f:7.5}
 \|\nabla v\|_{L^p(D_{t_0})}^{q} \leq 2 \int_{1}^{2}\|\nabla v\|_{L^p(D_t)}^{q} dt.
\end{equation}
Thus, combining the estimates $\eqref{f:7.4}$ and $\eqref{f:7.5}$, we obtain
\begin{equation}\label{f:7.6}
\|\nabla v\|_{L^p(D_{1})}
\leq
\|\nabla v\|_{L^p(D_{t_0})}
\lesssim \|\nabla v\|_{L^q(D_2)}.
\end{equation}

Then, by the relationship $p=\frac{qd}{d-1}$ we can infer
the iteration formula $p_k=p_{k-1}\kappa_d=\cdots=p_0\kappa_d^{k}$ with
$p_0:=2$ and $\kappa_d:=\frac{d}{d-1}$. In view of $\eqref{f:7.6}$, we have
$\|\nabla v\|_{L^{p_k}(D_{1})}\lesssim \|\nabla v\|_{L^{2}(D_{2k})}$, which
together with covering and rescaling arguments leads to
\begin{equation*}
\|\nabla v\|_{L^{p_k}(D_{R})}
\leq C_k R^{d(\frac{1}{p_k}-\frac{1}{2})}\|\nabla v\|_{L^{2}(D_{2R})},
\quad \text{and} \quad \lim_{k\to\infty}C_k = \infty.
\end{equation*}
This offers us the stated estimate $\eqref{pri:12}$ by mentioning that
the dependence of the multiplicative constant on $k$ is essentially the dependence on $p$. For a general Lipschitz domain, we refer the reader
to the remark below.
\end{proof}

\begin{remark}
\emph{In terms of Neumann boundary conditions, the estimate
$\eqref{f:7.2}$ should be replaced by
$\|(\nabla v)^*\|_{L^p(\partial D_R)}\lesssim_p
 \|(\partial v/\partial\nu)\|_{L^p(\partial D_R)}$ (see \cite[Theorem 7.3]{Hofmann-Mitrea-Taylor10}), and then
following the same arguments we can analogically establish $\eqref{pri:12}$ for Neumann boundary problems, where $\partial v/\partial\nu$ is
the corresponding conormal derivative of $v$.}
\end{remark}

\begin{remark}\label{remark:5}
\emph{If $\Omega$ is a general Lipschitz domain, the estimate $\eqref{f:7.2}$
is not true, and the upper bound of $1<p<2+\theta$
with $0<\theta\ll1$ is optimal (see
\cite{Shen06} and references therein for more details). In this regard, the proceed of ``bootstrap'' given in the proof of
Lemma $\ref{lemma:16}$ can not happen. In the scalar case, the result
was shown in \cite[Theorem B]{Shen05} together with \cite[Theorem 1.2, Proposition 1.4]{Jerison-Kenig95}.}
\end{remark}

\begin{center}
\textbf{Acknowledgements}
\end{center}

\noindent
The authors are grateful to the two anonymous referees for
valuable comments and suggestions to greatly improve
the statements of the article.
The first author is supported by China Postdoctoral
Science Foundation (Grant No. 2022M710228).
The second author deeply appreciated
Prof. Felix Otto for his instruction and encouragement when he held a post-doctoral position at the Max Planck
Institute for Mathematics in the Sciences (in Leipzig).
The second author would like to thank the Department of Mathematics
at Peking University for providing him with excellent office
conditions during his visit in the spring of 2022. This work
was supported in part by the Young Scientists Fund of the National Natural
Science Foundation of China (Grant No. 11901262);
the Fundamental Research Funds for the Central Universities
(Grant No.lzujbky-2021-51).

\end{document}